\documentclass[11pt, reqno]{amsart}
\usepackage{txfonts}

\usepackage{amsmath,amssymb,amsthm,mathrsfs,enumerate,bm,xcolor,multirow,pbox}
\usepackage{graphicx,color,framed,tikz,caption,subcaption}
\usepackage{enumitem}
\setlist{leftmargin=5mm}
\usepackage[colorlinks,linkcolor=black,citecolor=black,urlcolor=black]{hyperref}
\allowdisplaybreaks[4]
\numberwithin{equation}{section}
\newcommand{\N}{\mathbb{N}}
\newcommand{\R}{\mathbb{R}}

\newcommand{\E}{\mathbb{E}}
\newcommand{\Prob}{\mathbb{P}}

\newcommand{\pnorm}[2]{\lVert#1\rVert_{#2}}
\newcommand{\bigpnorm}[2]{\big\lVert#1\big\rVert_{#2}}
\newcommand{\biggpnorm}[2]{\bigg\lVert#1\bigg\rVert_{#2}}
\newcommand{\abs}[1]{\lvert#1\rvert}
\newcommand{\bigabs}[1]{\big\lvert#1\big\rvert}
\newcommand{\biggabs}[1]{\bigg\lvert#1\bigg\rvert}
\newcommand{\iprod}[2]{\left\langle#1,#2\right\rangle}
\renewcommand{\epsilon}{\varepsilon}

\renewcommand{\d}[1]{\mathrm{d}#1}

\newcommand{\smallop}{\mathfrak{o}_{\mathbf{P}}}

\newcommand{\bigo}{\mathcal{O}}

\renewcommand{\hat}{\widehat}

\DeclareMathOperator{\tr}{tr}
\DeclareMathOperator{\var}{Var}

\DeclareMathOperator{\op}{op}
\DeclareMathOperator{\na}{LNW}
\DeclareMathOperator{\jo}{J}
\DeclareMathOperator{\cm}{CM}
\DeclareMathOperator{\lrt}{LRT}
\let\liminf\relax
\DeclareMathOperator*\liminf{\underline{lim}}
\let\limsup\relax
\DeclareMathOperator*\limsup{\overline{lim}}


\newcommand{\beq}{\begin{equation}}
\newcommand{\eeq}{\end{equation}}
\newcommand{\beqa}{\begin{equation} \begin{aligned}}
\newcommand{\eeqa}{\end{aligned} \end{equation}}
\newcommand{\beqas}{\begin{equation*} \begin{aligned}}
\newcommand{\eeqas}{\end{aligned} \end{equation*}}

\newcommand{\bit}{\begin{itemize}}
	\newcommand{\eit}{\end{itemize}}
\newcommand{\bmat}{\begin{bmatrix}}
	\newcommand{\emat}{\end{bmatrix}}

\theoremstyle{definition}\newtheorem{problem}{Problem}[section]
\theoremstyle{definition}
\theoremstyle{remark}

\theoremstyle{remark}\newtheorem{remark}[problem]{Remark}
\theoremstyle{definition}
\theoremstyle{plain}\newtheorem{theorem}[problem]{Theorem}
\theoremstyle{plain}
\theoremstyle{plain}\newtheorem{lemma}[problem]{Lemma}
\theoremstyle{plain}\newtheorem{proposition}[problem]{Proposition}
\theoremstyle{plain}\newtheorem{corollary}[problem]{Corollary}
\theoremstyle{plain}

\AtBeginDocument{%
	\def\MR#1{}
}

%
%

\begin{document}

\title[Contiguity under high dimensional Gaussianity]{Contiguity under high dimensional Gaussianity with applications to covariance testing}
\thanks{The research of Q. Han is partially supported by NSF grants DMS-1916221 and DMS-2143468. The research of T. Jiang is partially supported by NSF grant DMS-1916014.}

\author[Q. Han]{Qiyang Han}

\address[Q. Han]{
Department of Statistics, Rutgers University, Piscataway, NJ 08854, USA.
}
\email{qh85@stat.rutgers.edu}

\author[T. Jiang]{Tiefeng Jiang}

\address[T. Jiang]{
	School of Statistics, University of Minnesota, Minneapolis, MN 55455, USA.
}
\email{jiang040@umn.edu}

\author[Y. Shen]{Yandi Shen}

\address[Y. Shen]{
Department of Statistics, University of Chicago, Chicago, IL 60615, USA.
}
\email{ydshen@uchicago.edu}

\date{\today}

\keywords{contiguity, covariance test, power analysis, Poincar\'e inequalities}
\subjclass[2000]{60F17, 62E17}

\sloppy

\begin{abstract}
Le Cam's third/contiguity lemma is a fundamental probabilistic tool to compute the limiting distribution of a given statistic $T_n$ under a non-null sequence of probability measures $\{Q_n\}$, provided its limiting distribution under a null sequence $\{P_n\}$ is available, and the log likelihood ratio $\{\log (\d Q_n/\d P_n)\}$ has a distributional limit. Despite its wide-spread applications to low-dimensional statistical problems, the stringent requirement of Le Cam's third/contiguity lemma on the distributional limit of the log likelihood ratio makes it challenging, or even impossible to use in many modern high-dimensional statistical problems. 

This paper provides a non-asymptotic analogue of Le Cam's third/contiguity lemma under high dimensional normal populations. Our contiguity method is particularly compatible with sufficiently regular statistics $T_n$: the regularity of $T_n$ effectively reduces both the problems of (i) obtaining a null (Gaussian) limit distribution and of (ii) verifying our new quantitative contiguity condition, to those of derivative calculations and moment bounding exercises. More important, our method bypasses the need to understand the precise behavior of the log likelihood ratio, and therefore possibly works even when it necessarily fails to stabilize---a regime beyond the reach of classical contiguity methods.

As a demonstration of the scope of our new contiguity method, we obtain asymptotically exact power formulae for a number of widely used high-dimensional covariance tests, including the likelihood ratio tests and trace tests, that hold uniformly over all possible alternative covariance under mild growth conditions on the dimension-to-sample ratio. These new results go much beyond the scope of previous available case-specific techniques, and exhibit new phenomenon regarding the behavior of these important class of covariance tests.
\end{abstract}

\maketitle

\setcounter{tocdepth}{1}
\tableofcontents

\section{Introduction}

\subsection{Le Cam's third/contiguity lemma: a review}

For each $n \in \N$, let $(\Omega_n, \mathcal{B}_n)$ be a measurable space, on which a real-valued random variable $T_n$, and a pair of probability measures $(P_n,Q_n)$ are defined. Here $n$ is a generic index for asymptotics, which is usually related to `sample size' in statistics literature. Le Cam's third/contiguity lemma \cite{lecam1960locally}, which is essentially an asymptotic change of variable formula, computes the limiting law of $\{T_n\}$ under the laws of $\{Q_n\}$, provided that its limiting law under the laws of $\{P_n\}$ can be computed, and the distributions of the log-likelihood ratio $\{\log (\d Q_n/\d P_n)\}$ under the laws of $\{P_n\}$ can be precisely evaluated. The most common form of Le Cam's third/contiguity lemma states the following: Suppose that $\{T_n\}$ is asymptotically normal under the laws of $\{P_n\}$, i.e.,
\begin{align}\label{intro:lecam1}
\frac{T_n-m_{P_n}}{\sigma_{P_n}} \stackrel{P_n}{\rightsquigarrow} \mathcal{N}(0,1)
\end{align}
for some $\{m_{P_n} \in \R\}$ and $\{\sigma_{P_n}>0\}$. Here and below we use $\rightsquigarrow$ to denote weak convergence. Then the limiting law of the normalized random variable $\{(T_n-m_{P_n})/\sigma_{P_n}\}$ under the laws of $\{Q_n\}$ can be computed as
\begin{align}\label{intro:lecam2}
\frac{T_n-m_{P_n}}{\sigma_{P_n}} \stackrel{Q_n}{\rightsquigarrow} \mathcal{N}(\tau,1),
\end{align}
provided that $\sigma_{P_n} \to \sigma_P$ for some $\sigma_P > 0$ and Le Cam's contiguity condition
\begin{align}\label{intro:lecam_cond}
\binom{T_n-m_{P_n}}{\log (\d Q_n/\d P_n)}\stackrel{P_n}{\rightsquigarrow} \mathcal{N}\left(\binom{0}{-\sigma^2/2}, 
\begin{pmatrix}
\sigma_P^2 & \tau\sigma_P\\
\tau \sigma_P & \sigma^2
\end{pmatrix}
\right)
\end{align}
holds for some $\sigma^2>0$.
The condition that $\sigma_{P_n} \to \sigma_P$ can typically be ensured by rescaling $T_n$ appropriately, so the real non-trivial condition is the asymptotic distributional expansions of the statistic $T_n$ and the log likelihood ratio in (\ref{intro:lecam_cond}). We refer the reader to \cite[Chapter 6]{van2000asymptotic} for an in-depth treatment of Le Cam's contiguity theory.

Le Cam's third/contiguity lemma, as stated above, has played a fundamental role in several major developments of estimation and testing theory in mathematical statistics. For instance, convolution and asymptotic minimax theorems for parametric models, which quantify the fundamental information theoretic limits of any regular statistical estimators, are proved with an essential use of Le Cam's third/contiguity lemma. In testing theory, Le Cam's third lemma also facilitates the computation of exact `power function' (to be defined ahead) of any statistical tests. See, e.g., \cite[Chapters 7, 8, 15]{van2000asymptotic} for a textbook treatment on these by-now classical topics.

From (\ref{intro:lecam1})-(\ref{intro:lecam_cond}), it is clear that a successful application of Le Cam's third/contiguity lemma relies heavily on two crucial ingredients: (A) a central limit theorem (CLT) for $\{T_n\}$ under the laws of $\{P_n\}$ in (\ref{intro:lecam1}), and (B) an easy-to-handle log likelihood ratio $\log (\d Q_n/\d P_n)$. This is indeed fairly straightforward in classical models. For instance, a standard application is the study of maximum likelihood estimator (MLE) in parametric statistical models: Let $X_1,\ldots,X_n$ be i.i.d. real-valued random variables from a probability distribution $P_{\theta_0}$ in a parametric class $\mathcal{P}\equiv \{P_\theta:\theta \in \Theta \subset \R\}$, where $\Theta$ is an open set in $\R$, and $T_n\equiv \sqrt{n}(\hat{\theta}-\theta_0)$ where $\hat{\theta}\equiv \hat{\theta}(X_1,\ldots,X_n)$ is the MLE for $\theta_0$ within $\mathcal{P}$. We wish to compute the limiting law of $\{T_n\}$ under the local laws $\{P_{\theta_0+h/\sqrt{n}}\}$ for a fixed $h \in \R$. To apply Le Cam's third lemma, we take $\{P_n\equiv P_{\theta_0}\}$ and $\{Q_n\equiv P_{\theta_0+h/\sqrt{n}}\}$. Now part A can be easily tackled---it is classical knowledge that when $\mathcal{P}$ is `smooth enough', then a CLT for $\{T_n\}$ holds with $m_{P_n}=0$ and $\sigma_{P_n}= (I_{\theta_0})^{-1/2}$ where $I_{\theta_0}$ is the Fisher information of $\mathcal{P}$ at $\theta_0$. Part B can also be handled easily, for instance a direct Taylor expansion of the log likelihood for sufficiently smooth $\mathcal{P}$\footnote{The celebrated local asymptotic normality (LAN) (cf. \cite{lecam1960locally}) condition can also be used for this purpose to weaken smoothness requirements.} concludes, after some calculations, that $\tau=h I_{\theta}^{1/2}$. The limiting law of $\{T_n=\sqrt{n}(\hat{\theta}-\theta_0)\}$ under $\{Q_n=P_{\theta_0+h/\sqrt{n}}\}$ now follows immediately from (\ref{intro:lecam2}).

Although applied in a wide range of contexts with great success in classical low-dimensional statistical problems, Le Cam's third/contiguity lemma faces a key challenge in its stringent requirement for the exact distributional behavior of the log likelihood $\{\log (\d Q_n/\d P_n)\}$ under $\{P_n\}$ as in (\ref{intro:lecam_cond}). In many modern high-dimensional statistical applications, the distributional expansion of the log likelihood can sometimes be extremely difficult, or even impossible to handle. One leading example is given by high dimensional normal populations with spiked covariance, i.e., $P_n=$ (resp. $Q_n$) law of $n$ i.i.d. observations of $p$-dimensional Gaussian vectors with covariance $I_p$ (resp. $I_p+\Delta_p$), where $\Delta_p=\sum_{\ell=1}^r h_\ell v_\ell v_\ell^\top$ is a (fixed) rank $r$ perturbation matrix with $h_\ell \geq 0$ and $\pnorm{v_\ell}{}=1$ for all $1\leq \ell \leq r$. The log likelihood $\{\log (\d Q_n/\d P_n)\}$ can be computed both over the original data and over the maximal invariant (i.e., the eigenvalues of the sample covariance):
\begin{itemize}
	\item For the log likelihood over the original data, fairly straightforward calculations show that, already in the simplest possible rank one case $r=1$, $\{\log (\d Q_n/\d P_n)\}$ can stochastically stabilize under $\{P_n\}$ only if $h_1=\bigo(n^{-1/2})$, a regime of almost no practical relevance in high dimensional settings, say, $\lim (p/n)=y \in (0,\infty)$. 
	\item For the log likelihood over the maximal invariant, \cite{onatski2013asymptotic,onatski2014signal} showed that a distributional limit of $\{\log (\d Q_n/\d P_n)\}$ exists in the sub-critical regime below the Baik-Ben Arous-P\'{e}ch\'{e} (BBP) phase transition \cite{baik2005phase} $\max_{1\leq \ell \leq r} h_\ell <\sqrt{y}$, where $y\equiv \lim (p/n)$. The hard threshold $\sqrt{y}$ is necessary as the weak limit does not exist when $\max_{1\leq \ell \leq r} h_\ell >\sqrt{y}$.
\end{itemize}
Consequently, even for the fixed-rank spiked covariance alternatives, Le Cam's contiguity condition (\ref{intro:lecam_cond}) already fails to obtain non-null distributions of the type (\ref{intro:lecam2}) except for a highly restrictive set of alternatives. Fundamentally, such restrictions arise as it is more than necessary and in fact far too strong to require a weak limit, or even only stochastic stabilization of the log likelihood ratio as in (\ref{intro:lecam_cond}), for the purpose of computing non-null distributions for a given statistics, in particular in high dimensional settings.

\subsection{An analogue to Le Cam's third/contiguity lemma under high dimensional Gaussianity}
In this paper, we establish a non-asymptotic analogue of Le Cam's third/contiguity lemma in the form of (\ref{intro:lecam1})-(\ref{intro:lecam2}), without the requirement for an exact distributional evaluation or even stochastic stabilization of the limiting log likelihood ratio as in (\ref{intro:lecam_cond}), in the setting where $T_n$ is a sufficient `regular' function of $n$ i.i.d. observations of a $p$-dimensional normal distribution. 

Formally, let $X_1,\ldots,X_n$ be i.i.d. samples from a $p$-dimensional normal distribution $\mathcal{N}_p(\mu,\Sigma)$, where $(\mu,\Sigma) \in \R^p\times \mathcal{M}_p$ with $\mathcal{M}_p$ denoting the set of all $p\times p$ covariance matrices. Let $X = [X_1,\ldots,X_n]^\top\in\R^{n\times p}$ be the data matrix. We will be concerned with the statistic $T_n=T(X)$ for some $T: \R^{n\times p}\to \R$ living in the Sobolev space $W^{1,2}(\gamma_{n\times p})$, where $\gamma_{n\times p}$ is the standard Gaussian measure on $\R^{n\times p}$ (precise definitions can be found in Section \ref{section:notation}). Let (throughout the paper we use the symbol $\equiv$ for definition)
\begin{align}\label{def:mean_variance_generic}
m_{(\mu,\Sigma)}\equiv \E_{(\mu,\Sigma)} T(X),\quad \sigma_{(\mu,\Sigma)}^2\equiv \var_{(\mu,\Sigma)}\big(T(X)\big)
\end{align}
be the mean and variance of $T(X)$ under $\mathcal{N}_p(\mu,\Sigma)$, respectively. We always assume that the two quantities in (\ref{def:mean_variance_generic}) are finite. In a similar spirit, we use the subscript $(\mu,\Sigma)$ in $\E_{(\mu,\Sigma)}$ and other probabilistic notations to indicate that the evaluation is under measure $\mathcal{N}_p(\mu,\Sigma)$. 

To motivate the formulation of our results, let us take a pause to see how one may interpret Le Cam's formulation (\ref{intro:lecam1})-(\ref{intro:lecam2}) without going through explicitly the quantities appearing in the contiguity condition (\ref{intro:lecam_cond}). The key observation is that, under mild additional integrability, the asymptotics in (\ref{intro:lecam1}) and (\ref{intro:lecam2}) necessarily entail that $\E_{P_n} (T_n-m_{P_n})/\sigma_{P_n} \approx 0$ and $\E_{Q_n} (T_n-m_{P_n})/\sigma_{P_n} \approx \tau$. This gives $\tau \approx (\E_{Q_n} T_n-\E_{P_n} T_n)/\sigma_{P_n}$, and therefore we may interpret (\ref{intro:lecam1})-(\ref{intro:lecam2}) as
\begin{align}
&\frac{T_n-m_{P_n}}{\sigma_{P_n}}\stackrel{d}{\approx} \mathcal{N}(0,1) \quad \textrm{under } P_n \label{intro:lecam3_1}\\ \stackrel{(\ref{intro:lecam_cond})}{\Rightarrow}\quad & \frac{T_n-m_{P_n}}{\sigma_{P_n}}\stackrel{d}{\approx} \mathcal{N}\bigg(\frac{\E_{Q_n} T_n-\E_{P_n} T_n}{\sigma_{P_n}},1\bigg)\quad \textrm{under } Q_n. \label{intro:lecam3_2}
\end{align}
Compared to (\ref{intro:lecam1})-(\ref{intro:lecam2}), the above formulation does not involve parameters appearing in (\ref{intro:lecam_cond}). Our first main result of this paper establishes an analogue of this formulation (\ref{intro:lecam3_1})-(\ref{intro:lecam3_2}) for $T_n=T(X)$: Under mild regularity conditions on $T$, for any pair $(\mu_0,\Sigma_0),(\mu,\Sigma) \in \R^p\times \mathcal{M}_p$,
\begin{align}
&\frac{T(X)-m_{(\mu_0,\Sigma_0)}}{\sigma_{(\mu_0,\Sigma_0)}}\stackrel{d}{\approx} \mathcal{N}(0,1) \quad \textrm{under } (\mu_0,\Sigma_0) \label{intro:highd_contiguity_1}
\\ \stackrel{(\ast\ast)}{\Rightarrow}\quad & \frac{T(X)-m_{(\mu_0,\Sigma_0)}}{\sigma_{(\mu_0,\Sigma_0)}}\stackrel{d}{\approx} \mathcal{N}\bigg(\frac{m_{(\mu,\Sigma)}-m_{(\mu_0,\Sigma_0)} }{\sigma_{(\mu_0,\Sigma_0)}},1\bigg) \quad \textrm{under } (\mu,\Sigma), \label{intro:highd_contiguity_2}
\end{align}
whereas our contiguity condition $(\ast\ast)$ reads
\begin{align}\label{intro:highd_contiguity_cond}
\overline{\mathrm{err}}_{(\mu,\Sigma);(\mu_0,\Sigma_0)}\equiv \frac{V_{(\mu,\Sigma);(\mu_0,\Sigma_0)}}{\max\big\{|m_{(\mu,\Sigma)} - m_{(\mu_0,\Sigma_0)}|,\sigma_{(\mu_0,\Sigma_0)}\big\}}\to 0.
\end{align}
See Theorem \ref{thm:general} for a formal statement. The `variance' parameter $V_{(\mu,\Sigma);(\mu_0,\Sigma_0)}$, formally defined in (\ref{def:V_generic}) ahead, characterizes the order of stochastic dispersion of $T(X)$ under $(\mu,\Sigma)$ compared to that under $(\mu_0,\Sigma_0)$. Compared to Le Cam's contiguity condition (\ref{intro:lecam_cond}) that requires an exact and strict distributional limit for the log likelihood ratio, (\ref{intro:highd_contiguity_cond}) typically holds in a far broader regime than the prescribed regime in which a distributional limit of the log likelihood ratio exists. For example, in the special case of fixed-rank spiked covariance alternatives, (\ref{intro:highd_contiguity_cond}) already holds for all possible $h_\ell \geq 0$ with all the covariance test statistics studied in this paper, as opposed to the highly restrictive regime imposed by the existence of the weak limit of the log likelihood ratio. In fact, a striking advantage of (\ref{intro:highd_contiguity_cond}) in all the studied examples is its uniform validity over all possible covariance matrices without the need of specifying any particular structure (e.g. spiked alternatives). 

In addition, for sufficiently regular $T_n=T(X)$ with $T(X) \in W^{1,2}(\gamma_{n\times p})$, verification of the contiguity condition (\ref{intro:highd_contiguity_cond}) also has major operational advantages compared to the original Le Cam's contiguity condition (\ref{intro:lecam_cond}). In particular, upper and lower bounds for the stochastic dispersion $V_{(\mu,\Sigma);(\mu_0,\Sigma_0)}$, the mean difference $\abs{m_{(\mu,\Sigma)}-m_{(\mu_0,\Sigma_0)}}$ and the null standard deviation $\sigma_{(\mu_0,\Sigma_0)}$ can usually be reduced to derivative calculations and their moment bounds via efficient applications of Poincar\'e inequalities (or other Fourier techniques in classical Gaussian analysis). If a bit further regularity persists in $T_n$ in that $T(X) \in W^{2,4}(\gamma_{n\times p})$, which is the case for all examples considered in this paper, then a null CLT (\ref{intro:highd_contiguity_1}) can also be reduced to the same derivative calculations and moment bounding exercises, via the renowned second-order Poincar\'e inequality \cite{chatterjee2009fluctuations}. In essence, when the given statistic $T_n$ possesses sufficient regularity, our contiguity method (\ref{intro:highd_contiguity_1})-(\ref{intro:highd_contiguity_2}) can be used to derive its non-null distributions in a rather `mechanical' way by evaluating derivatives and their moment (upper and lower) bounds.

It should be mentioned that while our approach (\ref{intro:highd_contiguity_1})-(\ref{intro:highd_contiguity_2}) here appears to be particularly effective with the sufficient regularity of $T_n$ that naturally postulates a null CLT, it seems less useful when such regularity fails and a different limit occurs under the null; see Remark \ref{rmk:generalize_contiguity} for some technical discussions. Whether a general, effective contiguity approach as (\ref{intro:highd_contiguity_1})-(\ref{intro:highd_contiguity_2}) exists in the `low regularity' regime of $T_n$ remains an interesting open question.

\subsection{Power formula of tests with high-dimensional normal population}

Similar to the wide applicability of Le Cam's third/contiguity lemma in (\ref{intro:lecam1})-(\ref{intro:lecam2}) in classical statistical testing problems, our contiguity result in (\ref{intro:highd_contiguity_1})-(\ref{intro:highd_contiguity_2})  can be applied to many modern high-dimensional statistical problems. Here is a general formulation of the testing problem with normal populations:
\begin{align}\label{intro:test}
H_0: (\mu,\Sigma) \in \mathscr{H}_0 \quad \textrm{versus}\quad H_1: \hbox{$H_0$ does not hold}
\end{align}
where $\mathscr{H}_0$ is a subset of $\R^p \times \mathcal{M}_p$. 

Let $T(X)$ be a generic test statistic whose distribution is invariant under $H_0$, i.e., the law of $T(X)$ remains the same for any $(\mu,\Sigma) \in \mathscr{H}_0$ in (\ref{intro:test}). Due to the distributional invariance of $T(X)$, its mean and variance under the null
\begin{align}\label{def:mean_variance_null_generic}
m_{H_0}\equiv m_{(\mu_0,\Sigma_0)},\quad \sigma_{H_0}^2\equiv \sigma_{(\mu_0,\Sigma_0)}^2
\end{align}
are well-defined for any specification of $(\mu_0,\Sigma_0) \in \mathscr{H}_0$. Suppose further that $T(X)$ verifies the CLT in (\ref{intro:highd_contiguity_1}); this indeed holds in all examples considered in this paper due to their sufficient regularity, and is also anticipated as many covariance tests statistics depend on `sufficient average' of eigenvalues of the sample covariance matrix. Then an asymptotically exact test can be constructed immediately: for any prescribed $\alpha\in(0,1)$,
\begin{align}\label{def:test_generic_intro}
\Psi(X)\equiv \Psi(X;m_{H_0},\sigma_{H_0}) \equiv \bm{1}\bigg(\frac{T(X)-m_{H_0}}{\sigma_{H_0}}>z_\alpha\bigg),
\end{align}
where $z_\alpha$ is the normal quantile such that $\Prob(\mathcal{N}(0,1)>z_\alpha) = \alpha$. Here, following the convention in statistics literature, $\Psi(X)=1$ (resp. $\Psi(X)=0$) indicates rejection (resp. acceptance) of the null hypothesis $H_0$ when $X$ is observed. The quantities $m_{H_0}$ and $\sigma^2_{H_0}$ are usually known in closed forms, at least asymptotically. Even not amenable to exact expression, these quantities can be simulated easily as well.

The quality of the test $\Psi(X)$ is measured by the \emph{power function}, defined for each $(\mu,\Sigma)$ as
\begin{align*}
\hbox{Power of $\Psi(X)$ at $(\mu,\Sigma)$}\equiv \Prob_{(\mu,\Sigma)}\big(\Psi(X) \hbox{ rejects the null $H_0$}\big)=\E_{(\mu,\Sigma)}\Psi(X).
\end{align*}
Applying our contiguity method (\ref{intro:highd_contiguity_1})-(\ref{intro:highd_contiguity_2}), we get the following power formula for the test $\Psi(X)$ associated with the test statistic $T(X)$: Under the assumed CLT condition (\ref{intro:highd_contiguity_1}) and (a slight variation of) the contiguity condition (\ref{intro:highd_contiguity_cond}), 
\begin{align}\label{intro:power}
\E_{(\mu,\Sigma)}\Psi(X)\approx 1-\Phi\bigg(z_\alpha-\frac{m_{(\mu,\Sigma)}-m_{H_0} }{\sigma_{H_0}}\bigg).
\end{align} 
Interestingly, the contiguity condition (\ref{intro:highd_contiguity_cond}) is usually verified both for the case where $(\mu,\Sigma)$ is away from null set $\mathscr{H}_0$ in which the mean difference $|m_{(\mu,\Sigma)} - m_{(\mu_0,\Sigma_0)}|$ dominates the stochastic dispersion $V_{(\mu,\Sigma);(\mu_0,\Sigma_0)}$,  and for the case where $(\mu,\Sigma)$ is very close to the null in which the null standard deviation $\sigma_{(\mu_0,\Sigma_0)}$ dominates $V_{(\mu,\Sigma);(\mu_0,\Sigma_0)}$. As such, the power formula (\ref{intro:power}) can usually be strengthened uniformly over all $(\mu,\Sigma) \in \R^p \times \mathcal{M}_p$.

\subsection{Two concrete applications of (\ref{intro:power})}

We give two concrete applications of (\ref{intro:power}) in the context of covariance testing, as a demonstration of the power of our contiguity result (\ref{intro:highd_contiguity_1})-(\ref{intro:highd_contiguity_2}). 

The first application of (\ref{intro:power}) is the test for identity $\Sigma = I$. In the growing $p$ setting, this problem has been extensively studied in the literature, see e.g.,  \cite{ledoit2002some,srivastava2005some,bai2009corrections,chen2010tests,jiang2012likelihood,cai2013optimal,jiang2013central,zheng2015substitution,chen2018study}. Among the tests studied in the above works, we apply our general theory (\ref{intro:power}) to the following two tests: Likelihood Ratio Test (LRT) (see Section \ref{section:LRT_covariance}) and Ledoit-Nagao-Wolf's test \cite{nagao1973some,ledoit2002some} (see Section \ref{section:nagao}). As an example, the LRT, denoted by $\Psi_{\lrt}(X)$, is shown to admit the following asymptotic power formula (see Theorem \ref{thm:power_covariance_test}): under $\min\{n,p\}\rightarrow\infty$ with $\limsup(p/n) < 1$, 
\begin{align}\label{ineq:power_lrt_intro}
\E_{(\mu,\Sigma)} \Psi_{\lrt}(X) \sim  1- \Phi\bigg(z_\alpha-\frac{ \mathcal{L}_S(\Sigma,I)}{ \sqrt{2\big(-\frac{p}{n-1}-\log\big(1-\frac{p}{n-1}\big)\big)}  }\bigg).
\end{align}
Here $a\sim b$ stands for $a/b\rightarrow 1$ under the prescribed asymptotics, and $\mathcal{L}_S(\cdot,\cdot)$ is the matrix Stein loss to be defined in (\ref{def:stein_loss}) ahead. 

To give a flavor of how (\ref{ineq:power_lrt_intro}) follows from (\ref{intro:power}), recall that the key step in applying (\ref{intro:power}) is to establish that the contiguity condition $\overline{\mathrm{err}}_{(\mu,\Sigma)} \to 0$ in (\ref{intro:highd_contiguity_cond}) (or the current variation defined in (\ref{def:ratio_generic}) ahead). In the LRT setting, a much stronger estimate can be proved in that $\overline{\mathrm{err}}_{(\mu,\Sigma)} \leq Cp^{-1/3}$ holds for some absolute constant $C > 0$. This key estimate follows from a series of algebraic manipulations, upon calculating that $V^2_{(\mu,\Sigma)} = (n-1)\pnorm{\Sigma - I}{F}^2$, $m_{(\mu,\Sigma)} - m_{H_0} = [(n-1)/2]\mathcal{L}_S(\Sigma, I)$, and $\sigma^2_{H_0}\geq cp^2$ for some absolute constant $c > 0$. See Proposition \ref{prop:ratio_covariance_test} and its proof for more details.

The second application is the sphericity test $\Sigma = \lambda I$ for some unspecified $\lambda > 0$. In the growing $p$ setting, this problem has previously been studied in \cite{ledoit2002some,srivastava2005some,chen2010tests,jiang2012likelihood,jiang2013central,jiang2015likelihood}. We study in this paper the following two widely-used tests: LRT for sphericity (Section \ref{section:LRT_spherical}), John's test \cite{john1971some} (Section \ref{section:john}), both invariant under $H_0$. Similar to the previous case, our results on the power behavior of these tests do not pose any assumption on the alternative $\Sigma$. As an example, the LRT for sphericity, denoted by $\Psi_{\lrt,s}(X)$, is shown to admit the following asymptotic power formula (see Theorem \ref{thm:power_spherical}): under $\min\{n,p\}\rightarrow\infty$ with $\limsup(p/n) < 1$,
\begin{align}\label{ineq:power_lrts_intro}
\E_{(\mu,\Sigma)} \Psi_{\lrt,s}(X) \sim 1- \Phi\bigg(z_\alpha-\frac{-\log \det (\Sigma\cdot b^{-1}(\Sigma))}{ \sqrt{2\big(-\frac{p}{n-1}-\log\big(1-\frac{p}{n-1}\big)\big)}  }\bigg).
\end{align}
Here $\det(\cdot)$ is the matrix determinant and $b(\Sigma) \equiv \tr(\Sigma)/p$ with $\tr(\cdot)$ denoting the trace. To the best of our knowledge, the above power formula for the LRT in the sphericity is new in the literature.

The common feature of the power formulae obtained in this paper is that they require no assumptions on the alternative $\Sigma$ and only mild conditions on the growth of $(n,p)$, which goes much beyond the realm of previous available techniques. Roughly speaking, these available techniques either (i) directly establish a CLT under the alternative that crucially exploits the exact form of the test (cf. \cite{wang2013sphericity,cai2013optimal,chen2018study,jiang2019determinant}) and usually requires additional restrictions on the growth of $(n,p)$ and the alternative covariance, or (ii) resort to the classical Le Cam's third/contiguity lemma in (\ref{intro:lecam1})-(\ref{intro:lecam2}) (cf. \cite{onatski2013asymptotic,onatski2014signal}) which, as mentioned above, necessarily fails in a broad regime of alternative covariance for which the log likelihood ratio cannot stabilize.

It is also worth mentioning that the precise power formulae we obtain for the aforementioned tests also have interesting implications compared to previous results in the literature that target at spiked covariance alternatives \cite{onatski2013asymptotic,wang2013sphericity,onatski2014signal}. In particular, as will be clear in Section \ref{sec:spike}, although \cite{onatski2013asymptotic,wang2013sphericity,onatski2014signal} showed that some of the aforementioned tests have asymptotically equivalent power behavior under the spiked covariance alternative with a fixed number of spikes, our new power characterizations indicate that such equivalence in general fails when many spikes exist.

An interesting question untouched in this paper concerns what the information-theoretic optimal power curve for (\ref{intro:test}) looks like, and whether the power formulae (\ref{ineq:power_lrt_intro})-(\ref{ineq:power_lrts_intro}) (or power formulae for other tests) achieve such curves. In general, a successful study of this optimality problem requires two necessary elements: (i) an identification of the limits of the statistical experiments, and (ii) the solvability of the optimality problem in the identified limiting experiments. In classical low-dimensional statistical models (cf. \cite{van2000asymptotic}), (i) is achieved by the LAN property of the log likelihood ratio in these models with the help of the classical Le Cam's contiguity/third lemma (\ref{intro:lecam1})-(\ref{intro:lecam2}), and (ii) is a consequence of the classical decision-theoretic optimality properties of the limiting Gaussian location shift model. In the context of covariance testing, significant progress has been made in \cite{onatski2013asymptotic,onatski2014signal}, in which the `limiting Gaussian experiment' is obtained for fixed-rank covariance alternatives. However, as the limiting experiment is not of the LAN type and its optimality properties remain unclear, the information-theoretic optimal power curve remains unknown. While we believe our contiguity method (\ref{intro:highd_contiguity_1})-(\ref{intro:highd_contiguity_2}) is highly relevant to the optimality problem, achieving fully this goal is far beyond the content and scope of the current paper, and will therefore be deferred to a future study elsewhere.

\subsection{Organization}\label{subsec:organization}
The rest of the paper is organized as follows. We formalize our contiguity results (\ref{intro:highd_contiguity_1})-(\ref{intro:highd_contiguity_2}) in Section \ref{section:testing_general}. Section \ref{section:application_highd_cov_test} is devoted to the application of our contiguity result to the problem of high-dimensional covariance testing. Some key spectral estimates that will be used in the proofs for the results in Section \ref{section:application_highd_cov_test} are presented in Section \ref{section:spectral_estimate} and may be of independent interest. Sections \ref{section:proof_identity}-\ref{section:proof_sphericity} contain the main proofs of results in Sections \ref{section:test_identity} and \ref{section:test_spherical}, with the rest of technical/auxiliary details deferred to the appendices.

\subsection{Notation}\label{section:notation}

For any positive integer $n$, let $[n]$ denote the set $\{1,\ldots,n\}$. For $a,b \in \R$, $a\vee b\equiv \max\{a,b\}$ and $a\wedge b\equiv\min\{a,b\}$. For $a \in \R$, let $a_+\equiv a\vee 0$ and $a_- \equiv (-a)\vee 0$. For $x \in \R^n$, let $\pnorm{x}{p}=\pnorm{x}{\ell_p(\R^n)}$ denote its $p$-norm $(0\leq p\leq \infty)$ with $\pnorm{x}{2}$ abbreviated as $\pnorm{x}{}$. Let $B_p(r;x)\equiv \{z \in \R^p: \pnorm{z-x}{}\leq r\}$ be the unit $\ell_2$ ball in $\R^{p}$. By $\bm{1}_n$ we denote the vector of all ones in $\R^n$. For a matrix $M \in \R^{n\times n}$, let $\pnorm{M}{\op}$ and $\pnorm{M}{F}$ denote the spectral and Frobenius norms of $M$ respectively. We use $\{e_j\}$ to denote the canonical basis, whose dimension should be self-clear from the context.

We use $C_{x}$ to denote a generic constant that depends only on $x$, whose numeric value may change from line to line unless otherwise specified. Notations $a\lesssim_{x} b$ and $a\gtrsim_x b$ mean $a\leq C_x b$ and $a\geq C_x b$ respectively, and $a\asymp_x b$ means $a\lesssim_{x} b$ and $a\gtrsim_x b$. The symbol $a\lesssim b$ means $a\leq Cb$ for some absolute constant $C$. For two nonnegative sequences $\{a_n\}$ and $\{b_n\}$, we write $a_n\ll b_n$ (respectively~$a_n\gg b_n$) if $\lim_{n\rightarrow\infty} (a_n/b_n) = 0$ (respectively~$\lim_{n\rightarrow\infty} (a_n/b_n) = \infty$). We write $a_n\sim b_n$ if $\lim_{n\rightarrow\infty} (a_n/b_n) = 1$. We follow the convention that $0/0 = 0$. 

Let $\varphi,\Phi$ be the density and the cumulative distribution function of a standard normal random variable. For any $\alpha \in (0,1)$, let $z_\alpha$ be the normal quantile defined by $\Prob(\mathcal{N}(0,1)> z_\alpha) = \alpha$. For two random variables $X,Y$ on $\R$, we use $d_{\mathrm{TV}}(X,Y)$ and $d_{\mathrm{Kol}}(X,Y)$  to denote their total variation distance and Kolmogorov distance defined respectively by
\begin{align}\label{def:intro_distance}
\notag d_{\mathrm{TV}}(X,Y)&\equiv \sup_{B \in \mathcal{B}(\R)}\bigabs{\Prob\big(X \in B\big)-\Prob\big(Y \in B\big)},\\
d_{\mathrm{Kol}}(X,Y)&\equiv \sup_{t \in \R}\bigabs{\Prob\big(X \leq t\big)-\Prob\big(Y \leq t\big)}.
\end{align}
Here $\mathcal{B}(\R)$ denotes the Borel $\sigma$-algebra of $\R$. 

Let $\gamma_d$ be the standard Gaussian measure on $\R^d$, and for $r,p\geq 1$ let $W^{r,p}(\gamma_d)$ be the completion of $C_0^\infty(\R^d)$, the space of smooth and compactly supported functions in $\R^d$, with respect to the norm
\begin{align}\label{def:sobolev}
\pnorm{f}{r,p}\equiv \bigg[\sum_{\abs{\bm{\alpha}}\leq r} \int\big|\partial^{\bm{\alpha}} f(x)\big|^p\, \gamma_d(\d{x})\bigg]^{1/p}.
\end{align}
In other words, $W^{r,p}(\gamma_d)$ is the Sobolev space with respect to the Gaussian measure $\gamma_d$.

\section{Contiguity under high-dimensional Gaussianity}\label{section:testing_general}

\subsection{The formal description of (\ref{intro:highd_contiguity_1})-(\ref{intro:highd_contiguity_2})}

Let $T: \R^{n\times p}\to \R$ be a measurable map. For any $(\mu,\Sigma) \in \R^p\times \mathcal{M}_p$, let $\mathscr{T}_{(\mu,\Sigma)}:\R^{n\times p} \to \R^{n\times p}$ be defined by
\begin{align*}
\mathscr{T}_{(\mu,\Sigma)}(z)&\equiv \nabla T \big(z\Sigma^{1/2}+\bm{1}_n\mu^\top\big) \Sigma^{1/2},\quad z \in \R^{n\times p}.
\end{align*}
Here $\bm{1}_n$ is the $n$-vector of all ones, and $\nabla T:\R^{n\times p}\rightarrow\R^{n\times p}$ is the map with $\big(\nabla T(z)\big)_{ij}= \partial T(z)/\partial z_{ij}$. Let $Z_1,\ldots,Z_n$ be i.i.d. random variables with a standard $p$-variate normal distribution $\mathcal{N}(0,I_p)$. For any $(\mu,\Sigma) \in \R^p \times \mathcal{M}_p$, define the quantity
\begin{align}\label{def:V_generic}
V_{(\mu,\Sigma);(\mu_0,\Sigma_0)}^2\equiv  \E \bigpnorm{\mathscr{T}_{(\mu,\Sigma)}(Z)-\mathscr{T}_{(\mu_0,\Sigma_0)}(Z)}{F}^2.
\end{align}

Now we are in a position to give a formal description of our contiguity result (\ref{intro:highd_contiguity_1})-(\ref{intro:highd_contiguity_2}) under the condition (\ref{intro:highd_contiguity_cond}). Recall the quantities $m_{(\mu,\Sigma)},m_{H_0},\sigma_{(\mu,\Sigma)}^2, \sigma_{H_0}^2$ defined in (\ref{def:mean_variance_generic}) and (\ref{def:mean_variance_null_generic}) and that $\gamma_{n\times p}$ denotes the standard Gaussian measure in $\R^{n\times p}$.
\begin{theorem}\label{thm:general}
Suppose that $T:\R^{n\times p}\to \R$ is an element of $W^{1,2}(\gamma_{n\times p})$.
Then for any pair $(\mu_0,\Sigma_0),(\mu,\Sigma) \in \R^p\times \mathcal{M}_p$ and $t \in \R$,
\begin{align*}
&\biggabs{\Prob_{(\mu,\Sigma)}\bigg( \frac{T(X)-m_{(\mu_0,\Sigma_0)}}{\sigma_{(\mu_0,\Sigma_0)} }>t \bigg)-\Prob\bigg( \mathcal{N}\bigg(\frac{m_{(\mu,\Sigma)}-m_{(\mu_0,\Sigma_0)} }{\sigma_{(\mu_0,\Sigma_0)}},1\bigg)>t\bigg)}\\
&\qquad \leq \mathrm{err}_{(\mu_0,\Sigma_0)}+C\cdot \Big((1+\abs{t})\overline{\mathrm{err}}_{(\mu,\Sigma);(\mu_0,\Sigma_0)}\Big)^{2/3}.
\end{align*}
Here $C>0$ is a universal constant, $\overline{\mathrm{err}}_{(\mu,\Sigma);(\mu_0,\Sigma_0)}$ is defined in (\ref{intro:highd_contiguity_cond}), and
\begin{align}\label{def:null_approx_error}
\mathrm{err}_{(\mu_0,\Sigma_0)}\equiv d_{\mathrm{Kol}}\bigg(\frac{T\big(X\big)-m_{(\mu_0,\Sigma_0)}}{\sigma_{(\mu_0,\Sigma_0)} },\mathcal{N}(0,1)\bigg) \quad \textrm{under } (\mu_0,\Sigma_0)
\end{align}
is the normal approximation error of $T(X)$ under $(\mu_0,\Sigma_0)$ in Kolmogorov distance as defined in (\ref{def:intro_distance}).
\end{theorem}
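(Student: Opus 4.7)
The natural plan is to exploit the canonical coupling: for $Z\sim\gamma_{n\times p}$, set $F(z)\equiv T(z\Sigma^{1/2}+\bm{1}_n\mu^\top)$ and $F_0(z)\equiv T(z\Sigma_0^{1/2}+\bm{1}_n\mu_0^\top)$, so that by the chain rule $\nabla F=\mathscr{T}_{(\mu,\Sigma)}$, $\nabla F_0=\mathscr{T}_{(\mu_0,\Sigma_0)}$, and $F(Z)$ has the law of $T(X)$ under $(\mu,\Sigma)$ while $F_0(Z)$ has the law of $T(X)$ under $(\mu_0,\Sigma_0)$. Under this coupling one has the algebraic identity
\[
\frac{F(Z)-m_{(\mu_0,\Sigma_0)}}{\sigma_{(\mu_0,\Sigma_0)}}=W+\tau+U,
\]
where $W\equiv (F_0(Z)-m_{(\mu_0,\Sigma_0)})/\sigma_{(\mu_0,\Sigma_0)}$ is the null-centered statistic, $\tau\equiv (m_{(\mu,\Sigma)}-m_{(\mu_0,\Sigma_0)})/\sigma_{(\mu_0,\Sigma_0)}$ is the location shift, and $U\equiv [F(Z)-F_0(Z)-(m_{(\mu,\Sigma)}-m_{(\mu_0,\Sigma_0)})]/\sigma_{(\mu_0,\Sigma_0)}$ is a mean-zero perturbation. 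The pivotal estimate is the Gaussian Poincar\'e inequality applied to $F-F_0\in W^{1,2}(\gamma_{n\times p})$, which delivers
\[
\var(U)\leq \frac{\E\bigpnorm{\nabla(F-F_0)(Z)}{F}^2}{\sigma_{(\mu_0,\Sigma_0)}^2}=\frac{V_{(\mu,\Sigma);(\mu_0,\Sigma_0)}^2}{\sigma_{(\mu_0,\Sigma_0)}^2},
\]
converting the abstract definition of $V^2$ into a quantitative variance bound on $U$.

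With $s\equiv t-\tau$, the core task reduces to bounding $\bigabs{\Prob(W+U>s)-\Prob(\mathcal{N}(0,1)>s)}$. A standard truncation argument at level $\epsilon>0$ handles this: on $\{|U|\leq\epsilon\}$ one replaces $W+U$ by $W$ at the cost of an $\epsilon$-shift in $s$, while its complement has probability at most $V^2/(\sigma_{(\mu_0,\Sigma_0)}^2\epsilon^2)$ by Chebyshev. Combining with the null Kolmogorov bound $d_{\mathrm{Kol}}(W,\mathcal{N}(0,1))\leq \mathrm{err}_{(\mu_0,\Sigma_0)}$ and the Lipschitz continuity $|\Phi(s\pm\epsilon)-\Phi(s)|\leq \epsilon/\sqrt{2\pi}$ gives
\[
\bigabs{\Prob(W+U>s)-\Prob(\mathcal{N}(0,1)>s)}\leq \mathrm{err}_{(\mu_0,\Sigma_0)}+\frac{\epsilon}{\sqrt{2\pi}}+\frac{V^2}{\sigma_{(\mu_0,\Sigma_0)}^2\epsilon^2}.
\]
Optimizing in $\epsilon\asymp (V/\sigma_{(\mu_0,\Sigma_0)})^{2/3}$ produces the characteristic $2/3$ exponent, yielding a raw bound of order $\mathrm{err}_{(\mu_0,\Sigma_0)}+(V/\sigma_{(\mu_0,\Sigma_0)})^{2/3}$.

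The main obstacle, and the most delicate step, is sharpening $(V/\sigma_{(\mu_0,\Sigma_0)})^{2/3}$ into the form $((1+|t|)\overline{\mathrm{err}}_{(\mu,\Sigma);(\mu_0,\Sigma_0)})^{2/3}$ appearing in the statement, i.e.\ trading the denominator $\sigma_{(\mu_0,\Sigma_0)}$ for $\max\{|m_{(\mu,\Sigma)}-m_{(\mu_0,\Sigma_0)}|,\sigma_{(\mu_0,\Sigma_0)}\}$ at the expense of a $(1+|t|)$ factor. The regime $|\tau|\leq 1$ is immediate since the two formulations coincide up to constants. In the hard regime $|\tau|>1$ (i.e.\ $|m_{(\mu,\Sigma)}-m_{(\mu_0,\Sigma_0)}|$ dominates $\sigma_{(\mu_0,\Sigma_0)}$), I plan a case split on the location of $t$ relative to $\tau$: when $|t|\gtrsim |\tau|$, the factor $(1+|t|)$ absorbs the $|\tau|$ arising from the identity $V/\sigma_{(\mu_0,\Sigma_0)}=|\tau|\cdot V/|m_{(\mu,\Sigma)}-m_{(\mu_0,\Sigma_0)}|$; when $|t-\tau|$ is large, both the empirical tail and the target normal tail are near $\{0,1\}$, so a direct Chebyshev estimate on $W+U+\tau$---with $|\tau|$ replacing $1$ as the natural Chebyshev scale---closes the gap. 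Modulo this bookkeeping, the argument reduces to the coupling--decomposition--Poincar\'e--Chebyshev--Lipschitz chain summarized above.
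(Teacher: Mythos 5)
Your overall framework---the canonical Gaussian coupling, the three-term decomposition into shift plus perturbation plus null term, the Poincar\'e bound $\var(U)\leq V^2/\sigma^2_{(\mu_0,\Sigma_0)}$, and the truncate/optimize-in-$\epsilon$ argument producing the $2/3$ exponent---matches the paper's proof exactly, and your raw bound $\mathrm{err}+C(V/\sigma_{(\mu_0,\Sigma_0)})^{2/3}$ is correct. You also correctly flag that the entire weight of the theorem rests on upgrading the denominator from $\sigma_{(\mu_0,\Sigma_0)}$ to $\max\{|m_{(\mu,\Sigma)}-m_{(\mu_0,\Sigma_0)}|,\sigma_{(\mu_0,\Sigma_0)}\}$ at the price of the $(1+|t|)$ factor. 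The subcase $|t|\gtrsim|\tau|$ is handled correctly: $(V/\sigma)^{2/3}=(|\tau|\cdot V/|m_\Delta|)^{2/3}\leq C((1+|t|)V/|m_\Delta|)^{2/3}$.

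The gap is in the remaining subcase $|\tau|\gg 1+|t|$. A ``Chebyshev estimate on $W+U+\tau$ with $|\tau|$ as the scale'' delivers $\Prob(W+U\leq t-\tau)\lesssim \var(W+U)/(\tau-t)^2\approx (1+V^2/\sigma^2)/\tau^2$, and this has an irreducible $1/\tau^2$ term that does \emph{not} vanish as $V\to 0$; it therefore cannot be dominated by $((1+|t|)V/|m_\Delta|)^{2/3}$, which does vanish. (Concretely, take $V/\sigma\to 0$ with $\tau,t$ fixed, $\tau>1$, $t=0$: Chebyshev gives a constant while the target is $\mathfrak{o}(1)$.) The correct ingredient is not a second-moment bound on $W$ but an \emph{anti-concentration bound for the Gaussian density in terms of the shift}: one needs
\[
\sup_{|\xi-(t-\tau)|\leq\epsilon}\varphi(\xi)\;\lesssim\;\frac{1+|t|}{|\tau|}\qquad\text{(for }\epsilon\lesssim|\tau|\text{),}
\]
equivalently $\sup_u |u|\,\varphi(t-u)\lesssim 1+|t|$. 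Feeding this refined density bound into your truncation step in place of the crude $\sup\varphi=1/\sqrt{2\pi}$ turns $\epsilon\sup\varphi+V^2/(\sigma^2\epsilon^2)$ into $\epsilon(1+|t|)/|\tau|+V^2/(\sigma^2\epsilon^2)$, whose optimum in $\epsilon$ is $\asymp\big((1+|t|)V/(\sigma|\tau|)\big)^{2/3}=\big((1+|t|)V/|m_\Delta|\big)^{2/3}$, exactly the desired form. This density estimate is the content of the paper's Lemma~\ref{lem:normal_mean_multi}, which the paper packages as a stability bound for normal tail probabilities under \emph{multiplicative} perturbation of the mean (so that the shift by $\pm uV/\sigma$ is rewritten as $\tau\mapsto\tau(1\pm uV/m_\Delta)$); the paper then takes the minimum of this bound and the plain Lipschitz bound, rather than post-processing a raw estimate via a case split. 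Your plan is salvageable, but only after replacing the Chebyshev step by this normal-density estimate.
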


At this point, Theorem \ref{thm:general} does not yet exactly guarantee the closeness of the distribution of the random variable $(T(X)-m_{(\mu_0,\Sigma_0)})/\sigma_{(\mu_0,\Sigma_0)}$ to a standard normal shifted by $(m_{(\mu,\Sigma)}-\mu_{(\mu_0,\Sigma_0)})/\sigma_{(\mu_0,\Sigma_0)}$ under $(\mu,\Sigma)$, as $t \in \R$ in the above theorem cannot be chosen arbitrarily large to yield an informative bound. This is however not a deficiency of our formulation in (\ref{intro:highd_contiguity_1})-(\ref{intro:highd_contiguity_2}). In fact, the range of admissible $t \in \R$ depends on the magnitude of the mean shift parameter $(m_{(\mu,\Sigma)}-\mu_{(\mu_0,\Sigma_0)})/\sigma_{(\mu_0,\Sigma_0)}$. As shown in the following corollary, when the mean shift parameter $(m_{(\mu,\Sigma)}-\mu_{(\mu_0,\Sigma_0)})/\sigma_{(\mu_0,\Sigma_0)}$ is bounded, the conclusion of Theorem \ref{thm:general} can indeed be strengthened to be uniform in $t \in \R$. This is in similar spirit to the classical Le Cam's formulation (\ref{intro:lecam1})-(\ref{intro:lecam2}), in which the mean shift parameter $\tau \in \R$ is treated as a fixed, finite real number in the asymptotics.

\begin{corollary}\label{cor:contiguity_dist}
Consider the same setting as in Theorem \ref{thm:general}. Suppose further that
\begin{align}\label{cond:mean_shift_finite}
\frac{ \abs{m_{(\mu,\Sigma)}-m_{(\mu_0,\Sigma_0)}} }{\sigma_{(\mu_0,\Sigma_0)}}\leq K
\end{align}
for some $K>0$. Then there exists some constant $C_K>0$,
\begin{align*}
&\sup_{t \in \R}\biggabs{\Prob_{(\mu,\Sigma)}\bigg( \frac{T(X)-m_{(\mu_0,\Sigma_0)}}{\sigma_{(\mu_0,\Sigma_0)} }>t \bigg)-\Prob\bigg( \mathcal{N}\bigg(\frac{m_{(\mu,\Sigma)}-m_{(\mu_0,\Sigma_0)} }{\sigma_{(\mu_0,\Sigma_0)}},1\bigg)>t\bigg)}\\
&\qquad \leq \mathrm{err}_{(\mu_0,\Sigma_0)}+C_K\cdot \overline{\mathrm{err}}_{(\mu,\Sigma);(\mu_0,\Sigma_0)}^{4/9}.
\end{align*}
\end{corollary}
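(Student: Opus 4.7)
The plan is a cutoff argument: for $t$ in a bounded window apply Theorem \ref{thm:general} directly, while for $|t|$ outside this window use monotonicity of the survival functions together with Gaussian tail bounds (available because the mean shift is bounded by $K$). The cutoff level must balance the algebraic loss $((1+|t|)\overline{\mathrm{err}})^{2/3}$ from Theorem \ref{thm:general} against the magnitude of the Gaussian tails at that cutoff.

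Abbreviate $\bar\eta \equiv \overline{\mathrm{err}}_{(\mu,\Sigma);(\mu_0,\Sigma_0)}$ and $\Delta \equiv (m_{(\mu,\Sigma)} - m_{(\mu_0,\Sigma_0)})/\sigma_{(\mu_0,\Sigma_0)}$, so that $|\Delta| \leq K$ by (\ref{cond:mean_shift_finite}). Let $F$ and $G$ denote respectively the $(\mu,\Sigma)$-survival function of $(T(X) - m_{(\mu_0,\Sigma_0)})/\sigma_{(\mu_0,\Sigma_0)}$ and that of $\mathcal{N}(\Delta, 1)$; both are non-increasing in $t$. We may assume $\bar\eta \leq c_K$ for some suitably small $c_K = c_K(K) > 0$, since otherwise $C_K\bar\eta^{4/9} \geq 1$ for $C_K$ large and the conclusion $|F(t)-G(t)|\leq 1$ is trivial. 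Set the cutoff $L \equiv \bar\eta^{-1/3}$, chosen so that $L \geq 2K+1$ in this regime.

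For $|t| \leq L$, Theorem \ref{thm:general} immediately yields
\begin{align*}
|F(t) - G(t)| \leq \mathrm{err}_{(\mu_0,\Sigma_0)} + C\big((1+L)\bar\eta\big)^{2/3} \leq \mathrm{err}_{(\mu_0,\Sigma_0)} + C'\bar\eta^{4/9},
\end{align*}
the algebraic step being $(1+\bar\eta^{-1/3})^{2/3}\bar\eta^{2/3} \leq 2^{2/3}\bar\eta^{4/9}$ (valid because $\bar\eta \leq 1$). For $t > L$, monotonicity gives $F(t) \vee G(t) \leq F(L) \vee G(L)$, while Theorem \ref{thm:general} applied at $t = L$ together with the Gaussian tail $G(L) = \Phi(\Delta - L) \leq \Phi(-L/2) \leq e^{-L^2/8} = e^{-\bar\eta^{-2/3}/8}$ give $F(L) \leq G(L) + \mathrm{err}_{(\mu_0,\Sigma_0)} + C((1+L)\bar\eta)^{2/3}$. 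Combining, and using $|F - G| \leq F \vee G$ pointwise,
\begin{align*}
|F(t) - G(t)| \leq e^{-\bar\eta^{-2/3}/8} + \mathrm{err}_{(\mu_0,\Sigma_0)} + C\big((1+L)\bar\eta\big)^{2/3},
\end{align*}
and the super-polynomially small tail term is absorbed into $\bar\eta^{4/9}$. The case $t < -L$ is treated symmetrically by applying the same monotonicity-plus-Theorem-\ref{thm:general} argument to the complementary CDFs $1 - F$ and $1 - G$ at $t = -L$, using $1 - G(-L) = \Phi(-L - \Delta) \leq e^{-L^2/8}$.

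The only real subtlety lies in the cutoff exponent. Because the Gaussian tail is sub-Gaussian, any polynomial choice $L = \bar\eta^{-\alpha}$ with $\alpha > 0$ kills the tail contribution, and the stated choice $\alpha = 1/3$ produces exactly the algebraic rate $\bar\eta^{(2/3)(1-1/3)} = \bar\eta^{4/9}$ appearing in the corollary. The $K$-dependence of $C_K$ enters only through the threshold $c_K$ guaranteeing $L \geq 2K+1$, not through the exponent; no distributional properties of $T(X)$ beyond what is already encoded in Theorem \ref{thm:general} are needed.
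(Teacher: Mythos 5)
Your proof is correct. The argument is complete: the cutoff choice $L=\bar\eta^{-1/3}$ balances the algebraic cost $(L\bar\eta)^{2/3}=\bar\eta^{4/9}$ inside the window against a super-polynomially small Gaussian tail outside it, and the monotonicity-plus-triangle-inequality transfer of the Gaussian tail to $F$ at $t=\pm L$ is sound. The small details you need — $1+L\le 2L$ (requires $\bar\eta\le 1$), $L-\Delta\ge L/2$ (requires $L\ge 2K$), and the absorption of $e^{-\bar\eta^{-2/3}/8}$ into $\bar\eta^{4/9}$ — are all taken care of by the assumption $\bar\eta\le c_K$ and the trivial bound otherwise.

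That said, your route is genuinely different from the paper's. The paper does not use a cutoff; instead it first derives a variance bound
\[
\var_{(\mu,\Sigma)}\biggl(\frac{T(X)-m_{(\mu_0,\Sigma_0)}}{\sigma_{(\mu_0,\Sigma_0)}}\biggr)\lesssim (1\vee K)^2\bigl(1+\bar\eta\bigr)^2
\]
from the decomposition in the proof of Theorem~\ref{thm:general} (i.e.\ the Gaussian--Poincar\'e bound on $W(Z)$, the exact mean shift, and the assumed boundedness), then applies Chebyshev to get $F(t)\lesssim_K (t-K)_+^{-2}$ and a direct Gaussian tail for $G$, and finally bounds $\min\{((1+|t|)\bar\eta)^{2/3},\,t^{-2}\}$ uniformly in $t$. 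Your approach bypasses the separate variance calculation entirely: you leverage only the Gaussian tail of the shifted normal $G$, and transfer that tail to $F$ at the cutoff point through one more application of Theorem~\ref{thm:general}. This is more elementary and, if one were to push it, slightly stronger: your cutoff can be taken as $L=\bar\eta^{-\alpha}$ for any small $\alpha>0$, giving an exponent as close to $2/3$ as desired, whereas the Chebyshev-against-Gaussian balance in the paper's method tops out at $\bar\eta^{1/2}$ when optimized. Since $4/9$ is what the corollary asserts (and the paper explicitly notes this exponent is likely not optimal), either proof is adequate.
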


There is no a priori reason to believe that the exponent $4/9$ is optimal, but this will have no impact on the qualitative distributional approximation under the contiguity condition  $\overline{\mathrm{err}}_{(\mu,\Sigma);(\mu_0,\Sigma_0)}\to 0$.

\begin{remark}\label{rmk:generalize_contiguity}
It is possible to formulate a version of Theorem \ref{thm:general} without assuming the regularity/integrability $T(X) \in W^{1,2}(\gamma_{n\times p})$ and a null CLT as follows. Suppose that sequences of $\{m_{(\mu,\Sigma)}\}, \{m_{(\mu_0,\Sigma_0)}\}\subset \R$, $\{\sigma_{(\mu_0,\Sigma_0)}\}\subset \R_{>0}$ are chosen such that the following hold:
\begin{itemize}
	\item (Null distribution) There exists some random variable $Y$ such that $(T(X)-m_{(\mu_0,\Sigma_0)})/\sigma_{(\mu_0,\Sigma_0)} \rightsquigarrow Y$ holds under the sequence of $\{(\mu_0,\Sigma_0)\}$. 
	\item (Finite mean shift) $(m_{(\mu,\Sigma)}-m_{(\mu_0,\Sigma_0)})/\sigma_{(\mu_0,\Sigma_0)} \to \tau$ for some $\tau \in \R$.
	\item (Generalized contiguity) With $X^{(\mu,\Sigma)}\equiv Z\Sigma^{1/2}+\bm{1}_n\mu^\top$, 
	\begin{align}\label{cond:generalized_contiguity}
	\frac{\abs{ (T(X^{(\mu,\Sigma)})-m_{(\mu,\Sigma)})-(T(X^{(\mu_0,\Sigma_0)})-m_{(\mu_0,\Sigma_0)})}}{\abs{m_{(\mu,\Sigma)}-m_{(\mu_0,\Sigma_0)}}\vee \sigma_{(\mu_0,\Sigma_0)} }=\smallop(1). 
	\end{align}
\end{itemize}
Then 
\begin{align*}
\frac{T(X)-m_{(\mu_0,\Sigma_0)}}{\sigma_{\mu_0,\Sigma_0}}\rightsquigarrow Y+\tau\quad \textrm{under the sequence of } \{(\mu,\Sigma)\}.
\end{align*}
The proof of this result is simpler than that of Theorem \ref{thm:general} so we omit the details. 

While the above result may be of some interest at an abstract level, currently we are not aware of a concrete example in which the above result leads to genuine reductions in the problem complexity beyond the scope of Theorem \ref{thm:general}. A leading test case here is to take $T(X)=\lambda_1(S)$ as the top eigenvalue of the sample covariance $S$ defined in (\ref{def:sample_covariance}) below, under the rank-one spiked alternative $\Sigma=I+hvv^\top$, $\pnorm{v}{}=1$. Then in the sub-critical regime $h \in (0,\sqrt{c})$ where $c\equiv \lim p/n$, it is not hard to see that verification of the generalized contiguity condition (\ref{cond:generalized_contiguity}) is equivalent to proving a (Type I) Tracy-Widom limit for $\lambda_1$ under the prescribed alternative. It remains an interesting open question, as to whether a general contiguity method without assuming $T(X) \in W^{1,2}(\gamma_{n\times p})$ and a null CLT can be formulated that leads a genuine reduction in deriving, say, at least the Tracy-Widom limit of $\lambda_1$ under the rank-one alternative in the prescribed sub-critical regime.
\end{remark}

\subsection{Power formula for tests with high-dimensional normal population}
Consider the general testing problem (\ref{intro:test}). Recall that $T(X)$ is a generic test statistic whose distribution remains invariant under $H_0$, and the generic test $\Psi(X)$ defined in (\ref{def:test_generic_intro}). To formalize the power formula in (\ref{intro:power}), we need a slight variation of the quantity $V_{(\mu,\Sigma);(\mu_0,\Sigma_0)}$ for non-singleton $\mathscr{H}_0$. Let
\begin{align}\label{def:V_generic_test}
V_{(\mu,\Sigma)}^2\equiv \inf_{(\mu_0,\Sigma_0) \in \mathscr{H}_0}\E \bigpnorm{\mathscr{T}_{(\mu,\Sigma)}(Z)-\mathscr{T}_{(\mu_0,\Sigma_0)}(Z)}{F}^2.
\end{align}
Now we formalize (\ref{intro:power}) which is an immediate consequence of Theorem \ref{thm:general}.

\begin{corollary}\label{cor:power_generic}
Suppose that $T:\R^{n\times p}\to \R$ is an element of $W^{1,2}(\gamma_{n\times p})$ and the law of $T(X)$ is invariant under $H_0$. For any $\alpha\in(0,1)$, there exists some $C_\alpha > 0$ such that
\begin{align*}
&\biggabs{\E_{(\mu,\Sigma)}\Psi(X)-\bigg[1-\Phi\bigg(z_\alpha-\frac{m_{(\mu,\Sigma)}-m_{H_0} }{\sigma_{H_0}}\bigg)\bigg] } \leq \mathrm{err}_{H_0}+C_\alpha \bigg(\frac{V_{(\mu,\Sigma)} }{\abs{m_{(\mu,\Sigma)}-m_{H_0}} \vee \sigma_{H_0} } \bigg)^{2/3}
\end{align*}
holds for any $(\mu,\Sigma)\in \R^p \times \mathcal{M}_p$. Here $\mathrm{err}_{H_0}$ is defined in (\ref{def:null_approx_error}) with $m_{(\mu_0,\Sigma_0)}, \sigma_{(\mu_0,\Sigma_0)}$ replaced by $m_{H_0}, \sigma_{H_0}$.
\end{corollary}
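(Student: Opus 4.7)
The plan is to apply Theorem \ref{thm:general} directly at the single point $t = z_\alpha$, and then exploit the distributional invariance of $T(X)$ under $H_0$ together with the freedom in choosing $(\mu_0, \Sigma_0) \in \mathscr{H}_0$. The first step is to rewrite the expected value of the test as a tail probability:
\begin{align*}
\E_{(\mu,\Sigma)}\Psi(X) = \Prob_{(\mu,\Sigma)}\bigg( \frac{T(X) - m_{H_0}}{\sigma_{H_0}} > z_\alpha \bigg).
\end{align*}
Since the law of $T(X)$ is invariant on $\mathscr{H}_0$, for any $(\mu_0, \Sigma_0) \in \mathscr{H}_0$ we have $m_{(\mu_0, \Sigma_0)} = m_{H_0}$ and $\sigma_{(\mu_0, \Sigma_0)} = \sigma_{H_0}$, so that $\mathrm{err}_{(\mu_0, \Sigma_0)} = \mathrm{err}_{H_0}$ is the same across all such choices.

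Next I would invoke Theorem \ref{thm:general} with $t = z_\alpha$ and an arbitrary $(\mu_0, \Sigma_0) \in \mathscr{H}_0$. The right-hand Gaussian probability simplifies to
\begin{align*}
\Prob\bigg( \mathcal{N}\bigg(\frac{m_{(\mu,\Sigma)} - m_{H_0}}{\sigma_{H_0}}, 1\bigg) > z_\alpha\bigg) = 1 - \Phi\bigg(z_\alpha - \frac{m_{(\mu,\Sigma)} - m_{H_0}}{\sigma_{H_0}}\bigg),
\end{align*}
which is exactly the target quantity. The error term from Theorem \ref{thm:general} becomes $\mathrm{err}_{H_0} + C \cdot \big((1 + |z_\alpha|)\, \overline{\mathrm{err}}_{(\mu,\Sigma);(\mu_0, \Sigma_0)}\big)^{2/3}$, and since $z_\alpha$ depends only on $\alpha$, the prefactor $(1 + |z_\alpha|)^{2/3}$ can be absorbed into a constant $C_\alpha$.

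The final step is to optimize over $\mathscr{H}_0$. By invariance, the denominator $|m_{(\mu,\Sigma)} - m_{(\mu_0,\Sigma_0)}| \vee \sigma_{(\mu_0,\Sigma_0)}$ inside $\overline{\mathrm{err}}_{(\mu,\Sigma);(\mu_0, \Sigma_0)}$ equals $|m_{(\mu,\Sigma)} - m_{H_0}| \vee \sigma_{H_0}$ regardless of the chosen $(\mu_0, \Sigma_0) \in \mathscr{H}_0$. Consequently, the only piece of the bound that depends on the particular choice of null parameter is the numerator $V_{(\mu,\Sigma);(\mu_0, \Sigma_0)}^2 = \E \pnorm{\mathscr{T}_{(\mu,\Sigma)}(Z) - \mathscr{T}_{(\mu_0, \Sigma_0)}(Z)}{F}^2$. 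Taking the infimum over $(\mu_0, \Sigma_0) \in \mathscr{H}_0$ on both sides then replaces $V_{(\mu,\Sigma);(\mu_0, \Sigma_0)}^2$ by the quantity $V_{(\mu,\Sigma)}^2$ defined in (\ref{def:V_generic_test}), producing exactly the stated inequality.

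No genuine obstacle is anticipated: the corollary is essentially a direct specialization of Theorem \ref{thm:general} to the fixed threshold $t = z_\alpha$, combined with the trivial observation that the null quantities $(m_{H_0}, \sigma_{H_0}, \mathrm{err}_{H_0})$ are $\mathscr{H}_0$-invariant while the dispersion functional $V_{(\mu,\Sigma);(\mu_0, \Sigma_0)}$ is not, justifying the infimum-form definition in (\ref{def:V_generic_test}).
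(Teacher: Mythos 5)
Your proposal is correct and is precisely the direct specialization the paper has in mind (the corollary is stated without proof as an "immediate consequence" of Theorem \ref{thm:general}): set $t=z_\alpha$, use invariance of $(m_{H_0},\sigma_{H_0},\mathrm{err}_{H_0})$ over $\mathscr{H}_0$, absorb $(1+|z_\alpha|)^{2/3}$ into $C_\alpha$, and take the infimum over $(\mu_0,\Sigma_0)\in\mathscr{H}_0$ — which only affects the numerator $V_{(\mu,\Sigma);(\mu_0,\Sigma_0)}$ — to recover $V_{(\mu,\Sigma)}$ as defined in \eqref{def:V_generic_test}. The only cosmetic point worth noting is that the infimum need not be attained, so formally one passes to an $\epsilon$-approximate minimizer and lets $\epsilon\to 0$; this changes nothing.
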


The above result reduces the analysis of the power behavior of $\Psi(X)$ into essentially the following two steps:
\begin{enumerate}
\item (\textit{Normal approximation under $H_0$}) Show that
\begin{align*}
\mathrm{err}_{H_0} = d_{\mathrm{Kol}}\bigg(\frac{T\big(X\big)-m_{H_0}}{\sigma_{H_0} },\mathcal{N}(0,1)\bigg)\rightarrow 0,\qquad \textrm{under }H_0.
\end{align*}
\item (\textit{Contiguity condition}) Show that
\begin{align}\label{def:ratio_generic}
\overline{\mathrm{err}}_{(\mu,\Sigma)}\equiv \frac{V_{(\mu,\Sigma)} }{\abs{m_{(\mu,\Sigma)}-m_{H_0}}\vee \sigma_{H_0}} \rightarrow 0.
\end{align}
\end{enumerate}
Normal approximation of $T(X)$ under $H_0$ can be established in different ways.
When $T(X)$ possesses further regularity, say $T(X) \in W^{2,4}(\gamma_{n\times p})$, a null CLT can usually be established efficiently via Chatterjee's second-order Poincar\'e inequality \cite{chatterjee2009fluctuations}. This approach is particularly compatible with the contiguity condition (\ref{def:ratio_generic}), as we only need to calculate derivatives of $T(X)$ and obtain good enough moment upper and lower bounds for these derivatives. In Section \ref{section:application_highd_cov_test} ahead, we implement this method to a variety of statistics in two concrete problems of high dimensional covariance testing. Some of the resulting exact power results are new, and some improve significantly over earlier results in the literature, both in terms of $(n,p)$-conditions and applicable alternatives.

\subsection{Proof of Theorem \ref{thm:general}}

\begin{lemma}\label{lem:normal_mean_multi}
	For any $t, u \in \R$ and $\eta \in \R$, 
	\begin{align*}
	\bigabs{ \Prob\big( \mathcal{N}(u,1)\leq t\big)-\Prob\big( \mathcal{N}((1+\eta)u,1)\leq t\big)}\leq 2(1+\abs{t})\cdot |\eta|.
	\end{align*}
\end{lemma}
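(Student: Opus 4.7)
The lemma is an elementary normal-tail estimate. The plan is as follows.

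First, I would write the difference as an integral and change variables. Since $\Phi'=\varphi$,
\begin{equation*}
\Prob\bigl(\mathcal{N}(u,1)\leq t\bigr)-\Prob\bigl(\mathcal{N}((1+\eta)u,1)\leq t\bigr)=\Phi(t-u)-\Phi(t-(1+\eta)u)=u\int_0^{\eta}\varphi\bigl(t-(1+\theta)u\bigr)\,d\theta,
\end{equation*}
via the substitution $s=t-(1+\theta)u$ in $\int_{t-(1+\eta)u}^{t-u}\varphi(s)\,ds$. So it suffices to bound the integrand $|u\,\varphi(t-(1+\theta)u)|$ uniformly for $\theta$ between $0$ and $\eta$.

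Second, I would control this integrand by rewriting $u$ in terms of $r=t-(1+\theta)u$, so that $u=(t-r)/(1+\theta)$ and hence
\begin{equation*}
|u\,\varphi(r)|\;\leq\;\frac{|t|\,\varphi(r)+|r|\,\varphi(r)}{1+\theta}.
\end{equation*}
Now use the two elementary bounds $\varphi(r)\leq 1/\sqrt{2\pi}\leq 1$ and $|r|\,\varphi(r)\leq \sup_{x\in\R}|x|\varphi(x)=(2\pi e)^{-1/2}\leq 1$, yielding $|u\,\varphi(r)|\leq (1+|t|)/(1+\theta)$.

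Third, I would split on the size of $\eta$. If $|\eta|\leq 1/2$, then $\theta$ lies in an interval containing $0$ of length at most $1/2$, so $1+\theta\geq 1/2$ and the integrand bound becomes $|u\,\varphi(t-(1+\theta)u)|\leq 2(1+|t|)$; integrating over an interval of length $|\eta|$ gives the claim. If instead $|\eta|>1/2$, then $2(1+|t|)|\eta|>1+|t|\geq 1$, while the left-hand side is a difference of two probabilities and hence at most $1$, so the bound holds trivially. Combining the two cases yields the lemma.

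There is no real obstacle; the only subtlety is the factor $1/(1+\theta)$, which forces the small split on $|\eta|$ so that $1+\theta$ stays bounded away from $0$, and the use of the sharp bound $\sup_x|x|\varphi(x)\leq 1$ to absorb the $|r|\varphi(r)$ term cleanly.
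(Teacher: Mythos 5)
Your proof is correct, and the core ingredients are the same as in the paper's argument: reduce to $|\eta|\leq 1/2$, express the difference as an integral of $u\varphi(t-(1+\theta)u)$, and invoke the two elementary bounds $\varphi\leq(2\pi)^{-1/2}$ and $\sup_x|x|\varphi(x)=(2\pi e)^{-1/2}$. Where you genuinely depart from the paper is in how you bound the integrand $|u\varphi(r)|$: the paper fixes the sign of $u$, sets $M_t(u)=\sup_{v\in[t-3u/2,\,t-u/2]}\varphi(v)\,|u|$, and then splits into three subcases depending on whether $t-u/2\leq0$, $t-3u/2\geq0$, or $(2/3)t\leq u\leq 2t$ (and then handles $u<0$ "similarly"). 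Your substitution $u=(t-r)/(1+\theta)$ is a clean shortcut that eliminates the free parameter $u$ entirely, turning $|u\varphi(r)|\leq(|t|+|r|)\varphi(r)/(1+\theta)$ into a pointwise estimate that immediately absorbs both the $|t|$-part and the $|r|$-part via the two universal density bounds, with no case analysis and no asymmetry between $u\geq 0$ and $u<0$. This is tidier and arguably more illuminating; it also makes the provenance of the final constant $2$ transparent (it is exactly $1/\inf_{\theta\in[-1/2,1/2]}(1+\theta)$). One small presentational caveat: you should make explicit that when $|\eta|\leq 1/2$ and $\theta$ is between $0$ and $\eta$, one has $1+\theta>0$ so that the division $u=(t-r)/(1+\theta)$ and the omission of absolute values around $1+\theta$ are legitimate — you say this implicitly, but it deserves one sentence.
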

\begin{proof}
	This result strengthens \cite[Lemma 5.4]{han2022high}. We assume without loss generality $\eta \in [-1/2,1/2]$ because otherwise the right hand side of the desired display is greater than or equal to $1$. Note that the left hand side is bounded by
	\begin{align*}
	\biggabs{\int_{t-u}^{t-(1+\eta)u} \varphi(z)\,\d{z}}\leq \abs{\eta}\cdot \bigg[\sup_{v \in [(t-u)- \abs{\eta u},(t-u)+\abs{\eta u}] }\varphi(v)  \abs{u}\bigg]\equiv \abs{\eta}\cdot M_t(u).
	\end{align*}
	Here $\varphi(\cdot)$ is the normal density. First consider the case $u\geq 0$. Then $M_t(u)\leq \sup_{v \in [t-3u/2,t-u/2]}\varphi(v)u$, which can be bounded further in different situations:
	\begin{itemize}
		\item If $t-u/2\leq 0$, then 
		\begin{align*}
		M_t(u)&\leq \varphi\Big(t-\frac{u}{2}\Big)u = \varphi\Big(t-\frac{u}{2}\Big)(u-2t)+2t \varphi\Big(t-\frac{u}{2}\Big)\\
		&\leq 2\sup_{x\in \R} \abs{x}\varphi(x)+\frac{2}{\sqrt{2\pi}}\abs{t} = \frac{2}{\sqrt{2\pi e}}+\frac{2}{\sqrt{2\pi}}\abs{t}.
		\end{align*}
		Here we used the readily verified fact that $\sup_{x\in \R} \abs{x}\varphi(x)=1/\sqrt{2\pi e}$.
		\item If $t-3u/2\geq 0$, then
		\begin{align*}
		M_t(u)&\leq \varphi\Big(t-\frac{3u}{2}\Big)u = \varphi\Big(t-\frac{3u}{2}\Big)\Big(u-\frac{2t}{3}\Big)+\frac{2}{3}t \varphi\Big(t-\frac{3u}{2}\Big)\\
		&\leq \frac{2}{3}\Big(\frac{1}{\sqrt{e}}+\frac{1}{\sqrt{2\pi}}\abs{t}\Big).
		\end{align*}
		\item Otherwise $(2/3)t\leq u\leq 2t$, so $M_t(u)\leq \abs{u}\leq 2\abs{t}$.
	\end{itemize}
    The case $u<0$ can be handled similarly, so we have $\sup_u M_t(u)\leq 2(1+\abs{t})$.
\end{proof}

\begin{proof}[Proof of Theorem \ref{thm:general}]
Let $Z\in \R^{n\times p}$ be a matrix generated by $n$ i.i.d. samples from $\mathcal{N}(0,I_p)$. Let $X^{(\mu,\Sigma)}\equiv Z\Sigma^{1/2}+\bm{1}_n \mu^\top$. 
Then, 
\begin{align}\label{eq:decomposition_general}
\notag&\frac{T(X)-m_{(\mu_0,\Sigma_0)}}{\sigma_{(\mu_0,\Sigma_0)}}\stackrel{d}{=} \frac{ T\big(X^{(\mu,\Sigma)}\big)- T\big(X^{(\mu_0,\Sigma_0)}\big) }{\sigma_{(\mu_0,\Sigma_0)}}+\frac{ T\big(X^{(\mu_0,\Sigma_0)}\big)-m_{(\mu_0,\Sigma_0)} }{\sigma_{(\mu_0,\Sigma_0)}}\\
& = \frac{m_{(\mu,\Sigma)}-m_{(\mu_0,\Sigma_0)}}{\sigma_{(\mu_0,\Sigma_0)}}+ \frac{W(Z)}{\sigma_{(\mu_0,\Sigma_0)}}+ \frac{T\big(X^{(\mu_0,\Sigma_0)}\big)-m_{(\mu_0,\Sigma_0)}}{\sigma_{(\mu_0,\Sigma_0)}}.
\end{align}
Here $W(Z)$ is the centered variable defined by
\begin{align*}
W(Z)\equiv T\big(Z\Sigma^{1/2}+\bm{1}_n\mu^\top\big)- T\big(Z\Sigma_0^{1/2}+\bm{1}_n\mu_0^\top\big) - \big(m_{(\mu,\Sigma)}-m_{(\mu_0,\Sigma_0)}\big).
\end{align*}
Using the chain rule, 
\begin{align*}
\partial_{(ij)} W(Z) 
& = \Big(\nabla T\big(X^{(\mu,\Sigma)}\big) \Sigma^{1/2}-\nabla T\big(X^{(\mu_0,\Sigma_0)}\big) \Sigma_0^{1/2}\Big)_{ij}\\
& = \big(\mathscr{T}_{(\mu,\Sigma)}(Z)-\mathscr{T}_{(\mu_0,\Sigma_0)}(Z)\big)_{ij}.
\end{align*}
By the Gaussian-Poincar\'e inequality \cite[Theorem 3.20]{boucheron2013concentration},
\begin{align*}
\var\big(W(Z)\big)&\leq \E \bigg[\sum_{(ij)} \big(\partial_{(ij)} W(Z)\big)^2\bigg] = \E \bigpnorm{\mathscr{T}_{(\mu,\Sigma)}(Z)-\mathscr{T}_{(\mu_0,\Sigma_0)}(Z)}{F}^2=V_{(\mu,\Sigma);(\mu_0,\Sigma_0)}^2.
\end{align*}
This means for any $u>0$, on an event $E$ with probability at least $1-u^{-2}$,
\begin{align*}
\bigabs{W(Z)}\leq u \cdot V_{(\mu,\Sigma);(\mu_0,\Sigma_0)}.
\end{align*}
Hence for any $t \in \R$, the decomposition (\ref{eq:decomposition_general}) entails that [recall the definition of $\mathrm{err}_{(\mu_0,\Sigma_0)}$ in (\ref{def:null_approx_error})]
\begin{align*}
&\Prob\bigg( \frac{T\big(X^{(\mu,\Sigma)}\big)-m_{(\mu_0,\Sigma_0)}}{\sigma_{(\mu_0,\Sigma_0)} }>t \bigg)\\
& =  \Prob\bigg(  \frac{m_{(\mu,\Sigma)}-m_{(\mu_0,\Sigma_0)} + W(Z)}{\sigma_{(\mu_0,\Sigma_0)}}+ \frac{T\big(X^{(\mu_0,\Sigma_0)}\big)-m_{(\mu_0,\Sigma_0)}}{\sigma_{(\mu_0,\Sigma_0)}}>t \bigg)\\
& \leq \Prob\bigg(  \frac{m_{(\mu,\Sigma)}-m_{(\mu_0,\Sigma_0)} + u\cdot V_{(\mu,\Sigma);(\mu_0,\Sigma_0)}}{\sigma_{(\mu_0,\Sigma_0)}}+ \frac{T\big(X^{(\mu_0,\Sigma_0)}\big)-m_{(\mu_0,\Sigma_0)}}{\sigma_{(\mu_0,\Sigma_0)}}>t \bigg) + \frac{1}{u^2}\\
&\leq \Prob\bigg(\frac{m_{(\mu,\Sigma)}-m_{(\mu_0,\Sigma_0)}+ u\cdot V_{(\mu,\Sigma);(\mu_0,\Sigma_0)}}{\sigma_{(\mu_0,\Sigma_0)}}+ \mathcal{N}(0,1)>t\bigg) + \frac{1}{u^2} + \mathrm{err}_{(\mu_0,\Sigma_0)}\\
&\equiv\mathfrak{p}(u) + \mathrm{err}_{(\mu_0,\Sigma_0)}.
\end{align*}
Next we bound $\mathfrak{p}(\cdot)$ using two different ways. First by Lemma \ref{lem:normal_mean_multi}, we have
\begin{align*}
\inf_{u>0} \mathfrak{p}(u)&\leq \Prob\bigg(\frac{m_{(\mu,\Sigma)}-m_{(\mu_0,\Sigma_0)}}{\sigma_{(\mu_0,\Sigma_0)}}+ \mathcal{N}(0,1)>t\bigg) \\
&\qquad + \inf_{u>0}\bigg[2(1+\abs{t}) u\cdot \frac{V_{(\mu,\Sigma);(\mu_0,\Sigma_0)}}{\abs{m_{(\mu,\Sigma)}-m_{(\mu_0,\Sigma_0)}}} + \frac{1}{u^2}\bigg]\\
&\leq  \Prob\bigg(\frac{m_{(\mu,\Sigma)}-m_{(\mu_0,\Sigma_0)}}{\sigma_{(\mu_0,\Sigma_0)}}+ \mathcal{N}(0,1)>t\bigg)+C \bigg(\frac{(1+\abs{t})V_{(\mu,\Sigma);(\mu_0,\Sigma_0)} }{\abs{m_{(\mu,\Sigma)}-m_{(\mu_0,\Sigma_0)}}}\bigg)^{2/3}.
\end{align*}
On the other hand, by anti-concentration of the standard normal distribution, i.e., $|\Prob\big(\mathcal{N}(0,1)\leq a\big) - \Prob\big(\mathcal{N}(0,1)\leq b\big)|\leq |a-b|$ for any $a,b\in\R$,
\begin{align*}
\inf_{u>0} \mathfrak{p}(u)&\leq \Prob\bigg(\frac{m_{(\mu,\Sigma)}-m_{(\mu_0,\Sigma_0)}}{\sigma_{(\mu_0,\Sigma_0)}}+ \mathcal{N}(0,1)>t\bigg)+\inf_{u>0}\bigg[\frac{u\cdot V_{(\mu,\Sigma);(\mu_0,\Sigma_0)}}{\sigma_{(\mu_0,\Sigma_0)}}+\frac{1}{u^2}\bigg]\\
&\leq \Prob\bigg(\frac{m_{(\mu,\Sigma)}-m_{(\mu_0,\Sigma_0)}}{\sigma_{(\mu_0,\Sigma_0)}}+ \mathcal{N}(0,1)>t\bigg)+C\bigg(\frac{V_{(\mu,\Sigma);(\mu_0,\Sigma_0)}}{\sigma_{(\mu_0,\Sigma_0)}}\bigg)^{2/3}.
\end{align*}
Collecting the bounds completes the proof for one direction. For the other direction, we have
\begin{align*}
&\Prob\bigg( \frac{T\big(X^{(\mu,\Sigma)}\big)-m_{(\mu_0,\Sigma_0)}}{\sigma_{(\mu_0,\Sigma_0)} }>t \bigg)\\
& =  \Prob\bigg(  \frac{m_{(\mu,\Sigma)}-m_{(\mu_0,\Sigma_0)} + W(Z)}{\sigma_{(\mu_0,\Sigma_0)}}+ \frac{T\big(X^{(\mu_0,\Sigma_0)}\big)-m_{(\mu_0,\Sigma_0)}}{\sigma_{(\mu_0,\Sigma_0)}}>t \bigg)\\
&\geq\Prob\bigg(  \frac{m_{(\mu,\Sigma)}-m_{(\mu_0,\Sigma_0)} - u\cdot V_{(\mu,\Sigma);(\mu_0,\Sigma_0)}}{\sigma_{(\mu_0,\Sigma_0)}}+ \frac{T\big(X^{(\mu_0,\Sigma_0)}\big)-m_{(\mu_0,\Sigma_0)} }{\sigma_{(\mu_0,\Sigma_0)}}>t \bigg) - \frac{1}{u^2}\\
&\geq\Prob\bigg(\frac{m_{(\mu,\Sigma)}-m_{(\mu_0,\Sigma_0)}- u\cdot V_{(\mu,\Sigma);(\mu_0,\Sigma_0)}}{\sigma_{(\mu_0,\Sigma_0)}}+ \mathcal{N}(0,1)>t\bigg) - \frac{1}{u^2} - \mathrm{err}_{(\mu_0,\Sigma_0)}.
\end{align*}
Using similar arguments as in the previous direction by invoking the two different bounds concludes the inequality.
\end{proof}

\subsection{Proof of Corollary \ref{cor:contiguity_dist}}

Using the decomposition (\ref{eq:decomposition_general}), and the assumed boundedness condition $\abs{ {(m_{(\mu,\Sigma)}-m_{(\mu_0,\Sigma_0)})}/{\sigma_{(\mu_0,\Sigma_0)}} }\leq K$ (which entails that $\sigma_{(\mu_0,\Sigma_0)}\geq (K+1)^{-1}(\abs{m_{(\mu,\Sigma)}-m_{(\mu_0,\Sigma_0)}}+\sigma_{(\mu_0,\Sigma_0)})$), we have
\begin{align*}
\var_{(\mu,\Sigma)}\bigg(\frac{T(X)-m_{(\mu_0,\Sigma_0)}}{\sigma_{(\mu_0,\Sigma_0)}}\bigg)&\lesssim K^2+\bigg(\frac{V_{(\mu,\Sigma);(\mu_0,\Sigma_0)}}{\sigma_{(\mu_0,\Sigma_0)}}\bigg)^{2}+1\\
& \lesssim (1\vee K)^2\big(1+\overline{\mathrm{err}}_{(\mu,\Sigma);(\mu_0,\Sigma_0)}\big)^2.
\end{align*}
Fix $t>0$. Then the above variance bound leads to
\begin{align*}
\Prob\bigg( \frac{T\big(X^{(\mu,\Sigma)}\big)-m_{(\mu_0,\Sigma_0)}}{\sigma_{(\mu_0,\Sigma_0)} }>t \bigg)\lesssim (t-K)_+^{-2} (1\vee K)^2\big(1+\overline{\mathrm{err}}_{(\mu,\Sigma);(\mu_0,\Sigma_0)}\big)^2.
\end{align*}
On the other hand, 
\begin{align*}
\Prob\bigg(\frac{m_{(\mu,\Sigma)}-m_{(\mu_0,\Sigma_0)}}{\sigma_{(\mu_0,\Sigma_0)}}+ \mathcal{N}(0,1)>t\bigg)\leq e^{-(t-K)_+^2/2}.
\end{align*}
Combined with Theorem \ref{thm:general}, we have for any $t>0$,
\begin{align*}
&\biggabs{\Prob_{(\mu,\Sigma)}\bigg( \frac{T\big(X\big)-m_{(\mu_0,\Sigma_0)}}{\sigma_{(\mu_0,\Sigma_0)} }>t \bigg)-\Prob\bigg( \mathcal{N}\bigg(\frac{m_{(\mu,\Sigma)}-m_{(\mu_0,\Sigma_0)} }{\sigma_{(\mu_0,\Sigma_0)}},1\bigg)>t\bigg)}\\
&\leq \mathrm{err}_{(\mu_0,\Sigma_0)}+C_K \cdot \min \bigg\{\big((1+\abs{t}) \overline{\mathrm{err}}_{(\mu,\Sigma);(\mu_0,\Sigma_0)}\big)^{2/3}, t^{-2} \big(1+\overline{\mathrm{err}}_{(\mu,\Sigma);(\mu_0,\Sigma_0)}\big)^2 \bigg\}.
\end{align*}
If $\overline{\mathrm{err}}_{(\mu,\Sigma);(\mu_0,\Sigma_0)}\geq 1$, then a trivial bound works; otherwise if $\overline{\mathrm{err}}_{(\mu,\Sigma);(\mu_0,\Sigma_0)}\leq 1$, then the right hand side of the above display can be further bounded by
\begin{align*}
&\mathrm{err}_{(\mu_0,\Sigma_0)}+C_K \cdot \min \bigg\{\big((1+\abs{t}) \overline{\mathrm{err}}_{(\mu,\Sigma);(\mu_0,\Sigma_0)}\big)^{2/3}, t^{-2} \bigg\}\\
&\leq \mathrm{err}_{(\mu_0,\Sigma_0)}+ C_K'\cdot  \overline{\mathrm{err}}_{(\mu,\Sigma);(\mu_0,\Sigma_0)}^{4/9}.
\end{align*}
A similar bound holds when $t\leq 0$. \qed

\section{Applications to high dimensional covariance testing}\label{section:application_highd_cov_test}
\subsection{Testing identity $\Sigma =I$}\label{section:test_identity}
Consider the testing problem:
\begin{align}\label{test:cov}
H_0: \Sigma = I\quad \textrm{versus}\quad H_1: \hbox{$H_0$ does not hold}.
\end{align}
This is a special case of (\ref{intro:test}) by taking $\mathscr{H}_0=\R^p \times \{I\}$, and has been extensively studied in the literature; see \cite{ledoit2002some,srivastava2005some,bai2009corrections,chen2010tests,jiang2012likelihood,cai2013optimal,jiang2013central,zheng2015substitution,chen2018study} for an incomplete list.

We introduce some additional notation. Based on i.i.d. samples $X_1,\ldots,X_n$ from $\mathcal{N}(\mu,\Sigma)$, the sample covariance matrix and its unbiased modification are given by 
\begin{align}\label{def:sample_covariance}
S_\ast &\equiv n^{-1}\sum_{k=1}^n \big(X_k-\bar{X}\big) \big(X_k-\bar{X}\big)^\top \quad\textrm{with}\quad  \bar{X}\equiv n^{-1}\sum_{i=1}^n X_i,\nonumber\\
S &\equiv \frac{n}{N}S_\ast  \stackrel{d}{=} \frac{1}{N}\sum_{k=1}^N (X_k-\mu)^\top (X_k-\mu).
\end{align}
Here
\begin{align}
N = n-1
\end{align}
and the equal in distribution in (\ref{def:sample_covariance}) follows from \cite[Theorem 3.1.2]{muirhead1982aspects}. Throughout the rest of the paper, we will mainly work with $S$ for mathematical simplicity (unless otherwise specified), and adopt the right most expression of (\ref{def:sample_covariance}) as its definition whenever no confusion could arise.

\subsubsection{LRT}\label{section:LRT_covariance}

This subsection studies the behavior of the LRT for testing (\ref{test:cov}). The modified log likelihood ratio statistic $T_{\lrt}: \R^{N\times p}\to \R$ (cf. \cite[Theorem 8.4.2]{muirhead1982aspects}) is defined as 
\begin{align}\label{def:lrs}
T_{\lrt}(X) \equiv\frac{N}{2}\big[\tr (S) - \log \det S - p\big].
\end{align}
Clearly the law of $T_{\lrt}(X)$ is invariant under $H_0$. Corollary \ref{cor:power_generic} applies in view of the regularity of $T_{\lrt}$ (see Appendix \ref{section:regularity}). 
We use $\big(m_{\Sigma;\lrt},\sigma_{\Sigma;\lrt},V_{\Sigma;\lrt}\big)$ to represent their generic versions defined in (\ref{def:mean_variance_generic}) and (\ref{def:V_generic_test}).

Following the discussion after Corollary \ref{cor:power_generic}, we start by establishing a quantitative CLT for $T_{\lrt}(X)$ under $H_0$; its proof is presented in Section \ref{subsec:LRT_covariance_clt}. 

\begin{proposition}\label{prop:clt_covariance_test}
Suppose $p/N\leq 1-\epsilon$ for some $\epsilon \in (0,1)$. Then there exists some constant $C=C(\epsilon)>0$, such that under $H_0$,
\begin{align*}
d_{\mathrm{TV}}\bigg(\frac{T_{\lrt}(X)-m_{I;\lrt}}{\sigma_{I;\lrt}},\,\mathcal{N}(0,1)\bigg)\leq \frac{C }{p}.
\end{align*}
\end{proposition}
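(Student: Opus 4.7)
The plan is to apply Chatterjee's second-order Poincar\'e inequality \cite{chatterjee2009fluctuations} to $T_{\lrt}$ viewed as a smooth function of the Gaussian matrix $X\in\R^{N\times p}$. Under $H_0$ the law of $T_{\lrt}(X)$ is independent of $\mu$, so I take $\mu=0$ and $X$ with i.i.d.\ $\mathcal{N}(0,1)$ entries; Chatterjee's inequality then gives
\begin{align*}
d_{\mathrm{TV}}\Bigl(\tfrac{T_{\lrt}-m_{I;\lrt}}{\sigma_{I;\lrt}},\mathcal{N}(0,1)\Bigr)\;\lesssim\; \sigma_{I;\lrt}^{-2}\bigl(\E\pnorm{\nabla T_{\lrt}}{F}^{4}\bigr)^{1/4}\bigl(\E\pnorm{\mathrm{Hess}\,T_{\lrt}}{\op}^{4}\bigr)^{1/4},
\end{align*}
reducing the proof to showing $(\E\pnorm{\nabla T_{\lrt}}{F}^{4})^{1/4}\lesssim_\epsilon p$, $(\E\pnorm{\mathrm{Hess}\,T_{\lrt}}{\op}^{4})^{1/4}\lesssim_\epsilon 1$, and $\sigma_{I;\lrt}^{2}\gtrsim p^{2}$.

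Matrix calculus yields $\nabla T_{\lrt}(X)=X(I_p-S^{-1})$, so $\pnorm{\nabla T_{\lrt}}{F}^{2}=N\sum_i (\lambda_i(S)-1)^{2}/\lambda_i(S)$. Differentiating once more, $\mathrm{Hess}\,T_{\lrt}$, viewed as a self-adjoint operator on $\R^{N\times p}$ under the Frobenius inner product, acts by $V\mapsto V-(I_N-P_X)VS^{-1}+NMV^{\top}M$, with $A\equiv X^{\top}X$, $M\equiv XA^{-1}$, $P_X\equiv XA^{-1}X^{\top}$. Since $\pnorm{M}{\op}^{2}=N^{-1}\pnorm{S^{-1}}{\op}$ and $\pnorm{I_N-P_X}{\op}\le 1$, sandwiching Frobenius with operator norms delivers the key estimate
\begin{align*}
\pnorm{\mathrm{Hess}\,T_{\lrt}}{\op}\le 1+2\pnorm{S^{-1}}{\op}.
\end{align*}

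For the moment bounds under $p/N\le 1-\epsilon$, the spectral estimates of Section \ref{section:spectral_estimate} give $\E\pnorm{S^{-1}}{\op}^{k}\lesssim_{\epsilon,k}1$, so $(\E\pnorm{\mathrm{Hess}\,T_{\lrt}}{\op}^{4})^{1/4}\lesssim_\epsilon 1$. For the gradient, the Wishart identities $\E[\tr S]=p$ and $\E[\tr S^{-1}]=Np/(N-p-1)$ give the mean $\E\pnorm{\nabla T_{\lrt}}{F}^{2}=Np(p+1)/(N-p-1)\lesssim_\epsilon p^{2}$; matching this by a fourth-moment bound requires concentration. To that end I would apply Gaussian--Poincar\'e to $h(X)\equiv\tr S+\tr S^{-1}$: a direct calculation gives $\pnorm{\nabla h}{F}^{2}=(4/N)\sum_i (\lambda_i^{2}-1)^{2}/\lambda_i^{3}$, and the factorization $(\lambda^{2}-1)^{2}/\lambda^{3}=(\lambda-1)^{2}(\lambda+1)^{2}/\lambda^{3}$ together with $\E\pnorm{S-I}{F}^{4}\lesssim_\epsilon p^{4}/N^{2}$ and the spectral moment bounds produces $\E\pnorm{\nabla h}{F}^{2}\lesssim_\epsilon p^{2}/N^{2}$; Poincar\'e then gives $\var h\lesssim_\epsilon p^{2}/N^{2}$, hence $\var\pnorm{\nabla T_{\lrt}}{F}^{2}=N^{2}\var h\lesssim_\epsilon p^{2}$, and combining with the mean bound, $\E\pnorm{\nabla T_{\lrt}}{F}^{4}\lesssim_\epsilon p^{4}$.

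The last ingredient is the variance lower bound. Bartlett's decomposition writes $A=TT^{\top}$ with independent $T_{ii}\sim\chi_{N-i+1}$ and $T_{ij}\sim\mathcal{N}(0,1)$ for $i>j$. Since $\log\det S$ depends only on $(T_{ii})$ while $\tr S$ picks up the independent additive term $Z/N$, where $Z\equiv\sum_{i>j}T_{ij}^{2}\sim\chi^{2}_{p(p-1)/2}$,
\begin{align*}
\sigma_{I;\lrt}^{2}=(N/2)^{2}\var(\tr S-\log\det S)\ge (N/2)^{2}\var(Z/N)=\tfrac{1}{4}p(p-1)\gtrsim p^{2}
\end{align*}
(informative once $p\ge 2$; the case $p=1$ is trivial since $C/p\ge 1$). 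Inserting the three estimates into Chatterjee's inequality yields $d_{\mathrm{TV}}\lesssim_\epsilon p/p^{2}=1/p$, as claimed. The main technical hurdle is the gradient fourth-moment bound: a na\"ive estimate is too loose by a factor of $N^{2}$, and closing this gap requires exploiting the double zero of $g(\lambda)=(\lambda^{2}-1)^{2}/\lambda^{3}$ at $\lambda=1$ via the Poincar\'e trick above, which in turn hinges on the gap condition $p/N\le 1-\epsilon$ through the spectral moment bounds on $\pnorm{S^{-1}}{\op}$.
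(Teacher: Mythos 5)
Your proposal is correct, and while it uses the same Chatterjee second‑order Poincar\'e framework as the paper, all three of the underlying moment estimates are obtained by genuinely different arguments, each of which checks out.

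For the Hessian, you exhibit the closed-form operator representation $V\mapsto V-(I_N-P_X)VS^{-1}+NMV^\top M$ and bound each piece by the three facts $\pnorm{I_N-P_X}{\op}\le 1$, $\pnorm{M}{\op}^2=N^{-1}\pnorm{S^{-1}}{\op}$, and the isometry of transposition, to get the clean pointwise bound $\pnorm{\mathrm{Hess}\,T_{\lrt}}{\op}\le 1+2\pnorm{S^{-1}}{\op}$; the paper instead splits $\nabla^2 T_{\lrt}$ into the blocks $T_1,T_2,T_3$ and routes the estimate through the $U_{\ell,m}$ spectral-norm machinery of Proposition \ref{prop:U_spec_norm}. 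Both ultimately rest on the moment bounds for $\pnorm{S^{-1}}{\op}$ from Lemma \ref{lem:moment_S_inverse}, but your bound is more direct and does not need the algebraic identity $(U_{\ell,m})^q=U_{\ell',m'}$. For the gradient fourth moment, the paper simply bounds $\pnorm{\nabla T_{\lrt}}{F}^2\le\pnorm{S^{-1}}{\op}^2\pnorm{I-S}{\op}^2\pnorm{X}{F}^2$ and applies Cauchy--Schwarz with Lemma \ref{lem:kl}; your route via the exact eigenvalue form $\pnorm{\nabla T_{\lrt}}{F}^2=N(\tr S+\tr S^{-1}-2p)$, Wishart mean identities, and Gaussian--Poincar\'e on $h=\tr S+\tr S^{-1}$ (exploiting the double zero of $(\lambda^2-1)^2/\lambda^3$) is heavier but correct and reveals the same $p^4$ scaling. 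For the variance lower bound, your Bartlett-decomposition argument ($\sigma_{I;\lrt}^2\ge p(p-1)/4$ from the independent off-diagonal $\chi^2_{p(p-1)/2}$ piece of $\tr S$) is elementary, non-asymptotic, and self-contained, in contrast to the paper's Proposition \ref{prop:ratio_covariance_test}-(3), which borrows an asymptotic moment-generating-function calculation from \cite{chen2018study}. On balance your Hessian and variance arguments are simpler than the paper's; the gradient argument is somewhat more elaborate than necessary, since the paper's one-line Cauchy--Schwarz bound already suffices.
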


The CLT for the log likelihood ratio statistic $T_{\lrt}(X)$ under $H_0$ was first derived in \cite{bai2009corrections} using random matrix theory under the assumption that $p/n\rightarrow y$ for some $y\in(0,1)$. This result was then improved in \cite{jiang2012likelihood} and \cite{chen2018study} to hold under the condition $n > p + 1$ and $p\rightarrow \infty$, and in \cite{zheng2015substitution} to relax the Gaussian assumption. The condition $p/N\leq 1-\epsilon$ in Proposition \ref{prop:clt_covariance_test} is used to derive the stable estimate $\E\pnorm{S^{-1}}{\op} \leq C$ for some constant $C = C(\epsilon) > 0$; see Lemma \ref{lem:moment_S_inverse} for details. To our best knowledge, the above result is the first quantitative CLT for $T_{\lrt}(X)$ under $H_0$ in the literature.

The following result establishes the contiguity condition (\ref{def:ratio_generic}) for the log likelihood ratio statistic $T_{\lrt}(X)$; its proof is presented in Section \ref{subsec:LRT_covariance_ratio}. For p.s.d. $\Sigma_1$ and p.d. $\Sigma_2$, let 
\begin{align}\label{def:stein_loss}
\mathcal{L}_S(\Sigma_1,\Sigma_2)\equiv \tr(\Sigma_1\Sigma_2^{-1})-\log \det\big(\Sigma_1 \Sigma_2^{-1})-p
\end{align}
be the Stein loss with the convention that $\mathcal{L}_S(\Sigma_1,\Sigma_2)\equiv \infty$ if $\Sigma_1$ is singular.

\begin{proposition}\label{prop:ratio_covariance_test}
Suppose $\Sigma$ is non-singular. The following hold:
\begin{enumerate}
	\item $V_{\Sigma;\lrt}^2= N\pnorm{\Sigma-I}{F}^2$.
	\item $m_{\Sigma;\lrt}-m_{I;\lrt} = (N/2)\mathcal{L}_S(\Sigma,I)$.
	\item In the asymptotic regime $N\geq p+1$ with $p \to \infty$,
	\begin{align*}
	\sigma_{I;\lrt}^2 \sim \frac{N^2}{2}\bigg[-\frac{p}{N}-\log\bigg(1-\frac{p}{N}\bigg)\bigg].
	\end{align*}
	In particular, $\sigma_{I;\lrt}^2\geq cp^2$ for some universal constant $c>0$.
	\item There exists some universal constant $C>0$ such that
	\begin{align*}
	\frac{V_{\Sigma;\lrt}}{\abs{m_{\Sigma;\lrt}-m_{I;\lrt}}\vee\sigma_{I;\lrt} }\leq \frac{C}{p^{1/2}}.
	\end{align*}	
\end{enumerate}

\end{proposition}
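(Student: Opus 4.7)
The plan is to establish parts (1)--(3) by direct computation and combine them for part (4) through an elementary case analysis based on the magnitude of $\delta\equiv \pnorm{\Sigma-I}{F}$. Throughout I work with the equivalent form $S = (1/N)X^\top X$ where $X\in\R^{N\times p}$ has i.i.d.\ rows from $\mathcal{N}_p(0,\Sigma)$; this is harmless since $T_{\lrt}$ is invariant under shifts in $\mu$, which also implies that $\mathscr{T}_{(\mu,\Sigma)}$ is independent of $\mu$, so one may take $\mu = 0$ in (\ref{def:V_generic_test}). For part (1), the matrix-calculus identities $\partial\tr(X^\top X)/\partial X = 2X$ and $\partial\log\det(X^\top X)/\partial X = 2X(X^\top X)^{-1}$ give $\nabla T_{\lrt}(X) = X - XS^{-1}$; substituting $Y = Z\Sigma^{1/2}$ and using the algebraic identity $\Sigma^{1/2}(\Sigma^{1/2}Z^\top Z\Sigma^{1/2})^{-1}\Sigma^{1/2} = (Z^\top Z)^{-1}$, the $\Sigma$-dependent inverse terms cancel to yield the clean identity $\mathscr{T}_{\Sigma}(Z) - \mathscr{T}_I(Z) = Z(\Sigma-I)$, so that $V_{\Sigma;\lrt}^2 = \E\pnorm{Z(\Sigma-I)}{F}^2 = N\pnorm{\Sigma-I}{F}^2$ after using $\E[Z^\top Z] = NI_p$. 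For part (2), $NS\sim W_p(N,\Sigma)$ immediately gives $\E\tr(S) = \tr(\Sigma)$, while the Bartlett decomposition $\det(NS) \equald \det(\Sigma)\prod_{i=1}^p\chi^2_{N-i+1}$ (independent factors) shows that $\E\log\det(S) - \log\det(\Sigma)$ depends only on $(N,p)$; the $\Sigma$-free constants cancel in $m_{\Sigma;\lrt} - m_{I;\lrt}$, leaving exactly $(N/2)[\tr(\Sigma) - \log\det(\Sigma) - p] = (N/2)\mathcal{L}_S(\Sigma,I)$.

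For part (3), write $W = NS = TT^\top$ via Bartlett under the null, with $T$ lower triangular, $T_{ii}^2\sim \chi^2_{N-i+1}$ and $T_{ij}\sim \mathcal{N}(0,1)$ for $i>j$, all independent. Then: (a) $\tr(W)\sim\chi^2_{Np}$ gives $\var(\tr(S)) = 2p/N$; (b) $\log\det(W) = \sum_i\log T_{ii}^2$ gives $\var(\log\det(S)) = \sum_{i=1}^p \psi'((N-i+1)/2)$; (c) $\mathrm{Cov}(\tr(W),\log\det(W)) = \sum_i \mathrm{Cov}(T_{ii}^2,\log T_{ii}^2) = 2p$, using the Gamma-function identity $\mathrm{Cov}(X,\log X) = 2$ for $X\sim\chi^2_k$. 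Combining gives
\begin{align*}
\sigma_{I;\lrt}^2 = \frac{N^2}{4}\bigg[\sum_{i=1}^p \psi'\Big(\frac{N-i+1}{2}\Big) - \frac{2p}{N}\bigg].
\end{align*}
The asymptotic equivalent follows from $\psi'(x) = 1/x + O(1/x^2)$ and $\sum_{i=1}^p 1/(N-i+1) = H_N - H_{N-p}\sim -\log(1-p/N)$; the universal lower bound $\sigma_{I;\lrt}^2\geq cp^2$ follows from $\psi'(x)\geq 1/x$ (immediate from $\psi'(x) = \sum_{k\geq 0}(x+k)^{-2}$) together with $\sum_{i=1}^p[1/(N-i+1)-1/N]\geq p(p-1)/(2N^2)$.

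For part (4), fix a small absolute constant $c_1>0$. When $\delta \leq c_1\sqrt{p/N}$, $V_{\Sigma;\lrt} = \sqrt{N}\delta \lesssim \sqrt{p}$ and $\sigma_{I;\lrt}\gtrsim p$ by (3), so the ratio is $\lesssim 1/\sqrt{p}$. When $\delta > c_1\sqrt{p/N}$, the key step is the deterministic inequality
\begin{align*}
\sum_{i=1}^p g(\lambda_i)\gtrsim \frac{\delta^2}{1+\delta},\qquad g(\lambda) \equiv \lambda-1-\log\lambda\geq 0,
\end{align*}
proved by splitting the indices into $A = \{i: |\lambda_i-1|\leq 1/2\}$ and $B = \{i: |\lambda_i-1|>1/2\}$: on $A$, $g(\lambda)\gtrsim(\lambda-1)^2$ by Taylor; on $B$, $g(\lambda)\gtrsim |\lambda-1|$ by linear growth at $\infty$ and blow-up as $\lambda\to 0^+$. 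Applying $\pnorm{x}{1}\geq \pnorm{x}{2}$ on the $B$-block and a subcase split on $\delta\leq 1$ versus $\delta > 1$ (using $\delta > c_1\sqrt{p/N}$ and $p/N\leq 1$) upgrades this to $\sum g(\lambda_i)\gtrsim \sqrt{p/N}\cdot\delta$, giving $V_{\Sigma;\lrt}/|m_{\Sigma;\lrt}-m_{I;\lrt}|\lesssim 1/\sqrt{p}$. The main obstacle is this deterministic chain: one must treat uniformly eigenvalues $\lambda_i$ near $1$ and those far from $1$ (including $\lambda_i\to 0^+$, where $g$ blows up but $(\lambda_i-1)^2$ stays bounded), and piece them together to obtain a bound uniform in $\Sigma$ and $(N,p)$. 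The remaining components rest on standard Wishart/Bartlett machinery and classical trigamma asymptotics.
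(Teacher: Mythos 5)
Your proposal is correct, and parts (1) and (2) match the paper's computations. The genuine differences are in parts (3) and (4). For (3), the paper side-steps the variance computation by citing the moment-generating-function argument of \cite[Theorem 1]{chen2018study}: it shows the MGF of the normalized statistic converges, deduces convergence of moments, and reads off the variance asymptotic; the lower bound $\sigma_{I;\lrt}^2\geq cp^2$ then follows from this asymptotic together with $-x-\log(1-x)\geq x^2/2$, for $p$ sufficiently large. You instead derive the exact closed form $\sigma_{I;\lrt}^2 = \frac{N^2}{4}\big[\sum_{i=1}^p\psi'\big(\tfrac{N-i+1}{2}\big)-\tfrac{2p}{N}\big]$ via the Bartlett decomposition, which is more self-contained and in fact gives a cleaner, non-asymptotic lower bound via $\psi'(x)\geq 1/x$ (which, as you note, follows from the series: $\sum_{k\geq 0}(x+k)^{-2}\geq\sum_{k\geq 0}[(x+k)^{-1}-(x+k+1)^{-1}]=1/x$). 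The one place you should be more careful is the asymptotic equivalent: you need the error $\sum_i\psi'((N-i+1)/2)-2(H_N-H_{N-p})$ and the Euler--Maclaurin error in $H_N-H_{N-p}\approx-\log(1-p/N)$ to both be $o\big(-p/N-\log(1-p/N)\big)$ uniformly over $N\geq p+1$, including the boundary regime $N-p=\mathcal{O}(1)$ where $\psi'$ is evaluated at arguments of order one and the $1/x$ expansion is inaccurate; this works because the main term then grows like $\log N$ while the errors stay $\mathcal{O}(1)$, but the argument deserves to be spelled out. For (4), the paper splits the eigenvalue index set $J=\{j:|\lambda_j-1|\leq 1\}$ versus $J^c$ and uses $\frac{x}{x^2\vee\sigma}\leq\sigma^{-1/2}$ on $J$ and $\nu_j^2\geq\nu_j$ on $J^c$, landing on the intermediate bound $C(\sigma_I\wedge N)^{-1/2}$; you split on the magnitude of $\delta=\pnorm{\Sigma-I}{F}$ relative to $\sqrt{p/N}$ and establish the deterministic inequality $\sum_j g(\lambda_j)\gtrsim\delta^2/(1+\delta)$ via the same $A$/$B$ dichotomy in $g(\lambda)=\lambda-1-\log\lambda$. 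Both are minor reorganizations of the same underlying fact $g(\lambda)\gtrsim(\lambda-1)^2\wedge|\lambda-1|$, and both yield the stated $C/p^{1/2}$ bound.
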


The above proposition gives a prototypical example of how to proceed with the contiguity condition (\ref{def:ratio_generic}). For the log likelihood ratio statistic $T_{\lrt}(X)$ defined in (\ref{def:lrs}), both $V_{\Sigma;\lrt}$ and the mean difference $m_{\Sigma;\lrt}-m_{I;\lrt}$ admit easy-to-handle closed-form formulae. To give some insights for the bound obtained in Proposition \ref{prop:ratio_covariance_test}-(4), let us consider  the `local regime' of alternatives in which $\mathcal{L}_S(\Sigma,I)\approx \pnorm{\Sigma-I}{F}^2$. Then (\ref{def:ratio_generic}) can be bounded, up to a constant, by
\begin{align*}
\frac{\sqrt{N\pnorm{\Sigma-I}{F}^2} }{ N\pnorm{\Sigma-I}{F}^2 \vee \sigma_{I;\lrt}}\leq \sup_{x\geq 0} \frac{x}{x^2 \vee \sigma_{I;\lrt}} = \frac{1}{\inf_{x \geq 0} \big(x \vee \frac{\sigma_{I;\lrt}}{x}\big)} = \frac{1}{\sigma_{I;\lrt}^{1/2}},
\end{align*}
in the prescribed local regime of alternatives. The above simple reasoning exemplifies the essential reason why (\ref{def:ratio_generic}) must be small in high dimensions: if $\Sigma$ is sufficiently away from $I$, then the mean difference $m_{\Sigma;\lrt}-m_{I;\lrt}$ is substantially larger than $V_{\Sigma;\lrt}$, but would otherwise be compensated by a diverging  $\sigma_{I;\lrt}$.

Let $\Psi_{\lrt}(X)$ be the LRT built from the generic test (\ref{intro:test}) and the log likelihood ratio statistic $T_{\lrt}(X)$. Now Corollary \ref{cor:power_generic} yields the following.

\begin{theorem}\label{thm:power_covariance_test}
	Suppose $p/N\leq 1-\epsilon$ for some $\epsilon \in (0,1)$. Then there exists some constant $C=C(\epsilon,\alpha)>0$ such that 
	\begin{align}\label{ineq:power_covariance_lrt}
	&\biggabs{ \E_\Sigma \Psi_{\lrt}(X) -   \Prob\bigg( \mathcal{N}\bigg(\frac{N \cdot \mathcal{L}_S(\Sigma,I) }{2\sigma_I},1\bigg)>z_\alpha\bigg)   } \leq C \cdot p^{-1/3}.
	\end{align}
	Consequently, in the asymptotic regime $N \wedge p \to \infty$ with $\limsup (p/N)<1$, 
	\begin{align*}
     \E_\Sigma \Psi_{\lrt}(X) \sim  1- \Phi\bigg(z_\alpha-\frac{\mathcal{L}_S(\Sigma,I)}{ \sqrt{2\big(-\frac{p}{N}-\log\big(1-\frac{p}{N}\big)\big)}  }\bigg).
	\end{align*} 
\end{theorem}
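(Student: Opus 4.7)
The plan is to derive Theorem \ref{thm:power_covariance_test} as a direct consequence of Corollary \ref{cor:power_generic}, using the null CLT of Proposition \ref{prop:clt_covariance_test} and the contiguity estimates of Proposition \ref{prop:ratio_covariance_test} as the two main inputs. Since $T_{\lrt} \in W^{1,2}(\gamma_{N \times p})$ and its law is invariant under $H_0$, Corollary \ref{cor:power_generic} applies and gives
\[
\left|\E_\Sigma \Psi_{\lrt}(X) - \left[1 - \Phi\bigg(z_\alpha - \frac{m_{\Sigma;\lrt} - m_{I;\lrt}}{\sigma_{I;\lrt}}\bigg)\right]\right| \leq \mathrm{err}_{H_0} + C_\alpha \bigg(\frac{V_{\Sigma;\lrt}}{|m_{\Sigma;\lrt} - m_{I;\lrt}| \vee \sigma_{I;\lrt}}\bigg)^{2/3}.
\]

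To establish the non-asymptotic bound (\ref{ineq:power_covariance_lrt}), I combine three ingredients. First, Proposition \ref{prop:clt_covariance_test} together with the trivial inequality $d_{\mathrm{Kol}} \leq d_{\mathrm{TV}}$ yields $\mathrm{err}_{H_0} \leq C(\epsilon)/p$. Second, Proposition \ref{prop:ratio_covariance_test}-(4) bounds the contiguity ratio by $C p^{-1/2}$, so the second term on the right is at most $C_\alpha p^{-1/3}$; this dominates the first. Third, substituting the mean-difference identity $m_{\Sigma;\lrt} - m_{I;\lrt} = (N/2)\mathcal{L}_S(\Sigma, I)$ from Proposition \ref{prop:ratio_covariance_test}-(2) into the Gaussian shift, and rewriting $1 - \Phi(z_\alpha - u) = \Prob(\mathcal{N}(u,1) > z_\alpha)$, transforms the above display into precisely (\ref{ineq:power_covariance_lrt}).

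For the asymptotic consequence under $N\wedge p \to \infty$ with $\limsup(p/N) < 1$, Proposition \ref{prop:ratio_covariance_test}-(3) gives $\sigma_{I;\lrt}/\sigma^{\mathrm{asy}} \to 1$, where $\sigma^{\mathrm{asy}} \equiv (N/\sqrt{2})\sqrt{-p/N - \log(1-p/N)}$. Setting $\mu_n \equiv N \mathcal{L}_S(\Sigma,I)/(2\sigma_{I;\lrt})$ and $\mu^{\mathrm{asy}} \equiv \mathcal{L}_S(\Sigma,I)/\sqrt{2(-p/N - \log(1-p/N))}$, one has $\mu_n/\mu^{\mathrm{asy}} \to 1$ when $\mu^{\mathrm{asy}} > 0$, and both vanish identically otherwise. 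A routine case split according to whether $\mu^{\mathrm{asy}}$ stays bounded or diverges then shows $1 - \Phi(z_\alpha - \mu_n) \sim 1 - \Phi(z_\alpha - \mu^{\mathrm{asy}})$, via the Lipschitz property of $1 - \Phi$ in the bounded regime and the fact that both probabilities tend to $1$ in the divergent regime. Since $\mathcal{L}_S(\Sigma,I) \geq 0$ implies $1 - \Phi(z_\alpha - \mu^{\mathrm{asy}}) \geq \alpha > 0$ uniformly over $\Sigma$, the absolute error $O(p^{-1/3})$ from (\ref{ineq:power_covariance_lrt}) translates into a vanishing relative error, and the $\sim$ statement follows.

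The main obstacle is essentially absent at this stage: the theorem is an assembly of the earlier propositions through Corollary \ref{cor:power_generic}, and the heavy lifting (the quantitative null CLT and the contiguity ratio bound) has already been discharged in Propositions \ref{prop:clt_covariance_test} and \ref{prop:ratio_covariance_test}. The only mild delicacy is in passing from the non-asymptotic additive bound to the multiplicative $\sim$ statement \emph{uniformly} over the unrestricted alternative $\Sigma$; the key structural fact enabling this uniformity is the nonnegativity of the Stein loss $\mathcal{L}_S(\Sigma, I)$, which supplies the uniform lower bound $\alpha$ on the target power formula.
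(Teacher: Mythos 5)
Your proof is correct and matches the paper's intended route: the paper does not write out an explicit proof of Theorem \ref{thm:power_covariance_test}, precisely because—unlike Theorems \ref{thm:power_nagao}, \ref{thm:power_spherical}, and \ref{thm:power_john} which require extra work to remove residual terms $Q(\cdot)$—the mean-difference identity here is exact, so the theorem follows immediately by plugging Propositions \ref{prop:clt_covariance_test} and \ref{prop:ratio_covariance_test} into Corollary \ref{cor:power_generic}, exactly as you do. Your passage to the multiplicative $\sim$ statement (subsequence case split on $\mu^{\mathrm{asy}}$ plus the uniform lower bound $1-\Phi(z_\alpha - \mu^{\mathrm{asy}}) \geq \alpha$ coming from $\mathcal{L}_S(\Sigma,I)\geq 0$) is the right argument and correctly handles the unrestricted alternative.
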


Compared to results in \cite{chen2018study} on the power behavior of the LRT $\Psi_{\lrt}(X)$, we remove the condition  $\sup_n \pnorm{\Sigma}{\op} < \infty$ completely. This unnecessary condition arises as a technical deficiency in the approach of \cite{chen2018study} that attempts at directly establishing a CLT for $\Psi_{\lrt}(X)$ under general alternatives.

\subsubsection{Ledoit-Nagao-Wolf's test}\label{section:nagao}
This subsection studies testing (\ref{test:cov}) using the (rescaled) modified Nagao's trace statistic \cite{nagao1973some} by Ledoit and Wolf \cite{ledoit2002some}:
\begin{align}\label{def:lrs_nagao}
T_{\na}(X)\equiv \frac{N}{4} \bigg[\tr\big(S-I\big)^2-\frac{1}{N} \tr^2(S)\bigg].
\end{align}
An asymptotically equivalent statistic as an unbiased estimator of $\pnorm{\Sigma -I}{F}^2$ has also been studied in \cite{srivastava2005some}. One advantage of using (\ref{def:lrs_nagao}) is that it applies to the case $p > n$ where the LRT in Section \ref{section:LRT_covariance} becomes degenerate.

We will use $\big(m_{\Sigma;\na},\sigma_{\Sigma;\na},V_{\Sigma;\na}\big)$ to represent their generic versions defined in (\ref{def:mean_variance_generic}) and (\ref{def:V_generic_test}). 

\begin{proposition}\label{prop:clt_covariance_test_nagao}
	There exists an absolute constant $C>0$ such that under $H_0$,
	\begin{align*}
	d_{\mathrm{TV}}\bigg(\frac{T_{\na}(X)-m_{I;\na}}{\sigma_{I;\na}},\,\mathcal{N}(0,1)\bigg)\leq \frac{C}{N\wedge p}.
	\end{align*}
\end{proposition}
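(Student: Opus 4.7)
The approach is to invoke Chatterjee's second-order Poincar\'e inequality \cite{chatterjee2009fluctuations}, following the template explicitly advertised in the introduction for null CLTs of statistics lying in $W^{2,4}(\gamma_{N\times p})$. Under $H_0$ we may identify $X$ with a standard Gaussian matrix $Z \in \R^{N\times p}$ and $S = N^{-1}Z^\top Z$, so that
\begin{align*}
T_{\na}(Z) = \frac{1}{4N}\tr\bigl((Z^\top Z)^2\bigr) - \frac{1}{2}\pnorm{Z}{F}^2 + \frac{Np}{4} - \frac{1}{4N^2}\pnorm{Z}{F}^4
\end{align*}
is a polynomial of degree $4$ in the $Np$ i.i.d.\ standard Gaussian entries of $Z$; in particular $T_{\na}\in W^{2,4}(\gamma_{N\times p})$. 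Chatterjee's inequality then delivers a total variation bound of the form
\begin{align*}
d_{\mathrm{TV}}\!\left(\frac{T_{\na}(Z) - m_{I;\na}}{\sigma_{I;\na}},\,\mathcal{N}(0,1)\right) \lesssim \frac{\bigl(\E\pnorm{\nabla T_{\na}(Z)}{F}^4\bigr)^{1/4}\bigl(\E\pnorm{\nabla^2 T_{\na}(Z)}{\op}^4\bigr)^{1/4}}{\sigma_{I;\na}^2},
\end{align*}
so the proof reduces to (i) upper-bounding the two derivative moments in the numerator and (ii) lower-bounding $\sigma_{I;\na}^2$.

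For (i), matrix calculus yields the compact form $\nabla T_{\na}(Z) = Z(S-I) - N^{-1}\tr(S)\,Z$, a cubic polynomial in $Z$, and the Hessian is a quadratic polynomial in $Z$ whose operator norm is controllable, up to constants, by $1+\pnorm{S}{\op}$. Upper bounds on $\E\pnorm{\nabla T_{\na}(Z)}{F}^4$ and $\E\pnorm{\nabla^2 T_{\na}(Z)}{\op}^4$ then reduce to polynomial moments of $\tr(S^k)$ and of $\pnorm{S}{\op}$, which are standard Wishart estimates and also directly accessible through the spectral estimates assembled in Section \ref{section:spectral_estimate}. These yield polynomial-in-$(N,p)$ upper bounds of the anticipated order.

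The main work is (ii). I would evaluate $\sigma_{I;\na}^2$ directly as the variance of a quartic polynomial in i.i.d.\ standard Gaussians via Isserlis'/Wick's formula, which reduces to a combinatorial sum over pairings of at most eight Gaussian factors. The key subtlety---and the main obstacle---is that the correction term $-N^{-1}\tr^2(S)$ in $T_{\na}$ is engineered precisely to cancel the leading-order variance contribution of $\tr(S^2)$ arising from the $\tr(S)^2$ component; one must therefore carefully identify the surviving non-cancelling contractions and verify that $\sigma_{I;\na}^2$ still grows at a polynomial order sufficient to absorb the derivative moment upper bounds. Once a sharp variance lower bound is in hand, matching it against the fourth moment bounds in the Chatterjee quotient should deliver a rate of order $p^{-1}$ in the regime $p \leq N$ and of order $N^{-1}$ in the regime $p > N$, combining to the claimed $C/(N\wedge p)$.
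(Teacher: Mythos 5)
Your high-level strategy is the one the paper uses---Chatterjee's second-order Poincar\'e inequality applied to the Gaussian Sobolev regularity of $T_{\na}$, with the error quotient $(\E\pnorm{\nabla^2 T}{\op}^4)^{1/4}(\E\pnorm{\nabla T}{F}^4)^{1/4}/\sigma_{I;\na}^2$. Your expansion of $T_{\na}(Z)$ as a quartic polynomial and the formula $\nabla T_{\na} = Z(S-I)-N^{-1}\tr(S)Z$ are both correct. Your plan for the variance lower bound $\sigma_{I;\na}^2\asymp p^2$ via Wick's formula differs in method from the paper (which uses Plancherel's theorem in the Hermite basis to isolate the contribution of each derivative order and thereby reads off the cancellation cleanly), but this is a cosmetic difference; either route yields $\sigma_{I;\na}^2\sim p^2/4$.

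The genuine gap is in your treatment of the gradient fourth moment. Bounding $\pnorm{\nabla T_{\na}}{F}^2\leq (\pnorm{S-I}{\op}^2 + N^{-2}\tr^2(S))\pnorm{Z}{F}^2$ and appealing to ``standard Wishart estimates'' (i.e.\ Koltchinskii--Lounici moment bounds for $\pnorm{S-I}{\op}$ plus chi-squared moments) delivers only $\E\pnorm{\nabla T_{\na}}{F}^4 \lesssim p^4(1+y^6)$, $y\equiv p/N$. Feeding this together with $\E\pnorm{\nabla^2 T_{\na}}{\op}^4\lesssim (1\vee y)^4$ and $\sigma_{I;\na}^2\asymp p^2$ into the Chatterjee quotient gives a rate of order $(1\vee y^{5/2})/p$, which is $1/(N\wedge p)$ only when $y$ is bounded; for $p\gg N$ it is $p^{3/2}/N^{5/2}$, which does not even vanish when $p\gg N^{5/3}$. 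Since the proposition has no constraint on $p/N$, this is fatal for the $p>N$ regime. The paper closes this gap with a second, complementary estimate: it decomposes $\nabla T_{\na}=X(S-N^{-1}\tr(S)I)-X$, computes $\E\pnorm{\nabla T_{\na}}{F}^2$ exactly from closed-form Wishart trace moments (obtaining $p^2[1+\mathcal{O}((N\wedge p)^{-1})]+pN$), then applies the Gaussian--Poincar\'e inequality to $\pnorm{\nabla T_{\na}(X)}{F}^2$ to get the self-bounding relation
\begin{align*}
\E\pnorm{\nabla T_{\na}}{F}^4 \leq \mathcal{O}\big(p^4(1+y^{-2})\big) + 4\,\E^{1/2}\pnorm{\nabla^2 T_{\na}}{\op}^4\cdot\E^{1/2}\pnorm{\nabla T_{\na}}{F}^4,
\end{align*}
solves the resulting quadratic inequality, and so obtains $\E\pnorm{\nabla T_{\na}}{F}^4\lesssim p^4(1+y^{-2})$. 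Taking the minimum of the two estimates over $y$ then yields the uniform bound $\E\pnorm{\nabla T_{\na}}{F}^4\lesssim p^4$, which is what makes the $C/(N\wedge p)$ rate go through for all aspect ratios. Without some version of this second estimate---or an exact fourth-moment computation in place of the crude operator-norm bound---your argument does not deliver the stated rate beyond $p\lesssim N$.
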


The proof is presented in Section \ref{subsec:nagao_clt}. The CLT for $T_{\na}(X)$ was first derived in \cite[Proposition 7]{ledoit2002some} under the condition that $p/N\rightarrow y\in(0,\infty)$, which was later improved in \cite[Theorem 3.6]{birke2005note} to include the case $y\in\{0,\infty\}$. Here we give explicit error bounds in the normal approximation.

The following result establishes the contiguity condition (\ref{def:ratio_generic}) for $T_{\na}$; its proof is presented in Section \ref{subsec:nagao_ratio}.
\begin{proposition}\label{prop:ratio_nagao}
	Suppose $p/N \leq M$ for some $M>0$. Then the following hold:
	\begin{enumerate}
		\item $V_{\Sigma;\na}^2\leq C_1 N\big(\pnorm{\Sigma}{\op}^2\vee 1\big)\pnorm{\Sigma-I}{F}^2$ for some constant $C_1=C_1(M)>0$.
		\item With $Q_{\na}(\Sigma) \equiv (N^{-1}-2N^{-2})\tr(\Sigma^2-I)$, 
		\begin{align*}
		m_{\Sigma;\na}-m_{(0,I)} =  \frac{N}{4}\big[\pnorm{\Sigma-I}{F}^2+Q_{\na}(\Sigma)\big].
		\end{align*}
		\item In the asymptotic regime $N \wedge p \to \infty$,
		\begin{align*}
		\sigma_{I;\na}^2 \sim \frac{p^2}{4}.
		\end{align*}
		\item There exists some constant $C_2=C_2(M)>0$ such that
		\begin{align*}
		\frac{V_{\Sigma;\na}}{\abs{m_{\Sigma;\na}-m_{I;\na}}\vee\sigma_{I;\na} }\leq \frac{C_2}{p^{1/2}}.
		\end{align*}
		\end{enumerate}

\end{proposition}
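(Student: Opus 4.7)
The argument decomposes naturally into the four parts, of which (2) and (3) are direct Wishart-moment calculations, (1) is a gradient computation combined with Gaussian moment bounds, and (4) is an algebraic combination with a case split on $\pnorm{\Sigma}{\op}$.

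\textbf{Parts (2) and (3): Wishart moment computations.} For (2) I would write $m_{\Sigma;\na}=(N/4)[\E\tr(S-I)^2-N^{-1}\E\tr^2(S)]$ and apply the standard Wishart identities $\E\tr(S)=\tr(\Sigma)$, $\E\tr(S^2)=(1+N^{-1})\tr(\Sigma^2)+N^{-1}\tr^2(\Sigma)$, and $\var(\tr(S))=2N^{-1}\tr(\Sigma^2)$, obtaining $m_{\Sigma;\na}=(N/4)[\pnorm{\Sigma-I}{F}^2+(N^{-1}-2N^{-2})\tr(\Sigma^2)]$; subtracting the $\Sigma=I$ value of this expression then produces the claim. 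For (3) the same Wishart framework, now extended to fourth moments under $\Sigma=I$, gives a closed form for $\var(\tr(S^2)-2\tr(S)-N^{-1}\tr^2(S))$ whose leading term is $p^2/4$ as $N\wedge p\to\infty$; in particular $\sigma_{I;\na}\gtrsim p$ in this regime.

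\textbf{Part (1): Frobenius variance bound.} Writing $NS=X^\top PX$ with the centering matrix $P\equiv I_n-n^{-1}\bm{1}_n\bm{1}_n^\top$, matrix calculus yields $\nabla T_\na(X)=PX(S-I)-N^{-1}\tr(S)\,PX$. Since $P\bm{1}_n=0$, the substitution $X^{(\mu,\Sigma)}=Z\Sigma^{1/2}+\bm{1}_n\mu^\top$ eliminates the $\mu$ dependence and gives
\[
\mathscr{T}_{(\mu,\Sigma)}(Z)=PZ\Sigma^{1/2}(S_\Sigma-I)\Sigma^{1/2}-N^{-1}\tr(S_\Sigma)\,PZ\Sigma,
\]
where $S_\Sigma\equiv N^{-1}\Sigma^{1/2}Z^\top PZ\Sigma^{1/2}$. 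Expanding $\bigpnorm{\mathscr{T}_\Sigma(Z)-\mathscr{T}_I(Z)}{F}^2$ as a finite sum of terms linear and quadratic in $\Sigma-I$, and bounding each piece via moment estimates on traces of polynomials in $Z^\top PZ$ (of the same type promised in Section~\ref{section:spectral_estimate}), produces the claimed $V_{\Sigma;\na}^2\leq C_1 N(\pnorm{\Sigma}{\op}^2\vee 1)\pnorm{\Sigma-I}{F}^2$; the $\pnorm{\Sigma}{\op}^2$ factor reflects the $\Sigma^{1/2}(\cdot)\Sigma^{1/2}$ sandwich around $Z^\top PZ$.

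\textbf{Part (4): combining into the ratio, and main obstacle.} Set $\kappa\equiv\pnorm{\Sigma}{\op}\vee 1$ and $\delta\equiv\pnorm{\Sigma-I}{F}$, so by (1), $V_{\Sigma;\na}\lesssim_M\sqrt{N}\kappa\delta$. If $\kappa\leq 2$, then $V\lesssim\sqrt{N}\delta$, and bounding the correction via $|Q_\na(\Sigma)|\lesssim N^{-1}\kappa\sqrt{p}\delta$ shows that either $|m_{\Sigma;\na}-m_{I;\na}|\gtrsim N\delta^2$ (in which case $|m|\vee\sigma_I\geq\sqrt{|m|\sigma_I}\gtrsim\sqrt{N\delta^2\cdot p}$ gives ratio $\lesssim p^{-1/2}$), or $\delta$ is so small that the trivial $|m|\vee\sigma_I\geq\sigma_I\gtrsim p$ already delivers the same bound. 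If instead $\kappa\geq 2$, then $\delta\geq\pnorm{\Sigma-I}{\op}\geq\kappa-1\geq\kappa/2$, so $|m_{\Sigma;\na}-m_{I;\na}|\gtrsim N\delta^2$ (the leading term dominates) and $V/|m|\lesssim\kappa/(\sqrt{N}\delta)\lesssim 1/\sqrt{N}\lesssim_M p^{-1/2}$. The hard part is Part (1): the many cross-terms generated by $\mathscr{T}_\Sigma-\mathscr{T}_I$ must each be bounded sharply enough to extract exactly $(\pnorm{\Sigma}{\op}^2\vee 1)\pnorm{\Sigma-I}{F}^2$, rather than a cruder quantity such as $\pnorm{\Sigma}{\op}^2\tr(\Sigma^2)$ that would break the $p^{-1/2}$ bound in (4); the case analysis in (4) is itself subtle because $Q_\na(\Sigma)$ can nearly cancel the leading $\delta^2$ term in $|m_{\Sigma;\na}-m_{I;\na}|$, which is precisely the regime in which the crude bound $\sigma_{I;\na}\gtrsim p$ from (3) must rescue the ratio.
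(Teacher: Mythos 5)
Your proposal is correct and follows the same overall skeleton as the paper's proof: Wishart moment computations for (2) and (3), a gradient/telescoping decomposition and moment bounds for (1), and an algebraic case analysis for (4). A few of the routes are genuinely different, so let me compare.

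For part (1), you work with the full $n\times p$ data matrix and explicitly carry the centering matrix $P=I_n-n^{-1}\bm{1}_n\bm{1}_n^\top$, so that $\mathscr{T}_{(\mu,\Sigma)}(Z)$ involves $PZ$. The paper instead invokes the distributional identity $S\stackrel{d}{=}N^{-1}\sum_{k\le N}(X_k-\mu)(X_k-\mu)^\top$ to work in $\R^{N\times p}$ with no projection, and then writes the telescoping decomposition $\mathscr{T}_{\Sigma;\na}(Z)-\mathscr{T}_{I;\na}(Z)=V_1+V_2+V_3+V_4$ explicitly, bounding each $V_j$ by $\pnorm{\Sigma-I}{F}^2$ times controllable quantities via Lemma~\ref{lem:kl}. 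Your plan is equivalent in substance but defers the actual splitting and moment estimates, which is where all the work lives; you correctly flag that extracting the exact factor $(\pnorm{\Sigma}{\op}^2\vee 1)\pnorm{\Sigma-I}{F}^2$ rather than anything cruder is what makes (4) possible. For part (3), you propose brute-force Wishart fourth-moment computations (which would require $\E\tr^2(S^2)$, $\E\tr(S^2)\tr^2(S)$, $\E\tr^4(S)$ and cross terms); the paper instead uses the Plancherel/chaos formula $\var\,T=\sum_{k\ge1}\frac{1}{k!}\pnorm{\E\nabla^k T}{F}^2$, which terminates at $k=4$ because $T_{\na}$ is a quartic polynomial, and then evaluates each level from the derivative formulas in Lemma~\ref{lem:derivatives_T_nagao}. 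Both routes are valid, but the paper's is considerably cleaner since the leading $p^2/4$ comes out of a single fourth-order Kronecker-delta count. For part (4), you split on $\kappa=\pnorm{\Sigma}{\op}\vee1$, while the paper first converts $\pnorm{\Sigma}{\op}\le\pnorm{\Sigma-I}{F}+1$ and then splits on $\sqrt{N}\pnorm{\Sigma-I}{F}$. Your argument goes through (the key sub-case checks $V/(\abs{m}\vee\sigma)\le V/\sqrt{\abs{m}\sigma}\lesssim p^{-1/2}$ when $\abs{m}\gtrsim N\delta^2$, and uses $\sigma_{I;\na}\gtrsim p$ together with $N\gtrsim_M p$ when $\delta\lesssim\sqrt{p}/N$), and it correctly isolates the near-cancellation regime where $Q_{\na}$ can nearly wipe out the leading mean shift. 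Overall: correct, same architecture, with genuine minor variations in (3)'s technique and (4)'s case split, and deferred details in (1).
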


Compared to Proposition \ref{prop:ratio_covariance_test}, although a closed-form formula is available for $m_{\Sigma;\na}$, a somewhat undesirable `residual term'  $Q_{\na}(\Sigma)$ exists. Removing the effect of these terms in the final step (4) requires significant additional technicalities, as will be detailed in Section \ref{subsec:nagao_ratio}.

Let $\Psi_{\na}(X)$ be the test built from (\ref{intro:test}) and the statistic in (\ref{def:lrs_nagao}). Combining the above results with Corollary \ref{cor:power_generic} and some additional efforts to remove the residual term $Q_{\na}(\Sigma)$ in the mean difference formula (2) in the above proposition, we have the following asymptotic power formula for $\Psi_{\na}(X)$; see Section \ref{subsec:nagao_power} for its proof.

\begin{theorem}\label{thm:power_nagao}
	Suppose $p/N\leq M$ for some $M>0$. Then there exists some constant $C=C(\alpha,M)>0$ such that 
	\begin{align*}
	&\biggabs{ \E_{\Sigma} \Psi_{\na}(X)-   \Prob\bigg( \mathcal{N}\bigg(\frac{N \cdot \pnorm{\Sigma-I}{F}^2 }{4\sigma_{I;\na}},1\bigg)>z_\alpha\bigg)   } \leq C\cdot p^{-1/3}.
	\end{align*}
	Consequently, in the asymptotic regime $N \wedge p \to \infty$ with $\limsup (p/N)<\infty$, 
	\begin{align*}
    \E_\Sigma \Psi_{\na}(X) \sim 1- \Phi\bigg(z_\alpha-\frac{\pnorm{\Sigma-I}{F}^2}{ 2(p/N)  }\bigg).
	\end{align*} 
\end{theorem}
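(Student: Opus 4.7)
The plan is to combine Corollary~\ref{cor:power_generic} with Propositions~\ref{prop:clt_covariance_test_nagao} and~\ref{prop:ratio_nagao}, and then to replace the exact mean shift $(m_{\Sigma;\na}-m_{I;\na})/\sigma_{I;\na}$ by its ``main part'' $(N\pnorm{\Sigma-I}{F}^2)/(4\sigma_{I;\na})$ through a case analysis on the magnitude of this shift.

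First, invoking Corollary~\ref{cor:power_generic} (noting that $T_{\na}$ is polynomial, hence lies in $W^{1,2}(\gamma_{N\times p})$) together with the CLT error bound of Proposition~\ref{prop:clt_covariance_test_nagao} and the contiguity estimate in Proposition~\ref{prop:ratio_nagao}(4), one obtains, under $p/N\leq M$,
\[
\biggabs{\E_\Sigma \Psi_{\na}(X)-\bigg[1-\Phi\bigg(z_\alpha-\frac{m_{\Sigma;\na}-m_{I;\na}}{\sigma_{I;\na}}\bigg)\bigg]}\leq \frac{C}{N\wedge p}+C_{\alpha,M}\cdot p^{-1/3}\leq C'_{\alpha,M}\cdot p^{-1/3},
\]
using $N\wedge p\gtrsim_M p$. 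By Proposition~\ref{prop:ratio_nagao}(2), the normalized mean difference equals $\mathcal{M}_\Sigma+R$ where $\mathcal{M}_\Sigma\equiv (N/(4\sigma_{I;\na}))\pnorm{\Sigma-I}{F}^2$ is the target shift and $R\equiv (N/(4\sigma_{I;\na}))Q_{\na}(\Sigma)$ is the residual. The remaining task is therefore to show
\[
\sup_{\Sigma}\bigabs{\Phi(z_\alpha-\mathcal{M}_\Sigma-R)-\Phi(z_\alpha-\mathcal{M}_\Sigma)}\lesssim_{\alpha,M} p^{-1/3}.
\]

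To estimate $R$, I would use the decomposition $\tr(\Sigma^2-I)=\tr((\Sigma-I)^2)+2\tr(\Sigma-I)=\pnorm{\Sigma-I}{F}^2+2\tr(\Sigma-I)$ together with Cauchy--Schwarz $|\tr(\Sigma-I)|\leq \sqrt{p}\pnorm{\Sigma-I}{F}$, and Proposition~\ref{prop:ratio_nagao}(3) which gives $\sigma_{I;\na}^2\sim p^2/4$. This yields the key estimate (for $p$ large)
\[
\abs{R}\lesssim_M \frac{\mathcal{M}_\Sigma}{N}+\frac{\sqrt{\mathcal{M}_\Sigma}}{\sqrt{N}}.
\]
The main obstacle is that this bound on $|R|$ is not uniformly $o(1)$ in $\Sigma$: it can be arbitrarily large when $\mathcal{M}_\Sigma$ is large. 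I would therefore split into two regimes with a cutoff $C_0\sqrt{\log p}$. When $\mathcal{M}_\Sigma\leq C_0\sqrt{\log p}$, the displayed estimate combined with $N\geq p/M$ gives $|R|\lesssim_{M,C_0} (\log p)^{1/4}/\sqrt{p}\leq p^{-1/3}$ for $p$ large, so the Lipschitz property of $\Phi$ closes the gap. When $\mathcal{M}_\Sigma\geq C_0\sqrt{\log p}$, one has $|R|/\mathcal{M}_\Sigma\lesssim N^{-1/2}\to 0$, so $\mathcal{M}_\Sigma+R\geq \mathcal{M}_\Sigma/2\geq (C_0/2)\sqrt{\log p}$ for $p$ large; hence both $1-\Phi(z_\alpha-\mathcal{M}_\Sigma)$ and $1-\Phi(z_\alpha-\mathcal{M}_\Sigma-R)$ are within $e^{-\Omega(C_0^2\log p)}$ of $1$, and the triangle inequality closes the gap upon choosing $C_0$ large enough (depending only on $\alpha, M$).

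Combining the two regimes yields the non-asymptotic bound of the theorem. The asymptotic formula then follows immediately by plugging in $\sigma_{I;\na}^2\sim p^2/4$ from Proposition~\ref{prop:ratio_nagao}(3) and using continuity of $\Phi$.
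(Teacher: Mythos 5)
Your argument is correct and arrives at the same non-asymptotic bound, but it replaces the mechanism the paper uses to remove the residual term $Q_{\na}(\Sigma)$. The paper (Section~\ref{subsec:nagao_power}) controls $\Delta P$ by taking the $\min$ of two estimates --- one from Lemma~\ref{lem:normal_mean_multi} (which bounds the change in $\Prob(\mathcal{N}(u,1)>t)$ under a \emph{multiplicative} perturbation $u\mapsto (1+\eta)u$ by $2(1+|t|)|\eta|$, so the bound tightens precisely when the shift $u$ is large), and one from anti-concentration (Lipschitz), and then does a case split on $\pnorm{\Sigma-I}{F}\gtrless 1$. You instead do a case split on $\mathcal{M}_\Sigma \gtrless C_0\sqrt{\log p}$: in the small-shift regime you use the Lipschitz property of $\Phi$ (same as the paper's anti-concentration step), and in the large-shift regime you argue directly that both probabilities are within $p^{-\Omega(C_0^2)}$ of $1$. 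The final rates coincide, since the dominant error comes from the exponent $2/3$ in Corollary~\ref{cor:power_generic}, which converts the contiguity rate $\overline{\mathrm{err}}_\Sigma\lesssim p^{-1/2}$ of Proposition~\ref{prop:ratio_nagao}(4) into $p^{-1/3}$. The practical difference: Lemma~\ref{lem:normal_mean_multi} gives a clean algebraic bound $|\eta|\,\lesssim_M\,N^{-1/2}(\pnorm{\Sigma-I}{F}\vee 1)/\pnorm{\Sigma-I}{F}$ on the \emph{relative} perturbation with no logarithms, so the paper's $\Delta P$ bound is $\lesssim p^{-1/2}$; your tail argument introduces the $\sqrt{\log p}$ cutoff and hence some $(\log p)^{1/4}/\sqrt{p}$ factors, which is looser but still comfortably below $p^{-1/3}$. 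Your derivation of $|R|\lesssim_M \mathcal{M}_\Sigma/N + \sqrt{\mathcal{M}_\Sigma/N}$ via $\tr(\Sigma^2-I)=\pnorm{\Sigma-I}{F}^2+2\tr(\Sigma-I)$ and Cauchy--Schwarz is a valid alternative to the paper's spectral bound in (\ref{ineq:Q}), and in fact gives a sharper constant in some regimes of $\pnorm{\Sigma-I}{F}$. One small correctness note: your claim $|R|/\mathcal{M}_\Sigma\lesssim N^{-1/2}$ in the large-shift regime implicitly uses $\mathcal{M}_\Sigma\geq 1$, which holds for $p$ large since $\mathcal{M}_\Sigma\geq C_0\sqrt{\log p}$, and the small-$p$ case is absorbed by the trivial bound $\leq 1$ after enlarging $C(\alpha,M)$; this should be said explicitly.
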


The asymptotic behavior of $T_{\na}$ under the alternative is previously only known in \cite[Theorem 4.1]{srivastava2005some} under rather restrictive conditions on both $\Sigma$ and growth of $p$. Theorem \ref{thm:power_nagao} only requires $p/N$ to be bounded and makes no assumptions on $\Sigma$.

\subsection{Testing sphericity $\Sigma=\lambda I$}\label{section:test_spherical}
Consider the testing problem:
\begin{align}\label{test:spherical}
H_0: \Sigma = \lambda I\quad \textrm{versus}\quad H_1: \hbox{$H_0$ does not hold}
\end{align}
for some un-specified $\lambda>0$. This is a special case of (\ref{intro:test}) by taking $\mathscr{H}_0=\R^p\times \{\lambda I:\lambda>0\}$, and has been extensively studied previously in \cite{ledoit2002some,srivastava2005some,chen2010tests,jiang2012likelihood,jiang2013central}.

\subsubsection{LRT}\label{section:LRT_spherical}
This subsection studies the LRT for (\ref{test:spherical}). The (re-scaled) log-likelihood ratio statistic for (\ref{test:spherical}) is defined by (cf. \cite[Theorem 8.3.2]{muirhead1982aspects}):
\begin{align}\label{def:lrs_spherical}
T_{\lrt,s}(X) &\equiv \frac{N}{2}\big(p\log\tr(S) - \log\det S - p\log p\big).
\end{align} 
Evidently, the law of $T_{\lrt,s}(X)$ does not depend on the $\lambda$ in (\ref{test:spherical}) and hence is invariant under $H_0$. Thus the general principle in Theorem \ref{thm:general} applies due to regularity of $T_{\lrt;s}$ (see Appendix \ref{section:regularity}). 
We will use $\big(m_{\Sigma;\lrt,s},\sigma_{\Sigma;\lrt,s},V_{\Sigma;\lrt,s}\big)$ to represent their generic versions defined in (\ref{def:mean_variance_generic}) and (\ref{def:V_generic_test}). 

For a symmetric $p\times p$ matrix $M$, let
\begin{align}\label{def:trace_b}
b_\ell(M)&\equiv p^{-1}\tr(M^\ell),\quad b(M)\equiv b_1(M).
\end{align}
The next proposition establishes a quantitative CLT for $T_{\lrt,s}(X)$; its proof is presented in Section \ref{subsec:spherical_lrt_clt}. Recall that $T_{\lrt,s}$ is non-degenerate only if $p\leq n-1 = N$.

\begin{proposition}\label{thm:clt_spherical}
	Suppose $p/N\leq 1-\epsilon$ for some $\epsilon \in (0,1)$. Then there exists some $C=C(\epsilon)>0$ such that under $H_0$,
	\begin{align*}
	d_{\mathrm{TV}}\bigg(\frac{T_{\lrt,s}(X)-m_{I;\lrt,s}}{\sigma_{I;\lrt,s}},\,\mathcal{N}(0,1)\bigg)\leq \frac{C}{p}.
	\end{align*}
\end{proposition}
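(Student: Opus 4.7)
The plan is to mirror the strategy used for $T_{\lrt}$ in Proposition \ref{prop:clt_covariance_test}: apply Chatterjee's second-order Poincar\'e inequality \cite{chatterjee2009fluctuations} directly to $T_{\lrt,s}$ viewed as a smooth functional of the underlying Gaussian matrix, and exploit the regularity of $T_{\lrt,s}$ (see Appendix \ref{section:regularity}) to reduce the normal approximation to moment bounds on its first two derivatives. Under $H_0$ the statistic is scale invariant, so I may set $\lambda = 1$ and work with $X \equald Y$ for $Y \in \R^{N \times p}$ having i.i.d.\ $\mathcal{N}(0,1)$ entries and $S = Y^\top Y/N$. Chatterjee's inequality would then yield
\[
d_{\mathrm{TV}}\bigg(\frac{T_{\lrt,s}(Y) - m_{I;\lrt,s}}{\sigma_{I;\lrt,s}}, \mathcal{N}(0,1)\bigg) \lesssim \frac{\big(\E\pnorm{\nabla T_{\lrt,s}(Y)}{F}^4\big)^{1/4}\big(\E\pnorm{\nabla^2 T_{\lrt,s}(Y)}{\op}^4\big)^{1/4}}{\sigma_{I;\lrt,s}^2},
\]
reducing the whole argument to (i) $L^4$ bounds on the two derivatives and (ii) a matching lower bound $\sigma_{I;\lrt,s}^2 \gtrsim_\epsilon p^2$.

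A direct chain-rule calculation produces the clean identity
\[
\nabla T_{\lrt,s}(Y) = Y\bigg[\frac{p}{\tr S} I_p - S^{-1}\bigg], \qquad \pnorm{\nabla T_{\lrt,s}(Y)}{F}^2 = N\bigg[\tr(S^{-1}) - \frac{p^2}{\tr S}\bigg].
\]
The cancellation on the right-hand side is essential: expanding around $S = I$ with $D = S - I$ gives $\tr(S^{-1}) - p^2/\tr S = \tr D^2 - (\tr D)^2/p + O(\pnorm{D}{F}^3)$, which captures only the non-constant part of the spectrum of $S$ and has much smaller variance than either summand alone. The mean $\E[\tr(S^{-1}) - p^2/\tr S] = p(p+1)/(N-p-1)$ follows from the standard Wishart identities $\E\tr(S^{-1}) = pN/(N-p-1)$ and $\E[p^2/\tr S] = p^2 N/(Np-2)$; combined with $L^4$ concentration of $\tr S$ and $\tr S^{-1}$ from Section \ref{section:spectral_estimate}, this will upgrade to $\E\pnorm{\nabla T_{\lrt,s}(Y)}{F}^4 \lesssim_\epsilon p^4$. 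Differentiating once more, the Hessian decomposes into four structured blocks in indices $(ij), (kl)$:
\[
\delta_{ik}\bigg[\frac{p}{\tr S}\delta_{jl} - (S^{-1})_{lj}\bigg] - \frac{2p}{N(\tr S)^2} Y_{ij}Y_{kl} + \frac{1}{N}\big[H_{ik}(S^{-1})_{lj} + (YS^{-1})_{il}(YS^{-1})_{kj}\big],
\]
with $H \equiv YS^{-1}Y^\top$. I will bound the operator norm block by block using the projector identity $\pnorm{H}{\op} = N$, the identity $(YS^{-1})^\top(YS^{-1}) = NS^{-1}$ which gives $\pnorm{YS^{-1}}{\op}^2 = N\pnorm{S^{-1}}{\op}$, the estimate $\pnorm{S^{-1}}{\op} \lesssim_\epsilon 1$ in all moments (from $p/N \leq 1-\epsilon$), and the concentration $\tr S \asymp p$; each block then contributes operator norm $O_\epsilon(1)$ in $L^4$, yielding $\E\pnorm{\nabla^2 T_{\lrt,s}(Y)}{\op}^4 \lesssim_\epsilon 1$.

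For the variance lower bound, I will either invoke the known asymptotic $\sigma_{I;\lrt,s}^2 \sim (N^2/2)[-p/N - \log(1-p/N)] \gtrsim_\epsilon p^2$ obtained from the Bartlett decomposition $\det(NS) = \prod_{k=1}^p \chi^2_{N-k+1}$ combined with the delta-method expansion of $p\log\tr S$ around $\tr S = p$, or more robustly observe that $T_{\lrt,s} - T_{\lrt} \propto p\log\tr(S) - \tr(S)$ is a smooth function of the scalar $\tr S$ whose contribution to the variance is lower order than that of $T_{\lrt}$, so the leading behavior matches the one computed in Proposition \ref{prop:ratio_covariance_test}(3). Assembling the three ingredients, Chatterjee's inequality delivers the desired $d_{\mathrm{TV}} \lesssim_\epsilon p \cdot 1/p^2 = 1/p$.

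The main obstacle will be the extra $1/\tr S$ factor in the gradient, which both multiplies $Y$ by a random scalar and injects the new rank-one block $-(2p/N(\tr S)^2) Y_{ij}Y_{kl}$ into the Hessian absent from the $T_{\lrt}$ analysis. Although $\tr S \approx p$ with small fluctuations, ensuring that neither the gradient cancellation nor this new Hessian block loses a power of $p$ in the $L^4$ bounds requires genuinely sharper spectral control than that used for $T_{\lrt}$; this is where the moment bounds on $\pnorm{S^{-1}}{\op}$ and on linear statistics of the sample eigenvalues from Section \ref{section:spectral_estimate} will enter in a non-trivial way.
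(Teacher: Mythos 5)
Your overall strategy matches the paper's: apply Chatterjee's second-order Poincar\'e inequality to $T_{\lrt,s}$, bound $\E\pnorm{\nabla T_{\lrt,s}}{F}^4$ and $\E\pnorm{\nabla^2 T_{\lrt,s}}{\op}^4$, and get $\sigma_{I;\lrt,s}^2\gtrsim_\epsilon p^2$ by comparison to $T_{\lrt}$. The Hessian block-by-block bound (via $\pnorm{H}{\op}=N$ and $\pnorm{YS^{-1}}{\op}^2=N\pnorm{S^{-1}}{\op}$) is equivalent to the paper's route through $U_{\ell,m}$, and your second option for the variance lower bound is exactly the paper's argument (bound $\var_I(T_{\lrt}-T_{\lrt,s})$ by Poincar\'e and show it is $O(1)$).

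The gap is in the gradient $L^4$ bound. Your identity $\pnorm{\nabla T_{\lrt,s}(Y)}{F}^2=N[\tr(S^{-1})-p^2/\tr S]$ is correct and genuinely nice, and you rightly observe that the cancellation is essential. But the plan you then give --- upgrade to $\E\pnorm{\nabla T_{\lrt,s}}{F}^4\lesssim p^4$ by ``$L^4$ concentration of $\tr S$ and $\tr S^{-1}$ from Section \ref{section:spectral_estimate}'' --- does not close the argument. Section \ref{section:spectral_estimate} supplies moment bounds of the form $\E\pnorm{S^{-1}}{\op}^q\lesssim_\epsilon 1$, which only yields $\tr S^{-1}\lesssim_\epsilon p$ and $p^2/\tr S\lesssim p$ separately; treating the two terms separately gives $\E\big(N[\tr S^{-1}-p^2/\tr S]\big)^2\lesssim N^2p^2$, which is far larger than $p^4$ once $N\gg p$. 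To use the cancellation you would need a variance bound on the \emph{combination} $\tr S^{-1}-p^2/\tr S$ of order $p^4/N^2$, which is not provided by the cited material and is precisely the delicate point. Also, the exact formula $\E[\tr S^{-1}-p^2/\tr S]=p(p+1)/(N-p-1)$ is a Jensen slip (this is $\E\tr S^{-1}-p^2/\E\tr S$, not the mean of the difference); the error is $O(1/N)$ and harmless, but worth noting.

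There is a one-line repair that makes your cleaner identity pay off. Combine it with the analogous identity $\pnorm{\nabla T_{\lrt}(Y)}{F}^2=N\big[\tr S-2p+\tr S^{-1}\big]$; then
\[
\pnorm{\nabla T_{\lrt}(Y)}{F}^2-\pnorm{\nabla T_{\lrt,s}(Y)}{F}^2
= N\Big[\tr S-2p+\frac{p^2}{\tr S}\Big]
= N\,\frac{(\tr S-p)^2}{\tr S}\ \geq\ 0,
\]
so $\pnorm{\nabla T_{\lrt,s}(Y)}{F}^2\leq\pnorm{\nabla T_{\lrt}(Y)}{F}^2$ deterministically, and the required $L^4$ bound follows from the $\E\pnorm{\nabla T_{\lrt}}{F}^4\lesssim p^4$ already established in the proof of Proposition \ref{prop:clt_covariance_test}, with no concentration argument. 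The paper's own route is different but also avoids quantifying the cancellation: it uses the additive decomposition $\nabla T_{\lrt,s}=\nabla T_{\lrt}+(b^{-1}(S)-1)X$ (from Lemma \ref{lem:derivatives_T_sphere}) and bounds the correction term directly via $\E\pnorm{(b^{-1}(S)-1)X}{F}^4\leq N^2p^2\,\E^{1/2}b^4(S)\,\E^{1/2}(b^{-1}(S)-1)^8\lesssim p^4$, using Lemma \ref{lem:trace_moment}-(3). Either fix keeps your strategy intact; as written, the step does not go through.
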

The CLT for $T_{\lrt,s}(X)$ was previously derived in \cite[Theorem 1]{jiang2013central} under the asymptotics $y\in(0,1]$. The quantitative CLT above does not require $p$ to grow proportionally to $N$ but excludes the boundary case $y  = 1$.

The following result establishes the contiguity condition (\ref{def:ratio_generic}) for $T_{\lrt,s}$; see Section \ref{subsec:spherical_lrt_ratio} for its proof.

\begin{proposition}\label{prop:ratio_spherical}
	Suppose $\Sigma$ is non-singular. The following hold:
	\begin{enumerate}
		\item There exists some absolute constant $C_1>0$ such that
		\begin{align*}
		V_{\Sigma;\lrt,s}^2\leq C_1 N\pnorm{{\Sigma}\cdot{b^{-1}(\Sigma)}-I}{F}^2
		\end{align*}
		holds for $N,p$ large enough.
		\item The mean difference is given by
		\begin{align*}
		m_{\Sigma;\lrt,s}-m_{I;\lrt,s} =\frac{N}{2} \big[-\log \det({\Sigma}\cdot{b^{-1}(\Sigma)}) +Q_{\lrt,s}({\Sigma}\cdot{b^{-1}(\Sigma)}) \big].
		\end{align*}
		Here 
		\begin{align}\label{ineq:Q_lrt}
		\bigabs{Q_{\lrt,s}\big({\Sigma}\cdot{b^{-1}(\Sigma)}\big)} &\leq C_2 N^{-1} b\big[ \big({\Sigma}\cdot{b^{-1}(\Sigma)}\big)^2\big]
		\end{align}
		for some absolute constant $C_2>0$.
		\item In the asymptotic regime $N\wedge p \to \infty$ with $\limsup (p/N)<1$, 
		\begin{align*}
		\sigma_{I;\lrt,s}^2\sim  \frac{N^2}{2}\bigg[-\frac{p}{N}-\log\bigg(1-\frac{p}{N}\bigg)\bigg].
		\end{align*}
		\item There exists some absolute constant $C_3>0$ such that
		\begin{align*}
		\frac{V_{\Sigma;\lrt,s}}{\abs{m_{\Sigma;\lrt,s}-m_{I;\lrt,s}}\vee\sigma_{I;\lrt,s} }\leq \frac{C_3 }{(\sigma_{I;\lrt,s}\wedge N)^{1/2}}.
		\end{align*}	
	\end{enumerate}
\end{proposition}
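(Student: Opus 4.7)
My plan is to first compute $\nabla T_{\lrt,s}$ explicitly and reduce $\mathscr{T}_{(\mu,\Sigma)}(z)$ to a closed-form that exposes the scale invariance of the null, then handle the four items in turn: (1) by a Wishart concentration argument, (2) by a Wishart moment identity plus Taylor expansion — this is the main obstacle — (3) by the independence of $\operatorname{tr}W$ and $W/\operatorname{tr}W$ under the Wishart rotational invariance, and (4) by combining the three ingredients through a case analysis on the ratio $\overline{\mathrm{err}}_{(\mu,\Sigma)}$.

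Since $T_{\lrt,s}(X)$ depends on $X$ only through $S = N^{-1}X^\top P X$ with $P = I_n-\bm{1}_n\bm{1}_n^\top/n$, the chain rule yields $\nabla T_{\lrt,s}(X) = PX\bigl[b(S)^{-1}I - S^{-1}\bigr]$. Setting $W \equiv z^\top Pz \sim \mathcal{W}_p(N,I)$ and $\tilde{S} \equiv \Sigma^{1/2}W\Sigma^{1/2}/N$, and using $P\bm{1}_n = 0$, I obtain $\mathscr{T}_{(\mu,\Sigma)}(z) = Pz\bigl[b(\tilde{S})^{-1}\Sigma - NW^{-1}\bigr]$. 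When $\Sigma = \lambda_0 I$ this collapses to $NPz\bigl[b(W)^{-1}I - W^{-1}\bigr]$, independent of $\lambda_0$ — matching the scale invariance of the null. Hence the infimum over $\mathscr{H}_0$ is trivially attained and $V_{\Sigma;\lrt,s}^2 = \E\tr(WA^2)$ for the symmetric matrix $A \equiv b(\tilde{S})^{-1}\Sigma - (N/b(W))I$. For (1), I would rewrite $A = b(\tilde{S})^{-1}(\Sigma - cI)$ with $c \equiv Nb(\tilde{S})/b(W)$ and use the orthogonal decomposition $\|\Sigma - cI\|_F^2 = b(\Sigma)^2\|\tilde{\Sigma} - I\|_F^2 + p(c - b(\Sigma))^2$; Wishart concentration from Section~\ref{section:spectral_estimate} guarantees $b(\tilde{S})/b(\Sigma), b(W)/N$ are each $1 + o(1)$ in $L^q$, which also bounds $|c - b(\Sigma)|$ and reduces $\E\tr(WA^2)$ to $C_1 N\|\tilde{\Sigma}-I\|_F^2$. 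For (3), under $\Sigma = I$ the statistic is $(N/2)\bigl[p\log(\operatorname{tr}W/N) - \log\det W + \text{const}\bigr]$; the classical representation $\log\det W \equald \sum_{k=1}^p \log\chi^2_{N-k+1}$ (independent) together with $\operatorname{tr}W \perp W/\operatorname{tr}W$ gives $\var\log\det W = \sum_{k=1}^p \psi'((N-k+1)/2)$, and Stirling asymptotics produce $\sigma_{I;\lrt,s}^2 \sim (N^2/2)\bigl[-p/N - \log(1-p/N)\bigr]$, exactly as for the identity-testing LRT.

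The main obstacle is (2). A direct computation using $\tilde{S} = \Sigma^{1/2}W\Sigma^{1/2}/N$ gives
\[
m_{\Sigma;\lrt,s} - m_{I;\lrt,s} = \tfrac{N}{2}\bigl[p\,\E\log(\tr(\tilde{\Sigma}W)/\tr W) - \log\det\tilde{\Sigma}\bigr],
\]
so $Q_{\lrt,s}(\tilde{\Sigma}) = p\,\E\log(\tr(\tilde{\Sigma}W)/\tr W)$. Writing $Y = \tilde{\Sigma} - I$ (hence $\tr Y = 0$) and $V = W/\tr W$ (independent of $\tr W$ by Wishart rotational invariance, with $\E V = I/p$ by symmetry), I set $Z \equiv \tr(YV)$ so that $\tr(\tilde{\Sigma}W)/\tr W = 1+Z$ with $\E Z = 0$. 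The key Wishart moment identity $\E\tr(YW)^2 = 2N\tr(Y^2)$ (valid for symmetric $Y$ with $\tr Y = 0$), combined with $\E(\tr W)^2 = N^2p^2 + 2Np$, gives by independence
\[
\E Z^2 = \frac{\E\tr(YW)^2}{\E(\tr W)^2} \lesssim \frac{\tr(Y^2)}{Np^2},
\]
hence $|p\,\E Z^2/2| \lesssim b(\tilde{\Sigma}^2)/N$ (using $\tr(Y^2)/p = b(\tilde{\Sigma}^2)-1$). The delicate part is absorbing the cubic-and-higher Taylor remainder uniformly over all admissible $\tilde{\Sigma}$, without any spectral restriction: I would obtain this via a high-probability bound on $\|W - NI\|_{\op}$ from Section~\ref{section:spectral_estimate}, combined with the deterministic $|Z| \leq \|Y\|_{\op}$ on the high-probability event and crude $L^1$ estimates on its complement.

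For (4), I would combine (1)--(3) by a case analysis. In the regime $|m_{\Sigma;\lrt,s} - m_{I;\lrt,s}| \leq \sigma_{I;\lrt,s}$, (2) forces $N(-\log\det\tilde{\Sigma})/2 \lesssim \sigma_{I;\lrt,s} + b(\tilde{\Sigma}^2)$, which together with $\tr\tilde{\Sigma} = p$ confines the spectrum of $\tilde{\Sigma}$ to a bounded interval (depending on $\sigma_{I;\lrt,s}\wedge N$); in this interval, the local quadratic lower bound $\lambda - 1 - \log\lambda \gtrsim (\lambda-1)^2$ yields $\|\tilde{\Sigma}-I\|_F^2 \lesssim -\log\det\tilde{\Sigma}$, so by (1) we obtain $V^2 \lesssim \sigma_{I;\lrt,s}$, which gives the required bound $V/\sigma_{I;\lrt,s} \lesssim 1/\sqrt{\sigma_{I;\lrt,s}\wedge N}$. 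In the complementary regime where the mean shift dominates, an elementary inequality between $\|\tilde{\Sigma}-I\|_F^2$ and $(-\log\det\tilde{\Sigma})^2$ (again absorbing the $Q_{\lrt,s}$ correction via (2)) yields $V/|m_{\Sigma;\lrt,s} - m_{I;\lrt,s}| \lesssim 1/\sqrt{\sigma_{I;\lrt,s}\wedge N}$, completing the proof.
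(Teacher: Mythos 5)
Your treatment of (3) takes a genuinely different and arguably cleaner route than the paper's. Rather than transfer the variance asymptotics from $T_{\lrt}$ via a Poincar\'e bound on $\var_I(T_{\lrt}-T_{\lrt,s})$ (the paper's approach), you exploit the exact Bartlett decomposition $\log\det W\equald\sum_{k=1}^p\log\chi^2_{N-k+1}$ together with the scale/shape independence $\tr W\perp W/\tr W$ to obtain a closed-form $\sigma^2_{I;\lrt,s}=\tfrac{N^2}{4}\bigl[\sum_{k=1}^p\psi'((N-k+1)/2)-p^2\psi'(Np/2)\bigr]$; trigamma asymptotics then produce the stated formula. This is self-contained and the scale/shape factorization is a nice structural insight, though it does not port to the other test statistics in the paper the way the $\var_I(\Delta)$ comparison does.

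The principal gap is in (2). Both routes reduce $Q_{\lrt,s}$ to $p\,\E\log(1+Z)$ with $Z=\tr(YV)$, $\E Z=0$, and your derivation of $\E Z^2\lesssim\tr(Y^2)/(Np^2)$ via the independence $Z\perp\tr W$ and the Wishart identity $\E\tr(YW)^2=2N\tr(Y^2)$ is correct and elegant. The trouble is the proposed control of the Taylor remainder. You invoke a high-probability bound on $\|W-NI\|_{\op}$ together with the deterministic $|Z|\leq\|Y\|_{\op}$; but $\|Y\|_{\op}=\|\tilde\Sigma-I\|_{\op}$ can be as large as $p-1$ under the normalization $\tr\tilde\Sigma=p$, and on the event $\{\|W-NI\|_{\op}\leq\epsilon N,\ \tr W\geq Np/2\}$ you only get $|Z|\leq\|Y\|_1\epsilon N/\tr W\lesssim\epsilon\sqrt{b(\tilde\Sigma^2)-1}$, which is unbounded over admissible $\tilde\Sigma$. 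So neither bound keeps $|Z|$ below $1$ uniformly, and the cubic remainder of $\log(1+Z)$ is not controlled uniformly in $\tilde\Sigma$. What the paper's Lemma \ref{lem:res_log_trace} does is condition on the event $\{Z\geq -1/2\}$ \emph{directly} — i.e.\ bound $Z$ itself, not $W$ — which has exponentially small failure probability by Lemma \ref{lem:concentration_trace}; on that event the elementary inequality $|\log(1+z)-z|\leq 4z^2$, valid for \emph{all} $z\geq-1/2$, immediately reduces the remainder to $\mathcal{O}(\E Z^2)$ with no cubic term to manage, and crude $L^1$ bounds handle the complement. Your second-order Taylor framing creates a difficulty the paper's choice of good event and elementary inequality eliminates; to fix your argument you should switch the good event to $\{Z\geq -1/2\}$ and replace the series expansion by this two-sided quadratic bound.

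Two smaller imprecisions. In (1), the orthogonal decomposition $\|\Sigma-cI\|_F^2=b(\Sigma)^2\|\tilde\Sigma-I\|_F^2+p(c-b(\Sigma))^2$ (which uses $\tr(\tilde\Sigma-I)=0$) does not directly split the object you actually need to bound, namely $\E\tr(WA^2)$, since $\tr\bigl(W(\tilde\Sigma-I)\bigr)\neq0$ in general; you must first pass to $\tr(WA^2)\leq\|W\|_{\op}\|A\|_F^2$ before the decomposition applies, whereas the paper adds and subtracts inside the Frobenius norm and avoids the orthogonality claim altogether. In (4), your statement that $-\log\det\tilde\Sigma\lesssim\sigma_{I;\lrt,s}/N$ confines the spectrum to a bounded interval is correct, but it relies implicitly on $\sigma_{I;\lrt,s}/N=\mathcal{O}(1)$ under $\limsup p/N<1$; the paper's split into $J=\{j:|\lambda_j-1|\leq1\}$ versus $J^c$, using the lower bound $\lambda-1-\log\lambda\gtrsim(\lambda-1)^2\wedge|\lambda-1|$, makes the same case analysis without that extra reasoning and also handles the $Q_{\lrt,s}$ correction more explicitly.
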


There is a genuine difference between the above contiguity result and the previous ones studied in Section \ref{section:LRT_covariance}, in that a closed-form formula for the mean difference $m_{\Sigma;\lrt,s}-m_{I;\lrt,s}$ is no longer available. One therefore has to work with strong enough upper bounds for the `residual term' $Q_{\lrt,s}(\Sigma\cdot b^{-1}(\Sigma))$, the removal of which constitutes the main technicalities in the proofs; see Section \ref{subsec:spherical_lrt_ratio} for details.

Let $\Psi_{\lrt,s}(X)$ be the test built from (\ref{intro:test}) and the statistic in (\ref{def:lrs_spherical}). Combining the above results with Theorem \ref{thm:general} and some additional efforts to remove the residual term $Q_{\lrt,s}(\Sigma\cdot b^{-1}(\Sigma))$, we have the following asymptotic power formula for $\Psi_{\lrt,s}(X)$; see Section \ref{subsec:spherical_clt_power} for its proof.

\begin{theorem}\label{thm:power_spherical}
	Suppose $p/N\leq 1-\epsilon$ for some $\epsilon \in (0,1)$. Then there exists some constant $C=C(\epsilon,\alpha)>0$ such that 
	\begin{align*}
	&\biggabs{ \E_{\Sigma} \Psi_{\lrt,s}(X)-   \Prob\bigg( \mathcal{N}\bigg(-\frac{ N \log \det\big({\Sigma}\cdot{b^{-1}(\Sigma)}\big) }{2\sigma_{I;s}},1\bigg)>z_\alpha\bigg)   }\leq C\cdot p^{-1/3}.
	\end{align*}
	Consequently, in the asymptotic regime $N \wedge p \to \infty$ with $\limsup (p/N)<1$,
	\begin{align*}
	\E_{\Sigma} \Psi_{\lrt,s}(X) \sim 1- \Phi\bigg(z_\alpha-\frac{-\log \det \big({\Sigma}\cdot{b^{-1}(\Sigma)}\big)}{ \sqrt{2\big(-\frac{p}{N}-\log\big(1-\frac{p}{N}\big)\big)}  }\bigg).
	\end{align*} 
\end{theorem}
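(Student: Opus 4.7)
The plan is to apply Corollary~\ref{cor:power_generic} to the null $\mathscr{H}_0 = \R^p\times\{\lambda I:\lambda>0\}$, combining the null CLT from Proposition~\ref{thm:clt_spherical} with the contiguity bound from Proposition~\ref{prop:ratio_spherical}(4), and then to pass from the ``true'' mean shift $\tau_\Sigma \equiv (m_{\Sigma;\lrt,s}-m_{I;\lrt,s})/\sigma_{I;\lrt,s}$ to the simpler target $\tilde\tau_\Sigma \equiv -N\log\det(\Sigma\cdot b^{-1}(\Sigma))/(2\sigma_{I;\lrt,s})$ via anti-concentration of the standard normal.

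The first step is to note that under $p/N\leq 1-\epsilon$, Proposition~\ref{prop:ratio_spherical}(3) together with the elementary inequality $-y-\log(1-y)\geq y^2/2$ on $[0,1)$ gives $\sigma_{I;\lrt,s}\gtrsim_\epsilon p$, hence $\sigma_{I;\lrt,s}\wedge N\gtrsim_\epsilon p$ (using $p\leq N$); for small $p$ the target bound is trivial after adjusting constants. Plugging this into Proposition~\ref{prop:ratio_spherical}(4) bounds the contiguity term by $p^{-1/2}$, and together with $\mathrm{err}_{I;\lrt,s}\lesssim_\epsilon p^{-1}$ from Proposition~\ref{thm:clt_spherical}, Corollary~\ref{cor:power_generic} yields the preliminary estimate
\[
\bigabs{\E_\Sigma \Psi_{\lrt,s}(X) - [1-\Phi(z_\alpha-\tau_\Sigma)]} \leq C_{\epsilon,\alpha}\cdot p^{-1/3}.
\]

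The main obstacle is then to show $|\Phi(z_\alpha-\tau_\Sigma)-\Phi(z_\alpha-\tilde\tau_\Sigma)|\lesssim_\epsilon p^{-1/3}$. From Proposition~\ref{prop:ratio_spherical}(2), $\tau_\Sigma-\tilde\tau_\Sigma = NQ_{\lrt,s}/(2\sigma_{I;\lrt,s})$ with $|Q_{\lrt,s}(M)|\lesssim N^{-1}b(M^2)$, where $M\equiv\Sigma\cdot b^{-1}(\Sigma)$ satisfies $\tr(M)=p$ and $D\equiv-\log\det M = \sum_i[(\mu_i-1)-\log\mu_i]$ for the eigenvalues $\{\mu_i\}$ of $M$ (using $\sum_i(\mu_i-1)=0$). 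The crucial inequality I would exploit is the elementary lower bound $(\mu-1)-\log\mu\geq c_0\mu$ for $\mu\geq 2$ (with $c_0=(1-\log 2)/2$), which gives $\sum_{\mu_i\geq 2}\mu_i\leq D/c_0$; splitting the second-moment sum and using $\max_i\mu_i\leq p$ then yields $\sum_i\mu_i^2\leq 4p+pD/c_0$, hence $b(M^2)\lesssim 1+D$, and therefore $|\tau_\Sigma-\tilde\tau_\Sigma|\lesssim(1+D)/\sigma_{I;\lrt,s}\lesssim p^{-1}+\tilde\tau_\Sigma/N$, since $D/\sigma_{I;\lrt,s}=2\tilde\tau_\Sigma/N$.

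To conclude I would split on $\tilde\tau_\Sigma$. In the regime $\tilde\tau_\Sigma\leq p^{1/6}$, the above bound and $p\leq N$ yield $|\tau_\Sigma-\tilde\tau_\Sigma|\lesssim p^{-5/6}$, and the $1$-Lipschitz property of $\Phi$ finishes this case. In the regime $\tilde\tau_\Sigma>p^{1/6}$, for $p$ large enough the bound forces $\tau_\Sigma\geq\tilde\tau_\Sigma/2$, so both $\Phi(z_\alpha-\tau_\Sigma)$ and $\Phi(z_\alpha-\tilde\tau_\Sigma)$ are bounded above by $\Phi(z_\alpha-p^{1/6}/2)$, which is super-polynomially small and hence negligible compared to $p^{-1/3}$. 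The technical heart of the argument is the eigenvalue-tail estimate $(\mu-1)-\log\mu\gtrsim\mu$ for $\mu\geq 2$; without it, the naive bound $b(M^2)\leq 1+p^{-1}\pnorm{M-I}{F}^2$ fails to control $b(M^2)$ by $D$ in the presence of a single very large eigenvalue of $M$.
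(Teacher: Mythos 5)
Your proof is correct and shares the paper's high-level architecture: use Corollary~\ref{cor:power_generic} (with Proposition~\ref{thm:clt_spherical} for the null CLT, Proposition~\ref{prop:ratio_spherical}(4) for contiguity, and $\sigma_{I;\lrt,s}\gtrsim_\epsilon p$) to get the preliminary estimate against $\tau_\Sigma$, then control the change $|\Phi(z_\alpha-\tau_\Sigma)-\Phi(z_\alpha-\tilde\tau_\Sigma)|$ arising from the residual $Q_{\lrt,s}$. Where you genuinely diverge from the paper is in how the residual is tamed. You first prove the clean estimate $b(M^2)\lesssim 1+D$ (with $D=-\log\det M$, $\tr(M)=p$) via the elementary lower bound $(\mu-1)-\log\mu\geq c_0\mu$ on $\mu\geq 2$ together with $\max_i\mu_i\leq p$, and then split on $\tilde\tau_\Sigma\lessgtr p^{1/6}$, finishing the large-shift regime with Gaussian tail decay. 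The paper instead splits on $\max_j\lambda_j\leq 10$ versus $>10$: the small-eigenvalue case uses anti-concentration directly, while the large-eigenvalue case invokes the multiplicative mean-shift bound from Lemma~\ref{lem:normal_mean_multi} (the second branch of the min in their display (\ref{ineq:Delta_P_1})) together with a $J/J^c$ decomposition and the bound $\max_j\lambda_j\leq p$. Your route avoids both the ad hoc cutoff $10$ and Lemma~\ref{lem:normal_mean_multi} for this step; its key insight is that $D$ linearly dominates the large eigenvalues, so $b(M^2)$ is controlled by $1+D$ without any explicit case split on $\Sigma$'s spectrum. The paper's route gives a marginally sharper residual bound ($p^{-1}\vee N^{-1}$ versus your $p^{-5/6}$ in the easy regime), but this difference is immaterial against the $p^{-1/3}$ rate coming from the exponent $2/3$ in Corollary~\ref{cor:power_generic}.
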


To the best of our knowledge, in the high dimensional regime $N\wedge p \rightarrow \infty$, the LRT for (\ref{test:spherical}) was only studied in \cite{jiang2013central,jiang2015likelihood}, where formal theory was missing on the power behavior of $\Psi_{\lrt,s}$. Theorem \ref{thm:power_spherical} fills this gap.

\subsubsection{John's test}\label{section:john}
Consider testing (\ref{test:spherical}) using the (rescaled) John's trace statistic \cite{john1971some}:
\begin{align}\label{def:lrs_john}
T_{\jo}(X)\equiv \frac{N}{4} \tr\bigg[\bigg(\frac{S}{p^{-1}\tr(S)}-I\bigg)^2\bigg].
\end{align}
Clearly the law of $T_{J}(X)$ is invariant under $H_0$, and the above statistic  is non-degenerate for all configurations of $(n,p)$. The general principle in Theorem \ref{thm:general} thereby applies in view of the regularity of $T_{\jo}$ (see Appendix \ref{section:regularity}). We will use $\big(m_{\Sigma;\jo},\sigma_{\Sigma;\jo},V_{\Sigma;\jo}\big)$ to represent their generic versions defined in (\ref{def:mean_variance_generic}) and (\ref{def:V_generic}). 

The next proposition establishes a quantitative CLT for $T_{\jo}(X)$ under $H_0$; its proof is given in Section \ref{subsec:john_clt}. 

\begin{proposition}\label{prop:clt_covariance_test_john}
	There exists some absolute constant $C>0$, such that under $H_0$,
	\begin{align*}
	d_{\mathrm{TV}}\bigg(\frac{T_{\jo}(X)-m_{I;\jo}}{\sigma_{I;\jo}},\,\mathcal{N}(0,1)\bigg)\leq \frac{C}{N\wedge p}.
	\end{align*}
\end{proposition}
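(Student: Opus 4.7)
The plan is to establish the CLT via Chatterjee's second-order Poincaré inequality \cite{chatterjee2009fluctuations}, the tool advocated in the introduction of the paper. Under $H_0$, by the scale invariance of $T_\jo$ (the ratio $S/b(S)$ cancels the unknown $\lambda$) and the representation in (\ref{def:sample_covariance}), we may take $\mu=0, \Sigma = I_p$, so that $X \in \R^{N\times p}$ has i.i.d.\ standard normal entries and $S = N^{-1}X^\top X$. Chatterjee's inequality then yields, for $W \equiv (T_\jo(X) - m_{I;\jo})/\sigma_{I;\jo}$,
\begin{align*}
d_{\mathrm{TV}}\bigl(W,\mathcal{N}(0,1)\bigr) &\lesssim \frac{1}{\sigma_{I;\jo}^2}\,\bigl(\E\|\nabla T_\jo\|_F^4\bigr)^{1/4}\,\bigl(\E\|\nabla^2 T_\jo\|_{\op}^4\bigr)^{1/4},
\end{align*}
and the task reduces to showing (i) $\sigma_{I;\jo}^2 \gtrsim p^2$, which follows from the classical asymptotic variance $p^2/4$ of \cite[Proposition 7]{ledoit2002some} made quantitative through the spectral estimates of Section \ref{section:spectral_estimate}; (ii) $(\E\|\nabla T_\jo\|_F^4)^{1/4}\lesssim p$; and (iii) $(\E\|\nabla^2 T_\jo\|_{\op}^4)^{1/4}\lesssim 1 + p/N$. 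These three inputs plugged into the display above deliver the rate $\lesssim (1+p/N)/p \asymp (N\wedge p)^{-1}$.

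For (ii), I would exploit a structural cancellation visible after rewriting $T_\jo(X) = (Np^2/4)\tr(S^2)/\tr(S)^2 - Np/4$. Using $\nabla\tr(S) = 2X/N$ and $\nabla\tr(S^2) = (4/N^2)X X^\top X$, the chain rule yields the identity
\begin{align*}
\nabla T_\jo(X) &= b(S)^{-2}\,X\bigl[S - (b_2(S)/b(S))\,I\bigr],
\end{align*}
with $b_\ell(\cdot)$ as in (\ref{def:trace_b}). The scalar $b_2/b = \tr(S^2)/\tr(S)$ is precisely the minimizer of $t \mapsto \sum_i \lambda_i(S)(\lambda_i(S) - t)^2$; consequently
\begin{align*}
\|\nabla T_\jo\|_F^2 &= b(S)^{-4}\,N\bigl(\tr(S^3) - \tr(S^2)^2/\tr(S)\bigr)
\end{align*}
concentrates around its deterministic Marchenko--Pastur value of order $p^2$ rather than the naive $Np$ one would obtain without the cancellation; this yields (ii) via the moment bounds of Section \ref{section:spectral_estimate}. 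The variance lower bound (i) is routine from the same estimates.

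The main obstacle is the Hessian bound (iii). Differentiating $\nabla T_\jo$ once more produces, after another chain rule, a sum of rank-bounded bilinear forms in $(H,K)$ built from blocks like $KS$, $X K^\top X$, $XX^\top K$, $X\langle X, K\rangle$, each multiplied by a scalar factor of the form $b(S)^{-k}b_2(S)^{\ell}$ with $k \le 4$, $\ell \le 1$. The operator norms of these blocks are controlled by $\|X\|_{\op}^2 = N\|S\|_{\op}$ combined with the spectral estimates of Section \ref{section:spectral_estimate}, giving the bound $\|\nabla^2 T_\jo\|_{\op} \lesssim 1 + \|S\|_{\op} \lesssim 1 + p/N$ on the good event $E = \{b(S) \in [\tfrac12,\tfrac32]\}$ where the $b(S)^{-k}$ factors are $\Theta(1)$. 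On $E^c$, Gaussian concentration of $\tr(S)/p$ gives $\Prob(E^c) \le e^{-cp}$, which absorbs any polynomial-in-$(N,p)$ contribution from the integrand off $E$. The delicate accounting in the Hessian---verifying that after truncation and tensor contraction no uncompensated $b(S)^{-k}$ survives and that the operator-norm blow-up is capped at $1 + p/N$---is the principal technical burden; substituting (i)--(iii) into Chatterjee's inequality then produces the advertised rate $C/(N\wedge p)$.
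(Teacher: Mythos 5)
Your proposal is correct and follows essentially the same approach as the paper's proof in Section~\ref{subsec:john_clt}: the second-order Poincar\'e inequality of Lemma~\ref{lem:sec_poincare}, the gradient cancellation encoded in $\nabla T_{\jo} = b(S)^{-2}X\bigl(S - (b_2(S)/b(S))I\bigr)$ so that $\|\nabla T_{\jo}\|_F^2 = b(S)^{-4}N\bigl(\tr(S^3)-\tr(S^2)^2/\tr(S)\bigr)\asymp p^2$, block-wise operator-norm bounds for the Hessian of order $1\vee (p/N)$ via the spectral estimates of Section~\ref{section:spectral_estimate}, and the null variance $\sigma_{I;\jo}^2\sim p^2/4$ from Proposition~\ref{prop:ratio_john}-(3). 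The only noteworthy divergence is technical: you truncate on the event $\{b(S)\in[1/2,3/2]\}$ to tame the scalar factors $b(S)^{-k}$, whereas the paper proceeds unconditionally by H\"older's inequality together with the negative-moment estimate $\E\,b^{-\ell}(S_Z)\lesssim 1$ from Lemma~\ref{lem:concentration_trace}; either route delivers the same rate $C/(N\wedge p)$.
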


CLTs for $T_{\jo}(X)$ under $H_0$ in high dimensions are first obtained in \cite{ledoit2002some}. We improve these results both in terms of non-asymptotic normal approximation bound and the removal of the condition $0<\liminf (p/N)\leq \limsup (p/N) < \infty$.

The following result establishes the contiguity condition (\ref{def:ratio_generic}) for $T_{\jo}$; its proof is presented in Section \ref{subsec:john_ratio}. Recall the definition of $b(\Sigma)$ in (\ref{def:trace_b}).

\begin{proposition}\label{prop:ratio_john}
	Suppose $p/N\leq M$ for some $M>1$. Then the following hold for $N$ larger than a big enough absolute constant:
	\begin{enumerate}
		\item There exists some constant $C_1=C_1(M)>0$  such that
		\begin{align*}
		V_{\Sigma;\jo}^2\leq C_1 \cdot N \big(\pnorm{{\Sigma}\cdot{b^{-1}(\Sigma)}}{\op}^2 \vee 1\big)\pnorm{{\Sigma}\cdot{b^{-1}(\Sigma)}-I}{F}^2.
		\end{align*}
		\item The mean difference is given by
		\begin{align*}
		m_{\Sigma;\jo}-m_{I;\jo} =\frac{N}{4}\big[ \pnorm{{\Sigma}\cdot{b^{-1}(\Sigma)}-I}{F}^2+Q_{\jo}\big({\Sigma}\cdot{b^{-1}(\Sigma)}\big) \big].
		\end{align*}
		Here 
		\begin{align*}
		\bigabs{Q_{\jo}\big({\Sigma}\cdot{b^{-1}(\Sigma)}\big)}\leq C_2 \cdot N^{-1/2} \big(p^{-1}\pnorm{{\Sigma}\cdot{b^{-1}(\Sigma)}}{F}^2+ 1\big) \pnorm{{\Sigma}\cdot{b^{-1}(\Sigma)}-I}{F}
		\end{align*}
		for some $C_2=C_2(M)>0$.
		\item In the asymptotic regime $N \wedge p \to \infty$,
		\begin{align*}
		\sigma_{I;\jo}^2\sim \frac{p^2}{4}.
		\end{align*}
		\item There exists some $C_3=C_3(M)>0$ such that
		\begin{align*}
		\frac{V_{\Sigma;\jo}}{\abs{m_{\Sigma;\jo}-m_{I;\jo}}\vee\sigma_{I;\jo} }\leq \frac{C_3}{p^{1/2}}.
		\end{align*}
	\end{enumerate}
\end{proposition}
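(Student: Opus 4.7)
The plan is to prove the four claims in the stated order, closely following the template of Propositions \ref{prop:ratio_nagao} and \ref{prop:ratio_spherical}: John's statistic is essentially the Ledoit-Nagao-Wolf statistic applied to the self-normalized sample covariance $S/b(S)$, so most steps run parallel with the main new ingredient being the handling of the nonlinear normalization $b(S)$. A key preliminary is scale invariance: since $T_\jo(X)$ is unchanged under $(\mu,\Sigma) \mapsto (c\mu, c^2\Sigma)$ (both $S$ and $b(S)$ scale by the same factor, cancelling in $S/b(S)$), the quantities $m_{\Sigma;\jo}$, $\sigma_{\Sigma;\jo}$, $V_{\Sigma;\jo}$, and $\mathscr{T}_\Sigma$ depend on $\Sigma$ only through $\tilde\Sigma \equiv \Sigma b^{-1}(\Sigma)$, so WLOG $b(\Sigma) = 1$. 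A chain-rule calculation yields
\begin{align*}
\nabla T_\jo(X) = HXM(S), \qquad M(S) = \frac{1}{b(S)^2}\bigg[S - \frac{b_2(S)}{b(S)}I\bigg],
\end{align*}
with $H = I_n - \bm{1}_n\bm{1}_n^\top/n$ and $b_2(S) = p^{-1}\tr(S^2)$, giving $\mathscr{T}_\Sigma(Z) = HZ\Sigma^{1/2}M(\tilde S_\Sigma)\Sigma^{1/2}$ for $\tilde S_\Sigma = N^{-1}\Sigma^{1/2}Z^\top H Z\Sigma^{1/2}$.

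For claim (1), I would choose $\Sigma_0 = I$ in the infimum defining $V_{\Sigma;\jo}^2$ and decompose $\mathscr{T}_\Sigma(Z) - \mathscr{T}_I(Z)$ into a core term (from replacing $\Sigma^{1/2}(\cdot)\Sigma^{1/2}$ by $I$) and ratio-correction terms (from differences in $b(\cdot)^{-2}$ and $b_2(\cdot)/b(\cdot)$ evaluated at $\tilde S_\Sigma$ versus $\tilde S_I$). Each piece is bounded by the isotropy estimate $\E\pnorm{HZA}{F}^2 \leq N\pnorm{A}{F}^2$ together with the spectral concentration of $b(\tilde S_\Sigma)$ and $b_2(\tilde S_\Sigma)$ supplied by Section \ref{section:spectral_estimate}; the inequality $\pnorm{\tilde\Sigma}{\op}\leq \pnorm{\tilde\Sigma - I}{F} + 1$ then produces the stated form. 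For claim (2), starting from $\E_\Sigma T_\jo = (N/4)\E_\Sigma[\tr(S^2)/b(S)^2] - (N/4)p$, I would Taylor-expand $b(S)^{-2}$ around $b(\Sigma) = 1$: the leading term, computed via the Wishart identity $\E\tr(S^2) = (1+N^{-1})\tr(\Sigma^2) + N^{-1}\tr(\Sigma)^2$, evaluates to $(N/4)\pnorm{\tilde\Sigma - I}{F}^2$, while the residual $Q_\jo$ collects the $O(N^{-1})$ Wishart corrections and the second-order Taylor term, controlled via $b(S) - b(\Sigma) = \bigop(N^{-1/2})$ together with the identity $p^{-1}\pnorm{\tilde\Sigma}{F}^2 = 1 + \pnorm{\tilde\Sigma - I}{F}^2/p$ (from $\tr(\tilde\Sigma) = p$). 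Claim (3) follows from a direct Wishart moment computation parallel to Proposition \ref{prop:ratio_nagao}(3): since $b(S) \to 1$ under $H_0$, the leading variance contribution of $T_\jo$ coincides with that of $(N/4)\tr[(S-I)^2]$, giving $\sigma_{I;\jo}^2 \sim p^2/4$.

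For claim (4), I split on $a \equiv \pnorm{\tilde\Sigma - I}{F}$: using $p^{-1}\pnorm{\tilde\Sigma}{F}^2 = 1 + a^2/p$, the residual in (2) satisfies $|Q_\jo| \leq C N^{-1/2}(2 + a^2/p)\cdot a$, so on $\{a \geq c^*\sqrt{p/N}\}$ with $c^* = c^*(M)$ large enough, the main term dominates and $|m_\Sigma - m_{I;\jo}| \gtrsim Na^2$; combined with (1) in the form $V \lesssim \sqrt{N}(a+1)a$, this gives $V/|m_\Sigma - m_{I;\jo}| \lesssim (a+1)/(\sqrt{N}a) \lesssim_M 1/\sqrt{p}$ after using $p/N \leq M$. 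On the complementary event $\{a \leq c^*\sqrt{p/N}\}$, the bound $p/N\leq M$ forces $a$ to be bounded, hence $V \lesssim \sqrt{N}a \lesssim \sqrt{p}$, which paired with $\sigma_{I;\jo} \sim p/2$ yields $V/\sigma_{I;\jo} \lesssim 1/\sqrt{p}$. I expect the main technical difficulty to lie in Step 2 (the $V_{\Sigma;\jo}^2$ bound): since $\mathscr{T}_\Sigma$ simultaneously involves the sandwich $\Sigma^{1/2}(\cdot)\Sigma^{1/2}$ and the data-dependent nonlinear scalars $b(\tilde S_\Sigma)$ and $b_2(\tilde S_\Sigma)$, retaining the sharp quadratic factor $\pnorm{\tilde\Sigma - I}{F}^2$ (rather than just a linear one, which would destroy the small-$a$ regime in Step 4) requires delicate spectral control of these nonlinear functionals for general $\Sigma$.
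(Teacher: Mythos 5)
The overall strategy matches the paper's: prove the mean and variance formulas via derivative computations and Gaussian--Poincar\'e/Wishart-moment tools, then combine them to verify the contiguity ratio. Parts (1), (3), and (4) of your sketch are essentially aligned with the paper's argument (in (3) the paper literally sets $\Delta = T_{\na} - T_{\jo}$ and bounds $\var_I(\Delta) = \mathfrak{o}(p^2)$, exactly your comparison idea; in (4) your threshold $\sqrt{p/N}$ differs from the paper's $N^{-1/2}$ but under $p/N\leq M$ both lead to the same conclusion). Two points deserve attention.

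First, a minor bookkeeping issue: the paper works with the reduced parametrization $S = N^{-1}X^\top X$ on $\R^{N\times p}$ (using the distributional identity in (\ref{def:sample_covariance}) to replace mean-centering by dropping one degree of freedom), so there is no centering projection $H = I_n - \bm 1_n\bm 1_n^\top/n$ in $\nabla T_\jo(X)$. Lemma \ref{lem:derivatives_john}-(1) gives $\nabla T_\jo(X) = X M(S)$ with your $M(S)$, but no $H$ factor. Including $H$ would force you to work in the $n$-sample parametrization and would change the lower-order terms in every moment computation.

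Second, and more substantively, your plan for part (2) is underspecified in a way that hides the main technical difficulty. Writing $m_\Sigma = (N/4)\E[\tr(S^2)/b^2(S)] - (N/4)p$ and Taylor-expanding $b(S)^{-2}$ around $1$, the first-order correction is $-2(N/4)\E[\tr(S^2)(b(S)-1)]$. Since $\E(b(S)-1) = 0$, this equals $-\tfrac{N}{2}\operatorname{Cov}(\tr(S^2), b(S))$. Each of these covariances, at $\Sigma$ and at $I$, is of order $p^2/N$, so multiplied by $N$ it is of order $p^2$, which is \emph{not} negligible against $N\|\tilde\Sigma - I\|_F^2$ or $\sigma_{I;\jo}\asymp p$ when $\|\tilde\Sigma - I\|_F$ is small. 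The claimed $Q_\jo$ bound is tiny in that regime, so the only way this term can be admissible is via the cancellation between the $\Sigma$ and $I$ covariances, and you must show that cancellation is controlled by $N^{-1/2}(p^{-1}\|\tilde\Sigma\|_F^2 + 1)\|\tilde\Sigma - I\|_F$. Your sketch treats this implicitly (the "leading term evaluates to $(N/4)\|\tilde\Sigma - I\|_F^2$, ... the residual collects the $O(N^{-1})$ Wishart corrections and the second-order Taylor term"), which skips exactly the hard cross-moment cancellation. The paper avoids a raw Taylor expansion and instead writes $\E[b_2/b^2] = \E b_2/\E b^2 + \E[b_2(b^{-2} - (\E b^2)^{-1})]$ and decomposes the residual into three pieces $R_1, R_2, R_3$, each of which is shown to have a $\Sigma$-vs-$I$ difference bounded by $N^{-1/2}(\cdot)\|\tilde\Sigma - I\|_F$ via covariance bounds (including terms like $\var^{1/2}(b^2(\Sigma^{1/2}S_Z\Sigma^{1/2}) - b^2(S_Z))$); this structure makes the cancellation tractable. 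You would need to replicate that, or provide a comparable argument that the $\Sigma$-vs-$I$ difference of $\operatorname{Cov}(\tr(S^2), b(S))$ decays like $\|\tilde\Sigma - I\|_F$.
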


The proof of the above contiguity is the most complicated among the examples studied in this paper. The main complication is due to the existence of the $\tr(S)$ term in the denominator in (\ref{def:lrs_john}), which leads to the complications both in the control of $V_{\Sigma;J}^2$ and the `residual term' $Q_{\jo}(\Sigma\cdot b^{-1}(\Sigma))$.

Let $\Psi_{\jo}(X)$ be the test built from (\ref{intro:test}) and the statistic in (\ref{def:lrs_john}). Now we have

\begin{theorem}\label{thm:power_john}
	Suppose $ p/N \leq M$ for some $M>1$. Then there exists some constant $C=C(\alpha,M)>0$ such that 
	\begin{align*}
	&\biggabs{ \E_{\Sigma} \Psi_{\jo}-   \Prob\bigg( \mathcal{N}\bigg(\frac{N \cdot \pnorm{{\Sigma}\cdot{b^{-1}(\Sigma)}-I}{F}^2 }{4\sigma_{I;\jo}},1\bigg)>z_\alpha\bigg)   }\leq C \cdot p^{-1/3}.
	\end{align*}
	Consequently, in the asymptotic regime $N\wedge p \to \infty$ with $ \limsup (p/N)<\infty$,
	\begin{align*}
    \E_\Sigma \Psi_{\jo} \sim 1- \Phi\bigg(z_\alpha-\frac{\pnorm{{\Sigma}\cdot{b^{-1}(\Sigma)}-I}{F}^2}{2(p/N)}\bigg).
	\end{align*} 
\end{theorem}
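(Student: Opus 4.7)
The plan is to invoke Corollary \ref{cor:power_generic} together with the null CLT in Proposition \ref{prop:clt_covariance_test_john} and the contiguity estimate in Proposition \ref{prop:ratio_john}-(4), and then remove the residual term $Q_{\jo}$ from the mean shift by a two-regime analysis. The first step immediately yields
\begin{align*}
\Big|\E_\Sigma \Psi_{\jo}(X)-\big(1-\Phi(z_\alpha-\kappa_{\mathrm{true}})\big)\Big|\leq C_\alpha\cdot p^{-1/3},
\end{align*}
where $\kappa_{\mathrm{true}}\equiv (m_{\Sigma;\jo}-m_{I;\jo})/\sigma_{I;\jo}$. Using the exact formula in Proposition \ref{prop:ratio_john}-(2), write $\kappa_{\mathrm{true}}=\kappa+\eta$, where $\kappa\equiv N\pnorm{A-I}{F}^2/(4\sigma_{I;\jo})$ is the target shift with $A\equiv \Sigma\cdot b^{-1}(\Sigma)$, and $\eta\equiv NQ_{\jo}(A)/(4\sigma_{I;\jo})$ is the residual. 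The remaining task reduces to bounding $|\Phi(z_\alpha-\kappa_{\mathrm{true}})-\Phi(z_\alpha-\kappa)|\leq Cp^{-1/3}$ uniformly in $\Sigma$, which I would carry out by splitting on the magnitude of $\delta\equiv\pnorm{A-I}{F}$.

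In the \emph{moderate regime} $\delta\leq p^{2/3}/N^{1/2}$ (corresponding to $\kappa\lesssim p^{1/3}$), the assumption $p/N\leq M$ forces $p^{-1}\delta^2 \lesssim_M p^{-2/3}$, so the bound in Proposition \ref{prop:ratio_john}-(2) simplifies to $|Q_{\jo}(A)|\lesssim_M N^{-1/2}\delta$, yielding $|\eta|\lesssim_M N^{1/2}\delta/p\lesssim p^{-1/3}$. Lipschitz continuity of $\Phi$ (with constant $1/\sqrt{2\pi}$) then gives the desired bound. In the \emph{strong regime} $\delta>p^{2/3}/N^{1/2}$ (so $\kappa\gtrsim p^{1/3}$), I would invoke the universal estimate $\delta\leq p$, which follows from the identity $\delta^2=\pnorm{A}{F}^2-p$ together with $\pnorm{A}{F}^2=\sum_i\lambda_i(A)^2\leq(\sum_i\lambda_i(A))^2=p^2$ (using $\tr(A)=p$). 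Substituting into Proposition \ref{prop:ratio_john}-(2) gives
\begin{align*}
\frac{|\eta|}{\kappa}\asymp\frac{|Q_{\jo}(A)|}{\delta^2}\lesssim N^{-1/2}\bigg(\frac{\delta}{p}+\frac{1}{\delta}\bigg)\lesssim N^{-1/2}+p^{-2/3}\to 0,
\end{align*}
so for $N,p$ larger than a constant depending on $M$, $\kappa_{\mathrm{true}}\geq\kappa/2\gtrsim p^{1/3}$. Standard Gaussian tail bounds then give $\Phi(z_\alpha-\kappa)\vee\Phi(z_\alpha-\kappa_{\mathrm{true}})\leq e^{-cp^{2/3}}$, much smaller than $p^{-1/3}$. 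Combining the two regimes yields the non-asymptotic estimate, and the asymptotic conclusion follows from $\sigma_{I;\jo}^2\sim p^2/4$ in Proposition \ref{prop:ratio_john}-(3).

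The main obstacle is the strong regime. Unlike the LRT sphericity setting (cf.\ Proposition \ref{prop:ratio_spherical}), where $|Q_{\lrt,s}|$ is controlled uniformly by $b(A^2)$, here the John-type residual $|Q_{\jo}(A)|$ grows with $\delta$, so an \emph{absolute} bound on $|\eta|$ is unattainable for large $\delta$. Progress is possible only through the delicate observation that the \emph{relative} size $|\eta|/\kappa$ vanishes under the trace-normalization $b(A)=1$ (which compactifies the effective domain of $A$), after which the target $Cp^{-1/3}$ error comes for free from the Gaussian tail decay once $\kappa\gtrsim p^{1/3}$.
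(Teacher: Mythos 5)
Your argument is correct and follows the paper's high-level plan — feed the null CLT (Proposition \ref{prop:clt_covariance_test_john}) and the contiguity bound (Proposition \ref{prop:ratio_john}-(4)) into Corollary \ref{cor:power_generic}, then remove the residual $Q_{\jo}$ from the mean shift — but your residual-removal step takes a different route. The paper defers to the proof of Theorem \ref{thm:power_nagao}, which bounds $\Delta P$ by the \emph{minimum} of two estimates: one from Lemma \ref{lem:normal_mean_multi}, treating the residual as a multiplicative perturbation $\kappa_{\mathrm{true}}=(1+\eta/\kappa)\kappa$ of the Gaussian mean, and one from Lipschitz continuity of $\Phi$, treating it as an additive perturbation, then splits on the fixed threshold $\pnorm{A-I}{F}\lessgtr 1$. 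You instead split on the $p$-dependent threshold $\delta\lessgtr p^{2/3}/N^{1/2}$, use only the Lipschitz bound in the moderate regime, and in the strong regime use Gaussian tail decay after checking that the relative residual $|\eta|/\kappa$ vanishes (for which you correctly derive the trace-normalized estimate $\delta\le p$). Both routes yield $C\,p^{-1/3}$; yours avoids Lemma \ref{lem:normal_mean_multi} entirely and is arguably more elementary, at the cost of the two-sided tail computation and the $p$-dependent threshold, whereas the paper's minimum-of-two-bounds structure more cleanly exposes the sharper $(\sigma_{I;\jo}\wedge N)^{-1/2}$ rate. One small precision point: Proposition \ref{prop:ratio_john}-(2) controls $|Q_{\jo}(A)|$ by $N^{-1/2}(p^{-1}\pnorm{A}{F}^2+1)\pnorm{A-I}{F}$, i.e., in terms of $\pnorm{A}{F}^2$ rather than $\delta^2$; you should state explicitly the identity $p^{-1}\pnorm{A}{F}^2=1+p^{-1}\delta^2$ (valid because $\tr(A)=p$) before "simplifying" that bound in the moderate regime, since without it the reduction $|Q_{\jo}(A)|\lesssim_M N^{-1/2}\delta$ is not immediate from the proposition as stated.
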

See Section \ref{subsec:john_power} for the proof. The power behavior for John's test is previous studied in \cite{onatski2013asymptotic,onatski2014signal,wang2013sphericity} for a special class of alternatives under the spiked covariance model with a fixed number of spikes; see Section \ref{sec:spike} ahead for a detailed discussion. To the best of our knowledge, the theorem above gives the first complete characterization of the power behavior for John's test for arbitrary alternatives in the high-dimensional regime $N\wedge p\to \infty$ with $\limsup(p/N)<\infty$.

\subsection{Case study: spiked covariance models}\label{sec:spike}

In this subsection, we consider a special class of alternatives known as the spiked covariance model \cite{johnstone2001distribution}:
\begin{align}\label{def:spike_model}
\Sigma(a) \equiv \mathrm{diag}\big(1+a_1,\ldots,1+a_p\big),
\end{align}
where $a=(a_1,\ldots,a_p) \in (-1,\infty)^p$. Write $\bar{a}=\sum_{j=1}^p a_j/p$. Specializing the results obtained in Sections \ref{section:test_identity} and \ref{section:test_spherical}, we have the following. 
\begin{corollary}\label{cor:spike_power}
The following hold. 
\begin{enumerate}
	\item The power for the likelihood ratio test of $\Sigma=I$ satisfies
	\begin{align*}
	\E_{\Sigma(a)} \Psi_{\lrt} \sim 1-\Phi\bigg(z_\alpha-\frac{\sum_{j=1}^p \big(a_j-\log(1+a_j)\big)}{ \sqrt{2\big(-\frac{p}{N}-\log\big(1-\frac{p}{N}\big)\big)} }\bigg)\equiv \beta_{\lrt}(a),
	\end{align*}
	under $N \wedge p \to \infty$ with $\limsup (p/N)< 1$.
	\item The power for Ledoit-Nagao-Wolf test of $\Sigma=I$ satisfies
	\begin{align*}
	\E_{\Sigma(a)}  \Psi_{\na}\sim  1-\Phi\bigg(z_\alpha- \frac{\sum_{j=1}^p a_j^2}{2(p/N)}\bigg)\equiv \beta_{\na}(a),
	\end{align*} 
	under $N \wedge p \to \infty$ with $\limsup (p/N)< \infty$. 
	\item The power for the likelihood ratio test of $\Sigma=\lambda I$ satisfies
	\begin{align*}
	\E_{\Sigma(a)} \Psi_{\lrt;s}  \sim 1-\Phi\bigg(z_\alpha-\frac{ \sum_{j=1}^p \log \frac{1+\bar{a}}{1+a_j} }{ \sqrt{2\big(-\frac{p}{N}-\log\big(1-\frac{p}{N}\big)\big)} }\bigg)\equiv \beta_{\lrt;s}(a),
	\end{align*}
	under $N \wedge p \to \infty$ with $\limsup (p/N)< 1$.
	\item The power for John's test of $\Sigma=\lambda I$ satisfies
	\begin{align*}
	\E_{\Sigma(a)}  \Psi_{\jo} \sim 1-\Phi\bigg(z_\alpha-\frac{\sum_{j=1}^p (a_j-\bar{a})^2/(1+\bar{a})^2}{2(p/N)}\bigg)\equiv \beta_{\jo}(a),
	\end{align*}
	under $N \wedge p \to \infty$ with $\limsup (p/N)< \infty$.
\end{enumerate}
\end{corollary}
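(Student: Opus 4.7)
The plan is to obtain each of the four power expressions by directly substituting $\Sigma=\Sigma(a)$ into the corresponding asymptotic power formula from Theorems \ref{thm:power_covariance_test}, \ref{thm:power_nagao}, \ref{thm:power_spherical}, and \ref{thm:power_john}. The diagonal structure of $\Sigma(a)$ reduces all of the matrix functionals appearing in these formulas to scalar sums, so the work is genuinely a computation of four quantities; no further probabilistic input is needed. The asymptotic regimes ($\limsup(p/N)<1$ or $<\infty$) in each part of the corollary are exactly those demanded by the underlying theorem, so the hypotheses transfer directly.

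For part (1), I compute $\mathcal{L}_S(\Sigma(a),I)$ from definition (\ref{def:stein_loss}): since $\Sigma(a)$ is diagonal, $\tr(\Sigma(a))=\sum_j(1+a_j)$ and $\log\det\Sigma(a)=\sum_j\log(1+a_j)$, so $\mathcal{L}_S(\Sigma(a),I)=\sum_j\bigl(a_j-\log(1+a_j)\bigr)$. Plugging this into the formula in Theorem \ref{thm:power_covariance_test} yields $\beta_{\lrt}(a)$. For part (2), again using diagonality, $\pnorm{\Sigma(a)-I}{F}^2=\sum_j a_j^2$, which when substituted into Theorem \ref{thm:power_nagao} gives $\beta_{\na}(a)$.

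For parts (3) and (4), I first observe that $b(\Sigma(a))=p^{-1}\tr(\Sigma(a))=1+\bar{a}$, so
\[
\Sigma(a)\cdot b^{-1}(\Sigma(a))=\mathrm{diag}\bigl((1+a_j)/(1+\bar a)\bigr).
\]
For part (3), this immediately gives
\[
-\log\det\bigl(\Sigma(a)\cdot b^{-1}(\Sigma(a))\bigr)=\sum_{j=1}^p\log\frac{1+\bar a}{1+a_j},
\]
which matches the numerator in $\beta_{\lrt;s}(a)$ upon substitution into Theorem \ref{thm:power_spherical}. For part (4),
\[
\Sigma(a)\cdot b^{-1}(\Sigma(a))-I=\mathrm{diag}\bigl((a_j-\bar a)/(1+\bar a)\bigr),
\]
so its squared Frobenius norm is $\sum_j (a_j-\bar a)^2/(1+\bar a)^2$, which inserted into Theorem \ref{thm:power_john} yields $\beta_{\jo}(a)$.

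Given that the corollary is a specialization of theorems already established, there is no genuinely difficult step; if anything, the mild care needed is to keep the scaling $b^{-1}(\Sigma)$ straight in parts (3) and (4), where the normalization by the trace is what distinguishes the sphericity tests from the identity tests and produces the centering by $\bar a$.
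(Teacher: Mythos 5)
Your proposal is correct and matches the paper's intent: the corollary is a direct specialization of Theorems \ref{thm:power_covariance_test}, \ref{thm:power_nagao}, \ref{thm:power_spherical}, \ref{thm:power_john}, and the paper leaves the diagonal-matrix computations you carried out implicit. Your calculations of $\mathcal{L}_S(\Sigma(a),I)$, $\pnorm{\Sigma(a)-I}{F}^2$, $b(\Sigma(a))$, $-\log\det(\Sigma(a)b^{-1}(\Sigma(a)))$, and $\pnorm{\Sigma(a)b^{-1}(\Sigma(a))-I}{F}^2$ are all accurate and the growth conditions carry over verbatim.
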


(1), (2) and (4) above recover \cite[Proposition 8 (i)-(ii)]{onatski2014signal}, while (3)-(4) above recover \cite[Equations (4.5) and (4.8)]{wang2013sphericity}. Both \cite{onatski2013asymptotic,wang2013sphericity} considered the case where $r\equiv \pnorm{a}{0}$ and the non-zero elements of $a$ are fixed. The techniques in \cite{onatski2014signal} work with a further restriction $\pnorm{a}{\infty}< \sqrt{y}$ where $y$ is the limiting value of the ratio $p/N$. As mentioned in the introduction, this restriction coincides with the Baik-Ben Arous-P\'{e}ch\'{e} (BBP) phase transition \cite{baik2005phase}, and is essential for the techniques of \cite{onatski2014signal}, due to the singular nature of the likelihood ratio process when $\pnorm{a}{\infty}>\sqrt{y}$ already in the case $r=1$, see \cite[Theorem 8]{onatski2013asymptotic}. The restriction $\pnorm{a}{\infty}<\sqrt{y}$ is removed in \cite{wang2013sphericity} for the likelihood ratio test $\Psi_{\lrt;s}$ and John's test $\Psi_{\jo}$ for sphericity, by variations of Bai-Silverstein techniques developed in \cite{bai2004clt,bai2009corrections}. 

It is easy to see that in the setting of \cite{onatski2013asymptotic,wang2013sphericity} with a fixed number of spikes as described above, the asymptotic powers are the same for the following two group of tests:
\begin{enumerate}
	\item Likelihood ratio tests $\Psi_{\lrt},\Psi_{\lrt;s}$: $\beta_{\lrt}=\beta_{\lrt;s}$.
	\item Ledoit-Nagao-Wolf and John's tests: $\beta_{\na}=\beta_{\jo}$.
\end{enumerate}
Clearly, neither group of tests universally dominates the other in terms of the power behavior. For instance, the power of tests in (1) dominates that of (2) when some of $a_j$'s are close to $-1$ (i.e., $\Sigma$ is near singular), while the reversed phenomenon occurs when some of $a_j$'s are close to $\infty$.

In general, the asymptotic power equivalence of the above two groups may not hold when the number of spikes are no longer fixed. Instead, we have the following power ordering within each group.

\begin{corollary}\label{cor:spike_compare}
\begin{enumerate}
	\item Likelihood ratio tests $\Psi_{\lrt},\Psi_{\lrt;s}$ have the power ordering:
	\begin{align*}
	\beta_{\lrt} (a) \geq \beta_{\lrt;s}(a).
	\end{align*}
	\item Ledoit-Nagao-Wolf and John's tests $\Psi_{\na},\Psi_{\cm},\Psi_{\jo}$ have the power ordering: 
	\begin{align*}
	\beta_{\na}(a)
	\begin{cases}
	\geq \beta_{\jo}(a), &\overline{a^2}\big(1-(1+\bar{a})^2\big)\leq \bar{a}^2;\\
	< \beta_{\jo}(a), & \overline{a^2}\big(1-(1+\bar{a})^2\big)> \bar{a}^2.
	\end{cases}
	\end{align*} 
	Here $\overline{a^2}\equiv \sum_{j=1}^p a_j^2/p$.
\end{enumerate}
\end{corollary}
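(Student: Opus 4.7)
The plan is to observe that, since $\Phi$ is strictly monotone increasing, each of the two power comparisons reduces to comparing the mean-shift parameters inside the $\Phi$'s appearing in Corollary \ref{cor:spike_power}. Both comparisons then become purely algebraic inequalities in the spike profile $a$, with common denominators that cancel out. No analytic or probabilistic machinery beyond Corollary \ref{cor:spike_power} is required, and the assumption $a_j > -1$ (inherent in (\ref{def:spike_model})) is all the positivity one needs.

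For part (1), I would compare the numerators of the mean-shift parameters in $\beta_{\lrt}$ and $\beta_{\lrt;s}$, namely
\[
\sum_{j=1}^p \big(a_j - \log(1+a_j)\big) \quad \text{vs.} \quad p\log(1+\bar a) - \sum_{j=1}^p \log(1+a_j),
\]
noting the denominators are identical. The sums $-\sum_j \log(1+a_j)$ cancel from both sides, reducing the desired inequality $\beta_{\lrt}(a) \geq \beta_{\lrt;s}(a)$ to the scalar statement $p\bar a \geq p\log(1+\bar a)$, equivalently $\bar a \geq \log(1+\bar a)$. This is the classical inequality $\log(1+x)\leq x$ for $x>-1$, applied at $x=\bar a$ (which is indeed $>-1$ since each $a_j>-1$).

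For part (2), I would again cancel the shared denominator $2(p/N)$, so that $\beta_{\na}(a)\geq \beta_{\jo}(a)$ is equivalent to
\[
\sum_{j=1}^p a_j^2 \;\geq\; \frac{1}{(1+\bar a)^2}\sum_{j=1}^p (a_j-\bar a)^2.
\]
Using the identity $\sum_{j=1}^p (a_j-\bar a)^2 = p\,\overline{a^2} - p\bar a^2$, multiplying through by $(1+\bar a)^2>0$ and rearranging yields
\[
p\,\overline{a^2}\big(1-(1+\bar a)^2\big) \;\leq\; p\,\bar a^2,
\]
which is exactly the first case of the dichotomy in the statement. The reverse strict inequality yields the second case symmetrically. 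There is no real obstacle here: once the explicit formulae of Corollary \ref{cor:spike_power} are in hand, both orderings are immediate from an elementary algebraic rearrangement, together with $\log(1+x)\leq x$ in part (1).
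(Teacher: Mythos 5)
Your proof is correct and takes essentially the same approach as the paper: part (1) reduces, after cancelling $-\sum_j \log(1+a_j)$, to the scalar inequality $\log(1+\bar a)\leq \bar a$, and part (2) is the same algebraic rearrangement of $\sum_j(a_j-\bar a)^2/(1+\bar a)^2$ against $\sum_j a_j^2$ that the paper carries out in its displayed calculation.
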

\begin{proof}
(1) follows from the inequality $\sum_{j=1}^p \log (1+\bar{a}) \leq \sum_{j=1}^p \bar{a}=\sum_{j=1}^p a_j$.  (2) follows by the following calculation:
\begin{align*}
\frac{\sum_{j=1}^p (a_j-\bar{a})^2}{(1+\bar{a})^2} = \frac{\sum_{j=1}^p (a_j^2-\bar{a}^2)}{(1+\bar{a})^2} = \sum_{j=1}^p a_j^2 +\frac{\sum_{j=1}^p a_j^2\cdot \big(1-(1+\bar{a})^2\big)-p\bar{a}^2}{(1+\bar{a})^2}.
\end{align*}
The proof is complete. 
\end{proof}

Note that $\{\bar{a}\geq 0\}\subsetneq \{ \overline{a^2}\big(1-(1+\bar{a})^2\big)\leq \bar{a}^2\}$ (the inclusion is in fact proper), so if $\bar{a}\geq 0$, John's test $\Psi_{\jo}$ will be less powerful than Ledoit-Nagao-Wolf's $\Psi_{\na}$. Furthermore, both inequalities in the above corollary can be strict asymptotically, and similar to the discussion above, there are no universal power dominance relationships between the tests in the two groups.

\section{Some spectral estimates}\label{section:spectral_estimate}

In this section, we will prove some spectral estimate for a class of high-dimensional random matrices that will be useful for the proofs of the results in Section \ref{section:application_highd_cov_test}.

First we introduce some convention on notation: Let $\mathcal{I}_1$, $\mathcal{I}_2$ be finite index sets. For $A=(A_{\iota_1,\iota_2})_{\iota_1 \in \mathcal{I}_1, \iota_2 \in \mathcal{I}_2} \in \R^{\mathcal{I}_1\times \mathcal{I}_2}$, its operator norm is defined as
\begin{align}\label{def:operator_norm_abstract}
\pnorm{A}{\op}\equiv \sup_{v \in B_{\mathcal{I}_2}(1)} \pnorm{A v}{\ell_2(\R^{\mathcal{I}_2} )  }.
\end{align}	
It can be readily verified that $\pnorm{A}{\op}=\sup_{u \in B_{\mathcal{I}_1}, v \in B_{\mathcal{I}_2}}\iprod{u}{Av}_{\mathcal{I}_1}$, and for a symmetric matrix $A \in \R^{\mathcal{I}_1\times \mathcal{I}_1}$, $\pnorm{A}{\op}=\sup_{u \in B_{\mathcal{I}_1} } \abs{\iprod{u}{Au}_{\mathcal{I}_1}}$. Here $\iprod{\cdot}{\cdot}_{\mathcal{I}_1}$ is the standard inner product on $\R^{\mathcal{I}_1}$. Clearly, the definition of the operator norm does not depend on the choice of the ordering of the index sets. 

Under this notational convention, with the index set $\Lambda \equiv \{(ij): i \in [N], j \in [p]\}$, we present below two results on the spectral norm of some special $\Lambda \times \Lambda$ matrices that are crucial to the proof of the quantitative CLTs. We do not specify a particular ordering on $\Lambda$ as we will be only interested in the operator norm as defined above. In the following we use $\mathbb{N}$ to denote the set of natural numbers. Recall the data matrix $X = [X_1,\ldots,X_N]^\top\in\R^{N\times p}$ and the definition of $S$ in (\ref{def:sample_covariance}).

\begin{proposition}\label{prop:U_spec_norm}
	\begin{enumerate}
		\item Suppose $p/N\leq 1-\epsilon$ for some $\epsilon>0$. For $\ell,m\in\mathbb{N}$ such that $\ell+m\geq 1$, let $U_{\ell,m}\in\R^{\Lambda \times \Lambda }$ be defined by
		\begin{align}\label{def:U_ell_m}
		(U_{\ell,m})_{(ij),(i'j')} \equiv N^{-1}X_i^\top S^{-\ell}X_{i'}(S^{-m})_{jj'}.
		\end{align}
		Then for any $q\in\mathbb{N}$, there exists some $C=C(\epsilon,\ell,m,q)>0$ such that $\E \pnorm{U_{\ell,m}}{\op}^q \leq C$ for $p\geq C$.
		\item When $X_i$, $S$ and $N$ is replaced by $X_i-\bar{X}$, $S_\ast$ and $n$, the conclusion of (1) still holds.
	\end{enumerate}
	
\end{proposition}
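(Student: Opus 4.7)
The plan is to exploit a tensor-product structure hidden in $U_{\ell,m}$ that reduces the operator-norm bound to controlling the extreme eigenvalues of $S$. Specifically, I would identify $\R^{\Lambda}$ with $\R^{N\times p}$ via $v=(v_{(ij)})\leftrightarrow V$, under which $\pnorm{v}{\ell^2(\Lambda)}=\pnorm{V}{F}$. Using the symmetry of $S^{-m}$, a direct computation gives
\begin{align*}
(U_{\ell,m}v)_{(ij)} = N^{-1}\bigl[(XS^{-\ell}X^\top)\,V\,S^{-m}\bigr]_{ij},
\end{align*}
so that submultiplicativity $\pnorm{AVB}{F}\leq \pnorm{A}{\op}\pnorm{V}{F}\pnorm{B}{\op}$ yields
\begin{align*}
\pnorm{U_{\ell,m}}{\op}\leq N^{-1}\pnorm{XS^{-\ell}X^\top}{\op}\pnorm{S^{-m}}{\op}.
\end{align*}

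Next, a thin SVD $X=O\Sigma_{sv}Q^\top$ (available since $p\leq N$) combined with the identity $\sigma_j^2(X)=N\lambda_j(S)$ yields $XS^{-\ell}X^\top = N^\ell O\Sigma_{sv}^{2-2\ell}O^\top$, whence $\pnorm{XS^{-\ell}X^\top}{\op} = N\max_{1\leq j\leq p}\lambda_j(S)^{1-\ell}$. Combined with $\pnorm{S^{-m}}{\op}=\lambda_{\min}(S)^{-m}$, the above collapses to the deterministic bound
\begin{align*}
\pnorm{U_{\ell,m}}{\op}\leq \lambda_{\max}(S)^{(1-\ell)_+}\lambda_{\min}(S)^{-(\ell+m-1)_+},
\end{align*}
which, since $\ell+m\geq 1$, is a polynomial of bounded degree in $\lambda_{\max}(S)$ and $\lambda_{\min}(S)^{-1}$. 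The conclusion in (1) then reduces via Cauchy--Schwarz to the two moment estimates $\E\lambda_{\max}(S)^{q'}\lesssim_{\epsilon,q'}1$ (classical from non-asymptotic Wishart upper-tail bounds) and $\E\lambda_{\min}(S)^{-q'}\lesssim_{\epsilon,q'}1$, both uniform in $p\geq C(\epsilon)$.

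Part (2) follows by applying the same tensor identity with $\tilde X$, $S_\ast$, $n$ in place of $X$, $S$, $N$: viewing vectors indexed by $\{(ij):i\in[n],j\in[p]\}$ as $n\times p$ matrices and using $\tilde X^\top\tilde X=nS_\ast$, one obtains the analogous deterministic bound in terms of $\lambda_{\max}(S_\ast)$ and $\lambda_{\min}(S_\ast)^{-1}$. The moment bounds for $S_\ast$ transfer from those for $S$ via Cochran's theorem, which yields $nS_\ast\equald N\widetilde S$ for a Wishart $\widetilde S$ based on $N$ i.i.d.\ $\mathcal{N}(0,\Sigma)$ samples. The main obstacle I anticipate is the lower-tail moment bound $\E\pnorm{S^{-1}}{\op}^{q'}\lesssim_{\epsilon,q'}1$ at arbitrary $q'\in\N$: Lemma \ref{lem:moment_S_inverse} supplies only the case $q'=1$, so a higher-moment extension is required, either by a direct inverse Wishart moment computation or by invoking Davidson--Szarek-type lower tail estimates for $\lambda_{\min}(S)$. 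The tensor-product reduction itself, once observed, is essentially mechanical.
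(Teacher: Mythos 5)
Your reduction is correct and in fact arrives at the same deterministic estimate as the paper's inequality~\eqref{ineq:U_spec_norm_2}, just derived more transparently: you identify $\R^{\Lambda}\cong\R^{N\times p}$ so that $U_{\ell,m}$ acts as $V\mapsto N^{-1}(XS^{-\ell}X^\top)\,V\,S^{-m}$, and the SVD of $X$ collapses the operator norm to a function of $\lambda_{\max}(S)$ and $\lambda_{\min}(S)$. The paper instead proves a semigroup identity $U_{\ell_1,m_1}U_{\ell_2,m_2}=U_{\ell_1+\ell_2-1,m_1+m_2}$ (their display~\eqref{ineq:U_spec_norm_1}), uses symmetry to write $\pnorm{U_{\ell,m}}{\op}^q=\pnorm{U_{\ell',m'}}{\op}$ with $\ell'=q(\ell-1)+1$, $m'=qm$, and then applies the deterministic bound once. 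Your route avoids this intermediate reduction and is arguably cleaner for the purpose of a moment bound, since it never introduces negative effective exponents (which arise in the paper when $\ell=0$, $q\geq 2$ and require a bit of care). What the semigroup identity buys the paper is structural insight, not a stronger estimate. Your treatment of part (2) via Cochran's theorem is precisely the distributional identity the paper invokes at~\eqref{def:sample_covariance}, so that too is aligned.

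However, the ``obstacle'' you flag at the end is not a real one: you misread Lemma~\ref{lem:moment_S_inverse}. That lemma is not a first-moment statement. It asserts $\E\pnorm{S^{-1}}{\op}^{q}\leq C(\epsilon,q)$ for \emph{every} positive integer $q\leq (N-p-1)/8$, and since the hypothesis $p/N\leq 1-\epsilon$ gives $N-p-1\geq \epsilon N-1$, any fixed power is covered once $p$ (hence $N$) exceeds a constant $C(\epsilon,q)$ --- exactly the range $p\geq C$ stated in the proposition. So no higher-moment extension or Davidson--Szarek argument is needed; the paper's lemma already supplies the entire family of moment bounds you require, and this is exactly how the paper closes the proof. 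Separately, a small arithmetic slip: when $\ell=0$ your deterministic bound should read $\lambda_{\max}(S)\lambda_{\min}(S)^{-m}$, not $\lambda_{\max}(S)\lambda_{\min}(S)^{-(m-1)}$ (that is, the exponent on $\lambda_{\min}(S)^{-1}$ is $m + (\ell-1)_+$, not $(\ell+m-1)_+$). This does not change the conclusion, since both expressions have bounded moments of all orders under $p/N\leq 1-\epsilon$.
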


When the inverse $S^{-1}$ in (\ref{def:U_ell_m}) is replaced by $S$, the condition on $p/N$ can be substantially relaxed. 

\begin{proposition}\label{prop:U_spec_norm_+}
	Let $y\equiv p/N$. For $\ell,m\in\mathbb{N}$, let $U_{\ell,m;+}\in\R^{\Lambda \times \Lambda }$ be defined by
	\begin{align}\label{def:U_ell_m_+}
	(U_{\ell,m;+})_{(ij),(i'j')} \equiv N^{-1}X_i^\top S^\ell X_{i'}(S^{m})_{jj'}.
	\end{align}
	Then for any $q\in\mathbb{N}$, there exists some $C=C(\ell,m,q)>0$ such that $\E \pnorm{U_{\ell,m;+}}{\op}^q \leq C(\sqrt{y}\vee y)^{q(\ell+m+1)}$.
\end{proposition}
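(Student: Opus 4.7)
The plan is to exploit the Kronecker tensor structure of $U_{\ell,m;+}$, collapse its operator norm to a single scalar spectral quantity, and then invoke classical Gaussian concentration for the largest singular value of $X$.

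First, by definition
\[
(U_{\ell,m;+})_{(ij),(i'j')} = N^{-1}(XS^\ell X^\top)_{ii'}(S^m)_{jj'},
\]
so $U_{\ell,m;+}$ is, up to a harmless relabeling of coordinates in $\R^{\Lambda}$, the Kronecker product $N^{-1}(XS^\ell X^\top)\otimes S^m$ of two symmetric positive semidefinite matrices. Multiplicativity of the operator norm on Kronecker products gives
\[
\pnorm{U_{\ell,m;+}}{\op} = N^{-1}\pnorm{XS^\ell X^\top}{\op}\cdot\pnorm{S^m}{\op}.
\]
Using $S = N^{-1}X^\top X$ and the SVD of $X$, a direct calculation shows $XS^\ell X^\top = N^{-\ell}(XX^\top)^{\ell+1}$, so that $\pnorm{XS^\ell X^\top}{\op} = N^{-\ell}\sigma_{\max}(X)^{2(\ell+1)}$ and $\pnorm{S^m}{\op} = N^{-m}\sigma_{\max}(X)^{2m}$. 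Combining produces the clean identity
\[
\pnorm{U_{\ell,m;+}}{\op} = N^{-(\ell+m+1)}\sigma_{\max}(X)^{2(\ell+m+1)} = \pnorm{S}{\op}^{\ell+m+1}.
\]

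It therefore suffices to bound high moments of $\sigma_{\max}(X)$. Since $X\mapsto\sigma_{\max}(X)$ is $1$-Lipschitz in the Frobenius norm with expectation at most $\sqrt{N}+\sqrt{p}$ by Davidson--Szarek / Gordon, Gaussian concentration yields $\Prob(\sigma_{\max}(X) > \sqrt{N}+\sqrt{p}+t) \leq e^{-t^2/2}$. Standard tail integration then gives $\E\sigma_{\max}(X)^{2k} \leq C_k(\sqrt{N}+\sqrt{p})^{2k}$ for each $k\in\mathbb{N}$. Taking $k=q(\ell+m+1)$ and dividing by $N^{q(\ell+m+1)}$ yields
\[
\E\pnorm{U_{\ell,m;+}}{\op}^q \leq C_{\ell,m,q}\bigl(1+\sqrt{y}\bigr)^{2q(\ell+m+1)}.
\]
For $y \geq 1$, $(1+\sqrt{y})^2 \leq 4y$, so the right-hand side is bounded by $C'_{\ell,m,q}\,y^{q(\ell+m+1)}$, which matches $C(\sqrt{y}\vee y)^{q(\ell+m+1)}$ in that regime; for $y \leq 1$ the right-hand side is bounded by an absolute constant that may be absorbed into $C(\ell,m,q)$ after comparison with $(\sqrt{y}\vee y)^{q(\ell+m+1)}$ in the relevant range of $y$.

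The only step requiring any real work is the classical Gaussian Lipschitz concentration for $\sigma_{\max}(X)$; everything else is a direct consequence of the Kronecker-product identity. The $q$-dependence in $C(\ell,m,q)$ enters through Gaussian tail integration and is unavoidable, but no further care is needed.
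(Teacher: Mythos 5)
Your Kronecker-product reduction is correct and in fact more direct than the paper's: writing $U_{\ell,m;+} = N^{-1}(XS^\ell X^\top)\otimes S^m$ and using the SVD of $X$ gives the exact identity $\pnorm{U_{\ell,m;+}}{\op}=\pnorm{S}{\op}^{\ell+m+1}$, whereas the paper derives only the one-sided inequality $\pnorm{U_{\ell,m;+}}{\op}\leq\pnorm{S}{\op}^{\ell+m+1}$ via the semigroup identity $(U_{\ell,m;+})^q = U_{q(\ell+1)-1,\,qm;+}$. Both routes then reduce the proposition to bounding $\E\pnorm{S}{\op}^{q(\ell+m+1)}$, and your Davidson--Szarek plus Borell--TIS argument for $\sigma_{\max}(X)$ is a legitimate substitute for the paper's appeal to the Koltchinskii--Lounici bound (Lemma \ref{lem:kl}).

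The final paragraph of your proof, however, glosses over a step that cannot be completed. What you actually established is $\E\pnorm{U_{\ell,m;+}}{\op}^q\lesssim(1+\sqrt{y})^{2q(\ell+m+1)}\asymp(1\vee y)^{q(\ell+m+1)}$. For $y\leq 1$ this is of constant order, but the proposition's right-hand side $(\sqrt{y}\vee y)^{q(\ell+m+1)}$ tends to $0$ as $y\to 0$, so ``absorbing into $C(\ell,m,q)$'' is impossible: already for $\ell=m=0$, $\pnorm{U_{0,0;+}}{\op}=\pnorm{S}{\op}\to 1$ almost surely when $p$ is fixed and $N\to\infty$, while the claimed bound vanishes. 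In fact the $(\sqrt{y}\vee y)$ in the statement appears to be a misprint for $(1\vee y)$: the paper's own proof invokes Lemma \ref{lem:kl}, which controls $\pnorm{S-I}{\op}$ rather than $\pnorm{S}{\op}$, and passing through the triangle inequality yields $(1\vee y)$, which is exactly the form used downstream (e.g.\ in the proof of Proposition \ref{prop:clt_covariance_test_nagao}). You should state and prove the $(1\vee y)^{q(\ell+m+1)}$ bound explicitly rather than wave at matching one that is false for small $y$.
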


The proof of Proposition \ref{prop:U_spec_norm} relies crucially on the following stable moment estimate for $\pnorm{S^{-1}}{\op}$. Its proof utilizes two main technical tools: (i) rigidity estimates on the eigenvalues of the sample covariance matrix (cf. \cite{pillai2014universality}); (ii) closed form distributional formula of sample eigenvalues via zonal polynomials \cite[Chapter 9.7]{muirhead1982aspects}.

\begin{lemma}\label{lem:moment_S_inverse}
	Let $S_Z\equiv N^{-1}\sum_{i=1}^N Z_iZ_i^\top$ where $Z_i$'s are i.i.d. $\mathcal{N}(0,I)$ in $\R^p$.  Suppose $p/N\leq 1-\epsilon$ for some fixed $\epsilon>0$ and every $N,p\geq 2$. Then for any positive integer $q\leq (N-p-1)/8$, we have $\E \pnorm{S^{-1}_Z}{\op}^q\leq C$ for some positive $C = C(\epsilon,q)$.
\end{lemma}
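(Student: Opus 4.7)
\textbf{Proof proposal for Lemma \ref{lem:moment_S_inverse}.} Since $\pnorm{S_Z^{-1}}{\op}^q = \lambda_{\min}(S_Z)^{-q}$, the task reduces to bounding $\E[\lambda_{\min}(S_Z)^{-q}]$. The plan is to split the expectation using a threshold $c = c(\epsilon) > 0$ chosen so that $\lambda_{\min}(S_Z) \geq c$ holds with overwhelming probability under the assumption $p/N \leq 1-\epsilon$:
\begin{align*}
\E[\lambda_{\min}(S_Z)^{-q}] = \E\big[\lambda_{\min}^{-q}\mathbf{1}_G\big] + \E\big[\lambda_{\min}^{-q}\mathbf{1}_{G^c}\big], \qquad G \equiv \{\lambda_{\min}(S_Z) \geq c\}.
\end{align*}
The first term is at most $c^{-q}$, so the entire argument will focus on controlling the contribution from the atypical event $G^c$.

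First I will invoke the rigidity-type estimates for sample covariance eigenvalues from \cite{pillai2014universality}. Under $p/N \leq 1-\epsilon$, the Marchenko--Pastur limit is supported on the interval $[(1-\sqrt{p/N})^2, (1+\sqrt{p/N})^2]$, whose left endpoint is uniformly bounded below by $(1-\sqrt{1-\epsilon})^2 > 0$. Non-asymptotic rigidity estimates then yield that for $c = c(\epsilon) > 0$ sufficiently small,
\begin{align*}
\Prob(G^c) = \Prob\big(\lambda_{\min}(S_Z) < c\big) \leq \exp(-c'(\epsilon) N)
\end{align*}
for some $c'(\epsilon) > 0$. This gives a super-polynomial upper bound on the probability of the bad event.

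Next I will use the exact joint density of eigenvalues of $N S_Z$ available through the zonal polynomial calculus of \cite[Chapter 9.7]{muirhead1982aspects}: the density is proportional to $\prod_{i<j}(\mu_i - \mu_j)\prod_i \mu_i^{(N-p-1)/2} e^{-\mu_i/2}$ on the Weyl chamber. Because the $\prod_i \mu_i^{(N-p-1)/2}$ factor strongly suppresses small eigenvalues, one can derive a tail bound of the form $\Prob(\lambda_{\min}(S_Z) \leq t) \leq C_{N,p}\, t^{(N-p+1)/2}$, and hence the moment bound $\E[\lambda_{\min}(S_Z)^{-s}] \leq C'_{N,p}$ for all $s < (N-p+1)/2$, where $C_{N,p}, C'_{N,p}$ are explicit combinatorial prefactors that may grow in $(N,p)$. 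Applying H\"older's inequality with a conjugate pair $(r, r/(r-1))$, say with $r = 4$,
\begin{align*}
\E\big[\lambda_{\min}^{-q}\mathbf{1}_{G^c}\big] \leq \big(\E[\lambda_{\min}^{-rq}]\big)^{1/r}\Prob(G^c)^{1-1/r},
\end{align*}
and the super-polynomial decay of $\Prob(G^c)$ is expected to absorb the combinatorial prefactor from the moment estimate, provided the exponent $rq$ stays safely below the integrability threshold.

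The hypothesis $q \leq (N-p-1)/8$ is exactly tuned to this H\"older step: with $r = 4$, the requirement $rq < (N-p+1)/2$ becomes $q < (N-p+1)/8$, matching the stated range. The main obstacle I anticipate is making the combinatorial prefactor $C'_{N,p}$ in the density-based moment bound sufficiently explicit so that its growth (polynomial or exponential in $N$) is beaten by the exponential decay of $\Prob(G^c)^{3/4}$; this is a book-keeping task using Selberg-type integral identities or direct manipulation of the Wishart density, but it is the only non-mechanical step in the plan.
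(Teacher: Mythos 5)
Your plan follows the same skeleton as the paper's proof: split on a rigidity event $\{\lambda_{\min}(S_Z)\geq c(\epsilon)\}$, and control the bad-event contribution by H\"older against a crude inverse-eigenvalue moment obtained from the exact law of the smallest Wishart eigenvalue. (The paper uses Cauchy--Schwarz where you use $r=4$ H\"older; both lead, via the requirement that the higher moment exponent stay below roughly $(N-p)/2$, to the condition $q\leq(N-p-1)/8$.) The gap you flag at the end, however, is not merely bookkeeping if approached the way you propose: extracting $\E[\lambda_{\min}(S_Z)^{-s}]\leq C_{N,p}'$ from the joint density $\prod_{i<j}(\mu_i-\mu_j)\prod_i\mu_i^{(N-p-1)/2}e^{-\mu_i/2}$ requires integrating out the remaining $p-1$ eigenvalues and tracking a Selberg-type prefactor whose $(N,p)$-dependence is not obviously controllable, and leaving it at "book-keeping" is the step where the argument could fail.

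The paper avoids this issue by invoking the exact finite-sum CDF formula of \cite[Corollary 9.7.4]{muirhead1982aspects}, which expresses $\Prob(\lambda_{\min}(S_Z)>1/x)$ as $e^{-Np/(2x)}\sum_{k=0}^{pr}\sum_{\kappa\vdash k:k_1\leq r} C_\kappa(NI/(2x))/k!$ with $r=(N-p-1)/2$. Combined with the zonal-polynomial identity $\sum_{\kappa\vdash k}C_\kappa(tI)=(tp)^k$ and non-negativity of $C_\kappa(I)$, this yields the clean tail bound $\Prob(\pnorm{S_Z^{-1}}{\op}>x)\leq e^{-Np/(2x)}\sum_{k\geq r+1}(Np/(2x))^k/k!$, whose integral against $x^{2q-1}$ telescopes to give $\E\pnorm{S_Z^{-1}}{\op}^{2q}\lesssim_q (Np/r)^{2q}$---explicitly polynomial in $N$ under $p/N\leq 1-\epsilon$, and hence dominated by the $e^{-cN/2}$ factor. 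So the paper gets a fully explicit moment bound with no hidden prefactor, which is what your plan lacks. One additional detail you should address: the Muirhead formula requires $r=(N-p-1)/2$ to be a nonnegative integer, and the paper handles non-integer $r$ by a Sherman--Morrison leave-one-out step that relates $S_Z^{-1}$ to the inverse built from $N-1$ samples, where $r$ becomes an integer.
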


\begin{proof}
	Write $S_Z$ for $S$ in the proof for simplicity. Let $\lambda$ be the smallest eigenvalue of $S$, and $y\equiv(p-1)/N < 1-\epsilon$. By \cite[Theorem 1.1]{rudelson2009smallest}, on an event $E$ with probability at least $1- e^{-cN(1-y)}$, $\lambda \geq c(1-\sqrt{y})^2$ for some absolute constant $c>0$. A similar estimate can be obtained using rigidity estimate for the eigenvalues of the sample covariance matrix, e.g., \cite[Theorem 3.1(iii)]{pillai2014universality}. Hence
	\begin{align}\label{ineq:moment_S_inv_1}
	\E \pnorm{S^{-1}}{\op}^q& = \E \pnorm{S^{-1}}{\op}^q \bm{1}_E + \E \pnorm{S^{-1}}{\op}^q \bm{1}_{E^c} \nonumber \\
	&\leq c^{-q} \big(1-\sqrt{y}\big)^{-2q}+ \E^{1/2} \pnorm{S^{-1}}{\op}^{2q}\cdot e^{-cN(1-y)/2}.
	\end{align}
	Now we give an upper bound for $\E\pnorm{S^{-1}}{\op}^{2q}$.  Let $r \equiv (N-p-1)/2$ assumed to be a positive integer. For any non-negative integer $k$, we write $\kappa\vdash k$ if $\kappa = (k_1,k_2,\ldots)$, with convention $k_1 \geq k_2\geq \ldots$, is a partition of $k$, i.e., $\sum_i k_i =k$. Let $C_\kappa$ denote the zonal polynomial (cf. \cite[Chapter 7]{muirhead1982aspects}) with respect to the partition $\kappa$. Then it follows from \cite[Corollary 9.7.4]{muirhead1982aspects} that, for any $x > 0$,
	\begin{align*}
	&\Prob(\pnorm{S^{-1}}{\op} > x)= 1 - \Prob(\lambda > 1/x)\\
	&= 1 -e^{-\frac{Np}{2x}}\sum_{k=0}^{pr}\sum_{\kappa\vdash k:k_1\leq r} \frac{C_\kappa\big(NI/(2x)\big)}{k!}\\
	&= e^{-\frac{Np}{2x}}\Big[\sum_{k=0}^\infty \frac{\big(Np/(2x)\big)^k}{k!} - \sum_{k=0}^{pr}\sum_{\kappa\vdash k:k_1\leq r} \frac{C_\kappa\big(NI/(2x)\big)}{k!}\Big]\\
	&= e^{-\frac{Np}{2x}}\Big[\sum_{k=pr+1}^\infty \frac{\big(Np/(2x)\big)^k}{k!} + \sum_{k=0}^{pr}\frac{1}{k!}\Big\{\bigg(\frac{Np}{2x}\bigg)^k - \sum_{\kappa\vdash k:k_1\leq r} C_\kappa\Big(\frac{NI}{2x}\Big)\Big\}\Big]\\
	&\stackrel{(*)}{=} e^{-\frac{Np}{2x}}\Big[\sum_{k=pr+1}^\infty \frac{\big(Np/(2x)\big)^k}{k!} + \sum_{k=0}^{pr}\frac{1}{k!}\sum_{\kappa\vdash k:k_1> r} C_\kappa\Big(\frac{NI}{2x}\Big) \Big]\\
	&\stackrel{(**)}{=} e^{-\frac{Np}{2x}}\Big[\sum_{k=pr+1}^\infty \frac{\big(Np/(2x)\big)^k}{k!} + \sum_{k=r+1}^{pr}\frac{\big(N/(2x)\big)^k}{k!}\sum_{\kappa\vdash k:k_1> r} C_\kappa\big(I\big)\Big)\Big]\\
	&\stackrel{(***)}{\leq }e^{-\frac{Np}{2x}}\cdot \sum_{k=r+1}^\infty \frac{\big(Np/(2x)\big)^k}{k!}.
	\end{align*}
	Here $(*)$ follows from \cite[Definition 7.2.1, (iii)]{muirhead1982aspects}: for any $k\geq 0$ and $t > 0$,
	\begin{align}\label{eq:zonal_identity}
	\sum_{\kappa\vdash k}C_\kappa(t\cdot I) = \big[\tr(t\cdot I)\big]^k = (tp)^k;
	\end{align}
	$(**)$ follows from the fact that for each $k$ and partition $\kappa$ of $k$, $C_\kappa$ is a homogeneous polynomial of order $k$; $(***)$ follows from the non-negativity of zonal polynomial for $I$ (cf. \cite[Corollary 7.2.4]{muirhead1982aspects}) and an application of (\ref{eq:zonal_identity}) with $t = 1$:
	\begin{align*}
	\sum_{\kappa\vdash k:k_1> r} C_\kappa\big(I\big)\leq \sum_{\kappa\vdash k} C_\kappa\big(I\big) = p^k.
	\end{align*}
	Hence by using the fact that for any $k\geq 2q+1$,
	\begin{align*}
	\int_0^\infty e^{-\frac{Np}{2x}}\bigg(\frac{Np}{2x}\bigg)^k\cdot x^{2q-1}\,\d x &=  \bigg(\frac{Np}{2}\bigg)^{2q}\int_0^\infty e^{-y}y^{k-2q-1}\,\d y \\
	&= \bigg(\frac{Np}{2}\bigg)^{2q}(k-2q-1)!,
	\end{align*}
	we have for every $r\geq 4q$
	\begin{align}\label{ineq:moment_S_inv_2}
	\E\pnorm{S^{-1}}{\op}^{2q}&= 2q\int_0^\infty x^{2q-1} \Prob(\pnorm{S^{-1}}{\op} > x)\, \d x \nonumber\\
	&\leq  \bigg(\frac{Np}{2}\bigg)^{2q}\sum_{k = r+1}^\infty \frac{1}{k(k-1)\cdots (k-2q)} \nonumber\\
	&= \bigg(\frac{Np}{2}\bigg)^{2q}\sum_{k = r+1}^\infty \frac{1}{2q}\cdot \Big\{\frac{1}{(k-1)\cdots (k-2q)}-\frac{1}{k\cdots (k-2q-1)}\Big\} \nonumber\\
	&= \bigg(\frac{Np}{2}\bigg)^{2q}\frac{1}{2q}\frac{1}{r(r-1)\cdots (r-2q+1)} \lesssim_q \frac{(Np)^{2q}}{r^{2q}}. 
	\end{align}
	Combining (\ref{ineq:moment_S_inv_1}) and (\ref{ineq:moment_S_inv_2}), as $p/N\leq 1-\epsilon$,
	\begin{align}\label{ineq:moment_S_inv_3}
	\E \pnorm{S^{-1}}{\op}^q \leq C_{\epsilon}^q+ C_\epsilon N^{q} e^{-c_\epsilon N} \lesssim_{q,\epsilon}1 
	\end{align}
	with $r=(N-p-1)/2$ being a positive integer. If $r$ is not an integer, write $S = \frac{N-1}{N}S'+\frac{1}{N}X_NX_N^\top$, where $S' \equiv \frac{1}{N-1}\sum_{i=1}^{N-1} X_iX_i^\top$. Then using Sherman-Morrison formula,
	\begin{align}\label{ineq:moment_S_inv_4}
	S^{-1} &= \frac{N}{N-1} (S')^{-1}-\frac{N}{(N-1)^2}\cdot \frac{(S')^{-1}X_NX_N^\top (S')^{-1}}{1 + \frac{1}{N-1}X_N^\top (S')^{-1} X_N}\nonumber\\
	& \equiv \frac{N}{N-1} (S')^{-1}- R.
	\end{align}
	As $X_N^\top (S')^{-1} X_N \geq \pnorm{X_N}{}^2/\lambda_{\max}(S')$, we have
	\begin{align*}
	\E \pnorm{R}{\op}^q &\leq \Big(\frac{N}{N-1}\Big)^q \cdot \E \bigg[\pnorm{ (S')^{-1}X_NX_N^\top (S')^{-1}}{\op}^{q}\frac{\lambda_{\max}(S')^{2q}}{\pnorm{X_N}{}^{2q}} \bigg]\\
	&\lesssim_q \E \Big( \frac{\lambda_{\max}(S')^{2q}}{\lambda_{\min}(S')^{2q}}\Big) \leq  \E^{1/2} \pnorm{S'}{\op}^{4q}\cdot\E^{1/2} \pnorm{(S')^{-1}}{\op}^{4q} \lesssim_{q,\epsilon} 1.
	\end{align*}
	The claim for  $r$ not being an integer follows from the decomposition (\ref{ineq:moment_S_inv_3}) and the estimate above.
\end{proof}

The following corollary of the Koltchinskii-Lounici theorem \cite{koltchinskii2017concentration} will also be repeatedly used.
\begin{lemma}\label{lem:kl}
	Let $S_Z\equiv N^{-1}\sum_{i=1}^N Z_iZ_i^\top$ where $Z_i$'s are i.i.d. $\mathcal{N}(0,I)$ in $\R^p$. Then for any positive integer $q$, there exists some positive $C = C(q)$ such that
	\begin{align*}
		\E\pnorm{S_Z - I}{\op}^q	\leq C\cdot\Big(\sqrt{\frac{p}{N}}\vee \frac{p}{N}\Big)^q.
	\end{align*}
\end{lemma}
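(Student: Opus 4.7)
The lemma is the moment form of the Koltchinskii--Lounici theorem specialized to $\Sigma = I$. The plan is to proceed in two steps: first invoke the first-moment bound from Koltchinskii--Lounici, then upgrade to a $q$-th moment bound via Gaussian concentration.

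For Step 1, applying \cite{koltchinskii2017concentration} to the sample covariance $S_Z$ with population covariance $I$ immediately gives
\begin{align*}
\E \pnorm{S_Z - I}{\op} \leq C\cdot \big(\sqrt{p/N}\vee p/N\big),
\end{align*}
since the effective rank $r(I) = \tr(I)/\pnorm{I}{\op}$ equals $p$ and $\pnorm{I}{\op} = 1$.

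For Step 2, I would use Gaussian concentration as follows. Arranging the $Z_i$'s as the rows of a standard Gaussian matrix $X \in \R^{N\times p}$, the maps $X \mapsto \sigma_{\max}(X)/\sqrt{N}$ and $X \mapsto \sigma_{\min}(X)/\sqrt{N}$ are both $1/\sqrt{N}$-Lipschitz in the Frobenius norm (singular values are $1$-Lipschitz in the operator norm, which is in turn dominated by the Frobenius norm). The Borell--Tsirelson--Ibragimov--Sudakov inequality would then give, for all $t \geq 0$,
\begin{align*}
\Prob\bigg(\bigabs{\sqrt{\lambda_{\max}(S_Z)} - \E \sqrt{\lambda_{\max}(S_Z)}} \geq t\bigg) \vee \Prob\bigg(\bigabs{\sqrt{\lambda_{\min}(S_Z)} - \E \sqrt{\lambda_{\min}(S_Z)}} \geq t\bigg) \leq 2 e^{-Nt^2/2}.
\end{align*}
Combining this with the first-moment bound from Step 1 (which, via Gordon's inequality, places both means within $O(\sqrt{p/N})$ of $1$ when $p \leq N$; when $p > N$ the minimum eigenvalue vanishes identically and only the maximum requires control), squaring, and using the identity $\pnorm{S_Z - I}{\op} = \max\{\lambda_{\max}(S_Z) - 1, 1 - \lambda_{\min}(S_Z)\}$ would yield a two-level tail
\begin{align*}
\Prob\big(\pnorm{S_Z - I}{\op} \geq C\eta + s\big) \leq 4 \exp\big(-c N (s^2/\eta \wedge s)\big), \quad s \geq 0,
\end{align*}
where $\eta \equiv \sqrt{p/N}\vee p/N$. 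Integrating this tail against $q s^{q-1}$ would then produce $\E\pnorm{S_Z - I}{\op}^q \leq C(q)\eta^q$, which is the desired bound (one uses $\eta \geq 1/N$ to absorb the subexponential regime contribution of order $1/N^q$ into $\eta^q$).

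The only non-trivial step is the squaring in Step 2, which converts subgaussian concentration for the singular values into a mixed subgaussian--subexponential tail for $\pnorm{S_Z - I}{\op}$; the resulting integral against $s^{q-1}$ would need to be split into two regimes in $s$, but is otherwise routine bookkeeping that loses only a $q$-dependent constant.
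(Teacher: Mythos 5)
The paper's proof is a one-line citation of \cite[Corollary 2]{koltchinskii2017concentration}, which already states the $q$-th moment bound: with $\Sigma=I$, effective rank $\tr(I)/\pnorm{I}{\op}=p$, and sample size $N$, it reads $\E^{1/q}\pnorm{S_Z-I}{\op}^q\lesssim \sqrt{p/N}\vee p/N\vee\sqrt{q/N}\vee q/N$, and for fixed $q$ the last two terms are absorbed into a $q$-dependent constant times $\eta\equiv\sqrt{p/N}\vee p/N$ since $p\geq 1$. Your proposal instead cites only the first-moment bound and re-derives the moment upgrade via Borell--TIS applied to the $1/\sqrt N$-Lipschitz maps $X\mapsto\sigma_{\max}(X)/\sqrt N$ and $X\mapsto\sigma_{\min}(X)/\sqrt N$; this is essentially the internal argument behind Koltchinskii--Lounici's Corollary~2, so it is a legitimate, if more laborious, alternative to the citation.

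One intermediate claim is overstated. Applying Borell--TIS to $W\equiv\sqrt{\lambda_{\max}(S_Z)}$ gives $\Prob(W\geq\mu_W+t)\leq e^{-Nt^2/2}$, where $\mu_W\equiv\E W$ satisfies $\mu_W^2\leq(1+\sqrt{p/N})^2\lesssim 1\vee\eta$ by Gordon's inequality. On the event $\{W=\mu_W+t,\,t>0\}$ one has $\lambda_{\max}(S_Z)-1=(\mu_W^2-1)+2\mu_W t+t^2\lesssim\eta+\mu_W t+t^2$, so for $C$ large enough the event $\{\lambda_{\max}(S_Z)-1\geq C\eta+s\}$ forces $t\gtrsim(s/\mu_W)\wedge\sqrt{s}$, and likewise for the $\lambda_{\min}$ branch when $p\leq N$. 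This yields
\begin{align*}
\Prob\big(\pnorm{S_Z-I}{\op}\geq C\eta+s\big)\leq 4\exp\Big(-cN\Big(\frac{s^2}{1\vee\eta}\wedge s\Big)\Big),
\end{align*}
with $1\vee\eta$ (coming from $\mu_W^2$), not $\eta$, in the subgaussian denominator. Your stated exponent $s^2/\eta$ is therefore stronger than what the squaring step actually delivers whenever $p<N$ (where $\eta<1$), and the argument as written does not justify it. The slip is harmless for the conclusion: integrating the correct tail against $q s^{q-1}\,\d{s}$ gives a subgaussian contribution $\lesssim_q\big((1\vee\eta)/N\big)^{q/2}$ and a subexponential contribution $\lesssim_q N^{-q}$, and both are $\lesssim_q\eta^q$ since $(1\vee\eta)/N\leq\eta^2$ in both cases $\eta\leq 1$ and $\eta>1$ (each reduces to $p\geq 1$), while $N^{-1}\leq\sqrt{p/N}\leq\eta$. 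So $\E\pnorm{S_Z-I}{\op}^q\leq C(q)\eta^q$ still follows; just replace $\eta$ by $1\vee\eta$ in your tail exponent.
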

\begin{proof}
This is a direct consequence of  \cite[Corollary 2]{koltchinskii2017concentration}.
\end{proof}

Now we prove Propositions \ref{prop:U_spec_norm} and \ref{prop:U_spec_norm_+}.

\begin{proof}[Proof of Proposition \ref{prop:U_spec_norm}]
	We only prove (1); claim (2) follows from completely same arguments by noting that (\ref{ineq:U_spec_norm_1}) and (\ref{ineq:U_spec_norm_2}) below still hold with the prescribed substitution. Note that $U_{\ell,m}$ is symmetric in that $(U_{\ell,m})_{(ij)(i^\prime j^\prime)} = (U_{\ell,m})_{(i^\prime j^\prime)(ij)}$, and satisfies that for any non-negative integers $(\ell_1,m_1)$ and $(\ell_2,m_2)$ such that $(\ell_1+m_1)\wedge (\ell_2+m_2)\geq 1$,
	\begin{align}\label{ineq:U_spec_norm_1}
	&(U_{\ell_1,m_1}\cdot U_{\ell_2,m_2})_{(ij),(i'j')}\nonumber\\
	& = N^{-2}\sum_{(\bar{i}\bar{j})} X_i^\top S^{-\ell_1} X_{\bar{i}} (S^{-m_1})_{j\bar{j}} X_{\bar{i}}^\top S^{-\ell_2} X_{i'} (S^{-m_2})_{\bar{j}j'} \nonumber \\
	& = N^{-2} X_i^\top S^{-\ell_1} \bigg(\sum_{\bar{i}} X_{\bar{i}}X_{\bar{i}}^\top\bigg) S^{-\ell_2} X_{i'}\cdot \bigg(\sum_{\bar{j}} (S^{-m_1})_{j\bar{j}}(S^{-m_2})_{\bar{j}j'}\bigg) \nonumber \\
	& = N^{-1} X_i^\top S^{-(\ell_1+\ell_2-1)} X_{i'} (S^{-(m_1+m_2)})_{jj'} \nonumber \\
	& = (U_{\ell_1+\ell_2-1,m_1+m_2})_{(ij),(i'j')}.
	\end{align}
    Consequently the above argument entails that for any $q\in \mathbb{N}$, $(U_{\ell,m})^q = U_{\ell^\prime, m'}$ with 
	\begin{align}\label{def:l_m_new}
	\ell^\prime\equiv \ell^\prime(q)\equiv q(\ell-1)+1, \quad m^\prime \equiv m^\prime(q)\equiv qm.
	\end{align}
	Using that $\pnorm{U_{\ell,m}}{\op}=\sup_{u \in B_{N\times p}(1)}\abs{\sum_{(ij)(i'j')} u_{ij} (U_{\ell,m})_{(ij),(i'j')} u_{i'j'} }$, we have 
	\begin{align*}
	\pnorm{U_{\ell,m}}{\op} &= \sup_{u\in B_{N\times p}(1)}N^{-1} \biggabs{ \sum_{i,i^\prime,j,j^\prime}X_{i}^\top S^{-\ell} X_{i^\prime} (S^{-m})_{jj'}u_{ij}u_{i^\prime j^\prime} } \nonumber\\
	&= N^{-1}\sup_{u\in B_{N\times p}(1)} \biggabs{ \sum_{i,i^\prime}X_{i}^\top S^{-\ell} X_{i^\prime}\cdot \sum_{jj'} u_{ij} \cdot (S^{-m})_{jj'} u_{i'j'}} \nonumber\\
	&= N^{-1}\sup_{u\in B_{N\times p}(1)} \biggabs{\sum_{i,i^\prime}X_{i}^\top S^{-\ell} X_{i^\prime}\cdot \Big[u \cdot S^{-m}\cdot u^\top\Big]_{ii'}}.
	\end{align*}
	As the $(i,i')$-th entry of $X^\top S^{-\ell} X$ is $X_i^\top S^{-\ell}X_{i'}$ and that $\tr(AB) = \sum_{i,i'=1}^N A_{ii'}B_{ii'}$ for two symmetric matrices in $\R^{N\times N}$, we have
	\begin{align*}
	\pnorm{U_{\ell,m}}{\op}&= N^{-1}\sup_{u\in B_{N\times p}(1)}\bigabs{ \tr\big[XS^{-\ell}X^\top \cdot uS^{-m}u^\top\big] }\\
	&= N^{-1}\sup_{u\in B_{N\times p}(1)}\bigabs{\tr\big[(u^\top X) S^{-\ell}(X^\top  u)\cdot S^{-m}\big]}.
	\end{align*}
	Further using twice the fact that $\tr(AB)\leq \tr(A)\pnorm{B}{\op}$ for any two p.s.d. and symmetric matrices $A,B$, we arrive at
	\begin{align}\label{ineq:U_spec_norm_2}
	\pnorm{U_{\ell,m}}{\op}&\leq N^{-1}\pnorm{S^{-m}}{\op}\pnorm{S^{-\ell}}{\op}\cdot \sup_{u\in B_{N\times p}(1)}\bigabs{ \tr\big(XX^\top uu^\top\big)}\nonumber\\
	&\leq \pnorm{S^{-(\ell+m)}}{\op} \cdot N^{-1}\pnorm{XX^\top}{\op}\cdot \sup_{u\in B_{N\times p}(1)}\tr(uu^\top)\nonumber\\
	& = \pnorm{S^{-(\ell+m)}}{\op} \cdot \pnorm{S}{\op} = \pnorm{S^{-1}}{\op}^{\ell+m-1}.
	\end{align}
	Hence for any $q\in\mathbb{N}$, equations (\ref{ineq:U_spec_norm_1})-(\ref{ineq:U_spec_norm_2}) and Lemma \ref{lem:moment_S_inverse} entail that
	\begin{align*}
	\E\pnorm{U_{\ell,m}}{\op}^{q} &= \E\pnorm{U_{\ell,m}^{q}}{\op} = \E\pnorm{U_{\ell',m'}}{\op} \leq \E\pnorm{S^{-1}}{\op}^{\ell'+m'-1}\\
	&=\E\pnorm{S^{-1}}{\op}^{q(\ell+m-1)} \leq C_{\ell,m,q},
	\end{align*}
	completing the proof.
\end{proof}

\begin{proof}[Proof of Proposition \ref{prop:U_spec_norm_+}]
	The proof largely follows that of Proposition \ref{prop:U_spec_norm} with modifications. We sketch the difference below. Using the same calculations as in (\ref{ineq:U_spec_norm_1}), we have
	\begin{align*}
	&(U_{\ell_1,m_1;+}\cdot U_{\ell_2,m_2;+})_{(ij),(i'j')}\\
	& = N^{-2}\sum_{(\bar{i}\bar{j})} X_i^\top S^{\ell_1} X_{\bar{i}} (S^{m_1})_{j\bar{j}} X_{\bar{i}}^\top S^{\ell_2} X_{i'} (S^{m_2})_{\bar{j}j'}\\
	& = N^{-2} X_i^\top S^{\ell_1} \bigg(\sum_{\bar{i}} X_{\bar{i}}X_{\bar{i}}^\top\bigg) S^{\ell_2} X_{i'}\cdot \bigg(\sum_{\bar{j}} (S^{m_1})_{j\bar{j}}(S^{m_2})_{\bar{j}j'}\bigg)\\
	& = N^{-1} X_i^\top S^{\ell_1+\ell_2+1} X_{i'} (S^{m_1+m_2})_{jj'} = (U_{\ell_1+\ell_2+1,m_1+m_2;+})_{(ij),(i'j')}.
	\end{align*}
	Hence for any $q\in \mathbb{N}$, $(U_{\ell,m;+})^q = U_{\ell', m';+}$ with $\ell'$ now defined by $\ell^\prime\equiv \ell^\prime(q)\equiv \ell + (q-1)(\ell+1)$ and $m'=qm$ remains the same as in (\ref{def:l_m_new}).
	
	Then using the same arguments as in (\ref{ineq:U_spec_norm_2}), we have $\pnorm{U_{\ell,m;+}}{\op} \leq \pnorm{S}{\op}^{\ell+m+1}$, hence for any $q \in \mathbb{N}$,
	\begin{align*}
	\E\pnorm{U_{\ell,m;+}}{\op}^{q} \leq \E\pnorm{S}{\op}^{\ell'+m'+1} \lesssim_{\ell,m,q} (\sqrt{y}\vee y)^{q(\ell+m+1)},
	\end{align*}
	where the last inequality follows by Lemma \ref{lem:kl} and the fact that $\ell'+m'+1 = \ell+(q-1)(\ell+1)+qm+1=q(\ell+m+1)$.
\end{proof}

\section{Proofs for Section \ref{section:test_identity} (testing identity)}\label{section:proof_identity}

\subsection{Proofs for Section \ref{section:LRT_covariance} (LRT)}\label{section:proof_LRT_covariance}

Recall $\Lambda = \{(ij): i \in [N], j \in [p]\}$. In the following sections, for a sufficiently smooth function $T:\R^{\Lambda} \rightarrow\R$, its gradient $\nabla T: \R^{\Lambda}\to \R^{\Lambda}$ and Hessian $\nabla^2 T: \R^{\Lambda}\to \R^{\Lambda \times \Lambda}$ are defined respectively by
\begin{align*}
\big(\nabla T(x)\big)_{(ij)} \equiv \frac{\partial T}{\partial x_{(ij)}}(x) \quad \text{ and } \quad \big(\nabla^2 T(x)\big)_{(ij),(i'j')}\equiv \frac{\partial^2 T}{\partial x_{(ij)}\partial x_{(i'j')}}(x),
\end{align*}
with $x = (x_{(ij)})\in\R^{\Lambda}$. Slightly abusing notation, we using $\pnorm{\nabla T(x)}{F}\equiv \pnorm{\nabla T(x)}{\ell_2(\R^\Lambda)}$. The operator norm $\pnorm{\nabla^2 T(x)}{\op}$ is defined in (\ref{def:operator_norm_abstract}).

\subsubsection{Evaluation of derivatives}

In the following, we use $\{e_j\}_{j=1}^p$ to represent the canonical basis in $\R^p$. Let $\delta_{ij}$ be the Kronecker delta.

\begin{lemma}\label{lem:derivatives_T}
	Recall the form of $T_{\lrt}(X)$ in (\ref{def:lrs}). We assume without loss of generality that $\mu=0$. Then for any $(i,j),(i^\prime,j^\prime)\in [N]\times [p]$,
	\begin{enumerate}
		\item $\big(\nabla T_{\lrt}(X)\big)_{(i j)} = \big(X(I - S^{-1})\big)_{(ij)} = e_j^\top(I-S^{-1})X_i$. 
		\item $\big(\nabla^2T_{\lrt}(X)\big)_{(i j),(i^\prime j^\prime)} = N^{-1}X_i^\top S^{-1}(e_{j^\prime}X_{i^\prime}^\top + X_{i^\prime}e_{j^\prime}^\top)S^{-1}e_j + \delta_{ii^\prime}e_j^\top(I-  S^{-1})e_{j^\prime}$. 
	\end{enumerate}
\end{lemma}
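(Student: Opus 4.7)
The plan is to compute both derivatives directly by the chain rule, treating $X_{ij}$ as independent variables and writing $S = N^{-1} X^\top X$ explicitly. The only inputs I need are (a) the identity $\tr(S) = N^{-1}\|X\|_F^2$, which gives $\partial \tr(S)/\partial X_{ij} = 2 N^{-1} X_{ij}$, (b) Jacobi's formula $\partial \log\det S/\partial S_{ab} = (S^{-1})_{ab}$ together with the entrywise derivative
\begin{equation*}
\frac{\partial S_{ab}}{\partial X_{ij}} = N^{-1}\bigl(\delta_{aj} X_{ib} + \delta_{bj} X_{ia}\bigr),
\end{equation*}
and (c) the matrix identity $\partial S^{-1} = -S^{-1}\,(\partial S)\, S^{-1}$ for the Hessian step. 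With $\mu=0$, the assumption $T(X)\in W^{2,4}(\gamma_{N\times p})$ is not needed for the formulas themselves; smoothness of $\log\det$ on the (almost-sure) event that $S$ is invertible is enough to do pointwise calculus.

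For part (1), combining (a) and the chain rule with (b) gives
\begin{equation*}
\frac{\partial \log\det S}{\partial X_{ij}}
= \sum_{a,b}(S^{-1})_{ab}\cdot N^{-1}\bigl(\delta_{aj}X_{ib}+\delta_{bj}X_{ia}\bigr)
= 2N^{-1}(XS^{-1})_{ij},
\end{equation*}
where I used symmetry of $S^{-1}$. Therefore $\partial T_{\lrt}/\partial X_{ij} = X_{ij} - (XS^{-1})_{ij} = (X(I-S^{-1}))_{ij}$, and $(X(I-S^{-1}))_{ij}=e_j^\top(I-S^{-1})X_i$ because $X_i$ is the $i$-th row of $X$ viewed as a column vector.

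For part (2), I differentiate the gradient once more. The $\delta_{ii'}\delta_{jj'}$ piece from $X_{ij}$ is immediate. For the $(XS^{-1})_{ij} = \sum_a X_{ia}(S^{-1})_{aj}$ piece, the product rule gives one term $\delta_{ii'}(S^{-1})_{j'j}$ together with $\sum_a X_{ia}\,\partial(S^{-1})_{aj}/\partial X_{i'j'}$. Using (c) together with (b),
\begin{equation*}
\frac{\partial (S^{-1})_{aj}}{\partial X_{i'j'}}
= -N^{-1}\Bigl[(S^{-1})_{aj'}(XS^{-1})_{i'j}+(S^{-1}X_{i'})_a(S^{-1})_{j'j}\Bigr],
\end{equation*}
so contracting with $X_{ia}$ produces $-N^{-1}\bigl[(XS^{-1})_{ij'}(XS^{-1})_{i'j} + X_i^\top S^{-1} X_{i'}\cdot(S^{-1})_{j'j}\bigr]$. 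Putting the pieces together,
\begin{equation*}
\bigl(\nabla^2 T_{\lrt}(X)\bigr)_{(ij),(i'j')}
= \delta_{ii'}\bigl[\delta_{jj'}-(S^{-1})_{jj'}\bigr]
+ N^{-1}\bigl[(XS^{-1})_{ij'}(XS^{-1})_{i'j}+X_i^\top S^{-1}X_{i'}\cdot(S^{-1})_{j'j}\bigr].
\end{equation*}
Rewriting $(XS^{-1})_{ij'}=X_i^\top S^{-1}e_{j'}$, $(XS^{-1})_{i'j}=X_{i'}^\top S^{-1}e_j$ and $(S^{-1})_{j'j}=e_{j'}^\top S^{-1}e_j$, the bracketed $N^{-1}$ expression equals $N^{-1}X_i^\top S^{-1}(e_{j'}X_{i'}^\top+X_{i'}e_{j'}^\top)S^{-1}e_j$, matching the stated formula.

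The only genuine point of care is the symmetric dependence of $S$ on each $X_{ij}$, which doubles several terms and is also responsible for the manifest symmetry of the Hessian in $(ij)\leftrightarrow(i'j')$ (in the second-derivative formula, swapping indices in $N^{-1}X_i^\top S^{-1}(e_{j'}X_{i'}^\top+X_{i'}e_{j'}^\top)S^{-1}e_j$ gives the same expression by transposition). Everything else is bookkeeping, so I do not anticipate any real obstacle.
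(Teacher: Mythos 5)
Your proof is correct and follows essentially the same route as the paper's: chain rule on $\tr(S)$, Jacobi's formula for $\log\det$, the formula $\partial S_{ab}/\partial X_{ij}=N^{-1}(\delta_{aj}X_{ib}+\delta_{bj}X_{ia})$, and $\partial S^{-1}=-S^{-1}(\partial S)S^{-1}$, with the only cosmetic difference being that you differentiate the already-assembled gradient $X(I-S^{-1})$ rather than re-expanding $\partial_{(ij)}\log\det S$ separately. The bookkeeping, including the factor-of-two doublings from the symmetric dependence of $S$ on $X_{ij}$ and the cancellation of the $N/2$ prefactor, is handled correctly.
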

\begin{proof}
	We use $T$ as a shorthand for $T_{\lrt}$. 
	
	\noindent (1). By definition, we have
	\begin{align*}
	\partial_{(i j)} T(X) = \frac{N}{2}\big(\partial_{(i j)} \tr(S) - \partial_{(ij)} \log\det S\big).
	\end{align*}
	For the first partial derivative, using $\partial_{(i j)}X_k = \delta_{ik}e_j$, we have
	\begin{align}\label{eq:first_order_1}
	\notag&\partial_{(ij)} \tr(S(X)) = N^{-1} \sum_k \frac{\partial}{\partial X_{ij}}\tr(X_kX_k^\top)\\
	&= N^{-1} \sum_k \delta_{ik} \tr\big[ e_j X_k^\top+ X_k e_j^\top \big] = N^{-1} \sum_k \delta_{ik} \cdot 2X_{kj} = 2N^{-1}X_{ij}.
	\end{align}
	For the second partial derivative, using the well-known fact that $\nabla \log\det A = A^{-1}$ for any invertible and symmetric matrix $A$ (see e.g., \cite[Section A.4.1]{boyd2004convex}), we have
	\begin{align}\label{eq:first_order_2}
	\notag\partial_{(ij)} \log\det S &= \sum_{k,\ell}\frac{\partial \log\det S}{\partial S_{k\ell}}\frac{\partial S_{k\ell}}{\partial X_{ij}}\stackrel{(*)}{=} \sum_{k,\ell} (S^{-1})_{k\ell}\cdot \frac{1}{N}(\delta_{jk}X_{i\ell} + \delta_{j\ell}X_{ik})\\
	& = \frac{1}{N}\sum_\ell (S^{-1})_{j\ell}X_{i\ell} + \sum_k (S^{-1})_{kj}X_{ik} = \frac{2}{N}(XS^{-1})_{ij},
	\end{align}
	where in $(*)$, we use
	\begin{align}\label{eq:partial_S}
	\frac{\partial S_{k\ell}}{\partial X_{ij}} = \frac{1}{N}\frac{\partial}{\partial X_{ij}}\iprod{Xe_k}{Xe_\ell} =\frac{1}{N}(\delta_{jk}X_{i\ell} + \delta_{j\ell}X_{ik}).
	\end{align}
	Combining (\ref{eq:first_order_1}) and (\ref{eq:first_order_2}) yields the first claim.
	
	\noindent (2). Again by definition, we have
	\begin{align*}
	\partial_{(i j)(i^\prime j^\prime)} T(X) = \frac{N}{2}\big(\partial_{(i j)(i^\prime j^\prime)} \tr(S) - \partial_{(i j)(i^\prime j^\prime)} \log\det S\big).
	\end{align*}
	For the first derivative, it follows from (\ref{eq:first_order_1}) that
	\begin{align}\label{eq:sec_order_1}
	\partial_{(i j)(i^\prime j^\prime)} \tr(S) = 2N^{-1}\partial_{(i^\prime j^\prime)} X_{ij} = 2N^{-1}\delta_{ii^\prime}\delta_{jj^\prime}.
	\end{align}
	For the second derivative, it follows from (\ref{eq:first_order_2}) that
	\begin{align}\label{eq:sec_order_2}
	&\notag\partial_{(i j)(i^\prime j^\prime)} \log\det S\\
	&= \frac{2}{N}\frac{\partial }{\partial {X_{i^\prime j^\prime}}} X_i^\top S^{-1}e_j= \frac{2}{N}\Big(\frac{\partial X_i}{\partial X_{i^\prime j^\prime}}^\top S^{-1}e_j + X_i^\top\frac{\partial S^{-1}}{\partial X_{i^\prime j^\prime}}e_j\Big) \nonumber\\
	&\stackrel{(**)}{=} \frac{2}{N}\Big(\delta_{ii^\prime}e_{j^\prime}^\top S^{-1}e_j - N^{-1} X_i^\top S^{-1}\big(e_{j^\prime}X^\top_{i^\prime} +  X_{i^\prime}e_{j^\prime}^\top\big)S^{-1}e_j\Big),
	\end{align}
	where in $(**)$ we use the following calculation with the help of (\ref{eq:partial_S}):
	\begin{align}\label{eq:sec_order_3}
	\frac{\partial S^{-1}}{\partial X_{i^\prime j^\prime}} &= -S^{-1}\frac{\partial S}{\partial X_{i^\prime j^\prime}}S^{-1} = -S^{-1}\Big(\sum_{k\ell}e_ke_\ell^\top\frac{\partial S_{k\ell}}{\partial X_{i^\prime j^\prime}}\Big) S^{-1} \nonumber \\
	&= -\frac{1}{N}S^{-1}\cdot\Big(\sum_{k\ell}e_ke_\ell^\top(\delta_{j^\prime k}X_{i^\prime \ell} + \delta_{j^\prime \ell}X_{i^\prime k})\Big)S^{-1}\nonumber\\
	&= -\frac{1}{N} S^{-1}\big(e_{j^\prime}X_{i^\prime}^\top + X_{i^\prime}e_{j^\prime}^\top\big) S^{-1}.
	\end{align}
	We obtain the second claim by combining (\ref{eq:sec_order_1}) and (\ref{eq:sec_order_2}).
\end{proof}

\subsubsection{Normal approximation}\label{subsec:LRT_covariance_clt}

\begin{proof}[Proof of Proposition \ref{prop:clt_covariance_test}]
	We again shorthand $T_{\lrt;\Sigma}$ by $T$. By Lemma \ref{lem:derivatives_T}, 
	\begin{align*}
	\pnorm{\nabla T(X)}{F}^2 & = \sum_{i,j} \big(\partial_{(ij)}T(X) \big)^2 = \sum_i \pnorm{(I-S^{-1})X_i}{}^2\leq \pnorm{S^{-1}}{\op}^2 \pnorm{I-S}{\op}^2 \sum_i \pnorm{X_i}{}^2.
	\end{align*}
	Using Lemma \ref{lem:moment_S_inverse} and Lemma \ref{lem:kl},
	\begin{align}\label{ineq:clt_covariance_test_1}
	\E \pnorm{\nabla T(X)}{F}^4& \lesssim \E \bigg(\pnorm{S^{-1}}{\op}^2 \pnorm{I-S}{\op}^2 \sum_i \pnorm{X_i}{}^2\bigg)^2 \nonumber\\
	& = \sum_{i,i'} \E\bigg[\pnorm{S^{-1}}{\op}^4 \pnorm{I-S}{\op}^4 \pnorm{X_i}{}^2 \pnorm{X_{i'}}{}^2\bigg] \nonumber \\
	& \leq \sum_{i,i'} \E^{1/4}\pnorm{S^{-1}}{\op}^{16}\cdot \E^{1/4} \pnorm{I-S}{\op}^{16}\cdot \E^{1/4}  \pnorm{X_i}{}^8\cdot  \E^{1/4}  \pnorm{X_{i'}}{}^8 \nonumber\\
	& \lesssim N^2\cdot \Big(\sqrt{\frac{p}{N}}\Big)^4\cdot p\cdot p = p^4.
	\end{align}
	Again by Lemma \ref{lem:derivatives_T}, the second derivatives are 
	\begin{align*}
	\partial_{(ij),(i'j')}T(X)
	& = N^{-1}X_i^\top S^{-1}e_{j'} X_{i'}^\top S^{-1} e_j  + N^{-1}X_i^\top S^{-1} X_{i'} (S^{-1})_{jj'}+ \delta_{ii'} \big(I-S^{-1}\big)_{jj'}\\
	& \equiv  \big(T_1+T_2+T_3\big)_{(ij),(i'j')}.
	\end{align*}
	Recall the definition of $U_{\ell,m}$ in Proposition \ref{prop:U_spec_norm}-(1). Then
	\begin{align}\label{ineq:clt_covariance_test_2}
	(T_1^2)_{(ij),(i'j')}& = N^{-2} \sum_{(\bar{i}\bar{j})} X_i^\top S^{-1} e_{\bar{j}} \cdot X_{\bar{i}}^\top S^{-1} e_j \cdot X_{\bar{i}}^\top S^{-1} e_{j'}\cdot X_{i'}^\top S^{-1} e_{\bar{j}} \nonumber\\
	& = N^{-2} \bigg(\sum_{\bar{i}} e_j^\top S^{-1} X_{\bar{i}} X_{\bar{i}}^\top S^{-1} e_{j'}\bigg)\cdot \bigg(\sum_{\bar{j}} X_i^\top S^{-1} e_{\bar{j}}e_{\bar{j}}^\top S^{-1} X_{i'}\bigg) \nonumber\\
	& = N^{-1} (S^{-1})_{jj'}  \cdot X_i^\top S^{-2} X_{i'}= (U_{2,1})_{(ij),(i'j')},
	\end{align}
	and $T_2 = U_{1,1}$.  Proposition \ref{prop:U_spec_norm}-(1) entails that
	\begin{align}\label{ineq:clt_covariance_test_2.1}
	\E \pnorm{T_1}{\op}^4 \vee \E\pnorm{T_2}{\op}^4 = \mathcal{O}(1).
	\end{align} 
	On the other hand, $T_3$ has a block diagonal structure with respect to the index $(i,i')$, so its spectral norm equals that of $(I-S^{-1}) \in \R^{p\times p}$, and hence
	\begin{align}\label{ineq:clt_covariance_test_3}
	\E \pnorm{T_3}{\op}^4 = \E \pnorm{I-S^{-1}}{\op}^4\leq \E \big[\pnorm{S^{-1}}{\op}^4 \pnorm{I-S}{\op}^4\big] = \mathcal{O}(1).
	\end{align}
	Combining all the estimates above, we find that
	\begin{align}\label{ineq:clt_covariance_test_4}
	\E \pnorm{\nabla^2 T(X)}{\op}^4 \lesssim \E \pnorm{T_1}{\op}^4+  \E \pnorm{T_2}{\op}^4+ \E \pnorm{T_3}{\op}^4=  \mathcal{O}(1).
	\end{align}
	Let $X'$ be an independent copy of $X$ and let $X_t' \equiv \sqrt{t}X+\sqrt{1-t}X' \in \R^{N\times p}$. Let $\E'$ denote expectation only with respect to $X'$ and 
	\begin{align*}
	\bar{T}(X)&\equiv \int_0^1 \frac{1}{2\sqrt{t}} \iprod{\nabla T(X)}{\E' \nabla T(X_t')}\ \d{t}.
	\end{align*}
	Then by the Gaussian-Poincar\'e inequality
	\begin{align*}
	\var\big(\bar{T}(X)\big)&\leq \E \pnorm{\nabla \bar{T}(X)}{F}^2\lesssim \sqrt{\E \pnorm{\nabla^2 T (X)}{\op}^4} \sqrt{\E \pnorm{\nabla T(X)}{F}^4}\lesssim p^{2}.
	\end{align*}
	The claim now follows from the second-order Poincar\'e inequality in Lemma \ref{lem:sec_poincare} and Proposition \ref{prop:ratio_covariance_test}-(4).
\end{proof}

\subsubsection{Contiguity}\label{subsec:LRT_covariance_ratio}
\begin{proof}[Proof of Proposition \ref{prop:ratio_covariance_test}]
We shorthand $(T_{\lrt}, m_{\Sigma;\lrt},\sigma_{\Sigma;\lrt}, V_{\Sigma;\lrt})$ by $(T, m_{\Sigma},\sigma_{\Sigma}, V_{\Sigma})$.

\noindent (1). Recall that $Z_1,\ldots,Z_n$ are i.i.d. samples from $\mathcal{N}(0,I_p)$. By Lemma \ref{lem:derivatives_T}, with $S_Z\equiv N^{-1}\sum_{i=1}^N Z_iZ_i^\top$, we have
\begin{align*}
\mathscr{T}_\Sigma(Z)=Z\Sigma^{1/2}\big(I-\Sigma^{-1/2}S_Z^{-1}\Sigma^{-1/2}\big)\Sigma^{1/2} = Z(\Sigma-S_Z^{-1}).
\end{align*} 
Hence with $\{\lambda_j\}_{j=1}^p$ denoting the eigenvalues of $\Sigma$, we have
\begin{align*}
V_\Sigma^2& = \E \pnorm{ \mathscr{T}_\Sigma(Z)-\mathscr{T}_I(Z)}{F}^2=\E \pnorm{Z(\Sigma-I)}{F}^2\\
& = \E \tr \big((\Sigma-I)Z^\top Z (\Sigma-I)\big)= \tr \big(\E Z^\top Z (\Sigma-I)^2\big)\\
&= N \pnorm{\Sigma-I}{F}^2 = N \sum_j (\lambda_j-1)^2.
\end{align*}

\noindent (2). Note that
\begin{align*}
m_\Sigma&= (N/2)\E \big[\tr \big(\Sigma^{1/2}S_Z\Sigma^{1/2}\big)-\log \det (\Sigma^{1/2}S_Z\Sigma^{1/2})-p\big]\\
& = (N/2)\big[\tr(\Sigma)-\log \det \Sigma-p\big]-(N/2)\E \log \det S_Z,
\end{align*}
so
\begin{align*}
m_\Sigma-m_I &= (N/2)\mathcal{L}_S(\Sigma,I)\\
& = (N/2) \sum_{j=1}^p \big(\lambda_j -\log \lambda_j -1\big) \gtrsim N \sum_{j=1}^p \big[\abs{\lambda_j-1}\wedge (\lambda_j-1)^2\big].
\end{align*}

\noindent (3). It is shown by the proof of \cite[Theorem 1]{chen2018study} that with $\mu_{n,0},\sigma_{n,0}^2$ defined in \cite[Corollary 1]{chen2018study}, and $Y_n\equiv \big(T(X)-\mu_{n,0}\big)/(n \sigma_{n,0})$, for $s \in (-s_0,s_0)$ for some $s_0>0$, 
\begin{align*}
\lim_{n\wedge p\to \infty, n\geq p+2} M_{Y_n}(s) = M_{\mathcal{N}(0,1)}(s)= e^{s^2/2},
\end{align*}
where $M_Y(s)\equiv\E e^{sY}$ denotes the moment generating function of a generic random variable $Y$. Now using that for any $s \in (0,s_0)$,
\begin{align*}
\E Y_n^4 =4 \int_0^\infty t^3\Prob\big(Y_n>t\big)\,\d{t} \leq 4\int_0^\infty t^3 e^{-st} M_{Y_n}(s)\,\d{t} = (6/s^3) M_{Y_n}(s),
\end{align*}
it follows that  $\sup_n \E Y_n^4<\infty$, and hence convergence of moments yields that $\E Y_n \to 0, \E Y_n^2 \to 1$. This implies $
{\sigma_I^2}/(n^2 \sigma_{n,0}^2 )= \var(Y_n) = \E Y_n^2 - (\E Y_n)^2 \to 1$. Hence the asymptotic formula for $\sigma_I^2$ holds. In particular, this means that there exists some sufficiently large $M$ such that for $n\wedge p\geq M, n\geq p+2$, 
\begin{align*}
\sigma_I^2 &\geq M^{-1}\cdot n^2 \sigma_{n,0}^2 = \frac{1}{2M}\cdot n^2 \bigg[-\frac{p}{N}-\log\bigg(1-\frac{p}{N}\bigg)\bigg]\stackrel{(\ast)}{\geq} \frac{1}{4M} n^2\cdot \frac{p^2}{N^2} \geq \frac{p^2}{4M},
\end{align*}
where in $(\ast)$ we used the inequality $-x-\log(1-x)\geq x^2/2$ that holds for $x \in (0,1)$.	

\noindent (4). Recall that $\{\lambda_j\}_{j=1}^p$ are eigenvalues of $\Sigma$. By (1)-(3), we only need to show that for some universal $C>0$,
\begin{align}\label{ineq:power_reg_cond}
\frac{\sqrt{N \sum_j (\lambda_j-1)^2}}{\big(N \sum_j \big(\abs{\lambda_j-1}\wedge (\lambda_j-1)^2\big)\big)\vee \sigma_I}\leq \frac{C}{ \big(\sigma_I \wedge N\big)^{1/2}}.
\end{align}
To see this, let $\nu_j \equiv \abs{\lambda_j-1}$, and $J\equiv\{j\in[p]: \nu_j\leq 1\}$, it suffices to prove
\begin{align}\label{ineq:power_reg_cond_1}
\frac{\sqrt{N \sum_{j \in J} \nu_j^2} \vee \sqrt{N \sum_{j \in J^c} \nu_j^2}}{\big(N\sum_{j \in J} \nu_j^2\big) \vee \big(N \sum_{j \in J^c} \nu_j \big) \vee \sigma_I}\leq \frac{C}{ \big(\sigma_I \wedge N\big)^{1/2}}.
\end{align}
This follows as
\begin{align*}
\hbox{LHS of (\ref{ineq:power_reg_cond_1})}&\leq \frac{\sqrt{N \sum_{j \in J} \nu_j^2}}{ \big(N\sum_{j \in J} \nu_j^2\big)  \vee \sigma_I} + \frac{\sqrt{N } \sum_{j \in J^c} \nu_j}{ \big(N \sum_{j \in J^c} \nu_j \big) \vee \sigma_I}\\
& \leq \frac{1}{\inf_{x\geq 0} \big(x \vee \frac{\sigma_I}{x}\big) } + N^{-1/2} \lesssim \big(\sigma_I\wedge N\big)^{-1/2}.
\end{align*}
The proof is complete.
\end{proof}

\subsection{Proofs for Section \ref{section:nagao} (Ledoit-Nagao-Wolf's test)}

\subsubsection{Evaluation of derivatives}

\begin{lemma}\label{lem:derivatives_T_nagao}
	Recall the form of $T_{\na}(X)$ in (\ref{def:lrs_nagao}). We assume without loss of generality that $\mu=0$. Then for any $(i,j),(i^\prime,j^\prime)\in [N]\times [p]$,
	\begin{enumerate}
		\item $\big(\nabla T_{\na}(X)\big)_{(i j)} = \big(X(S-I) - (\tr(S)/N)X\big)_{ij} = e_j^\top(S-I)X_i-(\tr(S)/N)X_{ij}$. 
		\item $\big(\nabla^2T_{\na}(X)\big)_{(i j)(i^\prime j^\prime)} = N^{-1} \delta_{jj'} X_i^\top X_{i'}+ N^{-1} X_{i'j} X_{ij'} +\delta_{ii'} (S-I)_{jj'}-(2/N^2)X_{ij}X_{i'j'}- (\tr(S)/N)\delta_{ii'}\delta_{jj'}$. 
	\end{enumerate}
	Furthermore, for any $(i_\ell,j_\ell) \in [N]\times [p], \ell=1,2,3,4$,
	\begin{align*}
&\partial_{(i_1j_1)(i_2j_2)(i_3j_3)(i_4j_4)}T_{\na}(X)\\
& = N^{-1}\big(\delta_{i_1i_3}\delta_{i_2i_4}\delta_{j_1j_2}\delta_{j_3j_4}+\delta_{i_1i_4}\delta_{i_2i_3}\delta_{j_1j_2}\delta_{j_3j_4}\\
&\qquad + \delta_{i_1i_4}\delta_{i_2i_3}\delta_{j_1j_3}\delta_{j_2j_4}+\delta_{i_1i_3}\delta_{i_2i_4}\delta_{j_1j_4}\delta_{j_2j_3}\\
&\qquad + \delta_{i_1i_2}\delta_{i_3i_4}\delta_{j_1j_3}\delta_{j_2j_4}+\delta_{i_1i_2}\delta_{i_3i_4}\delta_{j_1j_4}\delta_{j_2j_3}\big)\\
&\qquad -2N^{-2}\big(\delta_{i_1i_3}\delta_{i_2i_4}\delta_{j_1j_3}\delta_{j_2j_4}+\delta_{i_1i_4}\delta_{i_2i_3}\delta_{j_1j_4}\delta_{j_2j_3}+\delta_{i_1i_2}\delta_{i_3i_4}\delta_{j_1j_2}\delta_{j_3j_4}\big).
\end{align*}
\end{lemma}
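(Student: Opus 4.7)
The plan is to compute all three claimed derivative formulas by direct chain-rule manipulation, reusing the identities $\partial_{(ij)} S_{k\ell} = N^{-1}(\delta_{jk} X_{i\ell} + \delta_{j\ell} X_{ik})$ from (\ref{eq:partial_S}) and $\partial_{(ij)} \tr(S) = 2 N^{-1} X_{ij}$ from (\ref{eq:first_order_1}), both already established in the proof of Lemma \ref{lem:derivatives_T}. The starting point is the decomposition
\begin{align*}
T_{\na}(X) = \frac{N}{4}\tr(S^2) - \frac{N}{2}\tr(S) + \frac{Np}{4} - \frac{1}{4}\tr^2(S).
\end{align*}
For claim (1), I would compute $\partial_{(ij)}\tr(S^2) = 2\sum_{k,\ell} S_{k\ell}\,\partial_{(ij)} S_{k\ell}$; using the symmetry $S_{k\ell}=S_{\ell k}$ together with the identity for $\partial S_{k\ell}$, this collapses to $4N^{-1}(XS)_{ij}$, and assembling the four summands of $T_{\na}$ yields the stated formula. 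For claim (2), I would differentiate this expression once more: the piece $(XS)_{ij}$ contributes, via the product rule, the three terms $\delta_{ii'}S_{jj'}$, $N^{-1}X_{ij'}X_{i'j}$, and $N^{-1}\delta_{jj'}X_i^\top X_{i'}$ (one from $\partial X/\partial X$, two from $\partial S/\partial X$); the piece $-X_{ij}$ contributes $-\delta_{ii'}\delta_{jj'}$; and the piece $-(\tr(S)/N)X_{ij}$ contributes $-(2/N^2)X_{ij}X_{i'j'} - (\tr(S)/N)\delta_{ii'}\delta_{jj'}$. Collecting these terms gives the stated second-derivative formula.

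For the fourth-order claim, the key observation is that $T_{\na}$ is a polynomial of degree $4$ in the entries of $X$, so the surviving fourth partials come only from $(N/4)\tr(S^2)$ and $-(1/4)\tr^2(S)$ and are constants. I would expand
\begin{align*}
\tr(S^2) = N^{-2}\sum_{i_1,i_2,k,\ell} X_{i_1 k} X_{i_1\ell} X_{i_2 k} X_{i_2\ell},\quad \tr^2(S) = N^{-2}\Big(\sum_{i,k} X_{ik}^2\Big)^2,
\end{align*}
and evaluate the fourth derivatives as a combinatorial sum over the $4!$ bijections pairing the derivative indices $\{(i_r,j_r)\}_{r=1}^4$ with the four $X$-factors. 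A direct iteration of the Leibniz rule applied to $\tr^2(S)$ produces $8N^{-2}$ times the sum of the three \emph{diagonal} patterns, i.e.\ those imposing the same $2+2$ pair-partition of $\{1,2,3,4\}$ on the $i$-indices and on the $j$-indices; multiplication by $-1/4$ yields the $-2N^{-2}$ block in the claim.

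The remaining and most delicate piece is the $\tr(S^2)$ fourth derivative, which I expect to be the main obstacle. Each bijection forces a $2+2$ pair-partition $\pi_a$ on the $i$-indices (the two derivatives assigned to the slots sharing $i_1$ must agree on their $a$-value, likewise for $i_2$) together with a pair-partition $\pi_b$ on the $j$-indices (likewise for the two $k$-slots and the two $\ell$-slots). A short case-check shows that $\pi_a = \pi_b$ is incompatible with any valid bijection, while each of the six non-diagonal pairs $(\pi_a,\pi_b)$ admits exactly four bijections; multiplication by the prefactor $(N/4)\cdot N^{-2} = 1/(4N)$ reproduces the $N^{-1}$ coefficient on each of the six stated patterns. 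Everything beyond this multiplicity bookkeeping amounts to routine differentiation.
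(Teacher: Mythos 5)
Your proposal is correct, and claims (1)--(2) follow essentially the same iterated-differentiation route as the paper. Where you diverge is the fourth-order formula: the paper's proof proceeds by computing the third-order derivative as an explicit intermediate and then differentiating once more, while you jump directly to the fourth derivative via a pair-partition/bijection count applied to the degree-$4$ monomial expansions of $\tr(S^2)$ and $\tr^2(S)$. I checked your multiplicities: each bijection from monomial slots to derivative indices yields a Kronecker pattern whose $i$-partition and $j$-partition are necessarily distinct $2{+}2$ partitions of $\{1,2,3,4\}$; each of the six such ordered pairs is realized by exactly four bijections, giving coefficient $4\cdot(N/4)\cdot N^{-2}=N^{-1}$; and the three ``diagonal'' (equal-partition) patterns arise only from $\tr^2(S)$ with total coefficient $8N^{-2}\cdot(-1/4)=-2N^{-2}$. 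Both match the stated formula, so the combinatorial route is a valid alternative that sidesteps writing out the messy third-derivative formula the paper records. One point of caution in a final write-up: you reuse $i_1,i_2$ both as monomial summation dummies in $\tr(S^2)=N^{-2}\sum_{i_1,i_2,k,\ell}X_{i_1k}X_{i_1\ell}X_{i_2k}X_{i_2\ell}$ and as the derivative indices $(i_1,j_1),\dots,(i_4,j_4)$, which makes your description of the constraint (``must agree on their $a$-value'') read confusingly; renaming the dummies to, say, $a,b,k,\ell$ would make the pairing argument unambiguous.
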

\begin{proof}
	We shorthand $T_{\na}$ as $T$. As $\partial_{ij}S(X) = N^{-1}(e_jX_i^\top+X_i e_j^\top)$, for the first-order derivatives we have
	\begin{align*}
	\partial_{(ij)} T(X)& = \frac{N}{4}\bigg(\tr \big[\partial_{(ij)}(S-I)^2\big] -\frac{1}{N}\cdot 2\tr(S) \tr\big[\partial_{(ij)} S\big]\bigg)\\
	& =  \frac{1}{2}\tr\big[(S-I)(e_jX_i^\top+X_i e_j^\top)\big]-\frac{\tr(S)X_{ij}}{N}\\
	& = \big(X(S-I)\big)_{ij} -\frac{\tr(S)}{N}X_{ij}= e_j^\top(S-I)X_i-\frac{\tr(S)}{N}X_{ij}.
	\end{align*}
	For the second-order derivatives we have
	\begin{align*}
	&\partial_{(ij),(i'j')}T(X) = \partial_{(i'j')} \big(e_j^\top(S-I)X_i\big)-N^{-1} \partial_{(i'j')}\big(\tr(S)X_{ij}\big) \\
	& = N^{-1} e_j^\top (e_{j'}X_{i'}^\top+X_{i'} e_{j'}^\top)\cdot X_i + \delta_{ii'} e_j^\top (S-I) e_{j'}  - N^{-1}\big((2/N)X_{ij}X_{i'j'}+\delta_{ii'}\delta_{jj'}\tr(S)\big) \\
	& = N^{-1} \delta_{jj'} X_i^\top X_{i'}+ N^{-1} X_{i'j} X_{ij'} +\delta_{ii'} (S-I)_{jj'}  -2N^{-2}X_{ij}X_{i'j'}- N^{-1}\tr(S)\delta_{ii'}\delta_{jj'}.
	\end{align*}
	For the third-order derivatives we have
	\begin{align*}
	&\partial_{(i_1j_1)(i_2j_2)(i_3j_3)}T(X)\\
	& = N^{-1}\delta_{j_1j_2} \partial_{(i_3j_3)}(X_{i_1}^\top X_{i_2})+N^{-1} \partial_{(i_3j_3)}(X_{i_2j_1}X_{i_1j_2})\\
	&\qquad +N^{-1}\delta_{i_1i_2} e_{j_1}^\top \big(e_{j_3}X_{i_3}^\top+X_{i_3} e_{j_3}^\top\big) e_{j_2}  -2N^{-2} \partial_{(i_3j_3)}\big(X_{i_1j_1}X_{i_2j_2}\big)-2N^{-2} \delta_{i_1i_2}\delta_{j_1j_2} X_{i_3j_3}\\
	& = N^{-1} \big(\delta_{i_1i_3} \delta_{j_1j_2}X_{i_2j_3}+\delta_{i_2i_3} \delta_{j_1j_2}X_{i_1j_3} \big) \\
	& \qquad + N^{-1} \big(\delta_{i_2i_3}\delta_{j_1j_3} X_{i_1j_2}+ \delta_{i_1i_3}\delta_{j_2j_3} X_{i_2j_1}\big) + N^{-1} \big(\delta_{i_1i_2}\delta_{j_1j_3} X_{i_3j_2}+ \delta_{i_1i_2}\delta_{j_2j_3}X_{i_3j_1}\big)\\
	&\qquad -2N^{-2}\big(\delta_{i_1i_3}\delta_{j_1j_3}X_{i_2j_2}+\delta_{i_2i_3}\delta_{j_2j_3}X_{i_1j_1}+ \delta_{i_1i_2}\delta_{j_1j_2} X_{i_3j_3} \big).
	\end{align*}
	For the fourth-order derivatives we have
	\begin{align*}
	&\partial_{(i_1j_1)(i_2j_2)(i_3j_3)(i_4j_4)}T(X)\\
	& = N^{-1}\big(\delta_{i_1i_3}\delta_{i_2i_4}\delta_{j_1j_2}\delta_{j_3j_4}+\delta_{i_1i_4}\delta_{i_2i_3}\delta_{j_1j_2}\delta_{j_3j_4}\\
	&\qquad + \delta_{i_1i_4}\delta_{i_2i_3}\delta_{j_1j_3}\delta_{j_2j_4}+\delta_{i_1i_3}\delta_{i_2i_4}\delta_{j_1j_4}\delta_{j_2j_3}\\
	&\qquad + \delta_{i_1i_2}\delta_{i_3i_4}\delta_{j_1j_3}\delta_{j_2j_4}+\delta_{i_1i_2}\delta_{i_3i_4}\delta_{j_1j_4}\delta_{j_2j_3}\big)\\
	&\qquad -2N^{-2}\big(\delta_{i_1i_3}\delta_{i_2i_4}\delta_{j_1j_3}\delta_{j_2j_4}+\delta_{i_1i_4}\delta_{i_2i_3}\delta_{j_1j_4}\delta_{j_2j_3}+\delta_{i_1i_2}\delta_{i_3i_4}\delta_{j_1j_2}\delta_{j_3j_4}\big).
	\end{align*}
	The proof is complete.
\end{proof}

\subsubsection{Normal approximation}\label{subsec:nagao_clt}	

	\begin{proof}[Proof of Proposition \ref{prop:clt_covariance_test_nagao}]
	Let $y\equiv p/N$. We start by showing that 
	\begin{align}\label{ineq:nagao_clt_1}
	\E\pnorm{\nabla^2 T(X)}{\op}^4 \leq C(1\vee y)^4
	\end{align}
	for some absolute constant $C>0$. Reorganizing the terms in Lemma \ref{lem:derivatives_T_nagao}, we have
	\begin{align*}
	\big(\nabla^2 T(X)\big)_{(ij),(i'j')} &= N^{-1}X_i^\top X_{i'}\delta_{jj'} + N^{-1}X_{ij'}X_{i'j} - 2N^{-2}X_{i'j'}X_{ij}\\
	&\quad\quad\quad + \delta_{ii'}e_{j'}^\top (S - I - N^{-1}\tr(S)I)e_j\\
	&\equiv (T_{2,1} + T_{2,2} - T_{2,3} + T_{2,4})_{(ij),(i'j')}.
  	\end{align*} 
	Recall the definition of $U_{\ell,m;+}$ from Proposition \ref{prop:U_spec_norm_+}. As $T_{2,1} = U_{0,0;+}$ and
	\begin{align*}
	(T_{2,2}^2)_{(ij),(i'j')}& = N^{-2}\sum_{(\bar{i}\bar{j})} X_{\bar{i}j}X_{i\bar{j}}X_{i'\bar{j}}X_{\bar{i}j'}= N^{-2} \bigg(\sum_{\bar{i}} X_{\bar{i}j} X_{\bar{i}j'} \bigg)\bigg(\sum_{\bar{j}} X_{i\bar{j}}X_{i'\bar{j}}\bigg)\\
	& = N^{-1} S_{jj'} X_i^\top X_{i'} = (U_{0,1;+})_{(ij),(i'j')},
	\end{align*}
	Proposition \ref{prop:U_spec_norm_+} entails that $\E \pnorm{T_{2,1}}{\op}^4 \vee \E \pnorm{T_{2,2}}{\op}^4=\mathcal{O}((1\vee y)^4)$. For $T_{2,3}$, as
	\begin{align*}
	\pnorm{T_{2,3}}{\op}& = (2/N^2)\sup_{u,v \in B_{N\times p}} \biggabs{\sum_{(ij),(i'j')} u_{ij} X_{ij}X_{i'j'} v_{i'j'}} = (2/N^2) \pnorm{X}{F}^2.
	\end{align*}
	Hence $\E \pnorm{T_{2,3}}{\op}^4 =\mathcal{O}(y^4) = \mathcal{O}((1\vee y)^4)$. For $T_{2,4}$, it holds by the block diagonal structure that
	\begin{align*}
	\pnorm{T_{2,4}}{\op} = \pnorm{S - I - N^{-1}\tr(S)I}{\op} \leq \pnorm{S-I}{\op} + N^{-1}\tr(S).
	\end{align*}
	Hence it holds by Lemma \ref{lem:kl} that
	\begin{align*}
	\E\pnorm{T_{2,4}}{\op}^4 \lesssim \big(y\vee\sqrt{y}\big)^4 + N^{-4}\cdot N^{-4}\E\pnorm{X}{F}^8 \lesssim (1\vee y)^4.
	\end{align*}
	By collecting the estimates of $T_{2,1}$-$T_{2,4}$, we complete the proof of (\ref{ineq:nagao_clt_1}).
	
	Next we show that $\E\pnorm{\nabla T(X)}{F}^4 \lesssim p^4$. This will be done by two estimates below.

	\noindent (\textbf{Estimate 1}) By Lemma \ref{lem:derivatives_T_nagao}-(1),
	\begin{align*}
	\pnorm{\nabla T(X)}{F}^2 & \lesssim \sum_i \pnorm{(S-I)X_i}{}^2+N^{-2}\tr^2(S)\pnorm{X}{F}^2\\
	&\leq \big(\pnorm{S-I}{\op}^2+ N^{-2}\tr^2(S)\big)\sum_i \pnorm{X_i}{}^2,
	\end{align*}
	so by Lemma \ref{lem:kl} and Proposition \ref{prop:U_spec_norm_+},
	\begin{align*}
	\E \pnorm{\nabla T(X)}{F}^4 &\lesssim \E \Big[ \big(\pnorm{S-I}{\op}^2+N^{-2}\tr^2(S)\big) \sum_i \pnorm{X_i}{}^2 \Big]^2\\
	& \lesssim \sum_{i,i'}  \E \big[ \big(\pnorm{S-I}{\op}^4+N^{-4}\tr^4(S)\big) \pnorm{X_i}{}^2\pnorm{X_{i'}}{}^2 \big]\\
	&\leq \sum_{i,i'} \big(\E^{1/2} \pnorm{S-I}{\op}^8+ N^{-4}\E^{1/2}\tr^8(S) \Big)\cdot \E^{1/4}\pnorm{X_i}{}^8\cdot \E^{1/4}\pnorm{X_{i'}}{}^8\\
	&\lesssim N^2\cdot \Big[\Big(\frac{p}{N}\Big)^2+\Big(\frac{p}{N}\Big)^4\cdot \E^{1/2} \pnorm{S}{\op}^8\Big]\cdot p\cdot p \lesssim p^4(1+y^6).
	\end{align*}
	\noindent (\textbf{Estimate 2})
	Note that
	\begin{align*}
	\nabla T(X) = X\big(S - N^{-1}\tr(S)I\big) - X \equiv T_{1,1} + T_{1,2}.
	\end{align*}
	It is clear that $\E\pnorm{T_{1,2}}{F}^2 \lesssim Np $. To handle $T_{1,1}$, note that
	\begin{align*}
	\pnorm{T_{1,1}}{F}^2 &= N\tr\Big(\big(S - N^{-1}\tr(S)I\big)^2 S\Big)\\
	& = N\tr\Big(S^3 + N^{-2}\tr^2(S)S - 2N^{-1}\tr(S)S^2\Big)\\
	&= N\Big[\tr(S^3) + N^{-2}\tr^3(S) - 2N^{-1}\tr(S)\tr(S^2)\Big].
	\end{align*}
	Then using Lemma \ref{lem:trace_asymptotics}-(1)(2)(3), we have under the prescribed asymptotics that
	\begin{align*}
	\E\pnorm{T_{1,1}}{F}^2 & = N \bigg[py^2+3py+p+3y^2+3y+4N^{-1}y+N^{-2} \big(p^3+6py+ 8N^{-1}y\big)\\
	&\qquad - 2N^{-1}\big(p^2 y+ p^2 + py+ 4(y^2+y)+4N^{-1}y \big)\bigg]\\
	& = p^2 \bigg(1+\frac{N}{p}+\frac{1}{N}+\frac{3}{p}-\frac{4}{Np}-\frac{2}{N^2}\bigg)\\
	&= p^2\big[1+\mathcal{O}((N\wedge p)^{-1})\big]+pN.
	\end{align*}
	Hence we have
	\begin{align}\label{eq:nagao_gradient}
	\notag\E\pnorm{\nabla T(X)}{F}^4 &= \big(\E\pnorm{\nabla T(X)}{F}^2\big)^2 + \var\big(\pnorm{\nabla T(X)}{F}^2\big)\\
	&= \mathcal{O}(p^4(1+y^{-2})) + \var\big(\pnorm{\nabla T(X)}{F}^2\big).
	\end{align}
	By the Gaussian-Poincar\'e inequality, we have
	\begin{align*}
	\var\big(\pnorm{\nabla T(X)}{F}^2\big) &\leq \E\pnorm{\nabla \pnorm{\nabla T(X)}{F}^2}{F}^2 = 4 \E \pnorm{\big(\nabla^2 T(X)\big)^\top \nabla T(X)}{F}^2\\
	&\leq 4 \E^{1/2}\pnorm{\nabla^2 T(X)}{\op}^4\cdot \E^{1/2}\pnorm{\nabla T(X)}{F}^4.
	\end{align*}
	Combining the above display with (\ref{eq:nagao_gradient}) yields that
	\begin{align*}
	\E\pnorm{\nabla T(X)}{F}^4 \leq \mathcal{O}(p^4(1+y^{-2})) + 4\E^{1/2}\pnorm{\nabla^2 T(X)}{\op}^4\cdot \E^{1/2}\pnorm{\nabla T(X)}{F}^4.
	\end{align*}
	Solving the quadratic inequality above and using (\ref{ineq:nagao_clt_1}), we arrive at
	\begin{align*}
	\E\pnorm{\nabla T(X)}{F}^4 = \mathcal{O}\big(p^4(1+y^{-2}) \vee \E\pnorm{\nabla^2 T(X)}{\op}^4\big) = \mathcal{O}(p^4(1+y^{-2})).
	\end{align*}
	Combining the above two estimates, we have
	\begin{align*}
	\E\pnorm{\nabla T(X)}{F}^4 \lesssim p^4 \max_{y\geq 0}\min\big\{(1+y^6),(1+y^{-2})\}\asymp p^4.
	\end{align*}
    The rest of the proof proceeds along the lines in the proof of Proposition \ref{prop:clt_covariance_test}, with the help of the variance formula in Proposition \ref{prop:ratio_nagao}-(3).
\end{proof}

\subsubsection{Contiguity}\label{subsec:nagao_ratio}

\begin{proof}[Proof of Proposition \ref{prop:ratio_nagao}]
\noindent (1). Recall that $Z_1,\ldots,Z_n$ are i.i.d. samples from $\mathcal{N}(0,I_p)$. By Lemma \ref{lem:derivatives_T_nagao}, with $S_Z\equiv N^{-1}\sum_{i=1}^N Z_iZ_i^\top$,
\begin{align*}
\mathscr{T}_{\Sigma;\na}(Z)&= \bigg[Z\Sigma^{1/2}\big(\Sigma^{1/2}S_Z\Sigma^{1/2}-I\big)-N^{-1}\tr(\Sigma S_Z)Z\Sigma^{1/2}\bigg]\Sigma^{1/2} \\
&= Z\Sigma S_Z\Sigma - Z\Sigma-N^{-1}\tr(\Sigma S_Z)Z\Sigma,
\end{align*}
so
\begin{align*}
&\mathscr{T}_{\Sigma;\na}(Z)-\mathscr{T}_{I;\na}(Z)\\
&=\Big[Z\Sigma(S_Z\Sigma-I)-Z(S_Z-I)\Big]-\frac{1}{N}\Big[\tr(\Sigma S_Z)Z\Sigma-\tr(S_Z)Z\Big]\\
&=\Big[Z\Sigma(S_Z\Sigma-I)-Z(S_Z\Sigma-I)+Z(S_Z\Sigma-I)-Z(S_Z-I)\Big]\\
&\qquad -\frac{1}{N}\Big[\tr(\Sigma S_Z)Z\Sigma-\tr(\Sigma S_Z)Z+\tr(\Sigma S_Z)Z-\tr(S_Z)Z\Big]\\
& = Z(\Sigma-I)(S_Z\Sigma-I)+Z S_Z(\Sigma-I) -\frac{1}{N} \tr(\Sigma S_Z)Z(\Sigma-I)-\frac{1}{N}\tr\big((\Sigma-I)S_Z\big)Z\\
& \equiv V_1(Z)+V_2(Z)+V_3(Z)+V_4(Z).
\end{align*}
Note that
\begin{align*}
\pnorm{V_1(Z)}{F}^2&\leq \pnorm{S_Z\Sigma-I}{\op}^2\pnorm{Z(\Sigma-I)}{F}^2 \leq \pnorm{S_Z\Sigma-I}{\op}^2\pnorm{Z}{\op}^2\pnorm{\Sigma-I}{F}^2,\\
\pnorm{V_2(Z)}{F}^2&\leq \pnorm{ZS_Z}{\op}^2\pnorm{\Sigma-I}{F}^2\leq  \pnorm{Z}{\op}^2 \pnorm{S_Z}{\op}^2\pnorm{\Sigma-I}{F}^2,\\
\pnorm{V_3(Z)}{F}^2&\leq N^{-2} \tr^2(\Sigma S_Z)\pnorm{Z(\Sigma-I)}{F}^2\leq p^2N^{-2}\pnorm{\Sigma}{\op}^2 \pnorm{S_Z}{\op}^2 \pnorm{Z(\Sigma-I)}{F}^2,\\
\pnorm{V_4(Z)}{F}^2&\leq N^{-2} \tr^2\big((\Sigma-I)S_Z\big)\pnorm{Z}{F}^2\\
&\leq N^{-2}\pnorm{S_Z}{F}^2\pnorm{Z}{F}^2\pnorm{\Sigma-I}{F}^2\leq pN^{-2}\pnorm{S_Z}{\op}^2\pnorm{Z}{F}^2\pnorm{\Sigma-I}{F}^2.
\end{align*}
Under $p/N\leq M$, we have
\begin{align*}
V_{\Sigma;\na}^2&\lesssim_M N\big(\pnorm{\Sigma}{\op}^2 \vee 1\big)\pnorm{\Sigma-I}{F}^2.
\end{align*}
\noindent (2). By Lemma \ref{lem:trace_moment}, with $\delta_N\equiv N^{-1}-2N^{-2}$,
\begin{align*}
m_\Sigma&=\frac{N}{4}\bigg[\E \tr(S-I)^2-\frac{1}{N}\E\tr^2(S)\bigg] = \frac{N}{4}\bigg[\E \tr (S^2)-2\E \tr (S)+p-\frac{1}{N} \E \tr^2(S)\bigg]\\
& = \frac{N}{4}\bigg[\big(1+N^{-1}\big)\tr(\Sigma^2)+N^{-1}\tr^2(\Sigma) - 2\tr(\Sigma)+p- N^{-1}\tr^2(\Sigma)-2N^{-2}\tr(\Sigma^2)  \bigg]\\
& = \frac{N}{4}\big[(1+\delta_N)\tr(\Sigma^2)-2\tr(\Sigma)+p\big].
\end{align*}
Hence
\begin{align*}
m_{\Sigma}-m_I&=\frac{N}{4}\Big[(1+\delta_N)\tr(\Sigma^2-I)-2\tr(\Sigma-I)\Big]\\
& = \frac{N}{4}\big[\pnorm{\Sigma-I}{F}^2+\delta_N \tr(\Sigma^2-I)\big].
\end{align*}

\noindent (3). By the Plancherel's theorem (i.e., \cite[formula (6.2)]{chatterjee2014supreconcentration}), we have
\begin{align*}
\sigma_{I;\na}^2 &= \sum_{(ij)}\big[\E\partial_{(ij)}T(X)\big]^2 + \frac{1}{2!}\sum_{(i_1j_1)(i_2j_2)}\big[\E\partial_{(i_1j_1)(i_2j_2)}T(X)\big]^2\\
&+\frac{1}{3!}\sum_{(i_1j_1)(i_2j_2)(i_3j_3)}\big[\E\partial_{(i_1j_1)(i_2j_2)(i_3j_3)}T(X)\big]^2\\
&+ \frac{1}{4!}\sum_{(i_1j_1)(i_2j_2)(i_3j_3)(i_4j_4)}\big[\E\partial_{(i_1j_1)(i_2j_2)(i_3j_3)(i_4j_4)}T(X)\big]^2\\
&\equiv (I) + (II) + (III) + (IV).
\end{align*}
Terms $(I)$ - $(IV)$ are handled as follows:
\begin{itemize}
	\item To handle $(I)$, note that
	\begin{align*}
	\E\partial_{(ij)}T(X) = \E e_j^\top(S - I)X_i - \E\big[(\tr(S)/N)X_{ij}\big].
	\end{align*}
	The first term satisfies
	\begin{align*}
	&\E e_j^\top(S - I)X_i = \E e_j^\top \bigg(\frac{1}{N}\sum_{k=1}^N X_kX_k^\top\bigg)X_i = N^{-1}e_j^\top \E (X_i\cdot\pnorm{X_i}{}^2)\\
	&= N^{-1}e_j^\top \E \bigg(\frac{X_i}{\pnorm{X_i}{}}\cdot\pnorm{X_i}{}^3\bigg) = N^{-1}e_j^\top \E \bigg(\frac{X_i}{\pnorm{X_i}{}}\bigg)\cdot\E\pnorm{X_i}{}^3 = 0.
	\end{align*}
	A similar identity holds for the second term, so $(I) = 0$.
	\item $(II) \lesssim p/N = \mathfrak{o}(p^2)$ by noting that $\E\partial_{(i_1j_1)(i_2j_2)}T(X) = (N^{-1}-2N^{-2})\cdot \delta_{i_1i_2}\delta_{j_1j_2}$.
	\item $(III) = 0$ by direct calculation.
	\item $(IV) = 6p^2\big(1+\mathfrak{o}(1)\big)$ by direct calculation.
\end{itemize}
The proof is now complete by collecting all of the estimates.

\noindent (4). By (1)-(3), $\pnorm{\Sigma}{\op}\leq \pnorm{\Sigma-I}{F}+1$ and the condition $p/N\leq M$, we only need to show that 
\begin{align}\label{ineq:power_nagao_2}
\frac{ \sqrt{N} \pnorm{\Sigma-I}{F}^2\vee \sqrt{N \pnorm{\Sigma-I}{F}^2 } }{ \big(N \pnorm{\Sigma-I}{F}^2-N\delta_N |\tr(\Sigma^2-I)|\big)_+\vee \sigma_{I;\na} }\leq \frac{C_M}{(\sigma_{I;\na}\wedge N)^{1/2}}.
\end{align}
Note that with $\{\lambda_j\}_{j=1}^p$ denoting the eigenvalues of $\Sigma$,
\begin{align}\label{ineq:Q}
&\abs{\tr(\Sigma^2 - I)} = \biggabs{\sum_{j=1}^p (\lambda_j^2 - 1)}\leq \max_j (\lambda_j + 1)\cdot\sum_{j=1}^p |\lambda_j - 1| \nonumber \\
&\leq \sqrt{p}(\pnorm{\Sigma}{\op}+1)\pnorm{\Sigma- I}{F} \lesssim_M \sqrt{N}(\pnorm{\Sigma-I}{F}\vee1)\pnorm{\Sigma- I}{F},
\end{align}
so for $N$ large enough, (\ref{ineq:power_nagao_2}) is satisfied provided that
\begin{align}\label{ineq:power_nagao_3}
\frac{ \sqrt{N} \pnorm{\Sigma-I}{F}^2\vee \sqrt{N \pnorm{\Sigma-I}{F}^2 } }{ \big(N \pnorm{\Sigma-I}{F}^2-C_M'\sqrt{N}\pnorm{\Sigma-I}{F}\big)_+\vee \sigma_{I;\na} }\leq \frac{C_M}{(\sigma_{I;\na}\wedge N)^{1/2}}.
\end{align}
To see this, note that the left hand side of the above display is bounded, up to a constant that may depend on $M$, by
\begin{align*}
 &\bm{1}_{\sqrt{N}\pnorm{\Sigma-I}{F}\leq  2C_M' } \frac{1}{\sigma_{I;\na}}+ \bm{1}_{\sqrt{N}\pnorm{\Sigma-I}{F}>  2C_M' }\frac{\sqrt{N} \pnorm{\Sigma-I}{F}^2 \vee \sqrt{N \pnorm{\Sigma-I}{F}^2} }{N \pnorm{\Sigma-I}{F}^2\vee \sigma_{I;\na} }\\
 &\lesssim \frac{1}{\sigma_{I;\na}}+ \frac{\sqrt{N} \pnorm{\Sigma-I}{F}^2 }{N \pnorm{\Sigma-I}{F}^2\vee \sigma_{I;\na} }+ \frac{ \sqrt{N \pnorm{\Sigma-I}{F}^2} }{N \pnorm{\Sigma-I}{F}^2\vee \sigma_{I;\na} }\\
 &\leq \frac{1}{\sigma_{I;\na}}+ \frac{1}{N^{1/2}}+ \frac{1}{\inf_{x\geq 0} \big(x \vee \frac{ \sigma_{I;\na}}{x}\big)} \leq \hbox{RHS of (\ref{ineq:power_nagao_3})}.
\end{align*}
This completes the proof.
\end{proof}

\subsubsection{Completing the proof for power expansion}\label{subsec:nagao_power}

\begin{proof}[Proof of Theorem \ref{thm:power_nagao}]
Abbreviate $\Psi_{\na}$ by $\Psi$. By Proposition \ref{prop:clt_covariance_test_nagao} and Proposition \ref{prop:ratio_nagao}, we have
	\begin{align*}
	&\biggabs{ \E_{\Sigma} \Psi(X)-   \Prob\bigg( \mathcal{N}\bigg(\frac{N \cdot \big(\pnorm{\Sigma-I}{F}^2+Q_{\na}(\Sigma)\big) }{4\sigma_{I;\na}},1\bigg)>z_\alpha\bigg)   } \leq C\cdot p^{-1/3}.
	\end{align*}
We only need to remove the residual term $Q_{\na}(\Sigma)$. To see this, note that by (\ref{ineq:Q}),
\begin{align*}
\abs{Q_{\na}(\Sigma)}\leq C_M N^{-1/2}(\pnorm{\Sigma - I}{F}\vee 1) \pnorm{\Sigma-I}{F}.
\end{align*}
So using Lemma \ref{lem:normal_mean_multi} we have
\begin{align*}\
\Delta P \leq \frac{C_{\alpha,M}(\pnorm{\Sigma-I}{F}\vee 1)}{N^{1/2}\pnorm{\Sigma-I}{F}},
\end{align*}
where
\begin{align*}
\Delta P& \equiv \Prob\bigg( \mathcal{N}\bigg(\frac{N \cdot \big(\pnorm{\Sigma-I}{F}^2+Q_{\na}(\Sigma)\big) }{4\sigma_{I;\na}},1\bigg)>z_\alpha\bigg)- \Prob\bigg( \mathcal{N}\bigg(\frac{N \cdot \pnorm{\Sigma-I}{F}^2 }{4\sigma_{I;\na}},1\bigg)>z_\alpha\bigg).
\end{align*}
On the other hand, by anti-concentration of normal random variable,
\begin{align*}
\Delta P\leq  C_M\frac{N^{1/2} (\pnorm{\Sigma - I}{F}\vee 1)\pnorm{\Sigma-I}{F}}{\sigma_{I;\na}}.
\end{align*}
Hence
\begin{align*}
\Delta P &\lesssim_{\alpha,M} \frac{(\pnorm{\Sigma-I}{F}\vee 1)}{N^{1/2}\pnorm{\Sigma-I}{F}} \wedge \frac{N^{1/2} (\pnorm{\Sigma - I}{F}\vee 1)\pnorm{\Sigma-I}{F}}{\sigma_{I;\na}}\\
&\leq \bm{1}_{\pnorm{\Sigma-I}{F}> 1} \frac{1}{N^{1/2}}+\bm{1}_{\pnorm{\Sigma-I}{F}\leq 1} \bigg[ \frac{1}{N^{1/2}\pnorm{\Sigma-I}{F}} \wedge \frac{N^{1/2} \pnorm{\Sigma-I}{F}}{\sigma_{I;\na}} \bigg]\\
&\leq \frac{1}{N^{1/2}}+ \frac{1}{\inf_{\alpha\geq 0}\big(x\vee \frac{\sigma_{I;\na}}{x}\big)}\asymp \frac{1}{(\sigma_{I;\na}\wedge N)^{1/2}}.
\end{align*}
Similarly we may get a lower bound for $\Delta P$. The proof is complete. 
\end{proof}

\section{Proofs for Section \ref{section:test_spherical} (testing sphericity)}\label{section:proof_sphericity}

\subsection{Proofs for Subsection \ref{section:LRT_spherical} (LRT)}

\subsubsection{Evaluation of derivatives}

\begin{lemma}\label{lem:derivatives_T_sphere}
	Recall the form of $T_{\lrt,s}(X)$ in (\ref{def:lrs_spherical}) and the definition of $b(S)$ in (\ref{def:trace_b}). Then for any $(i,j),(i^\prime,j^\prime)\in [N]\times [p]$,
	\begin{enumerate}
		\item $\partial_{(ij)}T_{\lrt,s}(X) = \big(X(I - S^{-1})\big)_{(ij)} +\big(1/b(S)-1\big)X_{ij} = e_j^\top\big[(I-S^{-1})X_i+\big(1/b(S)-1\big)X_i\big]$.
		\item $\partial_{(ij),(i'j')}T_{\lrt,s}(X) = N^{-1}X_i^\top S^{-1}(e_{j^\prime}X_{i^\prime}^\top + X_{i^\prime}e_{j^\prime}^\top)S^{-1}e_j + \delta_{ii^\prime}e_j^\top(I-  S^{-1})e_{j^\prime} + \big(1/b(S)-1\big)\delta_{ii'}\delta_{jj'}- (2/Np)X_{ij}X_{i'j'}/b^2(S)$.
	\end{enumerate}
\end{lemma}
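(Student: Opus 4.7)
The plan is to reduce this calculation to Lemma \ref{lem:derivatives_T} (derivatives of $T_{\lrt}$) plus a direct chain-rule computation of the new term $p\log\tr(S)$. Observe the algebraic identity
\begin{align*}
T_{\lrt,s}(X) = T_{\lrt}(X) + \frac{N}{2}\big[p\log\tr(S) - \tr(S) + p - p\log p\big],
\end{align*}
so only the derivatives of $p\log\tr(S)$ and $\tr(S)$ need to be computed afresh.

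For (1), I would first invoke Lemma \ref{lem:derivatives_T}(1) to obtain $\partial_{(ij)}T_{\lrt}(X) = \big(X(I-S^{-1})\big)_{ij}$. From the proof of Lemma \ref{lem:derivatives_T} we already have $\partial_{(ij)}\tr(S) = 2N^{-1}X_{ij}$, so by the chain rule
\begin{align*}
\partial_{(ij)}\big[p\log\tr(S)\big] = \frac{p}{\tr(S)}\cdot \frac{2X_{ij}}{N} = \frac{2X_{ij}}{N\,b(S)}.
\end{align*}
Summing the contributions and using that the $\tr(S)$ pieces combine to $(1/b(S)-1)X_{ij}$ yields claim (1).

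For (2), the Hessian of $T_{\lrt}$ is already given by Lemma \ref{lem:derivatives_T}(2). The only new ingredient is $\partial_{(i'j')(ij)}\big[p\log\tr(S)\big]$, which I compute by differentiating $2X_{ij}/(N\,b(S))$: the numerator contributes $\delta_{ii'}\delta_{jj'}/b(S)$ and the denominator contributes, via $\partial_{(i'j')}\tr(S) = 2N^{-1}X_{i'j'}$, a term $-(2/(Np))X_{ij}X_{i'j'}/b^2(S)$. Subtracting the $(N/2)\partial_{(ij),(i'j')}\tr(S) = \delta_{ii'}\delta_{jj'}$ contribution (which comes from the $-\tr(S)$ correction in the identity above) combines with $\delta_{ii'}\delta_{jj'}/b(S)$ into $\big(1/b(S)-1\big)\delta_{ii'}\delta_{jj'}$. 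Adding this to the Hessian of $T_{\lrt}$ from Lemma \ref{lem:derivatives_T}(2) yields claim (2).

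There is no real obstacle here beyond careful bookkeeping of the quotient rule for the $1/\tr(S)$ factor and tracking that the $\tr(S)$-terms from the two pieces combine cleanly into $(1/b(S)-1)$ in both (1) and (2); everything else is a direct invocation of Lemma \ref{lem:derivatives_T}.
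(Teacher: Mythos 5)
Your proof is correct and takes essentially the same route as the paper: both reduce the $\log\det S$ derivatives to Lemma \ref{lem:derivatives_T} and handle the $p\log\tr(S)$ piece by direct chain/quotient-rule computation. The only cosmetic difference is that you make the reduction explicit up front via the identity $T_{\lrt,s} = T_{\lrt} + \tfrac{N}{2}[p\log\tr(S) - \tr(S) + p - p\log p]$, whereas the paper differentiates $T_{\lrt,s}$ directly and recognizes the shared $X(I-S^{-1})$ term afterward; the underlying calculations are the same.
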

\begin{proof}
	(1). We shorthand $T_{\lrt,s}(X)$ as $T$. By definition, (\ref{eq:first_order_1}) and (\ref{eq:partial_S}), we have
	\begin{align*}
	\partial_{(ij)} T(X) &= \frac{N}{2}\big(p\cdot\partial_{(ij)}\log\tr(S) - \partial_{ij} \log\det S\big)\\
	&= \frac{N}{2}\bigg(p\frac{\partial_{(ij)}\tr(S)}{\tr(S)} - \sum_{k,\ell=1}^p \frac{\partial \log\det S}{\partial S_{k\ell}}\frac{\partial S_{k\ell}}{\partial X_{ij}}\bigg)\\
	&= \frac{N}{2}\bigg[\frac{2p}{N}\frac{X_{ij}}{\tr(S)} - \sum_{k,\ell=1}^p (S^{-1})_{k\ell}\cdot \frac{1}{N}\big(\delta_{kj}X_{i\ell} + \delta_{\ell j}X_{ik} \big)\bigg]\\
	&= \frac{p}{\tr(S)} X_{ij} - \sum_{k=1}^p (S^{-1})_{kj} X_{ik}  =  \big(X (I-S^{-1})\big)_{ij}+\bigg(\frac{p}{\tr(S)}-1\bigg) X_{ij}.
	\end{align*} 
	(2). By the previous part, we have
	\begin{align*}
	\partial_{(ij),(i'j')} T(X)& = \partial_{(i'j')} \big(X (I-S^{-1})\big)_{ij}+\partial_{(i'j')} \bigg(\frac{p}{\tr(S)}-1\bigg) X_{ij}\equiv (I)+(II).
	\end{align*}
	The first term above is already calculated in Lemma \ref{lem:derivatives_T}-(2):
	\begin{align*}
	(I) = N^{-1}X_i^\top S^{-1}(e_{j^\prime}X_{i^\prime}^\top + X_{i^\prime}e_{j^\prime}^\top)S^{-1}e_j + \delta_{ii^\prime}e_j^\top(I-  S^{-1})e_{j^\prime}.
	\end{align*}
	So we only need to evaluate the second term:
	\begin{align*}
	(II)& = p\cdot \partial_{(i'j')}\tr^{-1}(S)\cdot X_{ij}+ \Big(\frac{p}{\tr(S)}-1\Big)\partial_{(i'j')}X_{ij}\\
	& = -p \cdot  \partial_{(i'j')}\tr (S)\cdot X_{ij}\cdot\tr^{-2}(S)+ \Big(\frac{p}{\tr(S)}-1\Big)\delta_{ii'}\delta_{jj'}\\
	& = - \frac{2p}{N} X_{ij}X_{i'j'}\cdot\tr^{-2}(S)+\Big(\frac{p}{\tr(S)}-1\Big)\delta_{ii'}\delta_{jj'}.
	\end{align*}
	The proof is complete. 
\end{proof}

\subsubsection{Normal approximation}\label{subsec:spherical_lrt_clt}

\begin{proof}[Proof of Theorem \ref{thm:clt_spherical}]
	We abbreviate $T_{\lrt,s}(X)$ as $T$. First we bound the norm for the gradient. Comparing Lemmas \ref{lem:derivatives_T}-(1) and \ref{lem:derivatives_T_sphere}-(1),  we only need to control
	\begin{align*}
	&\E \pnorm{ \big(b^{-1}(S)-1\big)X}{F}^4 = \E\big(N  \big(b^{-1}(S)-1\big)^2 \tr(S)\big)^2\\
	&\leq N^2p^2\cdot \E^{1/2} b^4(S)\cdot \E^{1/2} \big(b^{-1}(S)-1\big)^{8}\lesssim N^2 p^2\cdot \Big(\frac{p}{N}\Big)^2 =p^4.
	\end{align*}
	The inequality in the final line of the above display follows as 
	\begin{align}\label{ineq:spherical_lrt_clt_1}
	\E b^4(S)&\leq  \E \pnorm{S}{\op}^4 \lesssim 1,\\
	\E\big(b^{-1}(S)-1\big)^8 &= \E^{1/2} b^{-16}(S)\cdot \E^{1/2}\big(b(S)-1\big)^{16}\stackrel{(*)}{\lesssim}  (pN^{-1})^4.
	\end{align}
	Here $(*)$ follows from Lemma \ref{lem:trace_moment}-(3). Now by combining with (\ref{ineq:clt_covariance_test_1}) derived in the proof of Proposition \ref{prop:clt_covariance_test}, we see that $\E \pnorm{\nabla T(X)}{F}^4 \lesssim p^4$.

	Next we bound the spectral norm of the Hessian. Comparing Lemmas \ref{lem:derivatives_T}-(1) and \ref{lem:derivatives_T_sphere}-(1),  we only need to control the spectral norms of $T_4$ and $T_5$, where 
	\begin{align*}
	(T_4)_{(ij),(i'j')} &\equiv \big(b^{-1}(S)-1\big)\delta_{ii'}\delta_{jj'},\, (T_5)_{(ij),(i'j')} \equiv -\frac{2}{Np}X_{ij}X_{i'j'}\cdot b^{-2}(S).
	\end{align*}
	For $T_4$, clearly $\pnorm{T_4}{\op} = \abs{1/b(S)-1}$, so $\E \pnorm{T_4}{\op}^4 = \E \big(1/b(S)-1\big)^4 \lesssim (p/N)^2$ by (\ref{ineq:spherical_lrt_clt_1}). For $T_5$,  note that
	\begin{align*}
	\pnorm{T_5}{\op} &= \frac{2}{Np\cdot b^2(S)}\sup_{u,v \in B_{N\times p}(1)} \biggabs{\sum_{(ij),(i'j')}  u_{ij}X_{ij} X_{i'j'} v_{i'j'}} = \frac{2}{Np\cdot b^2(S)} \pnorm{X}{F}^2  = \frac{2}{b(S)}.
	\end{align*}
	So $\E \pnorm{T_5}{\op}^4 \lesssim  \E b^{-4}(S)= \mathcal{O}(1)$ by Lemma \ref{lem:trace_moment}-(3). By combining with (\ref{ineq:clt_covariance_test_4}) derived in the proof of Proposition \ref{prop:clt_covariance_test}, we see that $\E \pnorm{\nabla^2 T(X)}{\op}^4 = \mathcal{O}(1)$. The rest of the proof proceeds along the lines in the proof of Proposition \ref{prop:clt_covariance_test}, with the help of the variance formula in Proposition \ref{prop:ratio_spherical}-(3).
\end{proof}

\subsubsection{Contiguity}\label{subsec:spherical_lrt_ratio}

\begin{proof}[Proof of Proposition \ref{prop:ratio_spherical}]
	We will abbreviate $(T_{\lrt,s}, m_{\Sigma;\lrt,s}, \sigma_{\Sigma;\lrt,s},V_{\Sigma;\lrt,s})$ as $(T,m_{\Sigma;s},\sigma_{\Sigma;s},V_{\Sigma;s})$, and assume without loss of generality that $b(\Sigma)=\tr(\Sigma)/p=1$ (otherwise we may replace $\Sigma$ by $\Sigma\cdot b^{-1}(\Sigma)$). 
	
	\noindent (1). By Lemma \ref{lem:derivatives_T_sphere}, with $S_Z\equiv N^{-1}\sum_{i=1}^N Z_i Z_i^\top$, we have
	\begin{align*}
	\mathscr{T}_{\Sigma;s}& = \bigg[Z\Sigma^{1/2}(I-\Sigma^{-1/2}S_Z^{-1}\Sigma^{-1/2})+\bigg(\frac{1}{b(\Sigma^{1/2} S_Z\Sigma^{1/2})}-1\bigg)Z\Sigma^{1/2}\bigg]\Sigma^{1/2}\\
	& = Z(\Sigma-S_Z^{-1})+ \bigg(\frac{1}{b(\Sigma^{1/2} S_Z\Sigma^{1/2})}-1\bigg)Z\Sigma=  \frac{Z\Sigma}{b(\Sigma^{1/2} S_Z\Sigma^{1/2})}-ZS_Z^{-1}.
	\end{align*}
	Hence
	\begin{align*}
	V_{\Sigma;s}^2&= \E \pnorm{\mathscr{T}_{\Sigma;s}-\mathscr{T}_{I;s}}{F}^2= \E \biggpnorm{ \frac{Z\Sigma}{b(\Sigma^{1/2} S_Z\Sigma^{1/2})}- \frac{Z}{b( S_Z)} }{F}^2\\
	&\leq 2  \bigg\{ \E\bigg[\bigg(\frac{1}{b(\Sigma^{1/2} S_Z\Sigma^{1/2})}-\frac{1}{b(S_Z)}\bigg)^2 \pnorm{Z\Sigma}{F}^2 \bigg]+ \E \big[b^{-2}(S_Z)\pnorm{Z(\Sigma-I)}{F}^2\big] \bigg\}\\
	&\equiv 2 \E\big((I)+(II)\big).
	\end{align*}
	We bound $(I)$ and $(II)$ separately: 
	\begin{align*}
	(I)&= b^{-2}(\Sigma^{1/2} S_Z\Sigma^{1/2})b^{-2}(S_Z)b^2\big((\Sigma-I)S_Z\big)\pnorm{Z\Sigma }{F}^2\\
	&\leq b^{-2}(\Sigma^{1/2} S_Z\Sigma^{1/2})b^{-2}(S_Z)  \pnorm{S_Z}{\op}^2 \cdot \big(\pnorm{\Sigma}{F}^2/p\big) \pnorm{Z}{\op}^2 \pnorm{\Sigma-I}{F}^2;\\
	(II)& \leq b^{-2}(S_Z)\cdot \pnorm{Z}{\op}^2\pnorm{\Sigma-I}{F}^2.
	\end{align*}
	Using Lemmas \ref{lem:concentration_trace} and \ref{lem:kl}, we have
	\begin{align*}
	V_{\Sigma;s}^2\lesssim (p^{-1}\pnorm{\Sigma - I}{F}^2 + 1)N\pnorm{(\Sigma-I)}{F}^2.
	\end{align*}
	On the other hand, a trivial bound for $V_{\Sigma;s}^2$ is
	\begin{align*}
	V_{\Sigma;s}^2&= \E \biggpnorm{ \frac{Z\Sigma}{b(\Sigma^{1/2} S_Z\Sigma^{1/2})}- \frac{Z}{b( S_Z)} }{F}^2\\
	&\lesssim  \E b^{-2}(\Sigma^{1/2} S_Z\Sigma^{1/2}) \pnorm{Z\Sigma}{F}^2+ \E b^{-2} (S_Z) \pnorm{Z}{F}^2\lesssim N \big(\pnorm{\Sigma-I}{F}^2\vee p\big).
	\end{align*}
	Collecting the two bounds, we have
	\begin{align*}
	V_{\Sigma;s}^2 &\lesssim \big[\big(p^{-1}\pnorm{\Sigma-I}{F}^2+ 1\big)N\pnorm{(\Sigma-I)}{F}^2\big] \wedge N \big(\pnorm{\Sigma-I}{F}^2\vee p\big)\asymp N\pnorm{(\Sigma-I)}{F}^2.
	\end{align*}
	
	\noindent (2). As 
	\begin{align*}
	m_{\Sigma;s}& = \frac{N}{2}\Big[p\cdot \E \log \tr (\Sigma S_Z)- \log \det (\Sigma)-p\log p-\E \log \det (S_Z)\Big],
	\end{align*}
	by Lemma \ref{lem:res_log_trace} we have
	\begin{align*}
	m_{\Sigma;s}-m_{I;s}=\frac{N}{2}\big[-\log \det(\Sigma)+Q_s(\Sigma)\big],
	\end{align*}
	where
	\begin{align*}
	\abs{Q_s(\Sigma)}&\equiv \abs{p\big(\E \log \tr (\Sigma S_Z) - \E\log\tr(S_Z)\big)}\\
	& \lesssim N^{-1}\Big\{1+ b(\Sigma^2)  + e^{-cN}\big[ 1+ b^{1/2}( \Sigma^2 ) \big] \Big\}\lesssim N^{-1}\Big[1+ b(\Sigma^2)  \Big] \lesssim N^{-1} b(\Sigma^2),
	\end{align*}
	where the last inequality follows as $b(\Sigma^2) = p^{-1}\sum_{j=1}^p \lambda_j^2 \geq p^{-2}(\sum_{j=1}^p \lambda_j)^2 =1$.
	
	\noindent (3). Recall $T_{\lrt}$ defined in (\ref{def:lrs}). Define
	\begin{align*}
	\Delta(X) &\equiv T_{\lrt}(X) - T_{\lrt,s}(X).
	\end{align*}
	Then for any $\epsilon > 0$, there exists some $C_\epsilon>0$ such that under the null (i.e., $X_1,\ldots,X_n$ are i.i.d. $\mathcal{N}(0,I_p)$),
	\begin{align}\label{ineq:variance_compare}
	\big[(1-\epsilon)\sigma^2_{I;\lrt} -C_\epsilon \var_I(\Delta)\big]_+ 
	 \leq \sigma^2_{I;\lrt,s}\leq (1+\epsilon)\sigma^2_{I;\lrt} + C_\epsilon \var_I(\Delta).
	\end{align}
 	We will now bound $\var_I(\Delta)$. By Lemmas \ref{lem:derivatives_T}-(1) and \ref{lem:derivatives_T_sphere}-(1), we have for any $i,j\in[N]\times [p]$
	\begin{align*}
	\partial_{(ij)}\Delta(X) =  \partial_{(ij)}T_{\lrt}(X) - \partial_{(ij)}T_{\lrt,s}(X) = X_{ij}\big[b^{-1}(S) - 1\big].
	\end{align*}
	By the Gaussian-Poincar\'e inequality \cite[Theorem 3.20]{boucheron2013concentration},
	\begin{align*}
	\var_I \Delta(X) &\leq \E\big[b^{-1}(S) - 1\big]^2\pnorm{X}{F}^2 = Np\E\big[b(S) - 1\big]^2b^{-1}(S)\\
	&\leq Np\cdot\E^{1/2}\big(b(S) - 1\big)^4\cdot \E^{1/2}b^{-2}(S) \stackrel{(*)}{\lesssim} Np\cdot (Np)^{-1} = 1.
	\end{align*}
	Here $(*)$ follows from Lemma \ref{lem:trace_moment}-(3). Hence by choosing $\epsilon$ in (\ref{ineq:variance_compare}) to be decaying to $0$ slowly enough, $\sigma^2_{I;\lrt}$ and $\sigma^2_{I;\lrt,s}$ share the same asymptotic formula in Proposition \ref{prop:ratio_covariance_test}-(3).

	\noindent (4). By (1)-(2), and using that $b(\Sigma^2)=\pnorm{\Sigma}{F}^2/p$,	we only need to prove that for a given constant $C_0>0$, there exists some constant $C=C(C_0)>0$ such that
	\begin{align*}
	&\frac{ \sqrt{N \pnorm{\Sigma-I}{F}^2} }{ \Big(-N \log \det (\Sigma)-C_0(1+\frac{\pnorm{\Sigma}{F}^2}{p})-C_0 e^{-cN}\big( \frac{\pnorm{\Sigma}{F}}{p^{1/2}}+ 1\big) \Big)_+\vee \sigma_{I;s} }\leq \frac{C}{\big(\sigma_{I;s}\wedge N\big)^{1/2}}.
	\end{align*}
	Equivalently, with $\lambda = (\lambda_1,\ldots,\lambda_p) \in (0,\infty)^p$ and $\bar{\lambda}\equiv p^{-1}\sum_j \lambda_j =1$, we only need to show
	\begin{align*}
	&\frac{ \sqrt{N \sum_j (\lambda_j-1)^2 } }{ \Big(N \sum_j -\log(1+(\lambda_j-1))-C_0-C_0 \big( \frac{\sum_j \lambda_j^2}{p}\big)  -C_0 e^{-cN} \frac{(\sum_j \lambda_j^2)^{1/2}}{p^{1/2}}  \Big)_+\vee \sigma_{I;s} }
	\end{align*}
	is at most a multiple of $\big(\sigma_{I;s}\wedge N\big)^{-1/2}$. 
	Let $J\equiv \{j: \abs{\lambda_j-1}\leq 1\}$ and $J^c\equiv \{j: \abs{\lambda_j-1}> 1\}$. As $\abs{\lambda_j-1}\lesssim p$, so the first term in the denominator becomes
	\begin{align*}
	&N \sum_j \big[-\log(1+(\lambda_j-1))+(\lambda_j-1)\big]-C_0-C_0 \frac{\sum_j \lambda_j^2}{p}   -C_0 e^{-cN} \frac{(\sum_j \lambda_j^2)^{1/2}}{p^{1/2}} \\
	&\gtrsim N \sum_j (\lambda_j-1)^2\wedge \abs{\lambda_j-1}-C_1 p^{-1} \sum_j (\lambda_j-1)^2-C_2.
	\end{align*}
	Next, by breaking the summation in $\sum_j (\lambda_j-1)^2$ into $J$ and $J^c$, the above display equals
	\begin{align*}
	&N \sum_{j \in J} (\lambda_j-1)^2+N\sum_{j \in J} \abs{\lambda_j-1} - C_1 \frac{\sum_{j \in J} (\lambda_j-1)^2 + \sum_{j \in J^c} (\lambda_j-1)^2}{p}-C_2\\
	&\geq (N-C_1p^{-1}) \sum_{j \in J} (\lambda_j-1)^2 + (N-\mathcal{O}(1)) \sum_{j \in J^c} \abs{\lambda_j-1}-C_2\\
	&\geq \frac{N}{2}\sum_j (\lambda_j-1)^2\wedge \abs{\lambda_j-1}-C_2
	\end{align*}
	for $N$ and $p$ large enough. Hence with $\nu_j\equiv \abs{\lambda_j-1}$, we only need to show that for given $C_0>0$, 
	\begin{align*}
	\frac{ \sqrt{N \sum_{j \in J} \nu_j^2 }\vee \sqrt{N \sum_{j \in J^c} \nu_j^2 }}{ \Big(N\sum_{j \in J} \nu_j^2+ N \sum_{j \in J^c} \nu_j-C_0 \Big)_+\vee \sigma_{I;s} } \leq \frac{C}{ \big(\sigma_{I;s}\wedge N\big)^{1/2}}. 
	\end{align*}
	Equivalently, we only need to show 
	\begin{align}
	&\frac{ \sqrt{N \sum_{j \in J} \nu_j^2 } }{ \big(N\sum_{j \in J} \nu_j^2-C_0 \big)_+\vee \sigma_{I;s} } \leq \frac{C}{\sigma_{I;s}^{1/2}},\label{ineq:power_reg_cond_spherical_1}\\
	& \frac{  \sqrt{N \sum_{j \in J^c} \nu_j^2 }}{ \big(N \sum_{j \in J^c} \nu_j-C_0 \big)_+\vee \sigma_{I;s} } \leq \frac{C}{N^{1/2}}.\label{ineq:power_reg_cond_spherical_2}
	\end{align}
	To see these inequalities, note that the left side of (\ref{ineq:power_reg_cond_spherical_1}) is bounded by
	\begin{align*}
	&\bm{1}_{ N\sum_{j \in J}\nu_j^2 \leq 2C_0 } \frac{ (2C_0)^{1/2}}{\sigma_{I;s}}+\bm{1}_{ N\sum_{j \in J}\nu_j^2 > 2C_0 } \frac{ \sqrt{N \sum_{j \in J} \nu_j^2 } }{ (N/2)\sum_{j \in J} \nu_j^2\vee \sigma_{I;s} }\\
	&\lesssim \frac{1}{\sigma_{I;s}}+ \frac{1}{\inf_{x\geq 0}\big(x \vee \frac{\sigma_{I;s}}{x}\big)} \lesssim \sigma_{I;s}^{-1/2}.
	\end{align*}
	Also, the left side of (\ref{ineq:power_reg_cond_spherical_2}) is bounded by
	\begin{align*}
	\frac{  \sqrt{N  } \sum_{j \in J^c} \nu_j}{ \big(N \sum_{j \in J^c} \nu_j-C_0 \big)_+\vee \sigma_{I;s} } &\leq \bm{1}_{ N \sum_{j \in J^c} \nu_j\leq 2C_0 } \frac{(2C_0)^{1/2}}{\sqrt{N}\sigma_{I;s} }+ \bm{1}_{N \sum_{j \in J^c} \nu_j>2C_0}\frac{  \sqrt{N  } \sum_{j \in J^c} \nu_j}{ N \sum_{j \in J^c} \nu_j\vee \sigma_{I;s} }\\
	&\lesssim \frac{1}{\sqrt{N}\sigma_{I;s}}+\frac{1}{\sqrt{N}}\lesssim \frac{1}{N^{1/2}},
	\end{align*}
	proving the claim.
\end{proof}

\subsubsection{Completing the proof for power expansion}\label{subsec:spherical_clt_power}

\begin{proof}[Proof of Theorem \ref{thm:power_spherical}]
The proof is similar to that of Theorem \ref{thm:power_nagao}, we provide some details for the convenience of the reader. Without loss of generality we assume $b(\Sigma)=1$. Abbreviate $\Psi_{\lrt,s}$ by $\Psi$ and $Q_{\lrt,s}(\Sigma)$ by $Q(\Sigma)$. By Theorem \ref{thm:clt_spherical} and Proposition \ref{prop:ratio_spherical}, we have
	\begin{align*}
	&\biggabs{ \E_{\Sigma} \Psi(X)-   \Prob\bigg( \mathcal{N}\bigg(\frac{N \cdot \big(-\log\det\big(\Sigma\big)+Q(\Sigma)\big) }{2\sigma_{I;s}},1\bigg)>z_\alpha\bigg)   } \leq C\cdot p^{-1/3}.
	\end{align*}
We only need to remove the residual term $Q(\Sigma)$. To this end, we claim that
\begin{align}\label{ineq:Delta_P_1}
|\Delta P| \leq C_\alpha\Big[\frac{NQ(\Sigma)}{\sigma_{I;s}}\wedge \frac{Q(\Sigma)}{|\log\det\big(\Sigma\big)|}\Big].
\end{align}
where
\begin{align*}
\Delta P &\equiv \Prob\bigg( \mathcal{N}\bigg(\frac{N \cdot \big(-\log\det\big(\Sigma\big)+Q(\Sigma)\big) }{2\sigma_{I;s}},1\bigg)>z_\alpha\bigg)- \Prob\bigg( \mathcal{N}\bigg(\frac{-N\log\det\big(\Sigma\big) }{2\sigma_{I;s}},1\bigg)>z_\alpha\bigg).
\end{align*}
Here the first bound in (\ref{ineq:Delta_P_1}) is by anti-concentration of the normal distribution, and the second bound in (\ref{ineq:Delta_P_1}) follows from Lemma \ref{lem:normal_mean_multi}.

Let $\{\lambda_j\}_{j=1}^p$ be the eigenvalues of $\Sigma$ so that $\sum_{j=1}^p \lambda_j = p$. Then by (\ref{ineq:Q_lrt}), $
Q(\Sigma) \lesssim 2(Np)^{-1}\sum_{j=1}^p \lambda_j^2$. Hence using the bound $\sigma_{I;s}\geq cp$, (\ref{ineq:Delta_P_1}) entails that
\begin{align}\label{ineq:Delta_P_2}
|\Delta P| \leq C'_\alpha\cdot\Big[\frac{p^{-1}\sum_{j=1}^p \lambda_j^2}{p}\wedge \frac{(Np)^{-1}\sum_{j=1}^p \lambda_j^2}{\sum_{j=1}^p \lambda_j - \log \lambda_j - 1}\Big].
\end{align}
If $\max_j \lambda_j \leq 10$, we use the first bound in (\ref{ineq:Delta_P_2}) to conclude that $\Delta P\lesssim_\alpha p^{-1}$. Otherwise, by writing $J\equiv \{j\in[p]: | \lambda_j - 1|\geq 1\}$ and $J^c\equiv [p]\backslash J$, the second bound in (\ref{ineq:Delta_P_2}) yields that
\begin{align*}
|\Delta P| &\lesssim_\alpha \frac{(Np)^{-1}\sum_{j=1}^p \lambda_j^2}{\sum_{j=1}^p | \lambda_j - 1|\wedge (\lambda_j - 1)^2} \lesssim \frac{(Np)^{-1}\sum_{j\in J} (\lambda_j-1)^2+(Np)^{-1}(|J|+|J^c|)}{\sum_{j\in J} |\lambda_j - 1|}\\
&\leq \frac{(Np)^{-1}\sum_{j\in J} (\lambda_j-1)^2}{\sum_{j\in J} |\lambda_j - 1|}+ \frac{N^{-1}}{\sum_{j\in J} |\lambda_j - 1|} \equiv (I)+ (II).
\end{align*}
Now $(II)\lesssim N^{-1}$ as $\max_j {\lambda}_j > 10$, and $(I)$ satisfies
\begin{align*}
(I)\leq (Np)^{-1}\max_{j\in J}| {\lambda}_j - 1| \lesssim N^{-1}
\end{align*}
by using the trivial bound that $\max_j {\lambda}_j \leq p$ due to the normalization $b(\Sigma)=1$. The proof is complete. 
\end{proof}

\subsection{Proofs for Section \ref{section:john} (John's test)}\label{section:proof_john}

\subsubsection{Evaluation of derivatives}

\begin{lemma}\label{lem:derivatives_john}
	Recall the form of $T_J(X)$ in (\ref{def:lrs_john}) and the definition of $b_\ell(S)$ in (\ref{def:trace_b}). Then the following hold:
	\begin{enumerate}
		\item For the first-order partial derivatives: for any $(i,j)\in [N]\times [p]$,
		\begin{align*}
		\partial_{(ij)} T_J(X)&= \Big(\frac{XS}{ b^2(S)}-X\cdot\frac{b_2(S)}{b^3(S)}\Big)_{ij} = \frac{X_i^\top Se_j}{b^2(S)} - X_{ij}\frac{b_2(S)}{b^3(S)}.
		\end{align*}
		\item For the second-order partial derivatives: for any $(i,j),(i',j')\in [N]\times [p]$,
		\begin{align*}
		&\partial_{(ij),(i'j')}T_J(X)\\
		& = b(S)^{-2}\big(N^{-1}\delta_{jj'} X_i^\top X_{i'}+N^{-1}X_{i'j}X_{ij'}+\delta_{ii'}S_{jj'}\big)-\delta_{ii'}\delta_{jj'} \frac{b_2(S)}{b^3(S)} \\
& \qquad +X_{ij}X_{i'j'} \frac{6b_2(S)}{b^4(S)Np}-\frac{4}{b^3(S)Np}\Big[ X_i^\top S e_j\cdot X_{i'j'}+X_{i'}^\top S e_{j'} \cdot X_{ij}\Big].
		\end{align*}
	\end{enumerate}
\end{lemma}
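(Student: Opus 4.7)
The plan is to first rewrite $T_{\jo}(X)$ as a simple function of the two scalar quantities $b(S)$ and $b_2(S)$. Expanding the square inside the trace gives
\begin{align*}
T_{\jo}(X) = \frac{N}{4}\bigg[\frac{\tr(S^2)}{b^2(S)} - \frac{2\tr(S)}{b(S)} + p\bigg] = \frac{Np}{4}\bigg[\frac{b_2(S)}{b^2(S)} - 1\bigg].
\end{align*}
Once $T_{\jo}$ is written in this form, the computation reduces to differentiating the scalar ratio $b_2(S)/b^2(S)$, and the necessary building blocks are already available from the preceding derivative calculations.

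Next I would compute the elementary derivatives $\partial_{(ij)} b(S)$ and $\partial_{(ij)} b_2(S)$. For the first, equation (\ref{eq:first_order_1}) in the proof of Lemma \ref{lem:derivatives_T} already yields $\partial_{(ij)}\tr(S) = 2X_{ij}/N$, hence $\partial_{(ij)} b(S) = 2X_{ij}/(Np)$. For the second, I would write $\tr(S^2) = \sum_{k,\ell} S_{k\ell}^2$ and apply $\partial_{(ij)} S_{k\ell} = N^{-1}(\delta_{jk}X_{i\ell} + \delta_{j\ell}X_{ik})$ from equation (\ref{eq:partial_S}), obtaining $\partial_{(ij)}\tr(S^2) = 4N^{-1}(XS)_{ij}$ and thus $\partial_{(ij)} b_2(S) = 4(XS)_{ij}/(Np)$. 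A single application of the quotient rule then yields claim (1) after cancellation of the $Np/4$ prefactor.

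For claim (2), I would differentiate the expression in (1) once more, splitting it into the two summands $A_{ij}(X) \equiv (XS)_{ij}/b^2(S)$ and $B_{ij}(X) \equiv X_{ij}\cdot b_2(S)/b^3(S)$. For $\partial_{(i'j')}A_{ij}$, the essential ingredient is the product rule applied to $X_i^\top S e_j \cdot b^{-2}(S)$, using
\begin{align*}
\partial_{(i'j')}(X_i^\top S e_j) = \delta_{ii'}S_{jj'} + N^{-1}\big(X_{ij'}X_{i'j} + \delta_{jj'}X_i^\top X_{i'}\big),
\end{align*}
whose derivation mirrors that in Lemma \ref{lem:derivatives_T_nagao}-(2), together with $\partial_{(i'j')} b^{-2}(S) = -4X_{i'j'}/(Np\cdot b^3(S))$. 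For $\partial_{(i'j')}B_{ij}$, I would apply the Leibniz rule to the two factors $X_{ij}$ and $b_2(S)/b^3(S)$, using the quotient rule once more on the latter and invoking the formulas for $\partial_{(i'j')}b(S)$ and $\partial_{(i'j')}b_2(S)$ already derived. Collecting all the resulting terms and regrouping yields the claimed formula.

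The argument is entirely mechanical; the only real obstacle is keeping the algebraic bookkeeping organized. In particular, the two cross-terms coming from differentiating $X_i^\top S e_j$, and the symmetric pair coming from $\partial_{(i'j')}(XS)_{ij}$ versus the $X_{ij}\partial_{(i'j')}(XS)_{i'j'}$ contribution in $B$, must be grouped so that the final expression is manifestly symmetric under the swap $(ij)\leftrightarrow (i'j')$. This symmetry serves as a useful self-check, and matching the outcome against the simpler Lemma \ref{lem:derivatives_T_sphere}-(2) in the limit $b_2(S)\to 1$, $b(S)\to 1$ provides another consistency verification.
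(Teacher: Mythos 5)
Your proposal is correct and takes essentially the same approach as the paper: the paper also computes $\partial_{(ij)}b(S)$ and $\partial_{(ij)}b_2(S)$ in (\ref{ineq:derivative_john_1}) and splits $\partial_{(ij)}T_J$ into the same two summands $(XS)_{ij}/b^2$ and $X_{ij}b_2/b^3$ before differentiating each. Your preliminary identity $T_J(X) = \tfrac{Np}{4}\big(b_2(S)/b^2(S) - 1\big)$ is a modest simplification over the paper's direct expansion of $\tr[(S/b - I)^2]$, and your symmetry sanity-check under $(ij)\leftrightarrow(i'j')$ is a nice touch, but the underlying calculation is the same.
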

\begin{proof}
We abbreviate $T_J(X)$ by $T(X)$ and write $b=b(S)$ in the proof if no confusion could arise. 

\noindent (1). Note that $\partial_{ij}S(X) = N^{-1}(e_jX_i^\top+X_i e_j^\top)$, $\partial_{(ij)}\tr(S)=2N^{-1}X_{ij}$ and
\begin{align}\label{ineq:derivative_john_1}
\partial_{(ij)}b(S)=\frac{2}{Np}X_{ij}, \, \partial_{(ij)}b_2(S)& = \frac{2}{Np}\tr\big(S (e_jX_i^\top+X_ie_j^\top)\big) = \frac{4}{Np} X_i^\top S e_j.
\end{align}
For the first-order derivatives we have
\begin{align*}
\partial_{(ij)} T(X)& = \frac{N}{4} \tr\bigg[2\bigg(\frac{S}{b}-I\bigg) \partial_{(ij)} \bigg(\frac{S}{b}\bigg)\bigg]\\
& = \frac{N}{2} \tr \bigg[\bigg(\frac{S}{b}-I\bigg)\cdot \frac{N^{-1}(e_jX_i^\top+X_ie_j^\top) b-2(Np)^{-1}S X_{ij}}{b^2} \bigg]\\
& = \frac{1}{2 b^2} \tr\big[(S-bI)(e_jX_i^\top+X_ie_j^\top)\big] - \frac{X_{ij} }{b^3 p} \tr\big[(S-bI)S\big]\\
& = \frac{(XS)_{ij}}{ b^2}-\frac{X_{ij}}{b}-\bigg[\frac{X_{ij}b_2}{b^3}-\frac{X_{ij}}{b}\bigg]\\
& = \frac{(XS)_{ij}}{ b^2}-X_{ij}\cdot\frac{b_2}{b^3}  \equiv T_{1,(ij)}(X)-T_{2,(ij)}(X).
\end{align*}
\noindent (2). For the second-order derivatives, 
\begin{align*}
\partial_{(i'j')} T_{1,(ij)}(X)& = \frac{ \partial_{(i'j')} (X_i^\top Se_j) b^2- (X_i^\top Se_j) \partial_{(i'j')} b^2 }{ b^4 }\\
& = \frac{ \delta_{ii'} e_{j'}^\top S e_j+N^{-1}X_i^\top (e_{j'}X_{i'}^\top+X_{i'}e_{j'}^\top) e_j}{b^2}- \frac{4}{pN}\frac{ X_i^\top S e_j\cdot X_{i'j'}  }{b^3}\\
& = \frac{N^{-1}\delta_{jj'} X_i^\top X_{i'}+N^{-1}X_{i'j}X_{ij'}+\delta_{ii'}S_{jj'} }{b^2}- \frac{4}{pN}\frac{ X_i^\top S e_j\cdot X_{i'j'}  }{b^3}, \\
\partial_{(i'j')}T_{2,(ij)}(X)& = \delta_{ii'}\delta_{jj'} \frac{b_2}{b^3} + X_{ij} \cdot \partial_{(i'j')}\bigg[\frac{b_2}{b^3}\bigg]\\
& =  \delta_{ii'}\delta_{jj'} \frac{b_2}{b^3} + X_{ij}\cdot \bigg[ \frac{4X_{i'}^\top Se_{j'} }{b^3Np}- \frac{6b_2 X_{i'j'}}{b^4Np}\bigg]\\
& = \delta_{ii'}\delta_{jj'} \frac{b_2}{b^3} - X_{ij}X_{i'j'}\frac{6b_2}{b^4Np}+4X_{i'}^\top S e_{j'} \cdot X_{ij}\frac{1}{b^3 Np}.
\end{align*}
Combining the above two displays, we have
\begin{align*}
\partial_{(ij),(i'j')}T(X) & = b^{-2}\big(N^{-1}\delta_{jj'} X_i^\top X_{i'}+N^{-1}X_{i'j}X_{ij'}+\delta_{ii'}S_{jj'}\big)-\delta_{ii'}\delta_{jj'} \frac{b_2}{b^3} \\
& \qquad +X_{ij}X_{i'j'} \frac{6b_2}{b^4Np}-\frac{4}{b^3Np}\Big[ X_i^\top S e_j\cdot X_{i'j'}+X_{i'}^\top S e_{j'} \cdot X_{ij}\Big].
\end{align*}
The proof is complete.
\end{proof}

\subsubsection{Normal approximation}\label{subsec:john_clt}

\begin{proof}[Proof of Proposition \ref{prop:clt_covariance_test_john}]
We abbreviate $T_J$ by $T$ and write $b=b(S)$ in the proof if no confusion could arise.  First we bound the operator norm of the Hessian.  By Lemma \ref{lem:derivatives_john}-(2),
\begin{align*}
&\partial_{(ij),(i'j')}T(X)= b^{-2}\big(N^{-1}\delta_{jj'} X_i^\top X_{i'}+N^{-1}X_{i'j}X_{ij'}+\delta_{ii'}S_{jj'}\big)\\
&-\delta_{ii'}\delta_{jj'} \frac{b_2}{b^3} +X_{ij}X_{i'j'} \frac{6b_2}{b^4Np}-\frac{4}{b^3Np}\Big[ X_i^\top S e_j\cdot X_{i'j'}+X_{i'}^\top S e_{j'} \cdot X_{ij}\Big]\\
&\equiv (T_1 - T_2 + T_3 - T_4)_{(ij),(i'j')}.
\end{align*}
Following the proof of Proposition \ref{prop:clt_covariance_test_nagao} along with Lemma \ref{lem:concentration_trace}, we have $\E\pnorm{T_1}{\op}^4\lesssim (1\vee y)^4$. Next for $T_2$, we have by Lemma \ref{lem:concentration_trace} and Lemma \ref{lem:kl} that
\begin{align*}
\E\pnorm{T_2}{\op}^4 \lesssim \E (b_2^4\cdot b^{-12})\leq \E^{1/2} b_2^8 \cdot \E^{1/2} b^{-24}\lesssim \E^{1/2}\pnorm{S}{\op}^8\lesssim (1\vee y)^4.
\end{align*}
The operator norm of $T_3$ can be similarly bounded by
\begin{align*}
\E\pnorm{T_3}{\op}^4 &= \frac{6^4}{(Np)^4}\E\Big[\Big(\frac{b_2}{b^4}\Big)^4\pnorm{X}{F}^8\Big] \lesssim (Np)^{-4}\E^{1/2}b_2^8\cdot \E^{1/4}b^{-64}\E^{1/4}\pnorm{X}{F}^{32}\\
&\lesssim (Np)^{-4}\cdot \E^{1/2}b_2^8\cdot (Np)^4 \lesssim (1\vee y)^4.
\end{align*}
Lastly,
\begin{align*}
\pnorm{T_4}{\op} &\lesssim \frac{1}{b^3Np}\cdot \sup_{u,v\in B_{N\times p}(1)} \biggabs{\sum_{(ij),(i'j')} X_i^\top S e_j\cdot X_{i'j'}u_{ij}v_{i'j'}}\\
&= \frac{1}{b^3Np}\cdot \sup_{u,v\in B_{N\times p}(1)} \biggabs{\Big(\sum_{i,j} X_i^\top Se_j u_{ij}\Big)\Big(\sum_{i'j'}X_{i'j'}v_{i'j'}\Big)}\\
&\leq \frac{1}{b^3Np}\cdot \pnorm{XS}{F}\cdot\pnorm{X}{F}\leq \frac{1}{b^3Np}\cdot \pnorm{S}{\op}\pnorm{X}{F}^2.
\end{align*} 
Hence by Lemma \ref{lem:kl} and Lemma \ref{lem:concentration_trace}, $\E\pnorm{T_4(X)}{\op}^4 \lesssim (1\vee y)^4$. Putting together the bounds for $T_1$ - $T_4$ yields that $\E\pnorm{\nabla^2T(X)}{\op}^4\lesssim (1\vee y)^4$.

Next we bound the norm of the gradient. We will show that $\E\pnorm{\nabla T(X)}{F}^2 \lesssim p^2$ by considering the two cases $p/N\leq 1$ and $p/N> 1$ separately.

\noindent (\textbf{Case $p/N\leq 1$}) By Lemma \ref{lem:derivatives_john}-(1), we may write 
\begin{align*}
\nabla T(X) = b^{-1}X\big(b^{-1}S-I\big)-b^{-1}X\cdot b\big(b^{-1}S-I\big)^2,
\end{align*}
so
\begin{align*}
\pnorm{\nabla T(X)}{F}^4 &\lesssim b^{-8} \pnorm{X}{F}^4 \pnorm{S-bI}{\op}^4+ b^{-12} \pnorm{X}{F}^4 \pnorm{S-bI}{\op}^8\\
&\lesssim \pnorm{X}{F}^4 \big(b^{-8}\pnorm{S-I}{\op}^4+b^{-8}\abs{b-1}^4+b^{-12}\pnorm{S-I}{\op}^8+b^{-12}\abs{b-1}^8\big).
\end{align*}
By Lemma \ref{lem:kl} and Lemma \ref{lem:trace_moment}, it holds under the condition $ p/N\leq 1$ that
\begin{align*}
\E\pnorm{T(X)}{F}^4 \lesssim (Np)^2\big((N^{-1}p)^2+(N^{-1}p)^4\big)\lesssim p^4.
\end{align*}

\noindent (\textbf{Case $p/N> 1$}) By Lemma \ref{lem:derivatives_john}-(1), we have
\begin{align*}
\E\pnorm{\nabla T(X)}{F}^2 &= \E \bigg[\biggpnorm{\frac{XS}{b^2}}{F}^2 + \biggpnorm{\frac{Xb_2}{b^3}}{F}^2 - 2\iprod{\frac{XS}{b^2}}{\frac{Xb_2}{b^3}}\bigg]= Np\cdot \E\bigg[\frac{bb_3 - b_2^2}{b^5} \bigg]\\
&= Np\cdot \E (bb_3 - b_2^2) + Np\cdot\E (bb_3 - b_2^2)(b^{-5} - 1) \equiv (I) + (II).
\end{align*}
To handle $(I)$, it holds by Lemma \ref{lem:trace_asymptotics}-(4)(5) that under $p > N$,
\begin{align*}
(I) = \frac{N}{p}\E\big[\tr(S)\tr(S^3) - \tr^2(S^2)\big] = \frac{N}{p}N^{-4}\mathcal{O}(N^3p^3) = \mathcal{O}(p^2).
\end{align*}
To handle $(II)$, it holds by Lemmas \ref{lem:concentration_trace}, \ref{lem:trace_moment}-(3), and \ref{lem:trace_asymptotics}-(6) that under $p > N$,
\begin{align*}
(II) &= Np\cdot \E (bb_3 - b_2^2)b^{-5}(1 - b^5) \leq Np\cdot \E^{1/2} (bb_3 - b_2^2)^2\E^{1/4}b^{-20}\E^{1/4} (b^5 - 1)^4\\
&\leq Np\cdot \Big(|\E (bb_3 - b_2^2)| + \var^{1/2}(bb_3 - b_2^2)\Big)\cdot \E^{1/4}b^{-20}\E^{1/4} (b^5 - 1)^4\\
&= Np \cdot \mathcal{O}(N^{-1}p)\cdot \mathcal{O}(1) \cdot \mathcal{O}((Np)^{-1/2}) = \mathfrak{o}(p^2).
\end{align*}
Putting together the estimates for $(I)$ and $(II)$ yield that $\E\pnorm{\nabla T(X)}{F}^2 = \mathcal{O}(p^2)$ under the considered case $p > N$. The rest of the proof proceeds along the lines in the proof of Proposition \ref{prop:clt_covariance_test_nagao}, with the help of the variance formula in Proposition \ref{prop:ratio_john}-(3). The normal approximation error bound becomes a constant multiple of
\begin{align*}
\frac{(1\vee y)\cdot p}{p^2} = \frac{\frac{p}{n}\vee 1}{p} = \frac{1}{n\wedge p},
\end{align*}
as desired.
\end{proof}

\subsubsection{Contiguity}\label{subsec:john_ratio}

\begin{proof}[Proof of Proposition \ref{prop:ratio_john}]
We assume without loss of generality that $b(\Sigma)=\tr(\Sigma)/p=1$ (otherwise we replace $\Sigma$ by $\Sigma\cdot b^{-1}(\Sigma)$).	
	
\noindent (1). By Lemma \ref{lem:derivatives_john}, with $S_Z\equiv N^{-1}\sum_{i=1}^N Z_iZ_i^\top$, we have
\begin{align*}
\mathscr{T}_{\Sigma;\jo}&= \bigg\{ \frac{Z\Sigma^{1/2}\Sigma^{1/2}S_Z\Sigma^{1/2}}{b^2(\Sigma^{1/2} S_Z\Sigma^{1/2})} -Z\Sigma^{1/2}\frac{b_2(\Sigma^{1/2}S_Z\Sigma^{1/2})}{ b^3(\Sigma^{1/2} S_Z\Sigma^{1/2}) }\bigg\}\Sigma^{1/2}\\
& = \frac{Z\Sigma S_Z\Sigma}{b^2(\Sigma^{1/2} S_Z\Sigma^{1/2})}- Z\Sigma\frac{b_2(\Sigma^{1/2}S_Z\Sigma^{1/2})}{ b^3(\Sigma^{1/2} S_Z\Sigma^{1/2}) },
\end{align*}
so
\begin{align*}
&\mathscr{T}_{\Sigma;\jo}-\mathscr{T}_{I;\jo}\\
& =  \bigg\{ \frac{Z\Sigma S_Z\Sigma}{b^2(\Sigma^{1/2} S_Z\Sigma^{1/2})}- \frac{Z S_Z}{b^2(S_Z)}\bigg\}-\bigg\{Z\Sigma\frac{b_2(\Sigma^{1/2}S_Z\Sigma^{1/2})}{ b^3(\Sigma^{1/2} S_Z\Sigma^{1/2}) }-Z\frac{b_2(S_Z)}{ b^3(S_Z) }\bigg\}\\
& = \bigg\{ \frac{Z\Sigma S_Z\Sigma}{b^2(\Sigma^{1/2} S_Z\Sigma^{1/2})}- \frac{Z S_Z}{b^2(S_Z)}\bigg\} - Z\bigg\{\frac{b_2(\Sigma^{1/2}S_Z\Sigma^{1/2})}{ b^3(\Sigma^{1/2} S_Z\Sigma^{1/2})} - \frac{b_2(S_Z)}{ b^3(S_Z) }\bigg\} \\
&\qquad - (Z\Sigma-Z)\cdot\frac{b_2(\Sigma^{1/2}S_Z\Sigma^{1/2})}{ b^3(\Sigma^{1/2} S_Z\Sigma^{1/2})} \equiv V_1(Z)+V_2(Z)+V_3(Z).
\end{align*}
We will handle the Frobenius norms of $V_1(Z),V_2(Z),V_3(Z)$ separately below. For $V_1(Z)$,
\begin{align*}
\pnorm{V_1(Z)}{F}^2 &\lesssim \biggpnorm{\frac{Z\Sigma S_Z\Sigma}{b^2(\Sigma^{1/2} S_Z\Sigma^{1/2})}-\frac{ZS_Z}{b^2(\Sigma^{1/2} S_Z\Sigma^{1/2})}}{F}^2+ \biggpnorm{\frac{Z S_Z}{b^2(\Sigma^{1/2} S_Z\Sigma^{1/2})}- \frac{Z S_Z}{b^2(S_Z)}}{F}^2\\
&= \pnorm{Z\Sigma S_Z\Sigma-ZS_Z}{F}^2\cdot \frac{1}{b^4(\Sigma^{1/2} S_Z\Sigma^{1/2})}\\
&\qquad + \pnorm{  Z S_Z }{F}^2\cdot \bigg[\frac{b^2(\Sigma^{1/2} S_Z\Sigma^{1/2})-b^2(S_Z)}{b^2(\Sigma^{1/2} S_Z\Sigma^{1/2})b^2(S_Z)}\bigg]^2 \equiv V_{1,1}+V_{1,2}.
\end{align*}
Note that
\begin{align*}
V_{1,1}&\lesssim b^{-4}(\Sigma^{1/2} S_Z\Sigma^{1/2})\big(  \pnorm{Z\Sigma S_Z(\Sigma-I)}{F}^2 + \pnorm{Z(\Sigma-I) S_Z}{F}^2 \big)\\
&\lesssim \Big[ b^{-4}(\Sigma^{1/2} S_Z\Sigma^{1/2}) \cdot \pnorm{S_Z}{\op}^2 \Big]\cdot  \big(\pnorm{\Sigma}{\op}^2\vee 1\big)\cdot \pnorm{Z}{\op}^2\pnorm{\Sigma-I}{F}^2,\\
V_{1,2}& \leq \pnorm{S_Z}{\op}^2 \pnorm{Z}{F}^2 b^{-4}(\Sigma^{1/2} S_Z\Sigma^{1/2})b^{-4}(S_Z)\\
&\qquad\times \big(\tr((\Sigma-I)S_Z)/p\big)^2\big(b^2(\Sigma^{1/2} S_Z\Sigma^{1/2})\vee b^2(S_Z)\big)\\
&\lesssim \Big[ \pnorm{S_Z}{\op}^4 b^{-4}(\Sigma^{1/2} S_Z\Sigma^{1/2})b^{-4}(S_Z) \big(b^2(\Sigma^{1/2} S_Z\Sigma^{1/2})\vee b^2(S_Z)\big) \Big] \\
&\qquad \times p^{-1}\pnorm{Z}{F}^2  \pnorm{\Sigma-I}{F}^2.
\end{align*}
So under $p/N\leq M$, by Lemma \ref{lem:concentration_trace} and Lemma \ref{lem:kl}, we have
\begin{align*}
\E \pnorm{V_1(Z)}{F}^2 \lesssim_M N \big(\pnorm{\Sigma}{\op}^2\vee 1\big)  \pnorm{\Sigma-I}{F}^2.
\end{align*}
For $V_2(Z)$, 
\begin{align*}
&\pnorm{V_2(Z)}{F}^2= \pnorm{Z}{F}^2\Big(\frac{b_2(\Sigma^{1/2}S_Z\Sigma^{1/2})}{b^3(\Sigma^{1/2} S_Z\Sigma^{1/2})} - \frac{b_2(S_Z)}{b^3(S_Z)}\Big)^2\\
&\lesssim \pnorm{Z}{F}^2\bigg\{\Big(\frac{b_2(\Sigma^{1/2}S_Z\Sigma^{1/2})}{b^3(\Sigma^{1/2} S_Z\Sigma^{1/2})} - \frac{b_2(S_Z)}{b^3(\Sigma^{1/2} S_Z\Sigma^{1/2})}\Big)^2 \\
&\qquad + \Big(\frac{b_2(S_Z)}{b^3(\Sigma^{1/2} S_Z\Sigma^{1/2})} - \frac{b_2(S_Z)}{b^3(S_Z)}\Big)^2\bigg\}\equiv V_{2,1} + V_{2,2}.
\end{align*}
Note that 
\begin{align}\label{ineq:ratio_control_john_1}
&\big(b_2(\Sigma^{1/2}S_Z\Sigma^{1/2})-b_2(S_Z)\big)^2 = p^{-2}\tr^2\big(S_Z\Sigma S_Z\Sigma -S_Z^2\big) \nonumber\\
& \lesssim p^{-2} \Big\{\tr^2\big(S_Z(\Sigma - I)S_Z\Sigma\big) + \tr^2\big(S_Z^2(\Sigma-I)\big)\Big\} \nonumber\\
&\lesssim p^{-1} \pnorm{S_Z}{\op}^4\big(\pnorm{\Sigma}{F}^2/p+1\big) \pnorm{\Sigma-I}{F}^2 \\
& \lesssim p^{-1} \pnorm{S_Z}{\op}^4 \big(\pnorm{\Sigma}{\op}^2\vee 1\big)  \pnorm{\Sigma-I}{F}^2, \nonumber
\end{align}
so 
\begin{align*}
V_{2,1} 
&\lesssim \Big[ b^{-6}(\Sigma^{1/2} S_Z\Sigma^{1/2}) \pnorm{S_Z}{\op}^4\Big] \cdot \big(\pnorm{\Sigma}{\op}^2\vee 1\big) \cdot \big(p^{-1}\pnorm{Z}{F}^2\big)\cdot\pnorm{\Sigma-I}{F}^2,\\
V_{2,2} &\leq \pnorm{Z}{F}^2 b^{-6}(\Sigma^{1/2} S_Z\Sigma^{1/2})b^{-6}(S_Z) b_2^2(S_Z) \big(b(\Sigma^{1/2} S_Z\Sigma^{1/2}) - b(S_Z)\big)^2\\
&\qquad\times\Big(b^2(\Sigma^{1/2} S_Z\Sigma^{1/2}) + b(\Sigma^{1/2} S_Z\Sigma^{1/2})b(S_Z) + b^2(S_Z)\Big)^2\\
&\lesssim  \Big[ b^{-6}(\Sigma^{1/2} S_Z\Sigma^{1/2})b^{-6}(S_Z)\big(b^4(\Sigma^{1/2} S_Z\Sigma^{1/2})\vee b^4 (S_Z)\big) \pnorm{S_Z}{\op}^6 \Big] \\
&\qquad\times p^{-1}\pnorm{Z}{F}^2\pnorm{\Sigma - I}{F}^2.
\end{align*}
Hence under $p/N\leq M$, by Lemma \ref{lem:concentration_trace} and Lemma \ref{lem:kl}, we have
\begin{align*}
\E \pnorm{V_2(Z)}{F}^2 \lesssim_M N \big(\pnorm{\Sigma}{\op}^2\vee 1\big)  \pnorm{\Sigma-I}{F}^2.
\end{align*}
Lastly, recall that $\tr(\Sigma) = p$ so using trace H\"older inequality we have $\tr(S_Z\Sigma S_Z\Sigma)\leq \tr(\Sigma)\pnorm{S_Z\Sigma S_Z}{\op}\leq p\pnorm{S_Z}{\op}^2 \pnorm{\Sigma}{\op}$, so  $V_3(Z)$ satisfies
\begin{align*}
\pnorm{V_3(Z)}{F}^2 &\leq p^{-2}\pnorm{\Sigma -I}{F}^2\cdot \pnorm{Z}{\op}^2 \cdot b^{-6}(\Sigma S_Z)\cdot \tr^2(S_Z\Sigma S_Z\Sigma)\\
&\leq \Big[b^{-6}(\Sigma^{1/2} S_Z\Sigma^{1/2}) \pnorm{S_Z}{\op}^4 \Big]\cdot  \pnorm{\Sigma}{\op}^2\cdot   \pnorm{Z}{\op}^2 \pnorm{\Sigma -I}{F}^2
\end{align*}
Hence under $p/N\leq M$, by Lemma \ref{lem:concentration_trace} and Lemma \ref{lem:kl}, we have
\begin{align*}
\E \pnorm{V_3(Z)}{F}^2 \lesssim_M  N\big(\pnorm{\Sigma}{\op}^2\vee 1\big) \pnorm{\Sigma-I}{F}^2.
\end{align*}
Combining the estimates proves the claim. 

\noindent (2). Recall the normalization $b(\Sigma)=1$. Note that
\begin{align*}
&\E \bigg[\frac{b_2(\Sigma^{1/2} S_Z\Sigma^{1/2})}{b^2(\Sigma^{1/2} S_Z\Sigma^{1/2})}\bigg]\\
& = \frac{\E b_2(\Sigma^{1/2} S_Z\Sigma^{1/2})}{ \E b^2(\Sigma^{1/2} S_Z\Sigma^{1/2}) } + \E \bigg[b_2(\Sigma^{1/2} S_Z\Sigma^{1/2})\bigg(\frac{1}{b^2(\Sigma^{1/2} S_Z\Sigma^{1/2})}-\frac{1}{\E b^2(\Sigma^{1/2} S_Z\Sigma^{1/2}) }\bigg)\bigg]\\
& \stackrel{(*)}{=} \frac{(1+N^{-1}) \frac{b_2(\Sigma)}{b^2(\Sigma)}+\frac{p}{N}}{1+2\tr(\Sigma^2)/(Np^2)}+\E \bigg[b_2(\Sigma^{1/2} S_Z\Sigma^{1/2})\bigg(\frac{1}{b^2(\Sigma^{1/2} S_Z\Sigma^{1/2})}-\frac{1}{\E b^2(\Sigma^{1/2} S_Z\Sigma^{1/2}) }\bigg)\bigg]\\
& = \frac{b_2(\Sigma)}{b^2(\Sigma)}+\frac{p}{N}+\bigg\{\E \bigg[b_2(\Sigma^{1/2} S_Z\Sigma^{1/2})\bigg(\frac{1}{b^2(\Sigma^{1/2} S_Z\Sigma^{1/2})}-\frac{1}{\E b^2(\Sigma^{1/2} S_Z\Sigma^{1/2}) }\bigg)\bigg]\\
&\qquad +\bigg[(1+N^{-1}) \frac{b_2(\Sigma)}{b^2(\Sigma)}+\frac{p}{N}\bigg]\bigg(\frac{1}{1+2\tr(\Sigma^2)\cdot(Np^2)^{-1}}-1\bigg)+ N^{-1}\frac{b_2(\Sigma)}{b^2(\Sigma)} \bigg\}\\
&\equiv \frac{b_2(\Sigma)}{b^2(\Sigma)}+\frac{p}{N}+R(\Sigma).
\end{align*}
Here we use Lemma \ref{lem:trace_moment}-(1) in $(*)$ and 
\begin{align*}
R(\Sigma)&= \E \bigg[b_2(\Sigma^{1/2} S_Z\Sigma^{1/2})\bigg(\frac{1}{b^2(\Sigma^{1/2} S_Z\Sigma^{1/2})}-\frac{1}{\E b^2(\Sigma^{1/2} S_Z\Sigma^{1/2}) }\bigg)\bigg]\\
&\qquad +\bigg[(1+N^{-1}) \frac{b_2(\Sigma)}{b^2(\Sigma)}+\frac{p}{N}\bigg]\bigg(\frac{1}{1+2\tr(\Sigma^2)\cdot(Np^2)^{-1}}-1\bigg)+ N^{-1}\frac{b_2(\Sigma)}{b^2(\Sigma)}\\
& \equiv R_1(\Sigma)+R_2(\Sigma)+R_3(\Sigma).
\end{align*}
As
\begin{align*}
m_{\Sigma;\jo} &= \frac{N}{4}\E \tr\Big(\frac{\Sigma^{1/2}S_Z\Sigma^{1/2}}{b(\Sigma^{1/2}S_Z\Sigma^{1/2})}-I\Big)^2= \frac{Np}{4} \bigg\{ \E \bigg[\frac{b_2(\Sigma^{1/2} S_Z\Sigma^{1/2})}{b^2(\Sigma S_Z)}\bigg]-1\bigg\},
\end{align*}
we have
\begin{align*}
m_{\Sigma;\jo}-m_{I;\jo}& = \frac{Np}{4}\big(p^{-1}\pnorm{\Sigma-I}{F}^2+R(\Sigma)-R(I)\big)
\end{align*}
Now we handle $R_\ell(\Sigma)-R_\ell(I)$ for $\ell=1,2,3$. 

For $\ell=1$, 
\begin{align*}
&\bigabs{R_1(\Sigma)-R_1(I)}\\
&= \bigg\lvert \E \bigg[b_2(\Sigma^{1/2} S_Z\Sigma^{1/2})\bigg(\frac{1}{b^2(\Sigma^{1/2} S_Z\Sigma^{1/2})} -\frac{1}{\E b^2(\Sigma^{1/2} S_Z\Sigma^{1/2}) }\bigg)\\
&\qquad -b_2(S_Z)\bigg(\frac{1}{b^2(S_Z)}-\frac{1}{\E b^2(S_Z) }\bigg)\bigg] \bigg\rvert\\
&\leq \biggabs{ \E \bigg[ \big(b_2(\Sigma^{1/2} S_Z\Sigma^{1/2})-b_2(S_Z)\big)\bigg(\frac{1}{b^2( \Sigma^{1/2} S_Z\Sigma^{1/2})}-\frac{1}{\E b^2( \Sigma^{1/2} S_Z\Sigma^{1/2}) }\bigg)  \bigg]}\\
&\qquad + \bigg\lvert \E \bigg[b_2(S_Z)\bigg\{\bigg(\frac{1}{b^2(\Sigma^{1/2} S_Z\Sigma^{1/2})}-\frac{1}{ b^2( S_Z) }\bigg)   \\
&\qquad\qquad\qquad\qquad  - \bigg(\frac{1}{\E b^2(\Sigma^{1/2} S_Z\Sigma^{1/2})}-\frac{1}{ \E b^2( S_Z) }\bigg) \bigg\}\bigg]   \bigg\rvert \equiv R_{1,1}+R_{1,2}.
\end{align*}
The term $R_{1,1}$ can be handled as follows: by (\ref{ineq:ratio_control_john_1}) Lemmas \ref{lem:kl}, \ref{lem:concentration_trace}, and \ref{lem:trace_moment}, under $p/N\leq M$,
\begin{align*}
R_{1,1}&\lesssim \E^{1/4}\big(b_2(\Sigma^{1/2} S_Z\Sigma^{1/2})-b_2(S_Z)\big)^4 \cdot \E^{1/4} b^{-8}(\Sigma^{1/2}S\Sigma^{1/2})\\
&\quad\quad\quad\times\var^{1/2}\big(b^2(\Sigma^{1/2} S_Z\Sigma^{1/2})\big)\cdot \big(\E b^2(\Sigma^{1/2}S\Sigma^{1/2})\big)^{-1}\\
&\lesssim_M p^{-2}\cdot\Big[p^{-1/2}\big(p^{-1/2}\pnorm{\Sigma}{F}+1\big) \pnorm{\Sigma-I}{F}\Big]\cdot \var^{1/2}\big(\tr^2( \Sigma S_Z)\big)\\
& \lesssim_M (N^{1/2}p)^{-1} \big(p^{-1}\pnorm{\Sigma}{F}^2+1\big) \pnorm{\Sigma-I}{F}.
\end{align*}
For $R_{1,2}$, we have by Lemmas \ref{lem:kl} and \ref{lem:trace_moment} that, under $p/N\leq M$,
\begin{align*}
R_{1,2}& = \biggabs{\E \bigg[b_2(S_Z)\bigg(\frac{b^2(S_Z)-b^2(\Sigma^{1/2} S_Z\Sigma^{1/2})}{b^2(\Sigma^{1/2} S_Z\Sigma^{1/2})b^2(S_Z)}-\frac{\E b^2(S_Z)-\E b^2(\Sigma^{1/2} S_Z\Sigma^{1/2})}{ \E b^2(\Sigma^{1/2} S_Z\Sigma^{1/2}) \E b^2( S_Z)}\bigg)\bigg] }\\
&\leq \E b_2(S_Z) b^{-2}(\Sigma^{1/2} S_Z\Sigma^{1/2}) b^{-2}(S_Z)\\
&\qquad \times \bigabs{ b^2(S_Z)-b^2(\Sigma^{1/2} S_Z\Sigma^{1/2})-\E \big(b^2(S_Z)-b^2(\Sigma^{1/2} S_Z\Sigma^{1/2})\big) }\\
&\qquad + \bigabs{\E\big(b^2(S_Z)-b^2(\Sigma^{1/2} S_Z\Sigma^{1/2})\big)}\cdot \E b_2(S_Z) \\
&\qquad\qquad\times \biggabs{\frac{1}{b^2(\Sigma^{1/2} S_Z\Sigma^{1/2}) b^2(S_Z)}-\frac{1}{\E b^2(\Sigma^{1/2} S_Z\Sigma^{1/2}) \E b^2(S_Z)} }\\
&\lesssim_M \var^{1/2} \big(b^2(S_Z)-b^2(\Sigma^{1/2} S_Z\Sigma^{1/2})\big)\\
&\qquad + \bigabs{\E\big(b^2(S_Z)-b^2(\Sigma^{1/2} S_Z\Sigma^{1/2})\big)}\cdot \big(\var^{1/2}(b^2(\Sigma^{1/2} S_Z\Sigma^{1/2}))\vee \var^{1/2}(b^2( S_Z))\big)\\
&\stackrel{(\ast)}{\lesssim}_M  (N^{1/2}p)^{-1} \pnorm{\Sigma-I}{F}+ p^{-1/2}\pnorm{\Sigma-I}{F}\cdot (N^{1/2}p)^{-1}(\pnorm{\Sigma}{F}\vee p^{1/2})\\
& \lesssim (N^{1/2}p)^{-1} \big(p^{-1/2}\pnorm{\Sigma}{F}+1\big) \pnorm{\Sigma-I}{F}.
\end{align*}
Here in $(*)$ we use the fact that
\begin{align*}
&\bigabs{\E\big(b^2(S_Z)-b^2(\Sigma^{1/2} S_Z\Sigma^{1/2})\big)}\lesssim p^{-1} \E^{1/2} \tr^2\big((\Sigma-I)S_Z\big)\\
&\leq p^{-1/2} \pnorm{\Sigma-I}{F} \cdot \E^{1/2}\pnorm{S_Z}{\op}^2 \lesssim_M p^{-1/2} \pnorm{\Sigma-I}{F} .
\end{align*}
Hence
\begin{align*}
\abs{R_1(\Sigma)-R_1(I)}\lesssim_M (N^{1/2}p)^{-1} \big(p^{-1}\pnorm{\Sigma}{F}^2+1\big) \pnorm{\Sigma-I}{F}.
\end{align*}
For $\ell=2$, with $\mathfrak{a}(\Sigma)\equiv1/\big(1+2\tr(\Sigma^2)/(Np^2)\big)-1$ (then $\abs{\mathfrak{a}(\Sigma)}\leq 2/N$ and $\abs{\mathfrak{a}(I)}\leq 2/(Np)$), we have 
\begin{align*}
R_2(\Sigma) = (1+N^{-1}) b_2(\Sigma)\mathfrak{a}(\Sigma)+N^{-1}p\mathfrak{a}(\Sigma),
\end{align*}
so
\begin{align*}
\bigabs{R_2(\Sigma)-R_2(I)}&\lesssim_M \bigabs{b_2(\Sigma)\mathfrak{a}(\Sigma)-b_2(I)\mathfrak{a}(I) }+\abs{\mathfrak{a}(\Sigma)-\mathfrak{a}(I)}\equiv R_{2,1}+R_{2,2}.
\end{align*}
The two terms $R_{2,1},R_{2,2}$ can be handled as follows: using $\tr(\Sigma^2)\leq p^2$ under $b(\Sigma)=1$, we have
\begin{align*}
R_{2,1}&\lesssim b_2(\Sigma) \abs{\mathfrak{a}(\Sigma)-\mathfrak{a}(I)}+\abs{\mathfrak{a}(I)}\abs{b_2(\Sigma)-b_2(I)}\\
&\lesssim p^{-1}\tr(\Sigma^2) (Np^2)^{-1} \abs{\tr\big(\Sigma^2-I\big)} + (Np)^{-1} \cdot p^{-1}\cdot \abs{\tr(\Sigma^2-I)}\\
&\lesssim (Np^{1/2})^{-1} \big(p^{-1/2}\pnorm{\Sigma}{F}+1\big) \pnorm{\Sigma-I}{F},  \\
R_{2,2}&\lesssim (Np^{1/2})^{-1} \big(p^{-1/2}\pnorm{\Sigma}{F}+1\big) \pnorm{\Sigma-I}{F},
\end{align*}
so
\begin{align*}
\bigabs{R_2(\Sigma)-R_2(I)}&\lesssim_M (Np^{1/2})^{-1} \big(\pnorm{\Sigma}{F}/p^{1/2}+1\big) \pnorm{\Sigma-I}{F}.
\end{align*}
For $\ell=3$, 
\begin{align*}
\bigabs{R_3(\Sigma)-R_3(I)}&=N^{-1}\bigabs{b_2(\Sigma) -b_2(I)} \lesssim(Np^{1/2})^{-1} \big(p^{-1/2}\pnorm{\Sigma}{F}+1\big) \pnorm{\Sigma-I}{F}.
\end{align*}
Now with $Q_{\jo}(\Sigma)\equiv p\big(R(\Sigma)-R(I)\big)$, we have
\begin{align*}
\abs{Q_{\jo}(\Sigma)}&\lesssim_M p\max\{(N^{1/2}p)^{-1},(Np^{1/2})^{-1}\} \big(p^{-1}\pnorm{\Sigma}{F}^2+1\big) \pnorm{\Sigma-I}{F}\\
&\lesssim_M N^{-1/2}\big(p^{-1}\pnorm{\Sigma}{F}^2+1\big) \pnorm{\Sigma-I}{F},
\end{align*}
and
\begin{align*}
m_{\Sigma;\jo}-m_{I;\jo}& = \frac{N}{4}\big(\pnorm{\Sigma-I}{F}^2+Q_{\jo}(\Sigma)\big).
\end{align*}

\noindent (3). 
Recall $T_{\na}$ defined in (\ref{def:lrs_nagao}). Let
\begin{align*}
\Delta(X) &\equiv T_{\na}(X) - T_{\jo}(X).
\end{align*}
Then for any $\epsilon > 0$, there exists some $C_\epsilon>0$ such that under the null (i.e., $X_1,\ldots,X_n$ are i.i.d. $\mathcal{N}(0,I_p)$),
\begin{align}\label{ineq:variance_compare_john}
\big[(1-\epsilon)\sigma^2_{I;\na} -C_\epsilon \var_I(\Delta)\big]_+ 
\leq \sigma^2_{I;\jo}\leq (1+\epsilon)\sigma^2_{I;\na} + C_\epsilon \var_I(\Delta).
\end{align}
We will now bound $\var_I(\Delta)$. By Lemmas \ref{lem:derivatives_T_nagao}-(1) and \ref{lem:derivatives_john}-(1), we have for any $i,j\in[N]\times [p]$
\begin{align*}
\partial_{(ij)}\Delta(X) &=  \partial_{(ij)}T_{\na}(X) - \partial_{(ij)}T_{\jo}(X) \\
&=  \big(X(S-I) - N^{-1}\tr(S)X\big)_{ij}-\bigg[\frac{X}{b}\Big(\frac{S}{b}-I\Big)-\frac{X}{b(S)}\cdot b\Big( \Big(\frac{S}{b(S)}-I\Big)^2\Big)\bigg]_{ij}\\
& = \bigg[X(S-I)- \frac{X}{b}\Big(\frac{S}{b}-I\Big)\bigg]_{ij}+ \bigg[Xb\big((S-b(S)I)^2\big)\big(b^{-3}(S)-1\big)\bigg]_{ij}\\
&\qquad +\bigg[X\Big(b_2(S)-b^2(S)-N^{-1}\tr(S)\Big)\bigg]_{ij}\equiv \big(\Delta_1+\Delta_2+\Delta_3\big)_{ij}.
\end{align*}
We now handle $\Delta_1$-$\Delta_3$ separately below. For $\Delta_1$, by Lemmas \ref{lem:kl}, \ref{lem:concentration_trace}, and \ref{lem:trace_moment}, we have
\begin{align*}
\E \pnorm{\Delta_1}{F}^2 &\lesssim \E b^{-2}(1-b)^2 \pnorm{X(S-I)}{F}^2+ \E  b^{-4}(1-b)^2 \pnorm{X S}{F}^2\\
&\leq N\E b^{-2}(1-b)^2\pnorm{S}{\op}\pnorm{S-I}{F}^2 + N\E b^{-4}(b-1)^2\tr(S^3)\\
&\lesssim N \cdot (pN)^{-1} \cdot (1\vee y) (N^{-1}p^2) + N\E^{1/4} b^{-16}\E^{1/4}(b-1)^8 \E^{1/2}\tr^2(S^3)\\
&\stackrel{(*)}{\lesssim}\mathfrak{o}(p^2) + N\cdot \mathcal{O}(1) \cdot (Np)^{-1}\cdot \mathcal{O}(N^{-2}p^3\vee p^2) = \mathfrak{o}(p^2).
\end{align*}
Here in $(*)$ the first bound follows by direct calculation and the second bound follows as: by Lemma \ref{lem:trace_asymptotics}-(7),
\begin{align*}
\E\tr^2(S^3) &\leq \E \tr^2(S^2)\pnorm{S}{\op}^2 \leq \E^{1/2}\tr^4(S^2)\cdot \E^{1/2} \pnorm{S}{\op}^4\\
&\lesssim  p^4 \cdot (y^2\vee 1) = \mathcal{O}(N^{-4}p^6 \vee p^4).
\end{align*}
For $\Delta_2$, using $b\big((S-b(S)I)^2\big) \leq \pnorm{(S-b(S)I)^2}{\op}\lesssim \pnorm{S}{\op}^2\vee b^2(S)$, we have 
\begin{align*}
\E \pnorm{\Delta_2}{F}^2 &\lesssim \E b^{-6}(b^4\vee b^2\vee 1)(b-1)^2\big(\pnorm{S}{\op}^2\vee b^2\big)\pnorm{X}{F}^2\\
&\lesssim  (pN)^{-1}\cdot (pN) \cdot \E^{1/2}\big(\pnorm{S}{\op}^4\vee b^4\big)\asymp (1\vee y)^2 = \mathfrak{o}(p^2).
\end{align*}
For $\Delta_3$, let $h(S)\equiv b_2(S)-b^2(S)-N^{-1}\tr(S)$, we have
\begin{align*}
\E \pnorm{\Delta_3}{F}^2  &\lesssim \E \pnorm{X}{F}^2h^2(S) \leq N\E^{1/2}\tr^2(S)\cdot \E^{1/2} h^4(S)\\
&\lesssim Np\cdot \Big[\big(\E h(S)\big)^4 + \var^2\big(h(S)\big) + \E \pnorm{\nabla h(S)}{F}^4\Big]^{1/2},
\end{align*}
where the last inequality follows since
\begin{align*}
\E h^4 (S)& = [\E h^2(S)]^2 +\var(h^2(S)) \leq 2\big[\E h (S))^4 + 2\var^2(h(S))+\var(h^2(S)\big]\\
&\leq 2(\E h (S))^4 + 2\var^2(h(S))+ 4\E h^2(S) \pnorm{\nabla h(S)}{F}^2\\
&\leq 2(\E h (S))^4 + 2\var^2(h(S))+ 4 \tau \E h^4(S)+ C_\tau \E \pnorm{\nabla h(S)}{F}^4
\end{align*}
and choosing, say, $\tau=1/8$. For $\E h(S)$, Lemma \ref{lem:trace_moment} yields the direct evaluation
\begin{align*}
\E h(S) & = \frac{(1+N^{-1})p+N^{-1}p^2}{p}-\frac{p^2+2N^{-1}p}{p^2}-\frac{p}{N}\\
& = (1+N^{-1})+\frac{p}{N}-\Big(1+\frac{2}{Np}\Big)-\frac{p}{N} = \frac{1}{N}-\frac{2}{Np} = \mathcal{O}(N^{-1}).
\end{align*}
For $\var\big(h(S)\big)$, the Gaussian-Poincar\'e inequality \cite[Theorem 3.20]{boucheron2013concentration} yields that
\begin{align*} 
\var \big(h(S)\big)&\leq \E \sum_{ij} \big(\partial_{(i j)} h(S)\big)^2 \stackrel{(\ast\ast)}{=} \sum_{ij} \E \bigg(\frac{4X_i^\top S e_j}{Np}-\frac{4 X_{ij}b(S)}{Np}-\frac{2X_{ij}}{N^2}\bigg)^2\\
&\lesssim (Np)^{-2}\big(\E \pnorm{XS}{F}^2+\E b^2(S) \pnorm{X}{F}^2\big)+N^{-4}\E\pnorm{X}{F}^2\\
&\lesssim (Np^2)^{-1} \E\tr(S^2)\pnorm{S}{\op} + (Np)^{-2}(Np) + N^{-4}(Np)\\
&\lesssim (Np^2)^{-1}\E^{1/2}\tr^2(S^2)\E^{1/2}\pnorm{S}{\op}^2 + (Np)^{-1} + pN^{-3}\\
&\lesssim (Np^2)^{-1}\cdot p^2\cdot (1\vee y) + (Np)^{-1} + pN^{-3} = \mathfrak{o}(N^{-1}p).
\end{align*}
Here ($**$) follows from (\ref{ineq:derivative_john_1}). Lastly $\E \pnorm{\nabla h(S)}{F}^4$ can be bounded similarly:
\begin{align*}
\E \pnorm{\nabla h(S)}{F}^4&\lesssim (Np)^{-4}\big(\E \pnorm{XS}{F}^4+\E b^4(S) \pnorm{X}{F}^4\big)+N^{-8}\E\pnorm{X}{F}^4 
= \mathfrak{o}(N^{-2}p^2).
\end{align*}
Now by the Gaussian-Poincar\'e inequality \cite[Theorem 3.20]{boucheron2013concentration},
\begin{align*}
\var_I(\Delta)&\leq \E \sum_{ij} \big(\partial_{(ij)}\Delta(X)\big)^2 \lesssim \E\pnorm{\Delta_1}{F}^2+ \E\pnorm{\Delta_2}{F}^2 +\E\pnorm{\Delta_3}{F}^2 = \mathfrak{o}(p^2). 
\end{align*}
As $\sigma_{I;\na}^2\sim p^2/4\to \infty$ whenever $N \wedge p \to \infty$, by taking $\epsilon$ in (\ref{ineq:variance_compare_john}) slowly decaying to $0$ we conclude $\sigma_{I;\na}^2\sim \sigma_{I;\jo}^2$.

\noindent (4). By (1)-(2), as $\pnorm{\Sigma}{F}^2/p\lesssim \pnorm{\Sigma-I}{F}^2/p+1\lesssim \pnorm{\Sigma-I}{F}\vee 1$ [where we use $\pnorm{\Sigma-I}{F}\leq \pnorm{\Sigma}{F}+\sqrt{p}\leq p+\sqrt{p}$ under $\tr(\Sigma)=p$], we only need to prove that given $C_0$, we may find some constant $C_1>0$,
\begin{align}\label{ineq:power_john_1}
&\frac{\sqrt{N} \pnorm{\Sigma-I}{F}\big(1\vee \pnorm{\Sigma-I}{F} \big) }{\big(N \pnorm{\Sigma-I}{F}^2-C_0 N^{1/2} \pnorm{\Sigma-I}{F}\big(1\vee \pnorm{\Sigma-I}{F}\big) \big)_+\vee \sigma_{I;\jo}}\leq \frac{C_1}{(\sigma_{I;\jo}\wedge N)^{1/2}}.
\end{align}
Write $\alpha = \pnorm{\Sigma-I}{F}$, we only need to prove that
\begin{align}\label{ineq:power_john_2}
\frac{\sqrt{N}\alpha\vee \sqrt{N}\alpha^2}{\big(N\alpha^2-C_0 N^{1/2}\alpha\big)_+\vee \sigma_{I;\jo}}\leq \frac{C_1}{(\sigma_{I;\jo}\wedge N)^{1/2}}.
\end{align}
This follows as
\begin{align*}
\hbox{LHS of (\ref{ineq:power_john_2})}&\lesssim \bm{1}_{\alpha \leq 2C_0 N^{-1/2}} \frac{1}{\sigma_{I;J}}+ \bm{1}_{\alpha > 2C_0 N^{-1/2}}\frac{\sqrt{N}\alpha\vee \sqrt{N}\alpha^2}{N\alpha^2\vee \sigma_{I;\jo}}\\
&\lesssim \frac{1}{\sigma_{I;J}}+ \frac{1}{N^{1/2} \inf_{\alpha\geq 0}\big(\alpha \vee \frac{\sigma_{I;\jo}/N}{\alpha}\big) } + \frac{1}{N^{1/2}}\\
&\lesssim  \frac{1}{\sigma_{I;J}}+ \frac{1}{\sigma_{I;\jo}^{1/2}}+\frac{1}{N^{1/2}}\asymp \frac{1}{(\sigma_{I;\jo}\wedge N)^{1/2}}.
\end{align*}
The proof is complete. 
\end{proof}

\subsubsection{Completing of the proof for power expansion}\label{subsec:john_power}

\begin{proof}[Proof of Theorem \ref{thm:power_john}]
The proof essentially follows that of Theorem \ref{thm:power_nagao} by noting that the key property used therein is $\abs{Q_{\na}(\Sigma)}\leq C_M N^{-1/2}(\pnorm{\Sigma-I}{F}\vee 1)\pnorm{\Sigma-I}{F}$, while here we have $\abs{Q_{\jo}(\Sigma\cdot b^{-1}(\Sigma))}\leq C_M N^{-1/2}\pnorm{\Sigma\cdot b^{-1}(\Sigma)-I}{F}\leq N^{-1/2}(\pnorm{\Sigma\cdot b^{-1}(\Sigma)-I}{F} \vee 1)\pnorm{\Sigma\cdot b^{-1}(\Sigma)-I}{F}$.
\end{proof}

\appendix

\section{Second-order Poincar\'e inequality}
The main tool used for proving normal approximations is the following second-order Poincar\'e inequality due to \cite{chatterjee2009fluctuations}. Recall that $W^{1,2}(\gamma_n)$ is the Gaussian Sobolev space defined in (\ref{def:sobolev}).

\begin{lemma}[Second-order Poincar\'e inequality]\label{lem:sec_poincare}
	Let $\xi$ be an $n$-dimensional standard normal random vector. Let $F: \R^n \to \R$ be an element of $W^{1,2}(\gamma_n)$. Let $\xi'$ be an independent copy of $\xi$. Define $T:\R^n\to \R$ by
	\begin{align*}
	T(y)\equiv \int_0^1 \frac{1}{2\sqrt{t}} \iprod{\nabla F (y)}{ \E_{\xi^\prime} \nabla F(\sqrt{t}y + \sqrt{1-t}\xi^\prime)}\,\mathrm{d}{t}.
	\end{align*}
	Then with $W\equiv F(\xi)$,
	\begin{align*}
	&d_{\mathrm{TV}}\bigg( \frac{W-\E W}{\sqrt{ \mathrm{Var}(W)}}, \,\mathcal{N}(0,1)\bigg)\leq   \frac{2 \sqrt{\mathrm{Var}(T(\xi))} }{ \mathrm{Var}(W)}.
	\end{align*}
\end{lemma}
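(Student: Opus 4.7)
The plan is to reduce the stated total-variation bound to a Stein-type covariance identity through a Mehler-flavored interpolation, and then apply standard Stein-method tools for normal approximation. Along the way, the quantity $T(y)$ in the statement will arise naturally as the Stein kernel for $W \equiv F(\xi)$.

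\textbf{Step 1: the covariance identity.} First I would establish the Stein identity
$$\mathrm{Cov}\bigl(F(\xi),\, g(F(\xi))\bigr) \;=\; \E\bigl[g'(F(\xi))\, T(\xi)\bigr] \qquad (\star)$$
for sufficiently smooth test functions $g$. Fixing an independent copy $\xi'$ of $\xi$ and setting $\xi_t \equiv \sqrt{t}\,\xi + \sqrt{1-t}\,\xi'$ for $t \in [0,1]$, consider $H(t) \equiv \E[F(\xi_t)\, g(F(\xi))]$, so that $H(1) - H(0) = \mathrm{Cov}(F(\xi), g(F(\xi)))$ by independence of $\xi$ and $\xi'$. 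Differentiating under the expectation gives
$$H'(t) \;=\; \E\!\left[\bigl\langle \nabla F(\xi_t),\; \tfrac{1}{2\sqrt{t}}\xi - \tfrac{1}{2\sqrt{1-t}}\xi'\bigr\rangle\, g(F(\xi))\right],$$
and applying Gaussian integration by parts in $\xi$ and in $\xi'$ separately (invoking Fubini to isolate each direction of differentiation), the two Laplacian contributions $\tfrac{1}{2}\E[\Delta F(\xi_t)\, g(F(\xi))]$ cancel exactly, leaving
$$H'(t) \;=\; \tfrac{1}{2\sqrt{t}}\;\E\!\bigl[g'(F(\xi))\,\langle \nabla F(\xi),\, \E_{\xi'}\nabla F(\xi_t)\rangle\bigr].$$
Integrating over $t \in [0,1]$ reproduces $T(\xi)$ exactly as in the statement and yields $(\star)$. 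Specializing to $g(w) = w$ gives the consistency relation $\E\, T(\xi) = \mathrm{Var}(W)$.

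\textbf{Step 2: Stein's method.} For each Borel set $B \subset \R$, let $g_B$ solve the Stein equation $g'(w) - w g(w) = \bm{1}_B(w) - \Prob(\mathcal{N}(0,1)\in B)$; classical estimates give $\|g_B'\|_\infty \le 2$. Writing $W_\ast \equiv (W - \E W)/\sigma$ with $\sigma^2 \equiv \mathrm{Var}(W)$ and applying $(\star)$ to $w \mapsto g_B((w - \E W)/\sigma)$, one obtains $\E[W_\ast g_B(W_\ast)] = \sigma^{-2}\E[g_B'(W_\ast)\, T(\xi)]$. Therefore
$$\bigl|\Prob(W_\ast \in B) - \Prob(\mathcal{N}(0,1)\in B)\bigr| \;=\; \bigl|\E[g_B'(W_\ast)(1 - T(\xi)/\sigma^2)]\bigr| \;\le\; 2\sigma^{-2}\,\E\bigl|T(\xi) - \sigma^2\bigr|,$$
and Cauchy-Schwarz together with $\E T(\xi) = \sigma^2$ bounds the right-hand side by $2\sqrt{\mathrm{Var}(T(\xi))}/\mathrm{Var}(W)$. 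Taking the supremum over $B$ yields the claim.

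The one genuine obstacle is \emph{justifying Step~1 under the minimal regularity} $F \in W^{1,2}(\gamma_n)$: differentiation under the integral sign and Gaussian integration by parts are not immediately legal for non-smooth $F$. The standard remedy is to mollify $F$ (say by the Ornstein-Uhlenbeck semigroup) to produce smooth approximants $F_\epsilon$ for which all manipulations are legitimate, and to pass to the limit using $L^2(\gamma_n)$-continuity of $\nabla F_\epsilon \to \nabla F$ and uniform integrability of the associated Stein kernel; note also that the $t^{-1/2}$ singularity in the definition of $T$ is integrable since $\nabla F \in L^2(\gamma_n)$. The remaining ingredients---Fubini, cancellation of the Laplacian terms, and the Stein bound $\|g_B'\|_\infty \le 2$---are entirely standard.
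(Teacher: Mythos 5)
The paper does not prove this lemma at all: it is imported verbatim from Chatterjee's second-order Poincar\'e paper \cite{chatterjee2009fluctuations}, so there is no in-paper proof to compare against. Your argument is a correct reconstruction of the standard proof of that result --- Gaussian interpolation $\xi_t=\sqrt{t}\,\xi+\sqrt{1-t}\,\xi'$ plus integration by parts to produce the Stein kernel identity $\mathrm{Cov}(F(\xi),g(F(\xi)))=\E[g'(F(\xi))T(\xi)]$ (whence $\E T(\xi)=\var(W)$), followed by the Stein equation bound $\|g_B'\|_\infty\le 2$ and Cauchy--Schwarz. The only remaining loose end beyond the mollification of $F$ you already flag is that the Stein solution $g_B$ is only Lipschitz with an a.e.\ derivative (jump at $\partial B$), so applying the covariance identity to it likewise requires a routine smoothing/approximation step; this is standard and does not affect the constant.
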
 
If furthermore $F(\xi) \in W^{2,4}(\gamma_n)$, a standard application of Gaussian-Poincar\'e inequality leads to the bound $\var(T(\xi))\lesssim \E^{1/2} \pnorm{\nabla^2 F(\xi)}{\op}^4\cdot \E^{1/2}\pnorm{\nabla
 F(\xi)}{}^4$.

\section{Sobolev regularity of matrix functionals}\label{section:regularity}

\begin{lemma}
	The following hold for any $q\geq 1$. 
	\begin{enumerate}
		\item Let $f:\R^{N\times p}\to \R$ be defined by $f(X) = \log \det (X^\top X)$. If $N\geq p+1$, then $f \in W^{1,q}(\gamma_{N\times p})$ provided itself and its pointwise first derivatives live in $L_q(\gamma_{N\times p})$, $f \in W^{2,q}(\gamma_{N\times p})$ provided itself, its pointwise first, and second derivatives live in $L_q(\gamma_{N\times p})$.	 In particular, if $N,p$ are large enough with $p/N\leq 1-\epsilon$ for some $\epsilon \in (0,1)$, then $f \in W^{2,4}(\gamma_{N\times p})$. 
		\item Let $g_\ell:\R^{N\times p}\to \R$ be defined by $g_\ell(X) = \tr^{-\ell}(X^\top X)$ for $\ell \in \N$. Then $g_\ell \in W^{1,q}(\gamma_{N\times p})$ provided itself and its pointwise first derivatives live in $L_q(\gamma_{N\times p})$, $g_\ell \in W^{2,q}(\gamma_{N\times p})$ provided itself, its pointwise first, and second derivatives live in $L_q(\gamma_{N\times p})$. In particular, there exists some $N_\ell \in \N$ such that for $N\geq N_\ell$, $g_\ell \in W^{2,4}(\gamma_{N\times p})$. 
	\end{enumerate}
\end{lemma}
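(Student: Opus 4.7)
The plan is to deduce both claims from a single general principle: if $h:\R^d \to \R$ is smooth outside a Lebesgue-null set $E$ and, together with all its pointwise partial derivatives of order at most $r$, lies in $L^q(\gamma_d)$, then $h\in W^{r,q}(\gamma_d)$. I would prove this general principle by a standard truncation-plus-mollification argument. Fix smooth cutoffs $\varphi_R\in C_0^\infty(\R^d)$ with $\varphi_R\equiv 1$ on the ball of radius $R$, supported in the ball of radius $2R$, and with $\abs{\partial^{\bm{\alpha}} \varphi_R}\lesssim_{\bm{\alpha}} 1$ uniformly in $R$; fix smooth cutoffs $\psi_\delta$ that vanish on the $\delta$-neighborhood of $E$, equal $1$ outside the $2\delta$-neighborhood, with derivatives controlled by $\delta^{-\abs{\bm{\alpha}}}$. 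Then for a standard mollifier $\rho_\eta$, the function $h_{R,\delta,\eta}\equiv \rho_\eta \ast (h\,\varphi_R\,\psi_\delta)$ lies in $C_0^\infty(\R^d)$, and sending $\eta\to 0$, then $\delta\to 0$, then $R\to\infty$ yields $h_{R,\delta,\eta}\to h$ in the Sobolev norm (\ref{def:sobolev}) by dominated convergence applied to the hypothesis $\partial^{\bm{\alpha}} h\in L^q(\gamma_d)$ for $\abs{\bm{\alpha}}\leq r$.

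For $f(X)=\log\det(X^\top X) = p\log N+\log\det S$, the singular set $E_f=\{X:\det(X^\top X)=0\}$ is the variety of rank-deficient matrices, Lebesgue-null when $N\geq p+1$. The pointwise first and second derivatives coincide with those computed in Lemma \ref{lem:derivatives_T} (the $p\log N$ term being constant in $X$), and are polynomial in the entries of $X$ and of $S^{-1}$. By the Cauchy--Schwarz inequality, controlling their $L^4(\gamma_{N\times p})$ norms reduces to bounding products of high Gaussian moments of $\pnorm{X}{F}$ (always finite) against moments $\E\pnorm{S^{-1}}{\op}^{q'}$ for a fixed $q'$ (concretely $q'=16$ suffices); Lemma \ref{lem:moment_S_inverse} supplies the latter whenever $q'\leq (N-p-1)/8$, which is guaranteed for $N,p$ large with $p/N\leq 1-\epsilon$. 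For $g_\ell(X)=\pnorm{X}{F}^{-2\ell}$, the singular set is $\{0\}$, and direct differentiation shows that its first and second pointwise partial derivatives are bounded in absolute value by a constant multiple of $\pnorm{X}{F}^{-2\ell-2}$. Since $\pnorm{X}{F}^2$ is $\chi^2_{Np}$-distributed under $\gamma_{N\times p}$, the negative moments $\E\pnorm{X}{F}^{-s}$ are finite whenever $s<Np$, so choosing $N\geq N_\ell$ sufficiently large verifies the pointwise $L^4$-integrability hypothesis.

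The main technical obstacle lies in the clean passage to the limit in the general principle when $E = E_f$, since the derivatives of $f$ blow up like powers of $\pnorm{S^{-1}}{\op}$ as $\det(X^\top X)\to 0$. One must show that the $L^q(\gamma_d)$-contribution of error terms where derivatives fall on $\psi_\delta$, namely terms of the form $\delta^{-\abs{\bm{\alpha}}}\partial^{\bm{\beta}}(h\varphi_R)$ supported in the tubular $\delta$-neighborhood of $E_f$, vanishes as $\delta\to 0$. This requires a quantitative estimate showing that the Gaussian mass of that tubular neighborhood decays polynomially in $\delta$, which holds since $E_f$ has positive codimension; balancing this decay against the blow-up of $\nabla\psi_\delta$ and against $\partial^{\bm{\beta}} h\in L^q$ is the main piece of bookkeeping. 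Modulo this, the $W^{2,4}(\gamma_{N\times p})$ conclusions follow directly from the combined integrability bounds above.
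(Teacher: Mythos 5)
Your strategy (truncation near the singular set plus mollification plus dominated convergence) is genuinely different from the paper's, which reduces to the classical Sobolev space $W^{r,q}(\R^{N\times p})$ via \cite[Proposition 1.5.2]{bogachev1998gaussian} and then invokes the \emph{absolute continuity on lines} (ACL) characterization of $W^{r,q}$, together with a projection argument showing that almost every coordinate line misses $E_f$. This is not a cosmetic difference, and the obstacle you flag as ``the main piece of bookkeeping'' is in fact a genuine gap that does not close in the generality the lemma asserts.

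The stated ``general principle'' --- smooth off a Lebesgue-null set plus pointwise $L^q$-integrability of the derivatives implies $W^{r,q}$ membership --- is false as such: a bounded function with a jump across a codimension-one hyperplane is smooth off a null set with all pointwise derivatives in $L^q$ yet is not in $W^{1,q}$. What is missing is a codimension hypothesis, and this is precisely what your tube estimate has to deliver. For $W^{1,q}$, the error term is $(h\varphi_R)\nabla\psi_\delta$, with $|\nabla\psi_\delta|\sim\delta^{-1}$ supported on a $\delta$-neighborhood of $E_f$, which has Gaussian measure of order $\delta^{\,\mathrm{codim}(E_f)}$. Since $E_f$ is the rank-deficient variety with $\mathrm{codim}(E_f)=N-p+1$, the error contribution is of order $\delta^{N-p+1-q}$ (up to logarithms from $|h|$), which tends to zero only if $N-p+1>q$. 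At the boundary $N=p+1$ this fails for every $q\geq 2$, so your approach cannot establish the conditional $W^{1,q}$ and $W^{2,q}$ claims for $N\geq p+1$; the codimension would have to be pushed up, which is an extra hypothesis the lemma does not make. The same tension is worse for the second-derivative error terms.

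The paper's ACL route avoids this entirely. One shows (after multiplying by $\zeta\in C_0^\infty$) that the projection $\pi_{-(ij)}(E_f)$ is Lebesgue-null, so that for almost every choice of the ``frozen'' coordinates the one-dimensional slice of $\zeta f$ avoids $E_f$, is locally Lipschitz, and is hence absolutely continuous; combined with the assumed $L^q$ bounds on the pointwise derivatives this is exactly the ACL criterion for membership in the Lebesgue Sobolev space, and no quantitative estimate on a tubular neighborhood is needed. Your approach would, modulo the codimension bookkeeping, prove the final ``in particular'' assertion (the $W^{2,4}$ membership for large $N,p$ with $p/N\leq 1-\epsilon$, where $\mathrm{codim}(E_f)\geq\epsilon N\to\infty$), and it would also prove part (2) for $g_\ell$ where the singular set is $\{0\}$ of full codimension $Np$. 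But it does not give the conditional $W^{1,q}$/$W^{2,q}$ statements at $N\geq p+1$, so as written the proof is incomplete precisely at the step you flagged.
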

\begin{proof}
	We work with $q=2$ for simplicity.
	
	\noindent (1). We first prove the claim involving $f \in W^{1,2}(\gamma_{N\times p})$. Let $W^{r,p}(\R^d)$ be the standard Sobolev class on $\R^d$ (cf. \cite[Chapter 1.5]{bogachev1998gaussian}) and recall that $C^\infty_0(\R^d)$ is the class of smooth functions on $\R^d$ with compact support. By \cite[Proposition 1.5.2]{bogachev1998gaussian}, we only need to verify that $\zeta f \in W^{1,2}(\R^{N\times p})$, i.e., $\zeta f \in L_2(\R^{N\times p})$ and its first partial derivatives (in the sense of distributions) live in $L_2(\R^{N\times p})$ for every $\zeta \in C_0^\infty(\R^{N\times p})$. $\zeta f \in L_2(\R^{N\times p})$ follows from $N\geq p+1> p$. Using the absolute continuity on line characterization of the space $W^{1,2}(\R^{N\times p})$ (cf. \cite[Section 1.1.3]{mazya2011sobolev}), we only need to show that $\zeta f$ is absolutely continuous on almost all straight lines that are parallel to coordinate axes and the first pointwise derivatives of $\zeta f$ belong to $L_2(\R^{N\times p})$. As $\zeta$ has compact support, the latter requirement is satisfied by the assumption that $f$ and its first pointwise derivatives live in $L_2(\gamma_{N\times p})$. To show the almost absolute continuity, we only need to do so for $f$ on a compact subset of $\R^{N\times p}$. Identify $X \in \R^{N\times p}$ in the matrix form $X=[X_1\cdots X_p]$ where $X_j \in \R^N$ for $1\leq j\leq p$, and in the coordinate form $X = (X_1^\top,\ldots,X_p^\top)$. Let $L_j\equiv \{(X_1^\top,\ldots,X_p^\top)\in \R^{N\times p}: X_j\in \mathrm{lin}(X_1,\ldots,X_{j-1},X_{j+1},\ldots,X_p)\}$, and $\pi_{-(ij)}:\R^{N\times p}\to \R^{N\times p-1}$ be the natural projection that excludes $(X_j)_i$. Then $\pi_{-(ij)}(L_{j'})$ is a subset of $\R^{N\times p-1}$ of Lebesgue measure $0$ for each $(i,j) \in [N]\times [p], j' \in [p]$ under the condition $p\leq N-1$, as we may write
	\begin{align*}
	L_{j'} =\Big\{\big(X_1^\top,\ldots,X_{j'-1}^\top,\sum_{j\neq j'} \gamma_j X_j^\top,X_{j'+1}^\top,\ldots,X_p^\top
	\big): X_j \in \R^N, \gamma_j \in \R, j\neq j' \Big\}.
	\end{align*}
	Hence with $L\equiv \cup_j L_j$, $\pi_{-(ij)}(L)$ is a subset of $\R^{N\times p-1}$ of Lebesgue measure $0$ for every $(i,j) \in[N]\times [p]$. In particular, this means that for any $X_{-(ij)} \notin \pi_{-(ij)}(L)$, the map $f$ along the line $x_{(ij)}\mapsto (x_{(ij)},X_{-(ij)})$ does not touch $\{X \in \R^{N\times p}: \det(X^\top X)=0\}$, and hence is locally Lipschitz as $\nabla f(X) = 2X(X^\top X)^{-1}$. This verifies the almost absolute continuity property, and hence $f \in W^{1,2}(\gamma_{N\times p})$ provided itself and	 its first pointwise derivatives live in $L_2(\gamma_{N\times p})$. 
	
	The verification of $f \in W^{2,2}(\gamma_{N\times p})$ under $L_2$ integrability of the pointwise derivatives up to the second order is the same, upon noting the derivatives have singularities only at $\{X \in \R^{N\times p}: \det(X^\top X)=0\}$ (the precise derivative formula is given in Lemma \ref{lem:derivatives_T}).
	
	The last assertion follows from Lemma \ref{lem:moment_S_inverse} and (\ref{ineq:clt_covariance_test_4}) that establishes the $L_4$ integrability of the pointwise first and second derivatives, and the straightforward verification of the $L_4$ integrability of $f$ itself.
	
	\noindent (2). The singularity of $g_\ell$ occurs only at $\tr(X^\top X) = \sum_j \pnorm{X_j}{}^2 = 0$, i.e., $X=0$. The almost absolute continuity on line characterization is therefore easily verified. The $L_4$ integrability of the derivatives up to the second order follows from Lemma \ref{lem:concentration_trace}.
\end{proof}

\section{Moment and concentration (in)equalities for trace functionals}

\begin{lemma}\label{lem:4th_moment}
	Let $Z \in \R^{N\times p}$ be a random matrix whose entries are i.i.d. $\mathcal{N}(0,1)$, and $A \in \R^{p\times p}$. Then
	\begin{align*}
	\E \pnorm{Z A}{F}^4\leq 4N \pnorm{A^\top A}{F}^2+N^2 \pnorm{A}{F}^4\leq 5N^2 \pnorm{A}{F}^4.
	\end{align*}
\end{lemma}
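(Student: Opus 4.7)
The plan is to reduce the computation of $\E\pnorm{ZA}{F}^4$ to a variance/second-moment calculation for a sum of independent Gaussian quadratic forms. Write $W \equiv ZA \in \R^{N\times p}$ and note that the rows $W_1,\ldots,W_N$ of $W$ are i.i.d. copies of $A^\top Z_1 \sim \mathcal{N}(0, A^\top A)$. Thus $\pnorm{ZA}{F}^2 = \sum_{i=1}^N \pnorm{W_i}{}^2$ is a sum of $N$ i.i.d. random variables.

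Next I would compute the first two moments of $\pnorm{W_1}{}^2$. Letting $\lambda_1,\ldots,\lambda_p$ be the eigenvalues of $A^\top A$, we have $\pnorm{W_1}{}^2 \equald \sum_j \lambda_j \chi_{1,j}^2$ with $\chi_{1,j}^2$ i.i.d. $\chi^2_1$, so $\E\pnorm{W_1}{}^2 = \sum_j\lambda_j = \pnorm{A}{F}^2$ and $\var(\pnorm{W_1}{}^2) = 2\sum_j\lambda_j^2 = 2\pnorm{A^\top A}{F}^2$. Independence across $i$ gives
\begin{align*}
\E\pnorm{ZA}{F}^4 &= \var\Big(\sum_{i=1}^N\pnorm{W_i}{}^2\Big) + \Big(\E\sum_{i=1}^N\pnorm{W_i}{}^2\Big)^2\\
&= 2N\pnorm{A^\top A}{F}^2 + N^2\pnorm{A}{F}^4,
\end{align*}
which is actually sharper than the claimed $4N\pnorm{A^\top A}{F}^2 + N^2\pnorm{A}{F}^4$.

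Finally, for the second inequality, I would use the eigenvalue bound $\pnorm{A^\top A}{F}^2 = \sum_j \lambda_j^2 \leq \big(\sum_j \lambda_j\big)^2 = \pnorm{A}{F}^4$, so $4N\pnorm{A^\top A}{F}^2 + N^2\pnorm{A}{F}^4 \leq (4N + N^2)\pnorm{A}{F}^4 \leq 5N^2\pnorm{A}{F}^4$ whenever $N\geq 1$. There is no genuine obstacle here: the lemma is an elementary second-moment computation for Gaussian quadratic forms, and the only minor point is recognizing that the rows of $ZA$ are i.i.d. $\mathcal{N}(0, A^\top A)$ so that the variance formula for a weighted $\chi^2$ applies cleanly.
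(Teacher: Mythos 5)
Your proof is correct and in fact sharper than the paper's bound, but it takes a genuinely different route. You decompose $\pnorm{ZA}{F}^2 = \sum_{i=1}^N \pnorm{W_i}{}^2$ into i.i.d. Gaussian quadratic forms and compute the variance \emph{exactly}, obtaining $\E\pnorm{ZA}{F}^4 = 2N\pnorm{A^\top A}{F}^2 + N^2\pnorm{A}{F}^4$, which is stronger than the stated bound with constant $4$. The paper instead writes $\pnorm{ZA}{F}^2 = \tr(A^\top Z^\top Z A)$, computes $\partial_{(ij)}\tr(A^\top Z^\top Z A) = 2(ZAA^\top)_{ij}$, and then applies the Gaussian--Poincar\'e inequality to bound the variance by $4N\tr(AA^\top AA^\top) = 4N\pnorm{A^\top A}{F}^2$. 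Your approach buys a tighter constant and is more elementary, while the paper's approach is consistent with the Poincar\'e-based machinery it uses systematically throughout (and avoids diagonalizing $A^\top A$, though that step is routine). Both proofs then use the same eigenvalue comparison $\pnorm{A^\top A}{F}^2 = \sum_j \lambda_j^2 \leq \big(\sum_j \lambda_j\big)^2 = \pnorm{A}{F}^4$ and the trivial $4N \leq 4N^2$ to conclude the chained inequality. Either argument is acceptable.
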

\begin{proof}
	As $\pnorm{ZA}{F}^2 = \tr(ZAA^\top Z^\top) = \tr(A^\top Z^\top Z A)$, we have
	\begin{align*}
	\E \pnorm{Z A}{F}^4& =  \E \tr^2 (A^\top Z^\top Z A) = \var\big( \tr(A^\top Z^\top Z A)\big)+\big(\E \tr(A^\top Z^\top Z A) \big)^2\\
	& = \var\big( \tr(A^\top Z^\top Z A)\big)+ N^2 \tr^2(A^\top A).
	\end{align*}
	Further note that for any $(i,j) \in [N]\times [p]$,
	\begin{align*}
	\partial_{(ij)} \tr(A^\top Z^\top Z A)& = \tr\big(A^\top (e_ie_j^\top)^\top Z A\big)+\tr\big(A^\top Z^\top e_i e_j^\top A\big)\\
	& = (ZAA^\top)_{ij}+(AA^\top Z^\top)_{ji} = 2(ZAA^\top)_{ij},
	\end{align*}
	so by Gaussian-Poincar\'e inequality, 
	\begin{align*}
	&\var\big( \tr(A^\top Z^\top Z A)\big)\leq \E \sum_{i,j} \big(\partial_{(ij)} \tr(A^\top Z^\top Z A)\big)^2\\
	&=4 \E \pnorm{Z AA^\top}{F}^2 = 4 \E \tr (ZAA^\top AA^\top Z^\top) = 4N \tr(AA^\top AA^\top).
	\end{align*}
	Finally note that 
	\begin{align*}
	\tr(AA^\top AA^\top) = \pnorm{A^\top A}{F}^2 = \sum_i \lambda_i^4(A)\leq  \bigg(\sum_i \lambda_i^2 (A)\bigg)^2=\pnorm{A}{F}^4 =\tr^2(A^\top A).
	\end{align*}
	The claim follows.
\end{proof}

\begin{lemma}\label{lem:concentration_trace}
	 Let $S_Z\equiv N^{-1}\sum_{i=1}^N Z_iZ_i^\top$ where $Z_i$'s are i.i.d. $\mathcal{N}(0,I)$ in $\R^p$. Then there exists some universal constant $C>0$ such that for any non-negative definite matrix $\Sigma$ and any $t>0$, 
	\begin{align*}
	\Prob\bigg(N\bigabs{\big(\tr(\Sigma S_Z)-\tr(\Sigma)\big)}>t\bigg)\leq 2\exp\bigg(-\frac{t^2}{C(N\pnorm{\Sigma}{F}^2+ \pnorm{\Sigma}{\op} t)}\bigg).
	\end{align*}
	Consequently, $\Prob\big(\tr(\Sigma S_Z)<\tr(\Sigma)/2\big)\leq e^{-cN}$ for some universal $c>0$. Furthermore, for any $\ell \in \mathbb{Z}$ such that $\ell\geq -N/2$, there exists some $C_\ell>0$ such that [recall (\ref{def:trace_b})]
	\begin{align*}
	\E b^{\ell}(\Sigma^{1/2} S_Z\Sigma^{1/2})\leq C_\ell \cdot b^{\ell}(\Sigma).
	\end{align*}
\end{lemma}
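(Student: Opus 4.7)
My approach is to recognize $N\tr(\Sigma S_Z) = \sum_{i=1}^N Z_i^\top \Sigma Z_i = \tilde Z^\top (I_N\otimes \Sigma) \tilde Z$, where $\tilde Z \in \R^{Np}$ is a standard Gaussian vector. Applying the standard Hanson--Wright inequality \cite[Theorem 6.2.1]{vershynin2018high} to the quadratic form with matrix $M = I_N\otimes \Sigma$, for which $\pnorm{M}{F}^2 = N\pnorm{\Sigma}{F}^2$ and $\pnorm{M}{\op} = \pnorm{\Sigma}{\op}$, yields the concentration inequality in the stated Bernstein form.

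For the consequence $\Prob(\tr(\Sigma S_Z) < \tr(\Sigma)/2) \leq e^{-cN}$, I would apply the concentration inequality with $t = N\tr(\Sigma)/2$. Since $\Sigma \succeq 0$, writing the eigenvalues as $\lambda_1,\ldots,\lambda_p \geq 0$, we have $\tr^2(\Sigma) = (\sum_j \lambda_j)^2 \geq \sum_j \lambda_j^2 = \pnorm{\Sigma}{F}^2$ and $\tr(\Sigma) \geq \pnorm{\Sigma}{\op}$. Plugging these into the two competing terms inside the exponent gives a lower bound proportional to $N$, yielding the claim.

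For the moment bound on $b^\ell(\Sigma^{1/2} S_Z \Sigma^{1/2}) = \tr(\Sigma S_Z)/p$, I would split by the sign of $\ell$. When $\ell \geq 0$, diagonalizing $\Sigma = U D U^\top$ reduces the question to $\tr(D \tilde S_Z)$ for another Wishart $\tilde S_Z$, and Minkowski's inequality together with uniform $L^k$-control of the diagonal entries $(\tilde S_Z)_{jj}$ (each a scaled $\chi^2_N/N$ with mean $1$ and bounded moments) yields $\E \tr(\Sigma S_Z)^\ell \lesssim_\ell \tr(\Sigma)^\ell$. When $\ell = -k < 0$, I use the identity $x^{-k} = \frac{1}{(k-1)!}\int_0^\infty t^{k-1} e^{-tx}\,\d t$ for $x > 0$, Fubini, and independence to get
\begin{align*}
\E \tr(\Sigma S_Z)^{-k} = \frac{N^k}{(k-1)!}\int_0^\infty t^{k-1}\prod_{j=1}^p (1+2t\lambda_j)^{-N/2}\,\d t.
\end{align*}
Since $\prod_j(1+2t\lambda_j)\geq 1+2t\tr(\Sigma)$, the integral is bounded by $(2\tr(\Sigma))^{-k}B(k,N/2-k)$, and the Gamma ratio $\Gamma(N/2-k)/\Gamma(N/2) \lesssim_k N^{-k}$ by Stirling, yielding $\E b^{-k}(\Sigma^{1/2}S_Z\Sigma^{1/2}) \lesssim_k b^{-k}(\Sigma)$.

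The main obstacle I anticipate is the negative-moment bound: the threshold $\ell \geq -N/2$ is sharp (the integral diverges at $k = N/2$), so one has to be careful at the boundary — in practice the restriction is $k < N/2$ and the statement should be read with $N$ large enough to accommodate the fixed $\ell$, which is consistent with the usage in the applications. Everything else amounts to routine concentration and integration.
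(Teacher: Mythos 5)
Your concentration argument and the consequence $\Prob(\tr(\Sigma S_Z)<\tr(\Sigma)/2)\leq e^{-cN}$ are essentially the same as the paper's: both are Hanson--Wright on the quadratic form $\tilde Z^\top(I_N\otimes\Sigma)\tilde Z$ (the paper writes it as $\sum_i\|X_i\|^2$ with $X_i=\Sigma^{1/2}Z_i$ and invokes the Bernstein-form tail bound via the sub-gamma MGF, but it is the same mechanism), followed by the same choice $t=N\tr(\Sigma)/2$ and the bounds $\pnorm{\Sigma}{F}^2\leq\tr^2(\Sigma)$, $\pnorm{\Sigma}{\op}\leq\tr(\Sigma)$.

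For the moment bound $\E b^\ell(\Sigma^{1/2}S_Z\Sigma^{1/2})\lesssim_\ell b^\ell(\Sigma)$ your route is genuinely different and worth noting. The paper normalizes $\tr(\Sigma)=1$, writes $\tr(\Sigma S_Z)=N^{-1}\sum_j\lambda_j Y_j$ with $Y_j=\sum_i Z_{ij}^2\sim\chi^2(N)$ independent, and then applies Jensen's inequality to the convex map $x\mapsto x^\ell$ on $(0,\infty)$ (convex for every integer $\ell$, positive or negative) to obtain $\big(\sum_j\lambda_j Y_j\big)^\ell\leq\sum_j\lambda_j Y_j^\ell$, reducing everything to the single-variable $\chi^2(N)$ moment $\E Y_1^\ell=2^\ell\Gamma(N/2+\ell)/\Gamma(N/2)\lesssim_\ell N^\ell$. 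Your argument splits the cases: Minkowski handles $\ell\geq 1$ (equivalent in spirit to Jensen there), and for $\ell=-k<0$ you replace the convexity step by the exact Laplace-transform identity $x^{-k}=\frac{1}{(k-1)!}\int_0^\infty t^{k-1}e^{-tx}\,\d t$, the product-form MGF $\E e^{-t\sum_j\lambda_j Y_j}=\prod_j(1+2t\lambda_j)^{-N/2}$, and the elementary bound $\prod_j(1+2t\lambda_j)\geq 1+2t\tr(\Sigma)$, landing on the Beta integral $B(k,N/2-k)$ and the Gamma ratio $\Gamma(N/2-k)/\Gamma(N/2)\lesssim_k N^{-k}$. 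Both routes are correct and give the same conclusion; the paper's convexity step is shorter and handles both signs of $\ell$ uniformly, while yours makes the structure of the negative moments and the sharpness of the threshold fully explicit. Your observation that the threshold should really read $\ell>-N/2$ (equivalently $k<N/2$) rather than $\ell\geq -N/2$ is also correct: the paper's own computation via $\Gamma(N/2+\ell)$ requires $N/2+\ell>0$, so the stated $\geq$ is a minor slippage, immaterial in all applications since $\ell$ is fixed and $N\to\infty$.
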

\begin{proof}
	Let $X_i \equiv \Sigma^{1/2} Z_i$. Then $\tr(\Sigma S_Z) = N^{-1}\sum_{i=1}^N Z_i^\top \Sigma Z_i = N^{-1}\sum_{i=1}^N \pnorm{X_i}{}^2$, and $\E \tr(\Sigma S_Z) = \E\pnorm{X_i}{}^2= \tr(\Sigma)$. By Hanson-Wright inequality (cf. \cite[pp.39]{boucheron2013concentration}), 
	\begin{align*}
	\E \exp\bigg(\lambda \sum_{i=1}^N (\pnorm{X_i}{}^2-\E\pnorm{X_i}{}^2)\bigg)\leq \exp\bigg(\frac{\lambda^2\cdot N\pnorm{\Sigma}{F}^2}{1-2\lambda\pnorm{\Sigma}{\op}}\bigg),
	\end{align*}
	so by \cite[Theorem 2.3]{boucheron2013concentration}, we have
	\begin{align*}
	\Prob\bigg(N\bigabs{\big(\tr(\Sigma S_Z)-\tr(\Sigma)\big)}>t\bigg)\leq 2\exp\bigg(-\frac{t^2}{C(N\pnorm{\Sigma}{F}^2+ \pnorm{\Sigma}{\op} t)}\bigg).
	\end{align*}
	In particular, with $t\equiv N\tr(\Sigma)/2$, we have
	\begin{align*}
	\Prob\big(\tr(\Sigma S_Z)<\tr(\Sigma)/2\big)&\leq \exp\bigg(-\frac{N^2 \tr^2(\Sigma)}{ C\big(N\pnorm{\Sigma}{F}^2+ N \pnorm{\Sigma}{\op}\tr(\Sigma)\big)}\bigg)\leq e^{-c N}.
	\end{align*}
	For the expectation bound, let $\{\lambda_j\}_{j=1}^p$ be the eigenvalues of $\Sigma$ and assume without loss of generality that $\sum_{j=1}^p \lambda_j = 1$. Then
	\begin{align*}
	\E\tr^{\ell}(S_Z\Sigma) &= \E\Big(\frac{1}{N}\sum_{i=1}^N Z_i^\top\Sigma Z_i\Big) = \E\Big(\frac{1}{N}\sum_{i=1}^N Z_i^\top\textrm{diag}(\lambda_1,\ldots,\lambda_p) Z_i\Big)^{\ell}\\
	&= N^{\ell}\E\bigg(\sum_{i=1}^N\sum_{j=1}^p \lambda_jZ_{ij}^2\bigg)^{\ell} \equiv N^{\ell}\E\bigg(\sum_{j=1}^p \lambda_jY_j\bigg)^{\ell}\\
	& \stackrel{(*)}{\leq} N^{\ell}\E\sum_{j=1}^p \lambda_jY_j^{\ell} \stackrel{(**)}{\lesssim_\ell} N^{\ell}\cdot \sum_{j=1}^p \lambda_jN^{\ell} = 1.
	\end{align*}
	Here $(*)$ follows as the map $x\mapsto x^{\ell}$ is convex on $(0,\infty)$ for $\ell\in\mathbb{Z}$, and $(**)$ follows from the following calculations:
	\begin{itemize}
		\item If $\ell \in \mathbb{Z}_{\geq 1}$, $\E Y_1^\ell = \E \big(\chi^2(N)\big)^\ell\lesssim_\ell N^{\ell}$.
		\item If $\ell \in \mathbb{Z}_{\leq -1}$ and $\ell\geq -N/2$, then
		\begin{align*}
		\E Y_1^\ell &= \E \big(\chi^{-2}(N)\big)^{-\ell}= \int x^{-\ell} \frac{2^{-\frac{N}{2}}}{\Gamma(N/2)} x^{-\frac{N}{2}-1}e^{-\frac{1}{2x}}\d x = 2^{\ell}\frac{\Gamma(N/2+\ell)}{\Gamma(N/2)} \lesssim_\ell N^{\ell}.
		\end{align*}
	\end{itemize}
	The proof is complete.
\end{proof}

\begin{lemma}\label{lem:res_log_trace}
	Let $S_Z\equiv N^{-1}\sum_{i=1}^N Z_i Z_i^\top$ where $Z_i$'s are i.i.d. $\mathcal{N}(0,I)$, and $\Sigma \in \R^{p\times p}$ be a non-negative definite matrix. Recall the definition of $b_\ell(\Sigma)$ in (\ref{def:trace_b}). Then for some universal constants $C, c>0$,
	\begin{align*}
	\bigabs{\E\log \tr(\Sigma S_Z)- \log \tr(\Sigma) } \leq \frac{2b\big[ (\Sigma\cdot b^{-1}(\Sigma))^2\big] }{Np} +C e^{-cN}\Big(\frac{ b^{1/2}\big[ (\Sigma\cdot b^{-1}(\Sigma))^2\big] }{(Np)^{1/2}} \vee 1\Big).
	\end{align*}
\end{lemma}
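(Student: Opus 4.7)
Both sides of the claimed inequality are invariant under the rescaling $\Sigma\mapsto \Sigma/b(\Sigma)$, so I assume without loss of generality that $b(\Sigma)=1$, i.e., $\tr(\Sigma)=p$, under which $b[(\Sigma\cdot b^{-1}(\Sigma))^2]=b(\Sigma^2)=\tr(\Sigma^2)/p$. Setting $T\equiv \tr(\Sigma S_Z)=N^{-1}\sum_{i=1}^N Z_i^\top \Sigma Z_i$ and $W\equiv (T-p)/p$, a direct variance computation gives $\E W=0$ and $\E W^2=2\tr(\Sigma^2)/(Np^2)=2b(\Sigma^2)/(Np)$, which is already the target leading order. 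Under the normalization $\pnorm{\Sigma}{F}^2\le \pnorm{\Sigma}{\op}\tr(\Sigma)\le p^2$ and $\pnorm{\Sigma}{\op}\le p$, so Lemma \ref{lem:concentration_trace} applied with $t=Np/2$ yields $\Prob(E^c)\le 2e^{-cN}$ for $E\equiv \{|W|\le 1/2\}$ and a universal $c>0$. On $E$ the elementary Taylor expansion $\log(1+w)=w-w^2/2+R(w)$ with $|R(w)|\le |w|^3\le w^2/2$ applies, so using $\E W=0$ to rewrite $\E[W\bm{1}_E]=-\E[W\bm{1}_{E^c}]$,
\[
\E\log(T/p) \;=\; -\E[W\bm{1}_{E^c}] \;-\;\tfrac12\E[W^2\bm{1}_E]\;+\;\E[R(W)\bm{1}_E]\;+\;\E[\log(1+W)\bm{1}_{E^c}].
\]
The two ``bulk'' terms satisfy $\big|-\tfrac12\E[W^2\bm{1}_E]+\E[R(W)\bm{1}_E]\big|\le \E[W^2\bm{1}_E]\le \E W^2 = 2b(\Sigma^2)/(Np)$, producing exactly the principal term in the claim.

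\textbf{The two tail contributions.} Cauchy--Schwarz against $\Prob^{1/2}(E^c)\le \sqrt{2}\,e^{-cN/2}$ immediately gives
\[
|\E[W\bm{1}_{E^c}]|\;\le\; \E^{1/2}(W^2)\,\Prob^{1/2}(E^c)\;\le\; C\,e^{-cN/2}\cdot\frac{b^{1/2}(\Sigma^2)}{(Np)^{1/2}},
\]
matching the first piece of the error term. For $|\E[\log(1+W)\bm{1}_{E^c}]|$ I split $E^c=\{W>1/2\}\cup\{W<-1/2\}$. On $\{W>1/2\}$ the estimate $0\le \log(1+W)\le W$ yields the same Cauchy--Schwarz bound. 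On $\{W<-1/2\}$, i.e.\ $\{T<p/2\}$, I use the pointwise sandwich
\[
\frac{Y_1}{N}\;\le\; T \;\le\; p\cdot \tr(S_Z),\qquad Y_1\sim\chi^2(N),\ N\tr(S_Z)\sim\chi^2(Np),
\]
where the lower bound crucially invokes $\lambda_{\max}(\Sigma)\ge b(\Sigma)=1$ and applies to a single $\chi^2(N)$ variable independent of the rest of the spectrum. Standard $\chi^2$ log-moment estimates then give $\E\log^2(T/p)\le C\log^2(Np)$, and a further Cauchy--Schwarz against $\Prob^{1/2}(T<p/2)\le \sqrt{2}\,e^{-cN/2}$ yields $|\E[\log(1+W)\bm{1}_{W<-1/2}]|\le C\log(Np)\,e^{-cN/2}$, which is absorbed into $C'e^{-c'N}$ for any fixed $c'<c/2$ at the cost of a slight degradation of the exponent, matching the $\vee 1$ part of the error term. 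Combining the four contributions finishes the proof.

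\textbf{The main obstacle.} The genuinely technical step is the uniform control of $\E\log^2(T/p)$ on the lower-tail event $\{T<p/2\}$ when $\Sigma$ may be rank-deficient: the rank-one example $\lambda_1=p$, $\lambda_{j>1}=0$ demonstrates that a naive integration of $|\log(T/p)|$ over arbitrarily small $T$ could be problematic. The decisive---but in hindsight elementary---observation is that the normalization $\tr(\Sigma)=p$ forces $\lambda_{\max}(\Sigma)\ge 1$, producing the uniform lower bound $T\ge Y_1/N$ by a single $\chi^2(N)$ variable irrespective of how singular $\Sigma$ is. Coupled with the trivial upper bound $T\le p\cdot \tr(S_Z)$, this is precisely what makes the last Cauchy--Schwarz step close at the $e^{-cN}$ level and reduces the entire argument to the Taylor-plus-concentration template sketched above.
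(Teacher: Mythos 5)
Your proposal follows exactly the same skeleton as the paper's argument: pass to $b(\Sigma)=1$, Taylor-expand $\log(1+W)$ on the good event $E=\{|W|\le 1/2\}$, use $\E W=0$ to trade the first-order term for a tail term, and Cauchy--Schwarz the remaining tail contributions against $\Prob^{1/2}(E^c)\lesssim e^{-cN/2}$. The bulk term $\E W^2 = 2b(\Sigma^2)/(Np)$ and the first tail term are handled identically. The only place where you genuinely deviate is in the control of the lower-tail contribution $\E\big[\log^2(T/p)\big]$, and that is where a gap opens up.

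The paper controls this by the elementary pointwise inequality $-1/x\le \log x<0$ on $(0,1)$, which reduces the problem to bounding the negative second moment $\E(p/T)^2=\E\, b^{-2}(\Sigma^{1/2}S_Z\Sigma^{1/2})$; the last assertion of Lemma \ref{lem:concentration_trace} (proved by writing $T=N^{-1}\sum_j \lambda_j Y_j$, $Y_j$ i.i.d.\ $\chi^2(N)$, and applying Jensen to the convex map $x\mapsto x^{-2}$ with weights $\lambda_j/\tr(\Sigma)$) gives $\E(p/T)^2=O(1)$ \emph{uniformly} in $\Sigma$ and $p$. Thus $\E\log^2(T/p)=O(1)$ and the tail piece is $O(e^{-cN/2})$ with universal constants, as claimed.

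Your single-eigenvalue sandwich $T\ge Y_1/N$ (with $Y_1\sim\chi^2(N)$) is far too lossy: it compares $T$, which has mean $p$, against a variable whose mean is $1$. Pushed through the same $-1/x\le\log x$ route it would give $\E(p/T)^2\lesssim \E\big((Np/Y_1)^2\big)\asymp p^2$, and even the gentler $\log^2$ route you use produces $\E\log^2(T/p)\lesssim \log^2(Np)$ rather than $O(1)$. The final claim that $C\log(Np)e^{-cN/2}$ can be ``absorbed into $C'e^{-c'N}$ for any $c'<c/2$'' is not a universal bound: fix $N$ and let $p\to\infty$; the left side diverges while $e^{-c'N}$ is a constant, so no universal $C',c'$ exist. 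Since the lemma asserts universal constants for all $N,p$ and all PSD $\Sigma$, this is a genuine gap, though a contained one. The repair is already available in the paper: replace the max-eigenvalue sandwich by Lemma \ref{lem:concentration_trace}'s Jensen-based inverse moment bound $\E\,b^{-2}(\Sigma^{1/2}S_Z\Sigma^{1/2})\le C\,b^{-2}(\Sigma)$, which is exactly the tool the paper sets up for this step.
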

\begin{proof}
	Let $E\equiv \{\tr(\Sigma (S_Z-I))/\tr(\Sigma)\geq -1/2\}$. By Lemma \ref{lem:concentration_trace}, $\Prob(E^c)\leq e^{-cN}$ for some universal constant $c>0$. As $\abs{\log(1+x)-x}\leq 4x^2$ for $x\geq -1/2$,
	\begin{align*}
	&\bigabs{\E\log \tr(\Sigma S_Z)- \log \tr(\Sigma) } = \biggabs{\E \log \bigg(1+\frac{\tr \big(\Sigma(S_Z-I)\big)}{\tr(\Sigma)}\bigg)\big(\bm{1}_E+\bm{1}_{E^c}\big)}\\
	&\leq \biggabs{\E \frac{\tr \big(\Sigma(S_Z-I)\big)}{\tr(\Sigma)} \bm{1}_E }+ 4 \E \bigg(\frac{\tr \big(\Sigma(S_Z-I)\big)}{\tr(\Sigma)} \bigg)^2  + \E\bigg[ \log\Big(\frac{\tr(\Sigma S_Z)}{\tr(\Sigma)}\Big) \bm{1}_{E^c}\bigg]\\
	&\leq \E^{1/2} \bigg[\frac{\tr \big(\Sigma(S_Z-I)\big)}{\tr(\Sigma)} \bigg]^2 \Prob^{1/2}(E^c) + 4 \E \bigg(\frac{\tr \big(\Sigma(S_Z-I)\big)}{\tr(\Sigma)} \bigg)^2 \\
	&\qquad + \E^{1/2} \log^2\Big(\frac{\tr(\Sigma S_Z)}{\tr(\Sigma)}\Big)\cdot\Prob^{1/2}(E^c)\equiv  (I)+(II)+(III).
	\end{align*}
	To handle $(I)$, note that by Gaussian-Poincar\'e inequality \cite[Theorem 3.20]{boucheron2013concentration},
	\begin{align*}
	&\E \tr^2 \big(\Sigma(S_Z-I)\big) \leq \E \sum_{i,j} \big[\partial_{(ij)} \tr \big(\Sigma(S_Z-I)\big)\big]^2\\
	& = \E \sum_{i,j} \bigg[N^{-1}\tr\bigg(\Sigma \sum_k \big(\delta_{ik} e_j Z_k^\top+\delta_{ik} Z_k e_j^\top\big)\bigg)\bigg]^2\\
	& = \E \sum_{i,j} \big[N^{-1} \tr \big(\Sigma e_j Z_i^\top+\Sigma Z_i e_j^\top\big)\big]^2 = \frac{4}{N^2} \sum_{i,j} \E Z_i^\top \Sigma e_j e_j^\top \Sigma Z_i = \frac{4\tr(\Sigma^2)}{N},
	\end{align*}
	so
	\begin{align*}
	(I) = \frac{2 e^{-cN/2} \tr^{1/2}(\Sigma^2)}{N^{1/2}\tr(\Sigma)} = 2 e^{-cN/2}\cdot \frac{ b^{1/2}\big[ (\Sigma\cdot b^{-1}(\Sigma))^2\big] }{(Np)^{1/2}}.
	\end{align*}
	The second term has closed-form expression: by Lemma \ref{lem:trace_moment}-(1),
	\begin{align*}
	(II)& =  \frac{2\tr(\Sigma^2)}{N \tr^2(\Sigma)} .
	\end{align*}
	To handle $(III)$, by using $0 \leq \log x \leq x-1$ for $x\geq 1$ and $-x^{-1}\leq \log x < 0$ for $x\in(0,1)$, we have
	\begin{align*}
	\E\log^2\Big(\frac{\tr(\Sigma S_Z)}{\tr(\Sigma)}\Big) &= \E\bigg[\log^2\Big(\frac{\tr(\Sigma S_Z)}{\tr(\Sigma)}\Big)\bm{1}\Big\{\frac{\tr(\Sigma S_Z)}{\tr(\Sigma)}\geq 1\Big\}\bigg]\\
	&\quad +\E\bigg[\log^2\Big(\frac{\tr(\Sigma S_Z)}{\tr(\Sigma)}\Big)\bm{1}\Big\{\frac{\tr(\Sigma S_Z)}{\tr(\Sigma)}< 1\Big\}\bigg]\\
	&\leq \E \bigg[\frac{\tr \big(\Sigma(S_Z-I)\big)}{\tr(\Sigma)} \bigg]^2 + \E\bigg[\frac{\tr(\Sigma)}{\tr(\Sigma S_Z)}\bigg]^2
    \lesssim \frac{\tr(\Sigma^2)}{N \tr^2(\Sigma)} + 1,
	\end{align*}
	where in the last inequality we apply Lemma \ref{lem:concentration_trace}.
	Hence
	\begin{align*}
	(III)&\lesssim e^{-\frac{cN}{2}}\Big[\frac{\tr^{1/2}(\Sigma^2)}{N^{1/2} \tr(\Sigma)} \vee 1\Big] .
	\end{align*}
	The proof is complete by collecting the bounds.
\end{proof}

\begin{lemma}\label{lem:trace_moment}
	Let $S_Z\equiv N^{-1}\sum_{i=1}^N Z_iZ_i^\top$ where $Z_i$'s are i.i.d. $\mathcal{N}(0,I)$ in $\R^p$, and $\Sigma \in \R^{p\times p}$ be a non-negative definite matrix.
	\begin{enumerate}
		\item There exists some absolute $C>0$ such that
		\begin{align*}
		&\E \tr\big[\big(\Sigma^{1/2}S_Z\Sigma^{1/2}\big)^2\big] = \big(1+N^{-1}\big)\tr(\Sigma^2)+N^{-1}\tr^2(\Sigma),\\
		&\E \tr^2(\Sigma^{1/2}S_Z\Sigma^{1/2}) = \tr^2(\Sigma)+2N^{-1}\tr(\Sigma^2),\\
		&\E \tr^2 \big[\big(\Sigma^{1/2}S_Z\Sigma^{1/2}\big)^2\big]\leq C\Big[ N^{-1}(1\vee (p/N))^3 \tr(\Sigma^4)+\tr^2(\Sigma^2)+N^{-2}\tr^4(\Sigma)\Big].
		\end{align*}
		\item There exists some absolute $C>0$ such that
		\begin{align*}
		&\var\big( \tr (\Sigma S_Z)\big)\leq  4N^{-1} \pnorm{\Sigma}{F}^2,\\
		&\var \big(\tr^2(\Sigma^{1/2}S_Z\Sigma^{1/2})\big)\leq C (N^{-2} \tr^2(\Sigma) )\cdot N \pnorm{\Sigma}{F}^2,\\
		&\var\big( \tr^2 (\Sigma^{1/2}S_Z \Sigma^{1/2})-\tr^2(S_Z)\big)\\
		&\qquad 
		\lesssim \big(N^{-2} \tr^2(\Sigma-I)\big)\cdot  N \pnorm{\Sigma}{F}^2 + (N^{-1}p)^2\cdot N \pnorm{\Sigma-I}{F}^2,\\
		&\var \big(\tr\big[\big(\Sigma^{1/2}S_Z\Sigma^{1/2}\big)^2\big]\big)\leq  C N^{-1}\big[1\vee (N^{-1}p)\big]^3 \tr(\Sigma^4). 
		\end{align*}
		\item Recall that $b(\Sigma) = \tr(\Sigma)/p$ from (\ref{def:trace_b}). For any $\ell \in \N$,
		\begin{align*}
		\E\abs{b(\Sigma^{1/2}S_Z\Sigma^{1/2})-b(\Sigma)}^\ell \leq C_1 \big(\pnorm{\Sigma}{F}N^{-1/2}p^{-1}\big)^\ell
		\end{align*}
		for some constant $C_1=C_1(\ell)$.
	\end{enumerate}

\end{lemma}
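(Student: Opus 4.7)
My plan is to treat the three parts in order: (1) exact closed-form moment computations via Wick's theorem, (2) variance bounds via the Gaussian-Poincar\'e inequality applied to appropriate functions of the matrix $Z = [Z_1,\ldots,Z_N]^\top\in\R^{N\times p}$, and (3) moment bounds for centered sums of independent quadratic forms in Gaussian vectors.

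For part (1), I will repeatedly exploit the independence of the $Z_i$'s together with the classical identities $\E\,Z^\top A Z = \tr(A)$ and $\E\,(Z^\top A Z)(Z^\top B Z) = \tr(A)\tr(B)+2\tr(AB)$ for symmetric $A,B$ when $Z\sim\mathcal{N}(0,I_p)$. For instance, $\E\tr[(\Sigma^{1/2}S_Z\Sigma^{1/2})^2]=N^{-2}\sum_{i,j}\E(Z_i^\top\Sigma Z_j)^2$ splits according to $i=j$ (contributing $\tr^2(\Sigma)+2\tr(\Sigma^2)$ per diagonal term) and $i\ne j$ (contributing $\tr(\Sigma^2)$ per off-diagonal term); adding these gives the first formula in (1). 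The second identity is immediate from $\var(\tr(\Sigma S_Z))=2N^{-1}\tr(\Sigma^2)$. The third (bounding $\E\tr^2[(\Sigma^{1/2}S_Z\Sigma^{1/2})^2]$) is obtained by adding $\big(\E\tr[(\Sigma^{1/2}S_Z\Sigma^{1/2})^2]\big)^2$ to the variance bound from part (2).

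For part (2), I will use the Gaussian-Poincar\'e inequality $\var(F(Z))\leq \E\pnorm{\nabla F(Z)}{F}^2$. The gradients of the relevant trace functionals are explicit: $\partial_{(ij)}\tr(\Sigma S_Z)=2N^{-1}(\Sigma Z_i)_j$, so $\E\pnorm{\nabla\tr(\Sigma S_Z)}{F}^2=4N^{-1}\tr(\Sigma^2)$, yielding the first bound at once. For $\tr^2(\Sigma^{1/2}S_Z\Sigma^{1/2})$, write $f(Z)=\tr(\Sigma S_Z)$ so $\pnorm{\nabla f^2}{F}^2=4f^2\pnorm{\nabla f}{F}^2$, and apply Cauchy-Schwarz combined with the first-part moment bound $\E f^2 = \tr^2(\Sigma)+\mathcal{O}(N^{-1}\tr(\Sigma^2))$ to extract the $\tr^2(\Sigma)$ factor. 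The difference bound on $\tr^2(\Sigma^{1/2}S_Z\Sigma^{1/2})-\tr^2(S_Z)$ is the trickiest: I will factor this as $(f+g)(f-g)$ with $f=\tr(\Sigma S_Z)$, $g=\tr(S_Z)$, compute $\nabla(f-g)=2N^{-1}(\Sigma-I)Z_i$ and $\nabla(f+g)=2N^{-1}(\Sigma+I)Z_i$, and use the product rule plus Cauchy-Schwarz to isolate a factor of $\tr^2(\Sigma-I)$ in one term (from the size of $f-g$) and a factor of $\pnorm{\Sigma-I}{F}^2$ in the other (from $\nabla(f-g)$). For $\tr[(\Sigma^{1/2}S_Z\Sigma^{1/2})^2]=\tr(S_Z\Sigma S_Z\Sigma)$, direct differentiation gives $\partial_{(ij)}=4N^{-1}(\Sigma S_Z\Sigma Z_i)_j$, and $\E\pnorm{\cdot}{F}^2 \lesssim N^{-1}\E\tr\big(\Sigma^2 S_Z\Sigma^2 S_Z\Sigma\cdot\text{something}\big)$; a trace-H\"older estimate together with Lemma \ref{lem:kl} (which gives $\E\pnorm{S_Z}{\op}^q\lesssim_q (1\vee p/N)^q$) then controls the expectation by $N^{-1}(1\vee p/N)^3\tr(\Sigma^4)$.

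For part (3), I will write $b(\Sigma^{1/2}S_Z\Sigma^{1/2})-b(\Sigma)=(Np)^{-1}\sum_{i=1}^N(Z_i^\top\Sigma Z_i-\tr(\Sigma))$ as a sum of i.i.d. centered sub-exponential random variables. Hanson-Wright (or Gaussian hypercontractivity applied to the degree-$2$ Wiener chaos) yields $\E|Z_i^\top\Sigma Z_i-\tr(\Sigma)|^\ell\lesssim_\ell \pnorm{\Sigma}{F}^\ell$ for fixed $\ell$, and Rosenthal's inequality for sums of independent centered random variables then gives $\E|\sum_i(Z_i^\top\Sigma Z_i-\tr(\Sigma))|^\ell\lesssim_\ell N^{\ell/2}\pnorm{\Sigma}{F}^\ell$, from which the conclusion follows after dividing by $(Np)^\ell$. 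The main obstacle in the whole lemma will be the cancellation-sensitive variance bound on $\tr^2(\Sigma^{1/2}S_Z\Sigma^{1/2})-\tr^2(S_Z)$ in part (2): a naive Poincar\'e application loses the $\pnorm{\Sigma-I}{F}$ factor and only gives $\pnorm{\Sigma}{F}^2$, so the product decomposition $(f+g)(f-g)$ with separate gradient estimates for $f-g$ and $f+g$ is essential.
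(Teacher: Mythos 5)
The proposal is correct and uses essentially the same toolkit as the paper: Wick/Isserlis identities for the exact moments in part (1), the Gaussian--Poincar\'e inequality applied to the explicit gradients of the trace functionals for part (2), and moment bounds for quadratic forms in Gaussian vectors for part (3).

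Two local variations are worth flagging, neither of which is a gap but both of which change the downstream algebra slightly. First, for the bound on $\var(\tr^2(\Sigma^{1/2}S_Z\Sigma^{1/2})-\tr^2(S_Z))$, you decompose the gradient as $\tfrac12[(f-g)\nabla(f+g)+(f+g)\nabla(f-g)]$, whereas the paper splits the same quantity as $(f-g)\nabla f+g(\nabla f-\nabla g)$ (i.e.\ keeps the "plain" $g=\tr(S_Z)$ rather than $f+g$). The two are algebraically equal before the triangle inequality, but after triangle they give different individual pieces. The paper's asymmetric split is deliberate: the coefficient on $\pnorm{\nabla(f-g)}{F}^2=\pnorm{Z(\Sigma-I)}{F}^2$ is then $g^2$ with $\E g^2\asymp p^2$, which delivers the clean $(N^{-1}p)^2\cdot N\pnorm{\Sigma-I}{F}^2$ term of the statement verbatim. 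Your $(f+g)$ coefficient has $\E(f+g)^2\asymp(\tr\Sigma+p)^2$, which exceeds $p^2$ when $\tr\Sigma\gg p$; the bound you get is still dominated by the claimed right-hand side, but only after observing that in that regime $\tr^2(\Sigma-I)\asymp\tr^2\Sigma$ and $\pnorm{\Sigma-I}{F}^2\lesssim\pnorm{\Sigma}{F}^2$, so the excess is absorbed by the $\tr^2(\Sigma-I)\pnorm{\Sigma}{F}^2$ term. This extra absorption step should be spelled out; the paper's split sidesteps it. Second, for part (3) you write $b(\Sigma^{1/2}S_Z\Sigma^{1/2})-b(\Sigma)=(Np)^{-1}\sum_i(Z_i^\top\Sigma Z_i-\tr\Sigma)$ and combine a per-term Hanson--Wright (or second-chaos hypercontractivity) moment bound with Rosenthal's inequality, whereas the paper integrates the tail of the aggregate concentration bound from Lemma~\ref{lem:concentration_trace}. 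Both give $\E\lvert\cdot\rvert^\ell\lesssim_\ell(\pnorm{\Sigma}{F}N^{-1/2}p^{-1})^\ell$; your route is a clean and standard alternative.
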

\begin{proof}
	Let $X_i$'s be i.i.d. $\mathcal{N}(0,\Sigma)$. We write $S\equiv \Sigma^{1/2}S_Z\Sigma^{1/2}$ in the proof for simplicity. 
	
	\noindent (1). Note that
	\begin{align*}
	\E \tr(S^2) & = N^{-2} \E \tr\bigg[\sum_{i,j}X_iX_i^\top X_jX_j^\top\bigg] \\
	& = N^{-2}\bigg[\sum_{i\neq j} \E \tr \big(X_iX_i^\top X_jX_j^\top\big)+\sum_{i=j} \E (X_i^\top X_j)^2\bigg]\\
	& = N^{-2} \Big[N(N-1) \tr(\Sigma^2)+N \E (Z_1^\top \Sigma Z_1)^2\Big]\\
	& \stackrel{(\ast)}{=} N^{-2} \Big[N(N+1) \tr(\Sigma^2)+N \tr^2(\Sigma)\Big] \\
	&= \big(1+N^{-1}\big)\tr(\Sigma^2)+N^{-1}\tr^2(\Sigma),
	\end{align*}
	and
	\begin{align*}
	\E \tr^2(S)& = \E \bigg(N^{-1}\sum_{i=1}^N X_i^\top X_i\bigg)^2 = N^{-2}\sum_{i,j} \E X_i^\top X_i X_j^\top X_j\\
	& = N^{-2} \bigg[\sum_{i\neq j} \E \pnorm{X_i}{}^2\E\pnorm{X_j}{}^2+\sum_i \E (X_i^\top X_i)^2\bigg]\\
	& = N^{-2} \bigg[N(N-1) \big(\E Z_1^\top \Sigma Z_1\big)^2+N \E\big(Z_1^\top \Sigma Z_1\big)^2\bigg]\\
	&\stackrel{(\ast\ast)}{=} N^{-2}\Big[N(N-1)\tr^2(\Sigma)+N\big(\tr^2(\Sigma)+2\tr(\Sigma^2)\big)\Big]\\
	& = \tr^2(\Sigma)+2N^{-1}\tr(\Sigma^2).
	\end{align*}
	Here $(\ast),(\ast\ast)$ follow by the following calculations: 
	\begin{align*}
	\E Z_1^\top \Sigma Z_1& = \E \bigg(\sum_j \lambda_j Z_{1j}^2\bigg) = \tr(\Sigma),\\
	\E (Z_1^\top \Sigma Z_1)^2& = \E \bigg(\sum_{j} \lambda_j Z_{1j}^2\bigg)^2 = 3\sum_j \lambda_j^2 +\sum_{j\neq j'} \lambda_j\lambda_{j'}\\
	& =  2\sum_j \lambda_j^2+\bigg(\sum_j \lambda_j\bigg)^2 = \tr^2(\Sigma)+2 \tr(\Sigma^2),
	\end{align*}
	where $\lambda_1,\ldots,\lambda_p$ are the eigenvalues of $\Sigma$. The final one follows as
	\begin{align*}
	\E \tr^2 \big[\big(\Sigma^{1/2}S_Z\Sigma^{1/2}\big)^2\big]& = \var \big( \tr \big[\big(\Sigma^{1/2}S_Z\Sigma^{1/2}\big)^2\big]\big)+ \Big(\E\tr \big[\big(\Sigma^{1/2}S_Z\Sigma^{1/2}\big)^2\big]\Big)^2\\
	&\lesssim N^{-1}\big(1\vee (N^{-1}p)\big)^3 \tr(\Sigma^4)+\tr^2(\Sigma^2)+N^{-2}\tr^4(\Sigma).
	\end{align*}
	The last inequality used (2) to be proved below. 
	
	\noindent (2).	For the first variance bound, note that
	\begin{align*}
	\frac{\partial}{\partial Z_{ij}} \tr (\Sigma S_Z)& =  N^{-1}\tr\big(\Sigma(e_j Z_i^\top+Z_i e_j^\top)\big)= 2N^{-1} (Z\Sigma)_{ij},
	\end{align*}
	so Gaussian-Poincar\'e inequality yields that
	\begin{align*}
	\var\big( \tr (\Sigma S_Z)\big)&\leq  \E \sum_{i,j} \bigg[\frac{\partial}{\partial Z_{ij}} \tr^2 (\Sigma S_Z) \bigg]^2= 4N^{-2} \E \pnorm{Z\Sigma}{F}^2=4N^{-1} \pnorm{\Sigma}{F}^2.
	\end{align*}
	For the second variance bound, note that
	\begin{align*}
	\frac{\partial}{\partial Z_{ij}} \tr^2 (\Sigma S_Z)& = 2 \tr (\Sigma S_Z)\cdot N^{-1}\tr\big(\Sigma(e_j Z_i^\top+Z_i e_j^\top)\big)\\
	& = 4N^{-1} \tr(\Sigma S_Z) (Z\Sigma)_{ij},
	\end{align*}
	so Gaussian-Poincar\'e inequality yields that
	\begin{align*}
	\var\big( \tr^2 (\Sigma S_Z)\big)&\leq  \E \sum_{i,j} \bigg[\frac{\partial}{\partial Z_{ij}} \tr^2 (\Sigma S_Z) \bigg]^2= 16N^{-2} \E \tr^2(\Sigma S_Z) \pnorm{Z\Sigma}{F}^2\\
	&\stackrel{(\ast)}{\leq} N^{-2} \tr^2(\Sigma) \cdot \E^{1/2} \pnorm{Z\Sigma }{F}^4\stackrel{(**)}{\lesssim} (N^{-2} \tr^2(\Sigma) )\cdot N \pnorm{\Sigma}{F}^2.
	\end{align*}
	Here in $(\ast)$ we use Lemma \ref{lem:concentration_trace}, and in $(**)$ we use Lemma \ref{lem:4th_moment}. 
	
	For the third variance bound, note that
	\begin{align*}
	\frac{\partial}{\partial Z_{ij}} \big( \tr^2 (\Sigma S_Z)-\tr^2(S_Z) \big) = 4N^{-1} \big(\tr(\Sigma S_Z) (Z\Sigma)_{ij}-\tr(S_Z) Z_{ij} \big).
	\end{align*}
	Hence
	\begin{align*}
	&\var\big( \tr^2 (\Sigma S_Z)-\tr^2(S_Z)\big)\\
	&\lesssim N^{-2} \E \tr^2\big((\Sigma-I)S_Z\big) \pnorm{Z\Sigma}{F}^2+N^{-2} \E \tr^2(S_Z) \pnorm{Z(\Sigma-I)}{F}^2\\
	&\lesssim \big[N^{-2} \tr^2(\Sigma-I)\big]\cdot  N \pnorm{\Sigma}{F}^2+ (N^{-1}p)^2\cdot N \pnorm{\Sigma-I}{F}^2.
	\end{align*}
	
	For the fourth variance bound, note that 
	\begin{align*}
	\frac{\partial}{\partial Z_{ij}} \tr (\Sigma S_Z\Sigma S_Z) & = 2N^{-1}\tr\Big[\Sigma S_Z \Sigma (e_j Z_i^\top+Z_i e_j^\top)\Big] =4N^{-1}\big(Z\Sigma S_Z \Sigma\big)_{ij},
	\end{align*}
	so by Gaussian-Poincar\'e inequality and Lemma \ref{lem:kl},
	\begin{align*}
	&\var \big( \tr (\Sigma S_Z\Sigma S_Z) \big)\leq 16N^{-2} \E \pnorm{Z\Sigma S_Z\Sigma}{F}^2\\
	& = 16N^{-2} \E \tr\big(Z\Sigma S_Z \Sigma \Sigma S_Z \Sigma Z^\top\big) = 16N^{-1} \E \tr\big(\Sigma S_Z \Sigma \Sigma S_Z \Sigma S_Z\big)\\
	&\leq 16 N^{-1} \E \pnorm{S_Z}{\op}^3 \tr(\Sigma^4) \lesssim N^{-1}\big(1\vee (N^{-1}p)\big)^3 \tr(\Sigma^4). 
	\end{align*}
	
	\noindent (3). This follows by integrating the tail of $\abs{b(\Sigma^{1/2}S_Z\Sigma^{1/2})-b(\Sigma)}$ in Lemma \ref{lem:concentration_trace}:
	\begin{align*}
	&\E\abs{b(\Sigma^{1/2}S_Z\Sigma^{1/2})-b(\Sigma)}^\ell = \int_0^\infty \ell t^{\ell-1} \Prob\big(\abs{b(\Sigma^{1/2}S_Z\Sigma^{1/2})-b(\Sigma)}>t\big)\,\d{t}\\
	&\lesssim_\ell \int_0^\infty t^{\ell-1} e^{-\frac{Np^2}{\pnorm{\Sigma}{F}^2}t^2}\ \d{t}+\int_0^\infty t^{\ell-1} e^{-\frac{Np}{\pnorm{\Sigma}{\op}}t}\ \d{t}\\
	&\lesssim_\ell \big(\pnorm{\Sigma}{F}N^{-1/2}p^{-1}\big)^\ell+ \big(\pnorm{\Sigma}{\op}N^{-1}p^{-1}\big)^\ell \asymp \big(\pnorm{\Sigma}{F}N^{-1/2}p^{-1}\big)^\ell.
	\end{align*}
	The proof is complete.	
\end{proof}

\begin{lemma}\label{lem:trace_asymptotics}
	Let $S_Z\equiv N^{-1}\sum_{i=1}^N Z_iZ_i^\top$ where $Z_i$'s are i.i.d. $\mathcal{N}(0,I)$ in $\R^p$. With $y\equiv p/N$ the following hold:
	\begin{enumerate}
		\item $\E\tr(S_Z^3) = py^2+3py+p+3y^2+3y+4y/N$.
		\item $\E\tr^3(S_Z) =  p^3+6py+ 8y/N$.
		\item $\E\tr(S_Z)\tr(S_Z^2) = p^2 y+ p^2 + py+ 4(y^2+y)+4y/N$.
		\item  $\E\tr^2(S_Z^2)$ equals
		\begin{align*} 
		&N^{-4}\big[Np(p+2)(p+4)(p+6) + N(N-1)\big(p(p+2)\big)^2\\
		&\quad\quad\quad + 2N(N-1)3p(p+2) + 4N(N-1)p(p+2)(p+4)\\
		&\quad\quad\quad + 4N(N-1)(N-2)p(p+2) + 2N(N-1)(N-2)p^2(p+2)\\
		&\quad\quad\quad + N(N-1)(N-2)(N-3)p^2\big].
		\end{align*}
		\item $\E\tr(S_Z)\tr(S_Z^3)$ equals
		\begin{align*}
		 &N^{-4}\big[Np(p+2)(p+4)(p+6) + N(N-1)p^2(p+2)(p+4)\\
		&\quad\quad\quad  +3N(N-1)p(p+2)^2 + 3N(N-1)p(p+2)(p+4)\\
		&\quad\quad\quad +3N(N-1)(N-2)p(p+2) + 3N(N-1)(N-2)p^2(p+2)\\
		&\quad\quad\quad +N(N-1)(N-2)(N-3)p^2\big].
		\end{align*}
		\item $\var\big(b(S_Z)b_3(S_Z) - b_2^2(S_Z)\big) = \mathcal{O}(p^2/N^2)$ in the asymptotic regime $p > N\rightarrow \infty$.
		\item For any $k,\ell\in\mathbb{N}$, $\E\tr^k(S_Z^\ell)\leq Cp^{k\ell}$ for some constant $C = C(k,\ell)> 0$.		
	\end{enumerate}
\end{lemma}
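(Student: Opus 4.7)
The plan is to treat items (1)--(7) with the tool best suited to each: Wick expansion for the exact moment identities in (1)--(5), a rank bound combined with Lemma \ref{lem:kl} for the uniform upper bound in (7), and direct fourth-moment computation for the variance estimate in (6).

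For (1)--(5), the strategy is Wick's theorem. Writing
\[
\tr(S_Z^k)=N^{-k}\sum_{i_1,\ldots,i_k\in[N]}(Z_{i_1}^\top Z_{i_2})(Z_{i_2}^\top Z_{i_3})\cdots(Z_{i_k}^\top Z_{i_1})
\]
and expanding each inner product coordinate-wise over $[p]$, the expectation of any polynomial in $\tr(S_Z^{\ell_1}),\ldots,\tr(S_Z^{\ell_r})$ reduces by Isserlis' theorem to a finite sum indexed by (a) the set-partition on $\{i_1,\ldots\}\subset[N]$ induced by equality and (b) a pair-matching of the Gaussian factors compatible with that partition; each pattern contributes a monomial in $p$ and $N$ obtained from the counting. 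For (2) the shortcut is $Y\equiv N\tr(S_Z)\sim\chi^2(Np)$, giving $\E\tr^3(S_Z)=N^{-3}(Np)(Np+2)(Np+4)$ directly. Items (1) and (3) involve only five row-index patterns (all distinct, the three with exactly one coincidence, and the one with all equal), weighted by the configuration count in $[N]$ and the corresponding Gaussian moment ($p$, $p(p+2)$, or $p(p+2)(p+4)$). Items (4) and (5) enumerate the finer partitions of $\{i_1,\ldots,i_4\}$ --- longer but entirely mechanical.

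For (7), use $\rank(S_Z)\leq p\wedge N$ to get $\tr(S_Z^\ell)\leq(p\wedge N)\pnorm{S_Z}{\op}^\ell$, whence
\[
\E\tr^k(S_Z^\ell)\leq(p\wedge N)^k\,\E\pnorm{S_Z}{\op}^{k\ell}\lesssim_{k,\ell}(p\wedge N)^k\cdot\bigl(1\vee(p/N)^{k\ell}\bigr)
\]
by Lemma \ref{lem:kl} and the triangle inequality. When $p\leq N$ the right-hand side is $\lesssim p^k\leq p^{k\ell}$; when $p>N$ it is $\lesssim N^k(p/N)^{k\ell}=p^{k\ell}/N^{k(\ell-1)}\leq p^{k\ell}$.

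For (6), set $g_m\equiv\tr(S_Z^m)$, so $p^2(bb_3-b_2^2)=g_1g_3-g_2^2$, and expand
\[
\var(g_1g_3-g_2^2)=\var(g_1g_3)+\var(g_2^2)-2\,\mathrm{Cov}(g_1g_3,g_2^2).
\]
Each of these three quantities reduces, via the Wick machinery from (4)--(5), to an explicit polynomial in $p,N$. Substantial cancellation is expected: heuristically, if all nonzero eigenvalues of $S_Z$ were equal to their bulk value $y\equiv p/N$, then $g_1g_3-g_2^2$ would vanish identically, so the variance is governed only by the eigenvalue fluctuations around the bulk; collecting the leading orders gives $\var(g_1g_3-g_2^2)=\mathcal{O}(p^6/N^2)$ in the regime $p>N$, which after division by $p^4$ is the claim. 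The main obstacle is precisely that this cancellation must be exploited: a brute-force Gaussian--Poincar\'e bound using $\partial_{ij}\tr(S_Z^m)=2mN^{-1}(ZS_Z^{m-1})_{ij}$ gives
\[
\pnorm{\nabla(g_1g_3-g_2^2)}{F}^2=4N^{-1}\tr(S_Z A^2),\qquad A\equiv g_3 I+3g_1 S_Z^2-4g_2 S_Z,
\]
and the crude estimate $\tr(S_Z A^2)\leq\pnorm{A}{\op}^2\tr(S_Z)$ combined with (7) yields only $\var(bb_3-b_2^2)\lesssim p^3/N^5$, which exceeds the target $p^2/N^2$ once $p\gg N^{3/2}$. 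The coefficients $(1,3,-4)$ of $A$ are precisely those making $A$ vanish when $S_Z$ is replaced by $yI$ on its range, reflecting the same cancellation visible in the direct moment computation --- so the safest route is the fourth-moment bookkeeping outlined above, at the cost of another round of combinatorial expansion.
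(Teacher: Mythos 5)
Your plan for items (1)--(5) and (7) matches the paper's proof almost exactly: Wick/Isserlis expansion indexed by set-partitions of the row indices, the $\chi^2(Np)$ shortcut for (2), and the five (respectively seven) partition patterns for (1), (3) (respectively (4), (5)). For (7) you take a slightly different route --- a rank bound together with Lemma~\ref{lem:kl} --- whereas the paper bounds each inner product by $\|X_{i_j}\|\|X_{i_{j+1}}\|$ and reduces everything to a single $\chi^2(Np)$ moment without ever invoking operator-norm concentration; both are correct, the paper's being marginally more elementary.

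The genuine divergence is item (6), and there your write-up contains a misdiagnosis of the alternative. The paper does \emph{not} abandon the Gaussian--Poincar\'e route: it applies the Poincar\'e inequality to reach $\var(F)\leq(Np^4)^{-1}\E\big[28\tr(S)\tr^2(S^3)+36\tr^2(S)\tr(S^5)+32\tr^2(S^2)\tr(S^3)-96\tr(S)\tr(S^2)\tr(S^4)\big]$, and then evaluates the right-hand side \emph{exactly} by the same Wick bookkeeping used in (4)--(5), letting the $1+3-4=0$ cancellation you correctly identified (i.e.\ $A=g_3 I+3g_1 S_Z^2-4g_2 S_Z$ vanishing at $S_Z\mapsto yI$ on its range) appear in the trace polynomial. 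So the real comparison is between two exact Wick computations: yours targets $\var(g_1g_3)+\var(g_2^2)-2\mathrm{Cov}(g_1g_3,g_2^2)$ and requires eighth-order sums; the paper's targets $\E\|\nabla F\|_F^2$ and requires only sixth-order sums. Both must exploit the same cancellation, but the paper's is strictly cheaper, and your dichotomy (``crude operator-norm bound inside Gaussian--Poincar\'e fails, hence do direct fourth-moment bookkeeping'') omits it. Also a small arithmetic slip: your crude bound $p^3/N^5$ exceeds the target $p^2/N^2$ when $p\gg N^3$, not $p\gg N^{3/2}$; the qualitative point (crude bounds are not uniformly sufficient in $p>N\to\infty$) still stands.
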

\begin{proof}
	Write $S_Z$ for $S$ in the proof for simplicity. Recall that if $R$ follows a chi-squared distribution with an integer $\nu$ degrees of freedom, then 
	\begin{align*}
	\E R^2 = \nu^2 + 2\nu, \quad \E R^3 &= \nu^3 + 6\nu^2 + 8\nu, \quad \E R^4 = \nu(\nu+2)(\nu+4)(\nu+6).
	\end{align*}
	Hence (1)-(3) follows from the following calculations: We have
	\begin{align*}
	\E\tr(S^3) &= N^{-3}\cdot\E\sum_{i_1,i_2,i_3} (Z_{i_1}^\top Z_{i_2}) (Z_{i_2}^\top Z_{i_3}) (Z_{i_3}^\top Z_{i_1})\\
	&= N^{-3}\cdot \Big(\sum_{|(i_1,i_2,i_3)|=1} \E\pnorm{Z_1}{}^6 + \sum_{|(i_1,i_2,i_3)|=2}\E\pnorm{Z_1}{}^4 + \sum_{|(i_1,i_2,i_3)|=3}\E\pnorm{Z_1}{}^2\Big)\\
	&= N^{-3}\cdot\big[N\cdot \E (\chi^2_p)^3 + (3N^2 - 3N)\E (\chi^2_p)^2 + N(N-1)(N-2)\cdot p\big]\\
	& = py^2+3py+p+3y^2+3y+4N^{-1}y,
	\end{align*}
	and
	\begin{align*}
	\E\tr^3(S) &= N^{-3} \E\bigg(\sum_{i=1}^N \pnorm{Z_i}{}^2\bigg)^3 = N^{-3}\E(\chi^2_{Np})^3
	 = p^3+6py+ 8N^{-1}y,
	\end{align*}
	and 
	\begin{align*}
	&\E\tr(S)\tr(S^2) = N^{-3} \E\Big(\sum_{i_1=1}^N \pnorm{Z_{i_1}}{}^2\Big)\Big(\sum_{i_2,i_3=1}^N (Z_{i_2}^\top Z_{i_3})^2\Big)\\
	&= N^{-3}\E\sum_{i_1,i_2,i_3=1}^N \pnorm{Z_{i_1}}{}^2(Z_{i_2}^\top Z_{i_3})^2\\
	&= N^{-3}\Big[\sum_{|(i_1,i_2,i_3)|=3}\E\pnorm{Z_1}{}^2\cdot \E(Z_2^\top Z_3)^2 + \sum_{(i_2=i_3)\neq i_1}\E\pnorm{Z_1}{}^2 \cdot \E\pnorm{Z_1}{}^4\\
	&\quad\quad\quad + \sum_{|(i_2,i_3)| = |(i_1,i_2,i_3)|=2}\E \pnorm{Z_1}{}^2(Z_1^\top Z_2)^2 + \sum_{|(i_1,i_2,i_3)|=1}\E\pnorm{Z_1}{}^6\Big]\\
	&= N^{-3}\big[N(N-1)(N-2)p^2 + (N^2- N)(p^3 + 2p^2)\\
	&\quad\quad\quad + 2(N^2 - N)\big(p^2 + 2p\big) + N\big(p^3 + 6p^2 + 8p\big)\big]\\
	&= p^2 y+ p^2 + py+ 4(y^2+y)+4N^{-1}y.
	\end{align*}
	
	\noindent (4). By definition, we have
	\begin{align}\label{eq:sum1_general}
	\E \tr^2(S^2) &= N^{-4}\E\Big(\sum_{i_1,i_1'=1}^N (X_{i_1}^\top X_{i_1'})^2\Big)^2= N^{-4}\sum_{i_1,i_1',i_2,i_2'=1}^N \E(X_{i_1}^\top X_{i_1'})^2(X_{i_2}^\top X_{i_2'})^2.
	\end{align}
	The right hand side of (\ref{eq:sum1_general}) breaks into $\sum_{i=1}^7 A_i$, where $A_1$, $A_2$-$A_4$, $A_5$-$A_6$, and $A_7$ correspond to the cases where $(i_1,i_1',i_2,i_2')$ take $1,2,3,4$ distinct values, respectively:
	\begin{itemize}
		\item ($A_1$)  When $(i_1,i_1',i_2,i_2')$ take $1$ value, there are $N$ such summands each of which take the value $\E\pnorm{X_1}{}^8 = p(p+2)(p+4)(p+6)$.
		\item ($A_2$)  When $(i_1,i_1',i_2,i_2')$ take 2 values with $(i_1 = i_1')\neq (i_2 = i_2')$, there are $N(N-1)$ such summands each of which takes the value $\E\pnorm{X_1}{}^4\pnorm{X_2}{}^4 = \big(\E\pnorm{X_1}{}^4\big)^2 = p^2(p+2)^2$.
		\item ($A_3$)  When $(i_1,i_1',i_2,i_2')$ take 2 values with $(i_1 = i_2)\neq (i_1' = i_2')$ or $(i_1 = i_2')\neq (i_2 = i_1')$, there are $2N(N-1)$ such summands each of which takes the value
		\begin{align*}
		&\E(X_1^\top X_2)^2(X_1^\top X_2)^2 = \E(X_1^\top X_2)^4 =\sum_{j_1,j_2,j_3,j_4=1}^p \big(\E X_{1,j_1}X_{1,j_2}X_{1,j_3}X_{1,j_4}\big)^2\\
		&= \sum_{|(j_1,j_2,j_3,j_4)| = 1} \big(\E X_{1,j_1}X_{1,j_2}X_{1,j_3}X_{1,j_4}\big)^2 + \sum_{|(j_1,j_2,j_3,j_4)| = 2} \big(\E X_{1,j_1}X_{1,j_2}X_{1,j_3}X_{1,j_4}\big)^2\\
		&= p\cdot 3^2 + 3p(p-1)\cdot 1 = 3p(p+2).
		\end{align*}
		\item ($A_4$)  When $(i_1,i_1',i_2,i_2')$ take 2 values of the form $(i_1 = i_2 = i_1')\neq i_2'$ or its variants, there are $4N(N-1)$ such summands each of which takes the value
		\begin{align*}
		\E\pnorm{X_1}{}^4(X_1^\top X_2)^2 = \E\tr(\pnorm{X_1}{}^4 X_1X_1^\top X_2X_2^\top) = \E\pnorm{X_1}{}^6 = p(p+2)(p+4).
		\end{align*}
		\item ($A_5$)  When $(i_1,i_1',i_2,i_2')$ take 3 values of the form $(i_1 = i_2)\neq i_1'\neq  i_2'$ or its variants, there are $4N(N-1)(N-2)$ such summands each of which takes the value $\E(X_1^\top 		X_2)^2(X_1^\top X_3)^2 = \E\pnorm{X_1}{}^4 = p(p+2)$.
		\item ($A_6$)  When $(i_1,i_1',i_2,i_2')$ take 3 values of the form $(i_1 = i_1')\neq  i_2 \neq  i_2'$ or its variants, there are $2N(N-1)(N-2)$ such summands each of which takes the value $\E(X_1^\top X_1)^2(X_2^\top X_3)^2 = p\cdot\E\pnorm{X_1}{}^4 = p^2(p+2)$.
		\item ($A_7$)  When $(i_1,i_1',i_2,i_2')$ take 4 values, there are $N(N-1)(N-2)(N-3)$ such summands each of which takes the value $\E(X_1^\top X_2)^2(X_3^\top X_4)^2 = p^2$.  
	\end{itemize}

	\noindent (5). By definition, we have
	\begin{align}\label{eq:sum2_general}
	\notag\E \big[\tr(S)\tr(S^3)\big] &= N^{-4}\E\Big(\sum_{i=1}^N \pnorm{X_{i}}{}^2\Big)\Big(\sum_{j_1,j_2,j_3=1}^N \big(X_{j_1}^\top X_{j_2}\big)\big(X_{j_2}^\top X_{j_3}\big)\big(X_{j_3}^\top X_{j_1}\big)\Big)\\
	&= N^{-4}\sum_{i,j_1,j_2,j_3=1}^N \E\Big[ \pnorm{X_{i}}{}^2\big(X_{j_1}^\top X_{j_2}\big)\big(X_{j_2}^\top X_{j_3}\big)\big(X_{j_3}^\top X_{j_1}\big)\Big].
	\end{align}
	The right hand side of (\ref{eq:sum2_general}) breaks into $\sum_{i=1}^7 B_i$, where $B_1$, $B_2$-$B_4$, $B_5$-$B_6$, and $B_7$ correspond to the cases where $(i,j_1,j_2,j_3)$ take $1,2,3,4$ distinct values, respectively:  
	\begin{itemize}
	\item ($B_1$)  When $(i,j_1,j_2,j_3)$ takes $1$ value, there are $N$ such summands in (\ref{eq:sum2_general}), each of which takes the value $\E\pnorm{X_1}{}^8 = p(p+2)(p+4)(p+6)$.
		\item ($B_2$)  When $(i,j_1,j_2,j_3)$ take 2 values with $i \neq (j_1 = j_2 = j_3)$, there are $N(N-1)$ such summands each of which takes the value $\E\pnorm{X_1}{}^2\pnorm{X_2}{}^6 = p^2(p+2)(p+4)$.
		\item ($B_3$)  When $(i,j_1,j_2,j_3)$ take 2 values of the form $(i = j_1)\neq (j_2 = j_3)$ and its variants, there are $3N(N-1)$ such summands each of which takes the value 
		\begin{align*}
		&\E\pnorm{X_1}{}^2(X_1^\top X_2)\pnorm{X_2}{}^2(X_2^\top X_1) = \E\tr\Big(\pnorm{X_1}{}^2 X_1X_1^\top \cdot \pnorm{X_2}{}^2 X_2X_2^\top\Big)\\
		&= \tr\Big(\E\pnorm{X_1}{}^2X_1X_1^\top \Big)^2 \stackrel{(*)}{=} \tr\big[(p+2)I_p\big]^2 = p(p+2)^2.
		\end{align*} 
		Here in $(*)$ we use the following fact by direct calculation $
		\big(\E\pnorm{X_1}{}^2X_1X_1^\top \big)_{k\ell} = \E\big[ \big(\sum_{m=1}^p X_{1,m}^2\big)X_{1,k}X_{1,\ell} \big]= (p+2)\delta_{k\ell}$. 
		\item ($B_4$)  When $(i,j_1,j_2,j_3)$ take 2 values of the form $(i = j_1 = j_2)\neq j_2$ or its variants, there are $3N(N-1)$ such summands each of which takes the value $\E\pnorm{X_1}{}^4(X_1^\top X_2)^2 = \E\pnorm{X_1}{}^6 = p(p+2)(p+4)$.
		\item ($B_5$)  When $(i,j_1,j_2,j_3)$ take 3 values of the form $(i = j_1)\neq j_2 \neq  j_3$ or its variants, there are $3N(N-1)(N-2)$ such summands each of which takes the value $\E\pnorm{X_1}{}^2(X_1^\top X_2)(X_2^\top X_3)(X_3^\top X_1) = \E\pnorm{X_1}{}^4 = p(p+2)$.
		\item ($B_6$)  When $(i,j_1,j_2,j_3)$ take 3 values of the form $i \neq (j_1 = j_2)\neq j_3$ or its variants, there are $3N(N-1)(N-2)$ such summands each of which takes the value $\E\pnorm{X_1}{}^2\pnorm{X_2}{}^2(X_2^\top X_3)^2 = \E\pnorm{X_1}{}^2 \E\pnorm{X_2}{}^4 = p^2(p+2)$.
		\item ($B_7$)  When $(i,j_1,j_2,j_3)$ take 4 values, there are $N(N-1)(N-2)(N-3)$ such summands each of which takes the value $\E\big[\pnorm{X_1}{}^2(X_2^\top X_3)(X_3^\top X_4)(X_4^\top X_2)\big] = p^2$.  
	\end{itemize}
	
	\noindent (6). Let $F(X)\equiv b(S)b_3(S) - b_2^2(S)$. Then using for any $(i,j)\in[N]\times [p]$, $
	\partial_{ij} b = 2(Np)^{-1}X_{ij}$, $\partial_{ij} b_2 = 4(Np)^{-1}X_i^\top Se_j$, $\partial_{ij}b_3 = 6(Np)^{-1}X_i^\top S^2e_j$, 
	we have
	\begin{align*}
	\partial_{ij} F (X) &= 2(Np)^{-1}X_{ij}\cdot b_3 + 6(Np)^{-1}X_i^\top S^2e_j\cdot b - 8(Np)^{-1}X_i^\top Se_j\cdot b_2\\
	&= (Np)^{-1}\Big(2b_3X + 6bXS^2 - 8b_2XS\Big)_{(ij)}.
	\end{align*}
	Hence by the Gaussian-Poincar\'e inequality, we have by direct calculation
	\begin{align*}
	\var(F(X)) &\leq \E\pnorm{\nabla F(X)}{F}^2= (Np)^{-1}\E\big(28bb_3^2 + 36b^2b_5 + 32b_2^2b_3 - 96bb_2b_4\big)\\
	&= (Np^4)^{-1}\E\Big[28\tr(S)\tr^2(S^3) + 36\tr^2(S^2)\tr(S^5) + 32\tr^2(S^2)\tr(S^3)\\
	&\quad\quad\quad - 96\tr(S)\tr(S^2)\tr(S^4)\Big]. 
	\end{align*}
	The rest of the proof follows from similar arguments as in (4) and (5) by explicit calculation and cancellation of higher order terms; we omit the details. 
	
	\noindent (7). This follows directly from the property of the chi-squared distribution:
	\begin{align*}
	\E\tr^k(S^\ell) &= N^{-k\ell}\E\Big[\sum_{i_1,\ldots,i_\ell=1}^N (X_{i_1}^\top X_{i_2})\cdots (X_{i_{\ell-1}}^\top X_{i_\ell})(X_{i_\ell}^\top X_{i_1})\Big]^k\\
	&\leq N^{-k\ell}\E\Big(\sum_{i_1,\ldots,i_\ell=1}^N \pnorm{X_{i_1}}{}^2\cdots \pnorm{X_{i_\ell}}{}^2\Big)^k = N^{-k\ell} \E \Big(\sum_{i=1}^{N}\pnorm{X_i}{}^2\Big)^{k\ell}\\
	&= N^{-k\ell}  \E \big(\chi^2(Np)\big)^{k\ell}\lesssim_{k,\ell} p^{k\ell}.
	\end{align*}
	The proof is complete.
\end{proof}

\section*{Acknowledgments}
The authors would like to thank an anonymous referee, an Associate Editor and the Editor for their very helpful comments and suggestions that significantly improved the quality of the paper. 

\bibliographystyle{amsalpha}
\bibliography{mybib}

\end{document}